\definecolor{mygreen}{rgb}{0,0.7,0.3}
\definecolor{myblue}{rgb}{0,0.50,1.20}
\definecolor{myorange}{rgb}{1,0.38,0}
\numberwithin{equation}{section}
\crefname{thm}{Theorem}{Theorems}
\crefname{cor}{Corollary}{Corollaries}
\crefname{lem}{Lemma}{Lemmas}
\crefname{sublem}{Sublemma}{Sublemmas}
\crefname{prop}{Proposition}{Propositions}
\crefname{dfn}{Definition}{Definitions}
\crefname{defi}{Definition}{Definitions}
\crefname{ex}{Example}{Examples}
\crefname{claim}{Claim}{Claims}
\crefname{conj}{Conjecture}{Conjectures}
\crefname{conv}{Notation}{Notations}
\crefname{rem}{Remark}{Remarks}
\crefname{rmk}{Remark}{Remarks}
\crefname{prob}{Problem}{Problems}
\crefname{figure}{Figure}{Figures}
\crefname{table}{Table}{Tables}
\crefname{section}{Section}{Sections}
\crefname{subsection}{Section}{Sections}
\crefname{appendix}{Appendix}{Appendices}
\newtheorem{thm}{Theorem}[section]
\newtheorem{prop}[thm]{Proposition}
\newtheorem{cor}[thm]{Corollary}
\newtheorem{lem}[thm]{Lemma}
\theoremstyle{definition}
\newtheorem{dfn}[thm]{Definition}
\newtheorem{defi}[thm]{Definition}
\newtheorem{ex}[thm]{Example}
\newtheorem{conj}[thm]{Conjecture}
\newtheorem{conv}[thm]{Notation}
\theoremstyle{remark}
\newtheorem{rmk}[thm]{Remark}
\newtheorem{rem}[thm]{Remark}
\newcommand{\CORRECTED}
{}
\newcommand{\too}{\longrightarrow}
\newcommand*{\chom}{\mathcal{H}\kern -.5pt om}
\newcommand{\bZ}{\mathbb{Z}}
\newcommand{\bQ}{\mathbb{Q}}
\newcommand{\bR}{\mathbb{R}}
\newcommand{\bS}{\mathbb{S}}
\newcommand{\bT}{\mathbb{T}}
\newcommand{\bD}{\mathbb{D}}
\newcommand{\bE}{\mathbb{E}}
\newcommand{\bs}{{\boldsymbol{s}}}
\newcommand{\indedge}{t \overbar{\indk} t'}
\newcommand{\indi}{i}
\newcommand{\indk}{k}
\newcommand{\numi}{i}
\newcommand{\numj}{j}
\newcommand{\numk}{k}
\newcommand{\spe}{|M_\partial|}
\newcommand{\cTr}{\mathsf{G}}
\newcommand{\A}{\mathcal{A}}
\newcommand{\cA}{\mathcal{A}}
\newcommand{\cC}{\mathcal{C}}
\newcommand{\cE}{\mathcal{E}}
\newcommand{\cF}{\mathcal{F}}
\newcommand{\cH}{\mathcal{H}}
\newcommand{\cI}{\mathcal{I}}
\newcommand{\cL}{\mathcal{L}}
\newcommand{\cO}{\mathcal{O}}
\newcommand{\cR}{\mathcal{R}}
\newcommand{\cS}{\mathcal{S}}
\newcommand{\cU}{\mathcal{U}}
\newcommand{\X}{\mathcal{X}}
\newcommand{\cX}{\mathcal{X}}
\newcommand{\cZ}{\mathcal{Z}}
\newcommand{\fC}{\mathfrak{C}}
\newcommand{\fF}{\mathfrak{F}}
\newcommand{\fS}{\mathfrak{S}}
\newcommand{\tri}{\triangle}
\newcommand{\sgn}{\mathrm{sgn}}
\newcommand{\trop}{\mathrm{trop}}
\newcommand{\pos}{\mathbb{R}_{>0}}
\newcommand{\stab}{\mathrm{stab}}
\newcommand{\ML}{\mathcal{ML}}
\newcommand{\eML}{\widehat{\mathcal{ML}}}
\newcommand{\Tri}{\mathrm{Tri}^{\bowtie}}
\newcommand{\bTri}{\bT \mathrm{ri}^{\bowtie}}
\newcommand{\Exch}{\mathrm{Exch}}
\newcommand{\bExch}{\bE \mathrm{xch}}
\newcommand{\uf}{\mathrm{uf}}
\newcommand{\f}{\mathrm{f}}
\newcommand{\triv}{\mathrm{triv}}
\newcommand{\Teich}{Teichm\"uller}
\newcommand{\grp}{\mathrm{grp}}
\newcommand{\hL}{\widehat{L}}
\newcommand{\redtext}[1]{\textcolor{mygreen}{#1}}
\DeclareMathOperator{\im}{\mathrm{im}}
\DeclareMathOperator{\interior}{\mathrm{int}}
\newcommand{\oline}[1]{\overline{#1}}
\newcommand{\oset}[3][0ex]{%
  \mathrel{\mathop{#3}\limits^{
    \vbox to#1{\kern-2\ex@
    \hbox{$\scriptstyle#2$}\vss}}}}
\newcommand{\overbar}[1]{\oset{#1}{-\!\!\!-\!\!\!-}}
\newcommand{\osetnear}[3][0ex]{%
  \mathrel{\mathop{#3}\limits^{
    \vbox to#1{\kern-.3\ex@
    \hbox{$\scriptstyle#2$}\vss}}}}
\newcommand{\overbarnear}[1]{\osetnear{#1}{-\!\!\!-\!\!\!-}}
\newcommand\qarrow[2]{\draw[->,shorten >=4pt,shorten <=4pt] (#1) -- (#2) [thick];} 
\tikzset{
  mid arrow/.style={postaction={decorate,decoration={
        markings,
        mark=at position .5 with {\arrow[#1]{stealth}}
      }}},
}
\tikzset{
  symbol/.style={
    draw=none,
    every to/.append style={
      edge node={node [sloped, allow upside down, auto=false]{$#1$}}}
  }
}
\title[Sign stability of mapping classes on marked surfaces II]{Sign stability of mapping classes on marked surfaces II: general case via reductions}
\author[Tsukasa Ishibashi]{Tsukasa Ishibashi}
\address{Tsukasa Ishibashi, Research Institute for Mathematical Sciences, Kyoto University, Kitashirakawa Oiwake-cho, Sakyo-ku, Kyoto 606-8502, Japan.}
\email{ishiba@kurims.kyoto-u.ac.jp}
\author[Shunsuke Kano]{Shunsuke Kano}
\address{Shunsuke Kano, Department of Mathematics, Tokyo Institute of Technology, 2-12-1 Ookayama, Meguro, Tokyo 152-8551, Japan.}
\email{kano.s.ab@m.titech.ac.jp}
\date{\today}
\begin{document}
\maketitle

\begin{abstract}
We give a cluster algebraic description of the reduction procedure of mapping classes along a multicurve.  
Based on this description, we characterize pseudo-Anosov mapping classes on a general marked surface in terms of a weaker version of the uniform sign stability, generalizing the main result in the previous paper \cite{IK20}. Moreover we axiomatize a general reduction procedure of mutation loops parametrized by a rational polyhedral cone in the tropical cluster $\X$-variety, which includes both the reduction along a multicurve and the cluster reduction introduced in \cite{Ish19}.
\end{abstract}

\tableofcontents

\section{Introduction}
This paper continues the study of the sign stability of mutation loops in the cluster modular group $\Gamma_\Sigma$
associated with a marked surface\footnote{For a slight difference between the groups $\Gamma_\Sigma$ and  $\Gamma_{\bs_\Sigma}$ for a fourth-punctured sphere, see \cite[Theorem 4.5]{IK20}.}. The notion of the \emph{sign stability} of a mutation loop (or more precisely, its representation path) has been introduced in \cite{IK19}, aiming at generalizing the pseudo-Anosov property of a mapping class on a surface. In our previous paper \cite{IK20}, we confirmed the following:

\begin{thm}[{\cite[Corollary 7.7]{IK20}}]\label{prop:pA_NS_SS}
Let $\Sigma$ be a punctured surface, that is, a marked surface without boundary.
For a mutation loop $\phi \in \Gamma_\Sigma$, the following conditions are equivalent:
\begin{enumerate}
    \item The mutation loop $\phi$ is pseudo-Anosov.
    \item The mutation loop $\phi$ is uniformly sign-stable.
    \item The mutation loop $\phi$ has NS dynamics on $\bS \cX_{\Sigma}^\uf(\bR^\trop)$, and its attracting and repelling points are $\cX$-filling.
\end{enumerate}
In this case, the cluster stretch factor of the mutation loop $\phi$ coincides with the stretch factor of the underlying pseudo-Anosov mapping class of $\phi$.
\end{thm}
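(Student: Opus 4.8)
The plan is to prove the equivalences by going around the cycle $(1)\Rightarrow(2)\Rightarrow(3)\Rightarrow(1)$, where the first implication is the substantive new input (it uses the invariant train track of a pseudo-Anosov map), the second is a formal consequence of the general theory of sign stability, and the third is Thurston's classification transported through a dictionary between cluster and surface-topological data. So the first task is to set up that dictionary: fixing an ideal triangulation $\Delta$ of $\Sigma$, the shear coordinates give a $\MCG(\Sigma)$-equivariant PL identification of $\cX_\Sigma^\uf(\bR^\trop)$ with a suitable Thurston space of measured laminations on $\Sigma$ (in the sense of Fock--Goncharov), under which the $\Gamma_\Sigma$-action on the former matches the mapping class action on the latter; passing to the quotient by positive scaling identifies $\bS\cX_\Sigma^\uf(\bR^\trop)$ with the sphere of projective measured laminations, and the property that a point is $\cX$-filling with the property that the corresponding projective lamination fills $\Sigma$. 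The comparison between $\Gamma_\Sigma$ and $\MCG(\Sigma)$ (and the signed/tagged refinement near punctures) is where the no-boundary hypothesis and the footnote's exceptional surfaces enter; I would import this from the known surface-specific results rather than re-derive it.

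For $(2)\Rightarrow(3)$ I would invoke the general dynamical output of sign stability from \cite{IK19}: a uniformly sign-stable mutation loop has a well-defined stable presentation matrix, a nonnegative integer matrix that is insensitive to the sign choices after enough iterations; its Perron--Frobenius eigenvector and the corresponding eigenvector of the inverse-transpose give an attracting and a repelling fixed point of $\phi$ on $\bS\cX_\Sigma^\uf(\bR^\trop)$, and the spectral gap of this matrix forces North--South dynamics, with the cluster stretch factor being by definition the Perron--Frobenius eigenvalue. Uniformity of the stabilization — that the sign pattern fails to stabilize only on a locus that is ``thin'' rather than containing a whole lamination stratum — is exactly what promotes ``the fixed points are some laminations'' to ``the fixed points are $\cX$-filling.''

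The heart is $(1)\Rightarrow(2)$. Given a pseudo-Anosov $\phi$ with transverse invariant measured foliations $\cF^\pm$, I would (after conjugating $\phi$, which affects none of the conclusions) choose an ideal triangulation $\Delta$ adapted to $\cF^+$, e.g.\ one whose dual train track carries $\cF^+$. In the shear coordinates of $\Delta$, the sign of each exchanged entry along the representation path of $\phi^n$ is determined by the position of $\phi^n(L)$ relative to finitely many coordinate hyperplanes; since $\phi^n(L)$ converges in $\bS\cX_\Sigma^\uf(\bR^\trop)$ to $[\cF^+]$ for every $L$ off the wall dual to $\cF^-$, and $[\cF^+]$ sits in the interior of a single sign chamber, the sign pattern is eventually constant. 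To obtain \emph{uniform} sign stability I would rerun this with the convergence $\phi^n\to[\cF^+]$ taken uniformly on compact subsets of $\bS\cX_\Sigma^\uf(\bR^\trop)\setminus\{[\cF^-]\}$ (a consequence of the North--South dynamics just noted), and use a compactness/finiteness argument over the sphere both to bound the number of iterations needed and to identify the exceptional locus with the complement of the $\cX$-filling points. Finally the stretch-factor identity: the Perron--Frobenius eigenvalue of the stable presentation matrix is the expansion rate of $\phi$ along its attracting eigenlamination $[\cF^+]$, which is the dilatation $\lambda(\phi)$.

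For $(3)\Rightarrow(1)$, through the dictionary this is Thurston's trichotomy: a periodic class has a finite orbit on the sphere and hence no North--South dynamics, and a reducible class fixes the projective class of its canonical reducing multicurve — a non-filling, hence non-$\cX$-filling, fixed point, and one whose fixed-point set is not a single attracting/repelling pair — so only pseudo-Anosov classes can satisfy $(3)$, and for those the expansion factor is the dilatation, closing the loop. The main obstacle I expect is the uniformity in $(1)\Rightarrow(2)$: pinning down the adapted triangulation precisely enough that the sign vectors become \emph{literally constant} after one application (not merely eventually periodic), controlling the finitely many sign walls uniformly as the base point ranges over the whole tropical sphere, and checking that the stabilization-failure set is exactly the non-$\cX$-filling locus. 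A secondary technical nuisance is the puncture bookkeeping in the dictionary of the first paragraph.
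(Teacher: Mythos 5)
This paper does not reprove the statement; it is quoted from \cite[Corollary 7.7]{IK20}, and the surrounding text only recalls the ingredients — the geometric models of \cref{thm:geom_models}, the sign-stability framework of \cref{subsec:sign stability}, and in particular \cite[Proposition 2.14]{IK20}, which identifies the $\cX$-filling points of $\cX_\Sigma^\uf(\bR^\trop)$ with the arational (filling) measured laminations. Your dictionary in the first paragraph, the Thurston trichotomy for $(3)\Rightarrow(1)$, and the Perron--Frobenius mechanism for the stretch factor are all the right ingredients, so the broad architecture is fine.

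There is, however, a genuine gap in your treatment of $(1)\Rightarrow(2)$: you have misread what ``uniform'' means. Uniform sign stability (\cref{d:uniform stability}) does not assert that the integer $n_0$ in \cref{d:sign stability} can be chosen uniformly in $w$, nor uniformly over compact subsets of the tropical sphere. It asserts that \emph{every} representation path $\gamma:v_0\to v$ of $\phi$ — in particular, one starting from an \emph{arbitrary} labeled tagged triangulation $v_0$, not one ``adapted to $\cF^+$'' — is sign-stable. Your argument constructs a single adapted triangulation whose dual train track carries $\cF^+$, establishes eventual constancy of the sign vector along that one path, and then spends the uniformity discussion controlling $n_0$ over compacta; this addresses the wrong quantifier and does nothing about the other base vertices. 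The mechanism that actually yields uniformity is the one your own first paragraph sets up and your third paragraph abandons: because $\cF^+$ is arational (Thurston), \cite[Proposition 2.14]{IK20} says it is $\cX$-filling, i.e., its shear coordinates are strictly nonzero in \emph{every} chart, so $[\cF^+]$ lies in the interior of a top-dimensional sign cone of $\X_{(v_0)}^\uf(\bR^\trop)$ for every $v_0$. Combined with $\Gamma_\Sigma$-equivariance, North--South dynamics then pushes $\phi^n(w)$ into that open cone for each $w$ off the repelling fiber, giving a strict stable sign for \emph{any} choice of representation path. No adapted triangulation and no uniform-in-$w$ estimate is needed or would suffice. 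A secondary inaccuracy: the stable presentation matrix is a signed integral matrix, not automatically nonnegative; the Perron--Frobenius statement you invoke is nontrivial and is exactly \cite[Theorem 5.11]{IK20}, which — as this paper stresses — already fails for the weaker notion of sign stability introduced later.
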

Combining with the main result in \cite{IK19}, we have proved that the algebraic entropies of the cluster $\A$- and $\X$-transformations induced by a pseudo-Anosov mapping class both coincide with the topological entropy. 
These results are satisfactory to assert that the notion of sign stability is a correct generalization of the pseudo-Anosov property.

In this paper, we investigate the sign stability of the mapping classes on a general marked surface,  which is a compact oriented surface with boundary equpped with a finite set of marked points. 
One of our motivation is a possible connection to the theory of quiver representations, where the theory is rather well-developed for the marked surfaces with non-empty boundary.
For instance, the quiver with potential associated with an ideal triangulation of a marked surface $\Sigma$ with non-empty boundary is rigid \cite[Theorem 31]{LF09}, while it is typically not rigid for a punctured surface \cite{Lad12} except for a special choice of the potential data.

A mapping class $\phi$ on a marked surface $\Sigma$ naturally induces a mapping class $\pi(\phi)$ on the punctured surface $\bar{\Sigma}$ obtained from $\Sigma$ by shrinking each boundary component to a puncture. We say that $\phi$ is pseudo-Anosov if the induced mapping class $\pi(\phi)$ is pseudo-Anosov. We are interested in a cluster algebraic characterization of pseudo-Anosov mapping classes in this sense.

\subsection{Reduction of mutation loops}
Among other ways, we can view the surface $\bar{\Sigma}$ as a connected component of the marked surface obtained by cutting $\Sigma$ along the multicurve $\cR_\partial$ consisting of boundary-parallel curves.
This point of view lead us to the study of the classical reduction procedure of mapping classes along a multicurve $\cR=\{C_k \}_{k=1}^K$ due to Thurston in the framework of the cluster algebra, and especially its relation to the sign stability.

Let $\Sigma_\cR$ denote the marked surface obtained by cutting $\Sigma$ along $\cR$. 
A mapping class $\phi$ which fixes $\cR$ (but may permute the component curves) induces a mapping class $\pi_\cR^\grp(\phi)$ on $\Sigma_\cR$. Then we have a group homomorphism $\pi_\cR=\pi_\cR^\grp:\Gamma_{\Sigma,\cR} \to \Gamma_{\Sigma_\cR}$, where the former denotes the subgroup of $\Gamma_\Sigma$ consisting of mutation loops which fix $\cR$. 
The description of a point of $\X_\Sigma^\uf(\bR^\trop)$ as a measured geodesic lamination allows us to define a projection $\pi_\cR=\pi_\cR^\trop:\X_\Sigma^\uf(\bR^\trop) \to \X_{\Sigma_\cR}^\uf(\bR^\trop)$.

We first observe that a tagged triangulation $\tri$ of $\Sigma$ induces that $\oline{\tri}$ of $\Sigma_\cR$ via the map $\pi_\cR^\trop$. Moreover, it can both happen that a flip $f_\alpha:\tri \to \tri'$ does not change $\oline{\tri}$ or induces a non-trivial flip $f_{\oline{\alpha}}:\oline{\tri} \to \oline{\tri'}$. These two possibilities are distinguished by the shear coordinates of the curves in $\cR$ along the arc $\alpha$ (\cref{lem:collapsed_tri}). 
Although this observation does not tell us how to project the vertical edges in the exchange graph, we can describe a way 
to construct a representation path $\oline{\gamma}$ of $\pi_\cR^\grp(\phi) \in \Gamma_{\Sigma_\cR}$ from a given representation path $\gamma$ of $\phi \in \Gamma_{\Sigma,\cR}$ (\cref{prop:pi_MCG}).
This gives a cluster algebraic description of the reduction homomorphism $\pi_\cR^\grp$. 
Our result is summarized as follows:

\begin{thm}[
\cref{cor:reduction_spec}]\label{introthm:reduction_spec}
    If a path $\gamma:(\tri_0,\ell_0) \to (\tri,\ell)$ in $\bExch_\Sigma$ represents a mutation loop $\phi \in \Gamma_{\Sigma,\cR}$ and sign-stable on an $\bR_{>0}$-invariant subset $\Omega \subset \X_{(\tri_0,\ell_0)}^\uf(\bR^\trop)$, then the path $\oline{\gamma}$ is sign-stable on $\pi_\cR(\Omega) \subset \X_{(\oline{\tri_0},\lambda_0)}^\uf(\bR^\trop)$. Moreover we have $\rho(E_{\phi,\Omega}^{(\tri_0,\ell_0)})= \rho(E_{\pi_\cR(\phi),\pi_\cR(\Omega)}^{(\oline{\tri_0},\lambda_0)})$. Here $\lambda_0$ is an arbitrary labeling  of $\oline{\tri}_0$.

\end{thm}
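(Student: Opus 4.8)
The plan is to push everything through the tropical projection $\pi_\cR=\pi_\cR^\trop$, exploiting the description of points of $\X_\Sigma^\uf(\bR^\trop)$ as measured laminations and the compatibility of cutting a lamination along $\cR$ with flips, which is exactly what underlies the construction of $\oline\gamma$ in \cref{prop:pi_MCG} via \cref{lem:collapsed_tri}. Concretely, I would first isolate two intertwining statements that should be available by this point. The \emph{geometric intertwining}: for a path fixing $\cR$, the tropical $\X$-transformation along $\gamma$ and the one along $\oline\gamma$ fit into a square commuting with the vertical maps $\pi_\cR$; in particular, since $\pi_\cR^\grp$ is a homomorphism and $\pi_\cR^\trop$ is $\bR_{>0}$-equivariant, $\pi_\cR(\phi^n\Omega)=\pi_\cR(\phi)^n\,\pi_\cR(\Omega)$ and $\pi_\cR(\Omega)$ is again $\bR_{>0}$-invariant. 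The \emph{sign intertwining}: for $w$ in the domain, the tropical sign of $w$ at a flip of $\gamma$ that survives in $\oline\gamma$ equals the tropical sign of $\pi_\cR(w)$ at the corresponding flip of $\oline\gamma$, while the flips of $\gamma$ collapsed by $\pi_\cR$ contribute no entry to the sign sequence of $\oline\gamma$. The latter is \cref{lem:collapsed_tri} rephrased in shear coordinates: whether a flip is collapsed is detected by the shear coordinates of the curves of $\cR$ along the flipped arc.

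Granting these, sign stability transfers directly. If $\gamma$ is sign-stable on $\Omega$, then along the iterated representation paths of $\phi^n$ the tropical sign of every $w\in\Omega$ is eventually a fixed sequence $\epsilon^{\mathrm{stab}}$. Feeding this through the sign intertwining along the iterates and using $\pi_\cR(\phi^n\Omega)=\pi_\cR(\phi)^n\pi_\cR(\Omega)$, the tropical sign of every point of $\pi_\cR(\Omega)$ along the iterates of $\oline\gamma$ is obtained from $\epsilon^{\mathrm{stab}}$ by deleting the entries at the collapsed flips, hence is eventually a fixed sequence $\oline\epsilon^{\,\mathrm{stab}}$; thus $\oline\gamma$ is sign-stable on $\pi_\cR(\Omega)$. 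The delicate point here is to check that a flip which is collapsed at the relevant stage for one point of $\pi_\cR(\Omega)$ remains collapsed there for every point, so that no stray sign letter reappears and breaks stability; this follows once more from \cref{lem:collapsed_tri}, because the shear coordinates of the curves of $\cR$ have themselves stabilized in sign (as $\phi$ fixes $\cR$).

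It remains to compare spectral radii. Once the stable signs are fixed, $\phi$ acts on a top-dimensional cone $C\subset\X_{(\tri_0,\ell_0)}^\uf(\bR^\trop)$ absorbing a tail of the $\phi$-orbit of $\Omega$ by the single linear map $E:=E_{\phi,\Omega}^{(\tri_0,\ell_0)}$, and $\pi_\cR(\phi)$ acts on the image cone $\oline C\supset\pi_\cR(\Omega)$ by $\oline E:=E_{\pi_\cR(\phi),\pi_\cR(\Omega)}^{(\oline{\tri}_0,\lambda_0)}$. Shrinking $C$ into a chamber of linearity of the piecewise-linear map $\pi_\cR$, I may assume $\pi_\cR|_C$ is the restriction of a linear map $L$; the geometric intertwining then gives $L\circ E=\oline E\circ L$ on $W:=\mathrm{span}(C)$, with $L(W)=\mathrm{span}(\oline C)$ and $\ker(L|_W)$ an $E$-invariant subspace. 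Hence $\oline E$ is a linear quotient of $E|_W$, so $\rho(\oline E)\le\rho(E|_W)=\rho(E)$.

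For the reverse inequality I would use the Perron--Frobenius aspect of sign stability from \cite{IK19,IK20}: $\rho(E)$ is attained at the attracting fixed point $v_+\in\overline\Omega$, with $Ev_+=\rho(E)\,v_+$. If $\rho(E)=1$, then since $\oline E\in GL_n(\bZ)$ we automatically have $\rho(\oline E)\ge 1$, which with the previous bound forces equality. If $\rho(E)>1$, then $v_+\notin\ker(L|_W)$: a vector there represents a lamination carried by $\cR$, on which $\phi$ acts by permuting the components of $\cR$, so $E$ has spectral radius $1$ on $\ker(L|_W)$ and cannot contain the $\rho(E)$-eigenvector $v_+$. Therefore $Lv_+\in\overline{\pi_\cR(\Omega)}$ is a nonzero $\rho(E)$-eigenvector of $\oline E$, giving $\rho(\oline E)\ge\rho(E)$ and hence the claimed equality; the stated independence of the labeling $\lambda_0$ is then clear, since changing $\lambda_0$ conjugates $\oline E$ by a permutation. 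The step I expect to be the real obstacle is exactly this non-degeneracy input $v_+\notin\ker(L|_W)$ when $\rho(E)>1$: pinning down $\ker(L|_W)$ as the space of $\cR$-peripheral laminations and controlling the $\phi$-action there is where any hypothesis implicitly attached to \cref{cor:reduction_spec}---such as $\Omega$ not lying in that locus---would enter.
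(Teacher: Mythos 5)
Your treatment of the sign-stability transfer matches the paper's: you use the $\Gamma_{\Sigma,\cR}$-equivariance of $\pi_\cR^\trop$ (\cref{lem:pi_S_equivariance}), the strict-sign transfer along $\oline\gamma$ (\cref{prop:pi_MCG}), and the detection of collapsed flips by vanishing shear coordinates of the curves in $\cR$ (\cref{lem:collapsed_tri}); that is precisely the proof of \cref{cor:reduction_SS}.

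For the spectral-radius equality your argument has a concrete gap. You assert that after shrinking $C$ into a linearity chamber of $\pi_\cR$ one has $L(W)=\mathrm{span}(\oline C)$, so that $\oline E$ is a linear quotient of $E|_W$ and hence $\rho(\oline E)\le\rho(E)$. This is not true: by \cref{lem:im_pi_S} the image of $(d\pi_\cR)_{\hL}$ is a proper, codimension-$K$ subspace of $T_{\hL^*}\X_{\Sigma_\cR}^\uf(\bR^\trop)$, cut out by the relations $\theta_{p_k^+}=\theta_{p_k^-}$, while $\mathrm{span}(\oline C)$ is the whole tangent space since the stable-sign cone of $\oline\gamma$ is top-dimensional. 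The intertwining $L\circ E=\oline E\circ L$ therefore only yields $\rho\bigl(\oline E|_{\im L}\bigr)\le\rho(E)$; to conclude $\rho(\oline E)\le\rho(E)$ you would still have to control the induced action of $\oline E$ on the quotient $T\X_{\Sigma_\cR}^\uf/\im L$, which is carried by the $K$ directions $\theta_{p_k^+}-\theta_{p_k^-}$ and on which $\pi_\cR(\phi)$ acts by a signed permutation, hence with spectral radius $1$. Your reverse inequality via the Perron--Frobenius eigenvector $v_+$ and the identification of $\ker(L|_W)$ with $\bigoplus_k\bR C_k$ is sound and is in effect \cref{lem:ker_pi_S} together with the lemma following it, but it does not rescue the forward direction. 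The paper instead proves the explicit block decomposition of $(d\phi)_{\hL}$ in \cref{thm:block decomposition}, with upper-left block the permutation $\sigma_\cR(\phi)$ acting on $T_{\hL}(\pi_\cR^{-1}(\hL^*))$ and lower-right block $\overline{(d\phi^*)}_{\hL^*}=(d\phi^*)_{\hL^*}|_{\im(d\pi_\cR)_{\hL}}$, and reads off the equality of spectral radii; supplying the complementary-direction observation above would repair your forward inequality in the same spirit.
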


\subsection{Sign stability of mapping classes on a general marked surface}
For a marked surface $\Sigma$, let $\bar{\Sigma}$ be the punctured surface obtained from $\Sigma$ by shrinking each boundary component to a puncture. Let $\pi:\Gamma_\Sigma \to \Gamma_{\bar{\Sigma}}$ denote the $\Gamma_{\bar{\Sigma}}$-component of the reduction homomorphism $\pi_{\cR_\partial}^\grp$.
First we prove that a pseudo-Anosov mapping class in this sense induces a North-South dynamics on the space of measured geodesic laminations off a certain subset:

\begin{thm}[\cref{thm:pA_NS_general}]\label{introthm:NS dynamics}
A mutation loop $\phi \in \Gamma_{\Sigma}$ is pseudo-Anosov if and only if 
there exist two points $L^\pm_{\phi} \in \iota(\cU^\uf_{\bar{\Sigma}}(\bR^\trop)) \subset \cX^\uf_\Sigma(\bR^\trop)$ such that
\begin{align}
    \lim_{n \to \infty} \phi^{\pm n}([\hL]) = [L^\pm_{\phi}] \quad \mbox{in }\  \bS\X_\Sigma^\uf(\bR^\trop)
\end{align}
for any $\hL \in \cX^\uf_\Sigma(\bR^\trop) \setminus \pi^{-1}(\bR_{\geq 0}\cdot \pi(L^\mp_{\phi}))$.
\end{thm}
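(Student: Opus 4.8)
The plan is to deduce the statement for $\Sigma$ from its analogue on the punctured surface $\bar\Sigma$, transported across the reduction homomorphism. Write $\cR:=\cR_\partial$; then $\Gamma_\Sigma=\Gamma_{\Sigma,\cR}$ because every mapping class preserves $[\cR]$, and $\Sigma_\cR=\bar\Sigma\sqcup\Sigma'$ with $\Sigma'$ a disjoint union of once-punctured polygons (the collar pieces). I will freely use: (i) the $\Gamma_{\bar\Sigma}$- and $\cX^\uf_{\bar\Sigma}(\bR^\trop)$-components of $\pi_\cR^\grp$ and $\pi_\cR^\trop$ are the homomorphism $\pi\colon\Gamma_\Sigma\to\Gamma_{\bar\Sigma}$ and the piecewise-linear, positively homogeneous, continuous, surjective, $\phi$-equivariant map $\pi\colon\cX^\uf_\Sigma(\bR^\trop)\to\cX^\uf_{\bar\Sigma}(\bR^\trop)$, with a section $\iota$ over the $\cX$-filling locus $\cU^\uf_{\bar\Sigma}(\bR^\trop)$ realizing a lamination of $\bar\Sigma$ inside $\Sigma$ with the same $\bar\Sigma$-shear coordinates and vanishing collar- and $\cR$-twisting shear coordinates (so $\pi\circ\iota=\mathrm{id}$ and $\iota(\cU^\uf_{\bar\Sigma}(\bR^\trop))$ consists of $\cX$-filling laminations of $\Sigma$); and (ii) \cref{prop:pA_NS_SS} applied to the \emph{punctured} surface $\bar\Sigma$: $\pi(\phi)$ is pseudo-Anosov if and only if it has NS dynamics on the compact sphere $\bS\cX^\uf_{\bar\Sigma}(\bR^\trop)$ with attracting/repelling points represented by $\cX$-filling laminations $L^\pm\in\cU^\uf_{\bar\Sigma}(\bR^\trop)$, in which case $\pi(\phi)$ is uniformly sign-stable with cluster stretch factor equal to the topological stretch factor $\lambda>1$.

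\textbf{The ``if'' direction.} Assume the stated NS behaviour on $\Sigma$, with $L^\pm_\phi=\iota(L^\pm)$, $L^\pm\in\cU^\uf_{\bar\Sigma}(\bR^\trop)$. Given $\oline L\in\cX^\uf_{\bar\Sigma}(\bR^\trop)$ with $\oline L\notin\bR_{\ge0}\,\pi(L^-_\phi)$ (so $\oline L\neq0$), choose $\hL\in\pi^{-1}(\oline L)$ by surjectivity of $\pi$; since $\pi(\hL)=\oline L\notin\bR_{\ge0}\,\pi(L^-_\phi)$ we have $\hL\notin\pi^{-1}(\bR_{\ge0}\,\pi(L^-_\phi))$, hence $\phi^n([\hL])\to[L^+_\phi]$ in $\bS\cX^\uf_\Sigma(\bR^\trop)$. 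As $\pi$ is positively homogeneous it descends to a partial self-map of spheres, continuous away from $\bS$ of $\pi^{-1}(0)$ and hence at $[L^+_\phi]$ because $\pi([L^+_\phi])=[L^+]\neq0$; applying it and using equivariance together with $\pi(\phi^n(\hL))=\pi(\phi)^n(\oline L)\neq0$ gives $\pi(\phi)^n([\oline L])\to[L^+]$, and symmetrically $\pi(\phi)^{-n}([\oline L])\to[L^-]$ for $\oline L\notin\bR_{\ge0}\,\pi(L^+_\phi)$. Thus $\pi(\phi)$ has NS dynamics on $\bS\cX^\uf_{\bar\Sigma}(\bR^\trop)$ with $\cX$-filling attracting/repelling points, so by \cref{prop:pA_NS_SS} it is pseudo-Anosov; that is, $\phi$ is pseudo-Anosov.

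\textbf{The ``only if'' direction.} Assume $\pi(\phi)$ is pseudo-Anosov, with $L^\pm\in\cU^\uf_{\bar\Sigma}(\bR^\trop)$ and $\lambda>1$ as above; set $L^\pm_\phi:=\iota(L^\pm)$ and $\Omega:=\cX^\uf_\Sigma(\bR^\trop)\setminus\pi^{-1}(\bR_{\ge0}\,\pi(L^-_\phi))$. Fix $\hL\in\Omega$, so $\pi(\hL)\neq0$ and $[\pi(\hL)]\neq[L^-]$. Since $[L^-]$ is the repeller of the NS dynamics on $\bar\Sigma$, the $\bar\Sigma$-shear coordinates $\pi(\phi^n(\hL))=\pi(\phi)^n(\pi(\hL))$ grow at the exponential rate $\lambda$ with projective limit $[L^+]$; by \cref{introthm:reduction_spec}, applied after choosing (via \cref{lem:collapsed_tri} and \cref{prop:pi_MCG}) a representation path of $\phi$ adapted to $\cR$ whose induced path represents $\pi(\phi)$, this rate equals $\rho\bigl(E^{(\tri_0,\ell_0)}_{\phi,\Omega}\bigr)$. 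On the other hand, because $\phi$ fixes each $[\cR\text{-curve}]$ and restricts, up to isotopy and a permutation of the collars, to zero-entropy self-homeomorphisms of the collar pieces, the collar- and $\cR$-twisting shear coordinates of $\phi^n(\hL)$ grow only linearly: for a collar arc $\beta$ one has $i(\phi^n(\hL),\beta)=i(\hL,(\phi^n)^{-1}\beta)$, and $(\phi^n)^{-1}\beta$ stays in its collar while winding $O(n)$ times around the core, whence $i(\phi^n(\hL),\beta)=O(n)$ and a fortiori the shear coordinate at $\beta$ is $O(n)$, and similarly for the $\cR$-twisting coordinates. Therefore $\|\phi^n(\hL)\|$ is comparable to $\lambda^n$, so after normalisation the collar and twisting coordinates tend to $0$ while the $\bar\Sigma$-coordinates converge projectively to $[L^+]$; hence $\phi^n([\hL])\to[\iota(L^+)]=[L^+_\phi]$ in $\bS\cX^\uf_\Sigma(\bR^\trop)$, and the backward statement follows by applying this to $\phi^{-1}$, whose reduction is again pseudo-Anosov with the roles of $L^\pm$ exchanged.

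\textbf{Main obstacle.} The delicate part is the ``only if'' direction, and within it the structural description of $\cX^\uf_\Sigma(\bR^\trop)$ relative to $\cR$: one must justify carefully that $\phi$ acts on the $\bar\Sigma$-coordinates exactly as $\pi(\phi)$ and only sub-exponentially — in fact linearly — on the complementary collar- and $\cR$-twisting coordinates (equivalently, that the $\cR$-crossing data of $\phi^n(\hL)$ stays polynomially bounded), so that projectively the $\lambda^n$-growth coming from $\bar\Sigma$ dominates everything and pins the limit to $\iota(L^\pm)$. Granting this, both implications are formal consequences of \cref{prop:pA_NS_SS} on $\bar\Sigma$ together with the reduction machinery (\cref{lem:collapsed_tri}, \cref{prop:pi_MCG}, \cref{introthm:reduction_spec}).
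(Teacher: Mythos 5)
Your ``if'' direction is essentially the paper's converse argument: push the assumed convergence down via the continuous, equivariant, positively homogeneous $\pi$ and invoke \cref{prop:pA_NS_SS} on $\bar\Sigma$; that part is fine.

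The ``only if'' direction diverges from the paper, and this is where the gap lies. The paper works with the $\omega$-limit set $\Lambda^+_\phi([\hL])$: it shows $\pi(\Lambda^+_\phi([\hL]))\subset\{[L^+_{\pi(\phi)}]\}$ by commuting $\pi$ with the limit-set construction, then identifies the fibre $\pi^{-1}(L^+_{\pi(\phi)})=\{\iota(L^+_{\pi(\phi)})\sqcup d:d\in D_{\X,\Sigma}\}$ (\cref{lem:barU_in_X}(2)) and uses that the $\Gamma_\Sigma$-orbit of each $d$ is finite (\cref{lem:barU_in_X}(3)) together with $\phi^n(L^+_\phi)=\lambda^n_{\pi(\phi)}L^+_\phi$ to contract everything to $[L^+_\phi]$ via $\phi^n([d\sqcup L^+_\phi])=[\lambda^{-n}\phi^n(d)\sqcup L^+_\phi]\to[L^+_\phi]$. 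This sidesteps any coordinate-level growth estimate. Your argument instead tries a direct growth comparison, and the key step --- that the ``collar- and $\cR$-twisting shear coordinates of $\phi^n(\hL)$ grow only linearly'' --- is not justified. You only control $i(\phi^n(\hL),\beta)$ for a single collar arc $\beta$; that does not bound all the shear coordinates of $\phi^n(\hL)$ in a fixed triangulation of $\Sigma$, and your trichotomy of coordinates into $\bar\Sigma$-, collar-, and twist-pieces is not a genuine coordinate splitting (the fibre structure of $\pi_\cR$ is encoded by the equivariant $\sqcup$-operation, not by a complementary set of shear coordinates). Moreover, \cref{thm:block decomposition} only gives an upper-triangular block form with a nontrivial off-diagonal coupling from the image block into the kernel block, so a priori the kernel directions can pick up growth of order $\lambda^n$ from the $\bar\Sigma$-part --- you would need to explain why that coupling does not spoil the claimed sub-exponential bound. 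A second, smaller omission: you implicitly treat $\hL$ as if $\pi(\hL)\in\cU^\uf_{\bar\Sigma}(\bR^\trop)$, but the theorem must cover $\hL$ whose $\bar\Sigma$-projection has peripheral leaves, where the fibre description of \cref{lem:barU_in_X}(2) does not apply directly; the paper handles this by shifting all the analysis to the limit set, whose $\pi$-image is already the single compact point $[L^+_{\pi(\phi)}]$. The upshot is that your approach, if completed, would require essentially reproducing the $\sqcup$-decomposition and finite-orbit argument anyway, so the $\omega$-limit-set route is both shorter and more robust.
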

This is a generalization of \cref{prop:pA_NS_SS} $(1) \Longleftrightarrow (3)$. 
While this result supports the naturality of our definition of the pseudo-Anosov mapping class on a marked surface, it turns out that such a mapping class may fail to be sign-stable (\cref{ex:3bdries+1pct_sph}).
A crucial reason for this is that the stable and unstable laminations $L^\pm_{\phi}$ can have vanishing shear coordinates and thus non-strict signs, unlike the punctured case (\cite[Proposition 2.14]{IK20}).

Based on these observations, we introduce a weaker notion of stability called the \emph{weak sign stability} (\cref{def:weak_SS}). It requires the stability of signs of a representation path in a weaker sense. 
Combining with the observations on the reductions of representation paths, we also introduce the notion of \emph{$\cC$-hereditariness} (\cref{d:hereditary}) for a rational polyhedral cone $\cC \subset \X_\bs^\uf(\bR^\trop)$. When $\cC=\cC(\cR_\partial)$ is generated by the curves in $\cR_\partial$, it turns out that the $\cC$-hereditariness of a representation path $\gamma$ of $\phi \in \Gamma_\Sigma$ is nothing but the sign stability of the reduced path $\oline{\gamma}$ obtained via the reduction along $\cR_\partial$. Then we obtain the following characterization of the pseudo-Anosov property, which generalizes \cref{prop:pA_NS_SS} $(1) \Longleftrightarrow (2)$:

\begin{thm}[\cref{thm:weak_SS}]\label{introthm:weak SS}
Let $\Sigma$ be a marked surface, and $\phi \in \Gamma_\Sigma$ a mutation loop.
Then, $\phi$ is pseudo-Anosov if and only if any representation path $\gamma: (\tri, \ell) \to (\tri', \ell')$ of $\phi$ is weakly sign-stable on $\Omega_{(\tri, \ell)}^\bQ \setminus D_{\cX, \Sigma}$ and $\cC(\cR_\partial)$-hereditary.
\end{thm}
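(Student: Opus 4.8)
The plan is to derive both directions from two facts already established: the characterization \cref{prop:pA_NS_SS} applied to the punctured surface $\bar\Sigma$, and the dynamical characterization \cref{thm:pA_NS_general} of the pseudo-Anosov property on $\Sigma$. These are linked by the translation — to be carried out in the main body using \cref{lem:collapsed_tri} and \cref{prop:pi_MCG} — of the $\cC(\cR_\partial)$-hereditariness of a representation path $\gamma$ of $\phi$ into the \emph{sign stability} of the reduced path $\oline\gamma$, viewed as a representation path of $\pi(\phi) \in \Gamma_{\bar\Sigma}$. Two preliminary observations feed into this: every mapping class fixes the multicurve $\cR_\partial$, so $\Gamma_{\Sigma,\cR_\partial}=\Gamma_\Sigma$ and the statement makes sense for all $\phi$; and the components of $\Sigma_{\cR_\partial}$ other than $\bar\Sigma$ are peripheral (annular) pieces on which every mutation loop is trivially sign-stable, so only the $\bar\Sigma$-component, which is exactly what $\pi$ records, is relevant.

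For the ``only if'' direction, suppose $\phi$ is pseudo-Anosov, i.e.\ $\pi(\phi)$ is pseudo-Anosov on $\bar\Sigma$. Then \cref{prop:pA_NS_SS} $(1)\Rightarrow(2)$ gives that $\pi(\phi)$ is uniformly sign-stable, hence every reduced path $\oline\gamma$ is sign-stable, which by the translation above is exactly the $\cC(\cR_\partial)$-hereditariness of $\gamma$. For the weak sign stability, I would feed \cref{thm:pA_NS_general} into the definition \cref{def:weak_SS}: the North--South convergence $\phi^{\pm n}([\hL]) \to [L^\pm_\phi]$ for all $\hL$ outside $\pi^{-1}(\bR_{\geq 0}\cdot\pi(L^\mp_\phi))$ forces the forward (resp.\ backward) sign of the iterated presentation matrices of $\gamma$ along the loop, evaluated at any rational tropical point of the domain, to eventually equal the (possibly non-strict) sign of $L^+_\phi$ (resp.\ $L^-_\phi$); one then checks that the exceptional ray-set $\pi^{-1}(\bR_{\geq 0}\cdot\pi(L^\mp_\phi))$ is contained in $D_{\cX,\Sigma}$, so that weak sign stability holds on all of $\Omega^\bQ_{(\tri,\ell)}\setminus D_{\cX,\Sigma}$. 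The appearance of the \emph{weak} notion rather than plain sign stability is forced precisely because $L^\pm_\phi\in\iota(\cU^\uf_{\bar\Sigma}(\bR^\trop))$ may have vanishing shear coordinates and hence non-strict signs, in contrast with \cite[Proposition 2.14]{IK20}, which is also what underlies the failure of sign stability in \cref{ex:3bdries+1pct_sph}.

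For the ``if'' direction, assume $\gamma$ is weakly sign-stable on $\Omega^\bQ_{(\tri,\ell)}\setminus D_{\cX,\Sigma}$ and $\cC(\cR_\partial)$-hereditary. From the hereditariness and the translation above, $\oline\gamma$ is sign-stable on the image under $\pi_{\cR_\partial}$ of the domain, a set large enough (after removing the reducible locus) to invoke \cref{prop:pA_NS_SS} on $\bar\Sigma$ in the direction $(2)\Rightarrow(1)$, yielding that $\pi(\phi)$ — equivalently $\phi$ — is pseudo-Anosov. Alternatively, and perhaps more robustly, one uses the weak sign stability of $\gamma$ to produce the limiting points $[L^\pm_\phi]\in\bS\cX^\uf_\Sigma(\bR^\trop)$ together with their basins of attraction, uses $\cC(\cR_\partial)$-hereditariness and \cref{prop:pA_NS_SS}$(3)$ to see that $\pi(L^\pm_\phi)$ are $\cX$-filling in $\bar\Sigma$ so that $L^\pm_\phi$ lie in $\iota(\cU^\uf_{\bar\Sigma}(\bR^\trop))$, and then verifies the hypotheses of \cref{thm:pA_NS_general} directly to conclude.

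The main obstacle I expect is in the ``only if'' direction: converting the topological North--South dynamics of \cref{thm:pA_NS_general} into the purely combinatorial weak-sign-stability statement along an arbitrary representation path. This requires controlling the signs of the iterated presentation matrices uniformly over the rational domain, correctly accounting for the coordinates where $L^+_\phi$ vanishes (so the sign stabilizes only weakly), and matching the topological exceptional set $\pi^{-1}(\bR_{\geq0}\cdot\pi(L^\mp_\phi))$ with the combinatorially defined bad locus $D_{\cX,\Sigma}$. A secondary point to nail down is that $\cC(\cR_\partial)$-hereditariness and weak sign stability, a priori attached to a single representation path, depend only on the mutation loop $\phi$ — so that the quantifier ``any representation path'' in the statement is unambiguous and it suffices to verify the properties for one convenient $\gamma$.
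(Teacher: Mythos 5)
Your overall route mirrors the paper's: reduce via $\pi_{\cR_\partial}$, translate $\cC(\cR_\partial)$-hereditariness into sign stability of the reduced path $\oline\gamma$, and then apply \cref{prop:pA_NS_SS} to $\pi(\phi)$ on $\bar\Sigma$ together with \cref{thm:pA_NS_general} for the weak sign stability. However, the ``if'' direction as you have sketched it has a genuine gap. Knowing that the reduced path $\oline\gamma$ is sign-stable on $\Omega^{\bQ}_{(\oline\tri,\lambda)}\setminus\{0\}$ is not by itself enough to invoke \cref{prop:pA_NS_SS} $(2)\Rightarrow(1)$: uniform sign stability for $\pi(\phi)$ is a statement about \emph{all} representation paths of $\pi(\phi)$, whereas reduction only produces paths lying in the subgraph $\bExch_{\bar\Sigma}^{\pi(\partial\Sigma)}$. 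The paper closes this by (a) using the hypothesis on \emph{every} representation path $\gamma$ of $\phi$, (b) proving the surjectivity of the graph projection $\Exch_\Sigma\to\Exch_{\bar\Sigma}^{\pi(\partial\Sigma)}$ (\cref{lem:graph_proj}) so that the $\oline\gamma$ exhaust all representation paths of $\pi(\phi)$ in that subgraph, and (c) invoking a relative form of the characterization on the subgraph from \cite{IK19}. This also shows that your ``secondary point'' is pointed in the wrong direction: the $\forall$ quantifier over representation paths is genuinely used on the ``if'' side and cannot be replaced by ``verify for one convenient $\gamma$''. Your suggested alternative (producing $[L^\pm_\phi]$ from weak sign stability and then checking \cref{thm:pA_NS_general}) would require a Perron--Frobenius argument that, as the paper explains, is precisely what fails in the weak setting.

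Two smaller imprecisions in the ``only if'' direction. First, the signs $\boldsymbol{\epsilon}_\gamma(\phi^n(w))$ do not eventually \emph{equal} $\boldsymbol{\epsilon}_\gamma(L^+_\phi)$; they only eventually \emph{dominate} it in the partial order $\geq$ (they keep fluctuating in the coordinates where $\boldsymbol{\epsilon}_\gamma(L^+_\phi)$ vanishes, as in Table~\ref{tab:signs_l_pm}). The correct mechanism is the topological observation that the \emph{star} of $\cC_\gamma^{\boldsymbol{\epsilon}^\stab_\gamma}$ is a neighborhood of $L^+_\phi$, so the convergence of \cref{thm:pA_NS_general} forces the iterates into that star. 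Second, the inclusion $\pi^{-1}(\bR_{\geq 0}\cdot\pi(L^\mp_\phi))\subseteq D_{\cX,\Sigma}$ is false: by \cref{lem:barU_in_X}~(2) this fiber contains points $a\,\iota(L^-_{\pi(\phi)})\sqcup d$ with $a>0$ that have an arational component and so are not in $D_{\cX,\Sigma}$. What is true, and what the proof actually needs, is that $\Omega^\bQ_{(\tri,\ell)}\setminus D_{\cX,\Sigma}$ misses the exceptional ray set, because those $a>0$ points are neither rational nor in the cluster complex and so are excluded from $\Omega^\bQ_{(\tri,\ell)}$ before $D_{\cX,\Sigma}$ is even removed.
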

Unfortunately, we have not yet obtained an analogue of a cluster stretch factor which is intrinsic to a mutation loop admitting a weakly sign-stable representation path. The reason is that the presentation matrices in the stable range do not satisfy the Perron--Frobenius property \cite[Theorem 5.11]{IK20}.
Essentially due to this, we have not yet obtained a satisfactory generalization of \cite[Theorem 1.3]{IK20} on the coincidence of the algebraic and topological entropies of a pseudo-Anosov mapping class on a punctured surface. Nevertheless, we have the following partial estimate:

\begin{thm}[\cref{cor:trop/alg-entropy}]\label{introthm:entropy_2}
For any pseudo-Anosov mutation loop $\phi$ on a marked surface $\Sigma$, We have 
\begin{align}\label{eq:inequality_weak_SS}
\cE_\phi^x \geq \log \lambda_{\pi(\phi)}=\cE_{\pi(\phi)}^{\mathrm{top}}.
\end{align}
\end{thm}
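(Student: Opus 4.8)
\emph{Strategy.} The equality $\log\lambda_{\pi(\phi)}=\cE_{\pi(\phi)}^{\mathrm{top}}$ is Thurston's theorem that the topological entropy of a pseudo-Anosov mapping class is the logarithm of its stretch factor, so the real content is the inequality $\cE_\phi^x\ge\log\lambda_{\pi(\phi)}$. The plan is to factor everything through the reduction along $\cR_\partial$: on the punctured surface $\bar\Sigma$ the class $\pi(\phi)$ is genuinely pseudo-Anosov, so \cref{prop:pA_NS_SS} (hence also \cite[Theorem~1.3]{IK20}) applies there in full strength; one then transports the resulting growth rate back up to $\Sigma$ along the tropical reduction map $\pi\colon\X_\Sigma^\uf(\bR^\trop)\to\X_{\bar\Sigma}^\uf(\bR^\trop)$, which is surjective, intertwines the two tropical $\X$-transformations, and does not increase the relevant norms.

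\emph{Step 1 (punctured surface).} Since $\phi$ is pseudo-Anosov, $\pi(\phi)\in\Gamma_{\bar\Sigma}$ is by definition a pseudo-Anosov mutation loop. Fix a representation path $\gamma\colon(\tri,\ell)\to(\tri',\ell')$ of $\phi$ and let $\oline{\gamma}$ be the reduced path representing $\pi(\phi)$ produced by the reduction procedure (\cref{prop:pi_MCG}). By \cref{prop:pA_NS_SS} applied to $\bar\Sigma$, the path $\oline{\gamma}$ is uniformly sign-stable and the spectral radius of its presentation matrix in the stable range equals the cluster stretch factor of $\pi(\phi)$, namely $\lambda_{\pi(\phi)}$; equivalently, by the North--South dynamics of $\pi(\phi)$ on $\X_{\bar\Sigma}^\uf(\bR^\trop)$ (the space of measured laminations of $\bar\Sigma$), the orbit $\{\pi(\phi)^n(\mu)\}_n$ of any $\mu$ lying off the repelling ray grows like $\lambda_{\pi(\phi)}^n$.

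\emph{Step 2 (transport to $\Sigma$).} The remaining task is to show $\cE_\phi^x\ge\log\lambda_{\pi(\phi)}$. The tropicalization principle behind the entropy estimates of \cite{IK19} gives that $\cE_\phi^x$ dominates the exponential growth rate of the tropicalized $\X$-transformation: for a generic tropical point $w$ (equivalently, a suitable coordinate covector) one has $\cE_\phi^x\ge\limsup_n\tfrac1n\log\|(f_\gamma^\trop)^n(w)\|$. Here $\pi$ is surjective, intertwines $f_\gamma^\trop$ with $f_{\oline{\gamma}}^\trop$, and — being a projection in the shear coordinates of \cref{lem:collapsed_tri} — satisfies $\|\pi(v)\|\le C\|v\|$ for a fixed constant $C$. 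Choosing $w$ so that $\pi(w)$ lies off the repelling ray of $\pi(\phi)$ (possible since $\pi$ is onto and the repelling point of $\pi(\phi)$ is $\cX$-filling, so not all coordinate covectors can project into that ray), we get
\[
\|(f_\gamma^\trop)^n(w)\|\;\ge\;\tfrac1C\,\big\|\pi\big((f_\gamma^\trop)^n(w)\big)\big\|\;=\;\tfrac1C\,\big\|(f_{\oline{\gamma}}^\trop)^n(\pi(w))\big\|\;\asymp\;\lambda_{\pi(\phi)}^n,
\]
and therefore $\cE_\phi^x\ge\log\lambda_{\pi(\phi)}=\cE_{\pi(\phi)}^{\mathrm{top}}$, as claimed.

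\emph{Main obstacle.} The work is concentrated in Step 2. One must make the comparison between algebraic degree growth and tropical orbit growth precise for a representation path that is only \emph{weakly} sign-stable (\cref{def:weak_SS}), and one must verify that the non-injective projection $\pi$ really carries the $\lambda_{\pi(\phi)}^n$-growth available on $\bar\Sigma$ back to $\Sigma$ — that is, that the tropical orbit of $\phi$ does not degenerate into the kernel directions of $\pi$, along which the dynamics need not expand. This is also exactly why the estimate is one-sided: over $\Sigma$ itself the presentation matrices in the stable range fail the Perron--Frobenius property (\cite[Theorem~5.11]{IK20}), so the present argument supplies no matching upper bound, and \eqref{eq:inequality_weak_SS} cannot be promoted to an equality by this method.
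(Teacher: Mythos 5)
You take a genuinely different route from the paper's. The paper (\cref{prop:entropy_weak_SS} together with the proof of \cref{cor:trop/alg-entropy}) first lifts the attracting lamination $L^+_{\pi(\phi)}$ via the section $\iota$ to obtain a common eigenvector, with eigenvalue $\lambda_{\pi(\phi)}$, of every presentation matrix $E_\gamma^{\boldsymbol{\epsilon}}$ in the stable range; it then block-triangularizes the product $E_n\cdots E_1$ with respect to that eigendirection, reads off a Lyapunov lower bound, and transfers it to $\cE_\phi^x$ via the $C$-matrix growth and \cite[Proposition~4.7]{IK19}. You instead push the tropical orbit of $w$ down to $\bar{\Sigma}$ via $\pi$, use the $\lambda_{\pi(\phi)}^n$-growth of $\pi(\phi)$ there, and pull the lower bound back with the homogeneity estimate $\|\pi(v)\|\le C\|v\|$. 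Your route is geometrically cleaner: the one-line norm comparison replaces the block computation, and in particular it bypasses having to check that the scalar $a+\sum_m\lambda_{\pi(\phi)}^{-m}T_m E^0_{m-1}\cdots E^0_1\boldsymbol{v}$ in the paper's estimate stays bounded away from zero. What you gloss over is the bridge from a single tropical orbit to $\cE_\phi^x$: in \cite{IK19} this runs through the $C$-matrix norm, and for the recursion $C^{(n+n_0)}=E_n\cdots E_1 C^{(n_0)}$ to use the same matrices $E_m$ that propagate the orbit of $w$, one must take $w\in\Omega^{\mathrm{can}}_{(\tri,\ell)}$ rather than merely a ``generic tropical point,'' so that the sign sequence of its orbit matches the tropical sign sequence; the orbit's Lyapunov exponent then lower-bounds $\mathsf{\Lambda}_\phi(C^{(n_0)})$ and \cite[Proposition~4.7]{IK19} closes the argument. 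Two further misstatements are harmless but worth correcting: the $\cX$-filling property of $L^-_{\pi(\phi)}$ is not what guarantees a usable $w$ (the explicit description of $\pi^{-1}(\bR_{\ge 0}\cdot L^-_{\pi(\phi)})$ from \cref{lem:barU_in_X} does), and the ``kernel-directions'' concern in your final paragraph is dissolved automatically by the equivariance $\pi\circ\phi^n=\pi(\phi)^n\circ\pi$, as your own displayed inequality already shows --- weak sign stability of $\gamma$ itself plays no role in the tropical-to-algebraic comparison.
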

See \cref{subsec:discussion_A-transf} for an estimate of the algebraic entropy of the cluster $\A$-transformation. The authors do not know any examples with strict inequality in \eqref{eq:inequality_weak_SS}. 

\subsection{Cluster reduction and its generalization}
A cluster algebraic analogue of the reduction along a multicurve, called the \emph{cluster reduction}, has been proposed in \cite{Ish19}.
It works for an arbitrary seed pattern $\bs$, and we obtain a new seed pattern by \lq\lq freezing" the mutation directions corresponding to a fixed simplex $S$ in the Fomin--Zelevinsky cluster complex $\Delta_\bs^\mathrm{FZ}$ (\cite{FST}). The dual graph $\cTr \subset \Exch_\bs$ of the star of $S$ is identified with the exchange graph of the new seed pattern $\bs|_\cTr$. We can consider the subgroup $\Gamma_\bs^\cTr \subset \Gamma_\bs$ consisting of the mutation loops that admit representation paths \lq\lq inside $\cTr$", and get an injective homomorphism $\pi_\cTr: \Gamma_\bs^\cTr \to \Gamma_{\bs|_\cTr}$ as well as a projection $\pi_\cTr:\X_\bs^\uf(\bR^\trop) \to \X_{\bs|_\cTr}^\uf(\bR^\trop)$. We investigate the effect of this cluster reduction procedure on the sign stability, and obtain the following:
\begin{thm}[\cref{prop:SS_cluster_reduction}]
Let $\gamma: v \to v'$ be a path contained in $\cTr$ which represents a mutation loop in $\Gamma_\bs^\cTr \subset \Gamma_\bs$. Then the path $\pi_\cTr(\gamma)$ is sign-stable on an $\bR_{>0}$-invariant set $\Omega \subset \X^\uf_{\bs|_\cTr}(\bR^\trop)$ if and only if $\gamma$ is sign-stable on $\widetilde{\Omega}:=\pi_\cTr^{-1}(\Omega\setminus \{0\}) \subset \X^\uf_\bs(\bR^\trop)$. Moreover we have 
\begin{align*}
    \rho(E_{\phi,\widetilde{\Omega}}^{(v_0)})=\rho(E_{\pi_{\cTr}(\phi),\Omega}^{(v_0)}),
\end{align*}
where $\phi:=[\gamma]_\bs \in \Gamma_\bs^{\cTr}$.
\end{thm}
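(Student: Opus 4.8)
The plan is to exploit the fact that a path $\gamma$ contained in $\cTr$ consists only of mutations in directions lying \emph{outside} the frozen simplex $S$, so that the induced tropical $\cX$-transformation has a triangular block structure: the block acting on the coordinates away from $S$ is precisely the tropical $\cX$-transformation of the reduced path $\pi_\cTr(\gamma)$, while the block acting on the $S$-coordinates only receives additive corrections and hence contributes nothing to the dynamics. Once this is in place, both the equivalence of sign stability and the equality of spectral radii follow by tracking the $\bR_{>0}$- and $\phi$-actions through the projection $\pi_\cTr$.

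Concretely, I would first order the unfrozen vertices of $\bs$ so that those outside $S$ come first, and record that a single tropical $\cX$-mutation $\mu_k$ with $k\notin S$ is, on each sign-coherent chamber, the linear map $x_k\mapsto -x_k$ and $x_j\mapsto x_j+c_{jk}x_k$ for $j\neq k$, with $c_{jk}\in\bZ$ determined by $\epsilon_{jk}$ and the chamber; its matrix is block lower-triangular with lower-right block the identity, because $k\notin S$ forces the $S\times S$ entries to be $\delta_{jl}$. The key bookkeeping point — and, I expect, the main technical obstacle — is to check that mutating only in directions outside $S$ keeps the $S^{c}\times S^{c}$ submatrix of the exchange matrix in lockstep, seed by seed, with the exchange matrix of $\bs|_\cTr$ along $\cTr$; this is essentially the compatibility underlying the construction of the cluster reduction in \cite{Ish19}. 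Granting this, composing along $\gamma$ and postcomposing with the relabeling permutation — which preserves $S$ because $\phi=[\gamma]_\bs\in\Gamma_\bs^\cTr$ preserves $\cTr$ and hence its frozen simplex — yields, on every chamber, a presentation matrix of $\gamma$ of the form $\left(\begin{smallmatrix}E^{\pi_\cTr(\gamma)}&0\\ *&P\end{smallmatrix}\right)$ with $P$ a permutation matrix on the $S$-coordinates. In particular the projection $\pi_\cTr\colon\cX_\bs^\uf(\bR^\trop)\to\cX_{\bs|_\cTr}^\uf(\bR^\trop)$, which is the coordinate projection forgetting the $S$-coordinates, is linear (hence $\bR_{>0}$-equivariant), surjective, and intertwines the actions of $\phi$ and $\pi_\cTr(\phi)$.

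From the block structure I would deduce that for any $w$ the coordinates away from $S$ of a point obtained by transporting $w$ along an initial segment of $\gamma$ depend only on the corresponding coordinates of $w$, hence equal $\pi_\cTr$ of the point obtained by transporting $\pi_\cTr(w)$ along the matching initial segment of $\pi_\cTr(\gamma)$. Since every mutation of $\gamma$ is in a direction $k\notin S$, the sign read off at each step coincides for $w$ and for $\pi_\cTr(w)$, as long as $\pi_\cTr(w)\neq 0$; when $\pi_\cTr(w)=0$ but $w\neq 0$ all these signs vanish, which is exactly why the statement uses $\widetilde\Omega=\pi_\cTr^{-1}(\Omega\setminus\{0\})$ rather than $\pi_\cTr^{-1}(\Omega)$. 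Combining this sign-matching with the $\phi$-equivariance of $\pi_\cTr$, the sign of $\gamma$ at $\phi^{n}w$ equals the sign of $\pi_\cTr(\gamma)$ at $\pi_\cTr(\phi)^{n}\pi_\cTr(w)$ for all $n$; since $\pi_\cTr$ restricts to a surjection $\widetilde\Omega\twoheadrightarrow\Omega\setminus\{0\}$, quantifying over $w\in\widetilde\Omega$ on one side and over its image on the other gives the asserted equivalence of sign stability, with a common stable sign.

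Finally, for the spectral radii I would specialize the block identity to the stable chamber to obtain $E_{\phi,\widetilde\Omega}^{(v_0)}=\left(\begin{smallmatrix}E_{\pi_\cTr(\phi),\Omega}^{(v_0)}&0\\ *&P\end{smallmatrix}\right)$ with $P$ a permutation matrix, so that $\rho(E_{\phi,\widetilde\Omega}^{(v_0)})=\max\{\rho(E_{\pi_\cTr(\phi),\Omega}^{(v_0)}),\,1\}$. On the other hand, each single tropical $\cX$-mutation has determinant $-1$ on a sign chamber and each relabeling permutation has determinant $\pm1$, so $|\det E_{\pi_\cTr(\phi),\Omega}^{(v_0)}|=1$ and therefore $\rho(E_{\pi_\cTr(\phi),\Omega}^{(v_0)})\geq1$, whence the two spectral radii coincide. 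Thus the only genuinely non-formal ingredient is the triangular form together with the identification of its upper-left block with the reduced tropical $\cX$-transformation; everything after that is a matter of carefully carrying the $\bR_{>0}$- and $\phi$-actions through $\pi_\cTr$.
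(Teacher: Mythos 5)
Your proof follows essentially the same route as the paper's: reading off the sign equivalence from the fact that the path only mutates in $J_\uf$-directions (so signs of $\gamma$ at $w$ match signs of $\pi_\cTr(\gamma)$ at $\pi_\cTr(w)$), and then using the block lower-triangular form $\left(\begin{smallmatrix}E|_J&0\\ \ast&P\end{smallmatrix}\right)$ of the presentation matrices to compare spectral radii. You additionally spell out the small step that the paper leaves implicit — that $|\det E_{\pi_\cTr(\phi),\Omega}^{(v_0)}|=1$ forces $\rho(E_{\pi_\cTr(\phi),\Omega}^{(v_0)})\geq 1$, so the $\max\{\rho,1\}$ collapses — which is a correct and useful clarification, but the overall argument is the same.
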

This should be compared with \cref{introthm:reduction_spec}. In the surface case, the cluster reduction corresponds to the cutting operation of a marked surface along a multiarc. Therefore it is natural to expect a general framework both containing the reduction along a mutlicurve and the cluster reduction. 

Our observation is that the both reduction procedures can be parametrized by rational polyhedral cones $\cC \subset \X_\bs^\uf(\bR^\trop)$.
Indeed, the reduction along a multicurve $\cR$ is obviously related to the cone $\cC(\cR)$ generated by the curves in $\cR$ regarded as integral laminations, and the input data of the cluster reduction (\emph{i.e.}, a simplex of the Fomin--Zelevinsky cluster complex) is in one-to-one correspondence with the cones in the geometric realization of the Fock--Goncharov cluster complex (\cref{prop:cTr<->cone}), which can be regarded as a subset of $\cX_\bs^\uf(\bR^\trop)$.

Based on these observations, we axiomatize the \emph{$\cC$-reduction} (\cref{d:C-reduction}) for a rational polyhedral cone $\cC \subset \X_\bs^\uf(\bR^\trop)$ so that it includes both the reduction along a multicurve and the cluster reduction. In particular, we can consider the reduction along a collection of curves containing both ideal arcs and simple closed curves (\cref{prop:mixed reduction}). In view of the fact that the reduction data for the cluster reductions are precisely the strata of the \emph{special completion} introduced by Fock--Goncharov \cite{FG16}, it would be interesting to think about a generalized completion of cluster varieties. 







\subsection*{Organization of the paper}
In \cref{sec:sign stability}, we briefly recall the notation regarding cluster ensembles, the seed pattern associated with a marked surface, and the sign stability. For a detail, the reader is referred to \cite[Sections 2--5]{IK20}.

In \cref{sec:reduction}, the reduction procedure of mutation loops along a multicurve is studied. 
Not only that the case where the multicurve consists of boundary-parallel curves is the basis for the study in \cref{sec:weak_SS}, but also some of the observations obtained in this section lead us to the axiom of the $\cC$-reduction (\cref{d:C-reduction}).

In \cref{sec:weak_SS}, the notions of weak sign stability and hereditariness are introduced. \cref{introthm:NS dynamics,introthm:weak SS} are proved. 

In \cref{subsec:entropy}, we discuss the estimate of the algebraic entropy of the pseudo-Anosov mutation loops. 

In \cref{sec:cluster_reduction}, we recall the cluster reduction procedure from \cite{Ish19} and study its effect on the sign stability. As a slight generalization of \cite[Proposition 6.1]{IK20}, we prove the sign stability of a shortest path for a cluster Dehn twist.

Finally in \cref{sec:C-reduction}, we formulate a \emph{$\cC$-reduction} of a general cluster variety.

\subsection*{Acknowledgements}
T. I. would like to express his gratitude to his former supervisor Nariya Kawazumi for his continuous guidance and encouragement in the earlier stage of this work. 
S. K. is also deeply grateful to his supervisor Yuji Terashima for his advice and giving him a lot of knowledge.

T. I. is partially supported by JSPS KAKENHI Grant Numbers 18J13304 and 20K22304, and the Program for Leading Graduate Schools, MEXT, Japan.

\section{Sign stability of mutation loops}\label{sec:sign stability}
In this section, we recall the terminologies around the sign-stability of a representation path of a mutation loop from \cite{IK19}. 
Here we assuming that the notion in \cite[Appendix A]{IK20}.

\subsection{The exchange graph}
Let us fix a finite set $I = I_\uf \sqcup I_\f$ and a seed pattern $\bs: \bT_{I_\uf} \ni t \mapsto (N^{(t)}, B^{(t)})$.
Considering the graph $\bE_I$ with
\begin{itemize}
    \item vertices given by the pairs $(t,\sigma)$ for $t \in \bT_{I_\uf}$ and $\sigma \in \fS_I$, and
    \item edges of the following two types:
    \begin{itemize}
        \item (\emph{horizontal edges})
        For each $\indedge$ in $\bT_{I_\uf}$ and $\sigma \in \fS_I := \fS_{I_\uf} \times \fS_{I_\f}$, there is an edge $(t, \sigma) \overbar{\sigma(\numk)} (t', \sigma)$. 
        \item (\emph{vertical edges})
        For each vertex $t \in \bT_{I_\uf}$, $\sigma \in \fS_I$ and a transposition $(\numi \ \numj) \in \fS_I$, there is an edge
        $(t, \sigma) \overbar{(\numi\ \numj)} (t, (\numi\ \numj) \sigma)$.
    \end{itemize}
\end{itemize}
Then, the seed pattern $\bs$ lifts to a labeled seed pattern $\widetilde{\bs}: \bE_I \ni (t,\sigma) \mapsto (N^{(t, \sigma)},B^{(t, \sigma)})$ defined as
\begin{align*}
    N^{(t, \sigma)} := \bigoplus_{\numi \in I} \bZ e^{(t,\sigma)}_\numi, \quad
    B^{(t, \sigma)} := \sigma.B^{(t)}.
\end{align*}
We define a equivalence relation $\sim_\triv$ on $\bE_I$ as follows:
For two vertices $(t,\sigma),(t',\sigma') \in \bE_I$, we write $(t,\sigma) \sim_\triv (t',\sigma')$ if they are $\bs$-equivalent (\emph{i.e.}, $B^{(t, \sigma)} = B^{(t', \sigma')}$) and for some edge path $\gamma: (t,\sigma) \to (t',\sigma')$ in $\bE_I$, the composition
    \begin{align}\label{eq:automorphism}
        \cZ_{(t,\sigma)} \xrightarrow{\mu_\gamma^z} \cZ_{(t',\sigma')} \xrightarrow{\sim} \cZ_{(t,\sigma)}
    \end{align}
is identity.
Here $(z,\cZ) = (a, \cA), (x, \cX^\uf)$, $\mu_{\gamma_\nu}^z$ is the associated cluster transformation and the isomorphism $\cZ_{(v')} \xrightarrow{\sim} \cZ_{(v)}$ is induced by the seed isomorphism.
We denote the equivalence class for $\sim_\triv$ containing $(t,\sigma)$ by $[t,\sigma]_\triv$.
The \emph{labeled exchange graph} is the quotient graph
\begin{align*}
    \bExch_\bs := \bE_I / \sim_\triv,
\end{align*}
where two edges with equivalent endpoints are identified (so that there are no multiple edges in $\bExch_\bs$). 
Since the relation $\sim_\triv$ respects horizontal/vertical properties of an edge of $\bE_I$, the seed pattern $\widetilde{\bs}$ on $\bE_I$ descends to a ``seed pattern" $\bs_{\mathrm{Ex}}: \bExch_\bs \ni v \mapsto (N^{(v)}, B^{(v)})$ satisfying $B^{(v)} = \sigma.B^{(t)}$ for some $(t, \sigma) \in v$. 

Moreover, the vertical projection $\pi_\bT: \bE_I \to \bT_{I_\uf}$ induces a graph projection $\pi_{\mathrm{Ex}}: \bExch_\bs \to \Exch_\bs$, and the latter graph $\Exch_\bs$ is called the \emph{exchange graph}:
\[\begin{tikzcd}
\bE_I \ar[r] \ar[d, "\pi_\bT"'] & \bExch_\bs \ar[d, "\pi_{\mathrm{Ex}}"] \\
\bT_{I_\uf} \ar[r] & \Exch_\bs\, .
\end{tikzcd}\]
Then, define $\cZ_{[t,\sigma]_\triv} := \bigcup_{(t',\sigma') \in [t,\sigma]_\triv} \cZ_{(t',\sigma')}$, where the latter is obtained by patching the tori by trivial cluster transformations, so that we have 
\begin{align*}
    \cZ_\bs = \bigcup_{[t,\sigma]_\triv \in \bExch_\bs} \cZ_{[t,\sigma]_\triv}.
\end{align*}

Following the previous paper \cite{IK20}, we define mutation loops as certain equivalence classes of edge paths in the labeled exchange graph $\bExch_\bs$.
Let $\gamma_\nu: v_\nu \to v'_\nu$ be an edge path in $\bExch_\bs$ such that $v_\nu \sim_\bs v'_\nu$ for $\nu=1,2$.
We say that $\gamma_1$ and $\gamma_2$ are \emph{$\bs$-equivalent}
if there exists path $\delta: v_1 \to v_2$ such that  the following diagram commutes:
\begin{equation}\label{eq:equivalence of paths}
\begin{tikzcd}
    \cZ_{(v_1)} \ar[r, "\mu^z_{\gamma_1}"] \ar[d, "\mu^z_\delta"'] &\cZ_{(v'_1)} \ar[r, "\sim"]& \cZ_{(v_1)} \ar[d, "\mu_{\delta}^z"]\\
    \cZ_{(v_1)} \ar[r, "\mu^z_{\gamma_2}"] & \cZ_{(v'_2)} \ar[r, "\sim"] & \cZ_{(v_2)}. 
\end{tikzcd}
\end{equation}
Here $(z,\cZ) = (a, \cA), (x, \cX^\uf)$.
The $\bs$-equivalence class containing an edge path $\gamma$ is denoted by $[\gamma]_{\bs}$. 

\begin{dfn}[mutation loops]
A \emph{mutation loop} is an $\bs$-equivalence class of an edge path $\gamma:v \to v'$ in the labeled exchange graph $\bExch_\bs$ such that $v \sim_\bs v'$. We call $\gamma$ a \emph{representation path} of the mutation loop $\phi=[\gamma]_{\bs}$. 
\end{dfn}
The concatenation of two paths $\gamma_1: v_0 \to v_1$ and $\gamma_2: v_1 \to v_2$ is denoted by $\gamma_1 \ast \gamma_2: v_0 \to v_1 \to v_2$.

\begin{dfn}[cluster modular group]
The group $\Gamma_{\bs}$ consisting of all the mutation loops is called the \emph{cluster modular group}. 
Here the multiplication of two mutation loops $\phi_1$ and $\phi_2$ is defined as $\phi_2 \cdot \phi_1:= [\gamma_1 \ast \gamma_2]_\bs$, where $\gamma_\nu$ is a representation path of $\phi_\nu$ for $\nu = 1,2$ such that the terminal vertex of $\gamma_1$ coincides with the initial vertex of $\gamma_2$.
\end{dfn}

\paragraph{\textbf{$C$- and $G$-matrices.}}
Recall the assignment of the $C$- and $G$-matrices can be extended to $\bE_I$ by fixing a vertex $(t_0,\sigma_0) \in \bE_I$ as
\[C^{\widetilde{\bs};(t_0,\sigma_0)}_{(t,\sigma)} :=  P_\sigma C^{\bs;t_0}_{t}P_{\sigma_0}^{-1}, \quad
G^{\widetilde{\bs};(t_0,\sigma_0)}_{(t,\sigma)} :=  P_\sigma G^{\bs;t_0}_{t}P_{\sigma_0}^{-1},
\]
where $P_\sigma$ denotes the presentation matrix of $\sigma \in \fS_N$.
Then the separation formula \cite[Proposition 3.13, Corollary 6.4]{FZ-CA4} tells us that the assignments
\begin{align*}
     C^{v_0}_v:=C^{\widetilde{\bs};(t_0,\sigma_0)}_{(t,\sigma)},\quad 
     G^{v_0}_v:=G^{\widetilde{\bs};(t_0,\sigma_0)}_{(t,\sigma)}
\end{align*}
for $v_0:=[t_0,\sigma_0]_\triv$, $v=[t,\sigma]_\triv \in \bExch_\bs$ are well-defined.
From the tropical duality \cite{NZ12}, we get 
\begin{align}\label{eq:tropical_duality_exchange}
    G^{v_0}_v = \check{C}^{v_0}_v.
\end{align}

\subsection{The seed pattern associated with a marked surface}\label{sec:surf_seed_pattern}
Here, we recall the seed pattern associated with a marked surface and some of the associated objects.
For more details, see \cite[Sections 2 and 4]{IK20}.

A marked surface $\Sigma$ is a compact oriented surface with a fixed non-empty finite set of \emph{marked points} on it.
A marked point is called a \emph{puncture} if it lies in the interior of $\Sigma$, and a \emph{special point} otherwise. 
Let $P=P(\Sigma)$ (resp. $M_\partial=M_\partial(\Sigma)$) denote the set of punctures (resp. special points).
A marked surface is called a \emph{punctured surface} if it has empty boundary (and hence $M_\partial=\emptyset$). 
We denote by $g$ the genus of $\Sigma$, $h$ the number of punctures, and $b$ the number of boundary components in the sequel. We always assume the following conditions:
\begin{enumerate}
    \item[(S1)] Each boundary component (if exists) has at least one marked point.
    \item[(S2)] $3(2g-2+h+b)+2\spe >0$.
    \item[(S3)] If $g=0$ and $b=0$, then $h \geq 4$.
\end{enumerate}

For an ideal triangulation $\tri$ of $\Sigma$, let $\tri_\uf \subset \tri$ denote the subset consisting of internal arcs.
Let $I = I(\Sigma) := \{1, \dots, 3(2g-2+h+b)+2\spe\}$ and $I_\uf = I_\uf(\Sigma) := \{1, \dots, 3(2g-2+h+b)+\spe\}$.
A \emph{labeled triangulation} consists of an ideal triangulation $\tri$ and a bijection
$\ell: I \to \tri$ called a \emph{labeling} such that
$\ell(I_\uf) = \tri_\uf$.

A \emph{signed triangulation} of $\Sigma$ is a pair $(\tri,\varsigma)$ of an ideal triangulation $\tri$ of $\Sigma$ and a tuple $\varsigma \in \{+, -\}^P$ of signatures.
Two signed triangulations are equivalent if the underlying ideal triangulations are the same and the signatures differ only on univalent punctures.
The equivalence classes are called \emph{tagged triangulations}.
A \emph{labeled signed triangulation} is similarly defined, and a \emph{labeled tagged triangulation} is defined as a certain equivalence class of a labeled signed triangulation.

If $\tri$ has no self-folded triangles, for a triangle $t$ of $\tri$, we define a matrix $B(t)=(b^{\tri}_{\alpha\beta}(t))_{\alpha,\beta \in \tri}$ by
\begin{align*}
    b^{\tri}_{\alpha\beta}(t):= \begin{cases}
        1 & \mbox{if $t$ has $\alpha$ and $\beta$ as its consecutive edges in the clockwise order}, \\
        -1 & \mbox{if the same holds with the counter-clockwise order}, \\
        0 & \mbox{otherwise}.
    \end{cases}
\end{align*}
Then we set $B^\tri:=\sum_t B(t)$, where $t$ runs over all triangles of $\tri$.
See \cite[Section 2.2]{IK20} for the case $\tri$ has self-folded triangles and the tagged case.
For a labeled tagged triangulation $(\tri, \varsigma, \ell)$, we define a skew-symmetric matrix $B^{(\tri, \varsigma, \ell)}$ by
\[ B^{(\tri, \varsigma, \ell)} := (b^{\tri}_{\ell(i),\ell(j)})_{i,j\in I}. \]
Then for a labeled flip $\mu_k: (\tri,\varsigma,\ell) \to (\tri',\varsigma,\ell')$, we have $B^{(\tri',\varsigma,\ell')}=\mu_k(B^{(\tri,\varsigma,\ell)})$. Thus we obtain a seed pattern $\bs_\Sigma$ by choosing an \lq\lq initial" correspondence between a vertex of $\bT_{I_\uf}$ and a labeled tagged triangulation and extending it via mutations. Its isomorphism class only depends on the marked surface $\Sigma$.

The labeled exchange graph $\bExch_\Sigma := \bExch_{\bs_\Sigma}$ is identified with the graph consisting of the labeled tagged triangulations, as follows.
Let $\Tri(\Sigma)$ (resp. $\bTri(\Sigma)$) denote the graph whose vertices are tagged triangulations (resp. labeled tagged triangulations) of $\Sigma$ and adjacent labeled tagged triangulations are related by a flip (resp. labeled flip or the action of a transposition of labelings in $\fS_I$).

\begin{thm}[See {\cite[Theorem 4.1]{IK20}}]\label{thm:Tri_Exch}
If $\Sigma$ is a marked surface except for a once-punctured surface, there exist graph isomorphisms 
\begin{align*}
    \bTri(\Sigma) \xrightarrow{\sim} \bExch_\Sigma \quad\mbox{and} \quad \Tri(\Sigma) \xrightarrow{\sim} \Exch_\Sigma.
\end{align*}
If $\Sigma$ is a once-punctured surface, then each of the graphs $\bTri(\Sigma)$ and $\Tri(\Sigma)$ has two connected components, which are isomorphic to each other. The graph $\bExch_\Sigma$ (resp. $\Exch_\Sigma$) is isomorphic to the connected component of $\bTri(\Sigma)$ (resp. $\Tri(\Sigma)$) containing the initial labeled tagged triangulation (resp. the initial tagged triangulation).
\end{thm}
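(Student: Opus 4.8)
The plan is to realize the isomorphism $\bTri(\Sigma)\xrightarrow{\sim}\bExch_\Sigma$ concretely, by sending the initial labeled tagged triangulation to the initial vertex of $\bExch_\Sigma$ and propagating the identification along flips and transpositions of labelings; the isomorphism $\Tri(\Sigma)\xrightarrow{\sim}\Exch_\Sigma$ will then drop out by quotienting by the $\fS_I$-action, i.e.\ by comparing $\pi_{\mathrm{Ex}}$ with $\pi_\bT$. Throughout, the standing assumptions (S1)--(S3) are what guarantee that ideal triangulations exist and exclude the handful of degenerate surfaces.

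First I would build the candidate map $\Phi\colon\bTri(\Sigma)\to\bExch_\Sigma$. By flip connectivity of tagged triangulations --- valid for every marked surface that is not once-punctured, and failing only there, where the tagged flip graph breaks into two components swapped by the global tag-reversal involution \cite{FST} --- every labeled tagged triangulation is reached from the initial one by a finite word in labeled flips and label transpositions. A labeled flip $\mu_k\colon(\tri,\varsigma,\ell)\to(\tri',\varsigma,\ell')$ satisfies $B^{(\tri',\varsigma,\ell')}=\mu_k(B^{(\tri,\varsigma,\ell)})$, and a label transposition realizes the $\fS_I$-action on $B$-matrices; hence such a word lifts to an edge path in $\bExch_\Sigma$ from the initial vertex, and I set $\Phi(\tri,\varsigma,\ell)$ to be its endpoint. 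Well-definedness amounts to: a closed word (returning to the initial labeled tagged triangulation) induces a trivial cluster transformation, hence stays in the $\sim_\triv$-class of the initial vertex. I would prove this by reducing to the elementary relations that generate all relations among flips --- the \emph{square} relation for flips at arcs not sharing a triangle, the \emph{pentagon} relation for flips at arcs sharing a triangle, and the finitely many additional relations supported on once-punctured monogon/digon regions and on configurations involving self-folded triangles \cite{FST} --- and checking each one is a trivial cluster transformation for both the $\cA$- and $\cX^\uf$-coordinates: the square relation is a pair of commuting mutations, the pentagon relation is exactly the Fomin--Zelevinsky pentagon identity \cite{FZ-CA4}, and the remaining relations are explicit finite computations in small surfaces. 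This makes $\Phi$ a graph homomorphism respecting the horizontal/vertical edge types, and it is surjective because every vertex of $\bExch_\Sigma$ is, by construction, reached from the initial vertex by mutations and transpositions, each of which is realized by a flip or a transposition --- here one uses that every internal arc of a tagged triangulation can be flipped.

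The substantive point is injectivity: distinct labeled tagged triangulations must map to $\sim_\triv$-inequivalent vertices, and this is not visible from local flip combinatorics. The argument I would use is the global rigidity supplied by the surface cluster algebra: distinct tagged arcs of $\Sigma$ give distinct cluster variables of $\cA_\Sigma$ and distinct tagged triangulations give distinct clusters \cite{FST}, while a trivial cluster transformation acts as the identity on $\cA$-coordinates and so preserves the cluster; therefore $\Phi(\tri,\varsigma,\ell)\sim_\triv\Phi(\tri',\varsigma',\ell')$ forces the two clusters, hence the two tagged triangulations, to agree, and the permutation data carried along the path then pins down $\ell=\ell'$. (Alternatively, one runs this with shear coordinates on $\cX^\uf_\Sigma(\bR^\trop)$, using that the shear coordinates of a fixed lamination separate tagged triangulations.) I expect this injectivity step to be the main obstacle, because it is precisely where the topology of $\Sigma$ --- rather than abstract cluster combinatorics --- has to enter; a secondary nuisance is carrying the self-folded-triangle conventions uniformly and keeping track of the $\fS_I$-factor.

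Once $\Phi$ is a vertex-bijection compatible with the two edge types --- and since neither graph has multiple edges, it is then an isomorphism of graphs --- composing with the projections to $\bT_{I_\uf}$ and $\Exch_\Sigma$ (equivalently, quotienting by $\fS_I$) yields $\Tri(\Sigma)\xrightarrow{\sim}\Exch_\Sigma$. For a once-punctured surface the whole construction runs verbatim on the component of $\bTri(\Sigma)$ through the initial vertex, and the tag-reversal involution furnishes the isomorphism with the other component, giving the stated description of $\bExch_\Sigma$ and $\Exch_\Sigma$.
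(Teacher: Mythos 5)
The paper does not actually prove this statement; it is recalled verbatim from \cite[Theorem 4.1]{IK20}, so there is no internal proof to compare against. Judged on its own merits, your proposal contains a genuine gap in the well-definedness step.

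You treat well-definedness of $\Phi$ (a closed word in $\bTri(\Sigma)$ lifts to a trivial cluster transformation) and injectivity (inequivalent labeled tagged triangulations yield $\sim_\triv$-inequivalent vertices) as two separate problems, disposing of the first by ``reducing to elementary relations'' --- squares, pentagons, and ``finitely many additional relations supported on once-punctured monogon/digon regions and on configurations involving self-folded triangles,'' with a citation of \cite{FST}. That citation does not support the claim. \cite{FST} proves that the tagged arc complex is the Fomin--Zelevinsky cluster complex, but it does \emph{not} supply a presentation of the tagged flip groupoid by generators and relations. For ideal (untagged) triangulations there are presentations of the Ptolemy groupoid in the literature, but for tagged flips --- which include the operations changing tags at punctures and the special behaviour near self-folded triangles --- no such ``complete list of relations'' is standard, and establishing one would itself be a nontrivial theorem. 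As written, your well-definedness argument rests on an unproved classification.

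The irony is that the tool you deploy for injectivity already handles well-definedness cleanly, making the relations detour unnecessary. If a path in $\bE_I$ returns to the same labeled tagged triangulation, then the $\lambda$-length / cluster-variable dictionary of \cite{FST} says the cluster at the endpoints is the same labeled tuple; since these are coordinates, the induced automorphism of $\cA_{(t_0,\sigma_0)}$ fixes each coordinate and hence is the identity, and the dual statement for $\cX^\uf$ follows by the analogous argument with shear coordinates. (Note that $\sim_\triv$ as defined in the paper requires triviality for \emph{both} $(a,\cA)$ and $(x,\cX^\uf)$, so the shear-coordinate check is not merely an ``alternative'' --- both sides have to be verified, or one appeals to a duality/synchronicity statement.) Running this single global argument in both directions is precisely the content of the FST identification of the cluster complex with the tagged arc complex, which is what the theorem ultimately rests on. Your handling of surjectivity, of the passage from $\bTri \cong \bExch$ to $\Tri \cong \Exch$ by quotienting by $\fS_I$, and of the once-punctured case via the two components swapped by the global tag-reversal involution is all correct.
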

Henceforth we will identify the graphs $\bTri(\Sigma)$ and $\Tri(\Sigma)$ with (connected components of) $\bExch_\Sigma$ and $\Exch_\Sigma$, respectively.

We have the following geometric models for the associated tropical cluster varieties. 

\begin{thm}[See {\cite[Section 2]{IK20}}]\label{thm:geom_models}
Fix a hyperbolic structure $F$ on $\Sigma$ as in \cite[Section 2.5]{IK20}. 
Then we have the following PL isomorphisms:
\begin{align*}
    \cX^\uf_\Sigma(\bR^\trop) &\cong \cL^x(\Sigma, \bR) \cong \eML(F),\\
    \cU^\uf_\Sigma(\bR^\trop) &\cong \cL^u(\Sigma, \bR) \cong \ML_0(F).
\end{align*}
Here $\cL^z(\Sigma, \bR)$ is the space of real $\cZ$-laminations for $(\cZ, z) \in \{(\cX^\uf, x), (\cU^\uf, u)\}$, $\eML(F)$ is the space of ($\iota$-invariant) measured geodesic laminations on (the double of) $F$, and $\ML_0(F) \subset \eML(F)$ is the subspace consisting of measured laminations with compact support.
\end{thm}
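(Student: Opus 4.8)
The plan is to build both chains of isomorphisms one arrow at a time, using tagged triangulations as a common system of coordinate charts. Recall from \cite[Section 2]{IK20} that a real $\cX$-lamination on $\Sigma$ is (roughly) a finite collection of non-boundary-parallel, mutually non-intersecting curves and arcs carrying positive real weights, taken modulo the standard moves, and that to each tagged triangulation $\tri$ one attaches the tuple of \emph{shear coordinates} $\mathbf{x}^\tri = (x^\tri_\alpha)_{\alpha \in \tri_\uf} \in \bR^{I_\uf}$; likewise a real $\cU$-lamination carries the intersection-type coordinates dual to the $\cA$-side (the tropicalized lambda lengths). On the other hand, by definition $\cX^\uf_\Sigma(\bR^\trop)$ (resp. $\cU^\uf_\Sigma(\bR^\trop)$) is the set of $\bR^\trop$-points of the corresponding cluster variety, that is, a family of charts $\bR^{I_\uf}$ indexed by the vertices of $\bExch_\Sigma$ and glued by the tropicalized cluster $\cX$- (resp. $\cA$-) mutations; via \cref{thm:Tri_Exch} these charts are indexed by labeled tagged triangulations.

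First I would identify $\cX^\uf_\Sigma(\bR^\trop)$ with $\cL^x(\Sigma,\bR)$. Fixing a labeled tagged triangulation $(\tri,\ell)$, the map sending a tropical point with chart coordinates $\mathbf{n} \in \bR^{I_\uf}$ to the unique $\cX$-lamination whose shear coordinates along $\tri$ equal $\mathbf{n}$ is a bijection onto $\cL^x(\Sigma,\bR)$ for that single chart. The real content is chart-independence: the shear coordinates of a \emph{fixed} lamination transform under a flip $\tri \to \tri'$ by precisely the tropical $\cX$-mutation formula (Fock--Goncharov, Thurston), with the subtleties of self-folded triangles and of the signature $\varsigma$ handled exactly as in \cite[Section 2.2]{IK20}, so that they agree with the mutations defining $B^{(\tri,\varsigma,\ell)}$. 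Since both the gluing of tropical charts and the transition of shear coordinates are piecewise linear, the resulting bijection is a PL isomorphism. The $\cA$/$\cU$-side is entirely parallel, using the tropical $\cA$-mutation and the $\cU$-lamination coordinates.

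Next I would pass from $\cL^x(\Sigma,\bR)$ (resp. $\cL^u(\Sigma,\bR)$) to $\eML(F)$ (resp. $\ML_0(F)$). Double the hyperbolic surface $F$ along its geodesic boundary to obtain $DF$ together with its orientation-reversing isometric involution $\iota$; punctures of $\Sigma$ become $\iota$-fixed punctures of $DF$. A weighted system of arcs and curves representing a real $\cX$-lamination is straightened to its geodesic representative on $DF$: arc components running to special points of $\partial\Sigma$ become leaves spiraling into $\iota$-fixed geodesics, so the result is an $\iota$-invariant measured geodesic lamination, and conversely every such lamination arises in this way. Under Thurston's parametrization the shear data of the lamination match the shear coordinates of the corresponding tropical point, so this map is again a PL isomorphism. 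The compactly-supported sublocus $\ML_0(F)$ is exactly the image of those laminations with no arc component ending on $\partial\Sigma$, which is precisely $\cL^u(\Sigma,\bR) \subset \cL^x(\Sigma,\bR)$; this yields the second line.

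The main obstacle is the punctured/tagged bookkeeping rather than any single estimate. One must: (i) verify the shear-coordinate flip rules in the presence of self-folded triangles and of nontrivial signatures $\varsigma$, reconciling them with the cluster $\cX$-mutations; (ii) for a once-punctured surface, reconcile the fact from \cref{thm:Tri_Exch} that $\bExch_\Sigma$ is only one of the two isomorphic components of $\bTri(\Sigma)$ with the claim that the tropical variety still models all of $\eML(F)$ --- the two components differ by a global change of tagging at the puncture, which does not alter the underlying lamination; and (iii) check properness, so that "PL isomorphism" holds at the level of topological spaces and not merely of sets carrying compatible PL charts. All three points are carried out in \cite[Section 2]{IK20}, building on \cite{FG16, FST}, and it is their combination that constitutes the work.
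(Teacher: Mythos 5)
The theorem is stated in the paper as a pure recall, citing \cite[Section~2]{IK20} for the proof; the paper itself supplies no argument. Your sketch faithfully reconstructs the route taken there and in the underlying Fock--Goncharov and Fomin--Shapiro--Thurston references: shear/tropical-lambda-length coordinates on tagged-triangulation charts, chart-independence via the tropical mutation rules (the real content, including the self-folded and signature bookkeeping), then geodesic straightening on (the double of) $F$; and you correctly isolate the once-punctured two-component subtlety from \cref{thm:Tri_Exch} and the properness check. So the approach is the same.

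One local slip is worth flagging. You describe $\ML_0(F)$ as ``the image of those laminations with no arc component ending on $\partial\Sigma$''. For a punctured surface $\partial\Sigma=\emptyset$, so this criterion would force $\cL^u(\Sigma,\bR)=\cL^x(\Sigma,\bR)$, which is false; and even for a surface with boundary it is incomplete. The compactly-supported condition on (the double of) $F$ forbids \emph{all} non-compact leaves: arc components are excluded whether they end on $\partial\Sigma$ or at punctures, and leaves spiraling into punctures are excluded as well, so the signature data becomes trivial. Equivalently, $\cL^u(\Sigma,\bR)$ is the cone of real-weighted systems of essential simple closed curves with no peripheral components, which is exactly how $\cU^\uf_\Sigma(\bR^\trop)$ sits inside $\cX^\uf_\Sigma(\bR^\trop)$. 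With this correction the remainder of your argument stands.
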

For a description of $\A_\Sigma(\bR^\trop)$ and $\cU_\Sigma^\uf(\bR^\trop)$ in terms of measured foliations, see \cite[Section 2.4]{IK20}.

We define the group $\Gamma_\Sigma$ as
\begin{align*}
    \Gamma_\Sigma := \begin{cases}
    MC(\Sigma) & \mbox{if $\Sigma$ is a punctured surface with exactly one puncture}, \\    
    MC(\Sigma) \ltimes (\bZ/2)^P & \mbox{otherwise}.
    \end{cases}
\end{align*}
Then this group is ``almost isomorphic" to the cluster modular group $\Gamma_{\bs_\Sigma}$ associated with the seed pattern $\bs_\Sigma$.

\begin{thm}[See {\cite[Theorem 4.5]{IK20}}]
\label{t:cluster modular_surface case}
If $\Sigma$ is not a sphere with 4 punctures, then the group $\Gamma_\Sigma$ is isomorphic to the cluster modular group $\Gamma_{\bs_\Sigma}$.
Otherwise, $\Gamma_\Sigma$ is embedded into $\Gamma_{\bs_\Sigma}$ as a subgroup of index $\geq 2$.
\end{thm}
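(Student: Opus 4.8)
The plan is to build a homomorphism $\Psi\colon\Gamma_\Sigma\to\Gamma_{\bs_\Sigma}$ from the action of $\Gamma_\Sigma$ on labeled tagged triangulations and then show it is an isomorphism away from the $4$-punctured sphere. Recall $\Gamma_\Sigma=MC(\Sigma)\ltimes(\bZ/2)^P$ (or just $MC(\Sigma)$ in the once-punctured case), where $MC(\Sigma)$ pushes forward tagged triangulations and post-composes labelings, while the generator of $(\bZ/2)^P$ indexed by $p\in P$ is the tag change $\tau_p$ flipping all tags at the puncture $p$. Both kinds of operation preserve the signed adjacency matrix up to the $\fS_I$-action on labels, so under the identification $\bExch_\Sigma\cong\bTri(\Sigma)$ of \cref{thm:Tri_Exch} we have $\phi\cdot v\sim_\bs v$ for every vertex $v$ and every $\phi\in\Gamma_\Sigma$. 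Since $\bTri(\Sigma)$ is connected (Fomin--Shapiro--Thurston), and since in the once-punctured case $\bExch_\Sigma$ is the single component of $\bTri(\Sigma)$ containing the all-plain triangulations, which $MC(\Sigma)$ preserves, I may choose an edge path $\gamma_\phi^v\colon v\to\phi\cdot v$ and set $\Psi(\phi):=[\gamma_\phi^v]_\bs$.

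The first thing to verify is that $\Psi$ is a well-defined homomorphism. The key input is that every $\phi\in\Gamma_\Sigma$ induces a genuine automorphism of the cluster ensemble $(\cA_\Sigma,\cX_\Sigma)$ — for $MC(\Sigma)$ this is the naturality of shear and lambda-length coordinates under change of triangulation, and for $\tau_p$ it is the description of the tag change as an explicit monomial coordinate transformation — and any edge path $v\to\phi\cdot v$ realizes exactly this automorphism once composed with the canonical seed isomorphism. Hence any two such paths are $\bs$-equivalent, so $[\gamma_\phi^v]_\bs$ does not depend on the path; independence of the base vertex follows by conjugating with a connecting path, and the homomorphism property follows by concatenation.

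For injectivity, suppose $\Psi(\phi)=1$. Then $\phi$ acts trivially on $\cX_\Sigma^\uf$, hence on its tropical points $\cX_\Sigma^\uf(\bR^\trop)\cong\eML(F)$ from \cref{thm:geom_models}. I would then invoke faithfulness of the $\Gamma_\Sigma$-action on measured laminations on the double of $F$: for the $MC(\Sigma)$-factor this is an Alexander-method argument, and for the $(\bZ/2)^P$-factor one observes that a nontrivial $\tau_p$ alters which laminations appear with notched ends at $p$, so no nontrivial combination acts trivially. Conditions (S1)--(S3) are precisely what excludes the sporadic small surfaces on which this faithfulness would fail.

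Finally, surjectivity away from the $4$-punctured sphere. Given $\psi\in\Gamma_{\bs_\Sigma}$ represented by a path $\gamma\colon v\to v'$ with $v\sim_\bs v'$, the vertices $v,v'$ are labeled tagged triangulations with equal signed adjacency matrices. I would prove a rigidity statement: such an equality forces $v$ and $v'$ to be related by an element $\phi\in\Gamma_\Sigma$, i.e.\ a mapping class composed with tag changes. Then $\phi\cdot v=v'$ and $\Psi(\phi)=[\gamma]_\bs=\psi$, since both paths realize the same seed isomorphism $v'\xrightarrow{\sim}v$. I expect this rigidity to be the main obstacle: it is a combinatorial Alexander method for tagged triangulations that must handle univalent punctures and self-folded triangles (which is exactly why tagged, rather than plain, triangulations are used), and it is where the $4$-punctured sphere must be excluded — there every ideal triangulation produces the Markov quiver, whose automorphisms are not all induced by $\Gamma_\Sigma$, so that $\Psi$ remains injective but its image has index $\geq 2$. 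The index bound then follows by exhibiting one such exotic symmetry, or from the known description of $\Gamma_{\bs_\Sigma}$ in that case.
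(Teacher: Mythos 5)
This theorem is quoted from \cite[Theorem 4.5]{IK20} and is not proved in the present paper, so there is no in-text argument to compare against; your reconstruction can only be assessed on its own terms. Your strategy---define $\Psi$ via the action of $\Gamma_\Sigma$ on $\bTri(\Sigma)\cong\bExch_\Sigma$, prove injectivity from faithfulness of the $\Gamma_\Sigma$-action on $\eML(F)$, and prove surjectivity from a rigidity lemma for signed adjacency matrices of labeled tagged triangulations---is the expected one and matches the ingredients the present paper assembles (\cref{thm:Tri_Exch} for the graph isomorphism, \cref{thm:geom_models} for the equivariant geometric model). You correctly flag the rigidity lemma as the crux: that is precisely the non-trivial combinatorial input, and without establishing or citing it the argument remains a sketch rather than a proof. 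Two concrete slips are also worth fixing. First, the paper's convention (see \cref{subsec:reduction_path}) is that a representation path of $\phi$ runs from $(\tri_0,\ell_0)$ to $\phi^{-1}(\tri_0,\ell_0)$, so your $\gamma_\phi^v\colon v\to\phi\cdot v$ is off by an inverse; you should verify that $\Psi$ comes out a homomorphism rather than an anti-homomorphism under your concatenation convention. Second, the four-punctured sphere does not give the Markov quiver---that is the once-punctured torus; a tagged triangulation of the four-punctured sphere has six arcs, hence a six-vertex exchange matrix, and the index-$\geq 2$ phenomenon comes from an exceptional automorphism of that exchange pattern which is not induced by any element of $\Gamma_\Sigma$, not from the Markov quiver.
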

The graph isomorphisms in \cref{thm:Tri_Exch} and the PL isomorphisms in \cref{thm:geom_models} are equivariant with respect to the embedding $\Gamma_\Sigma \to \Gamma_{\bs_\Sigma}$, and therefore we can study the actions of mapping classes with reflections on these objects in terms of the corresponding mutation loops.

\subsection{Sign stability}\label{subsec:sign stability}
Let us briefly recall the presentation matrices of a piecewise-linear map (PL map for short). For a vector space $V$, we have the translation isomorphism $t_x:V \xrightarrow{\sim} T_xV$, $v \mapsto \left.\frac{d}{ds}\middle|\right._{s=0}(x+sv)$ for any point $x \in V$. For a PL map $\psi: V \to W$ and a differentiable point $x \in V \setminus \{0\}$, observe that the linear map
\begin{align*}
    V \xrightarrow{t_x} T_x V \xrightarrow{(d\psi)_x} T_{\psi(x)}W \xrightarrow{t_{\psi(x)}^{-1}} W
\end{align*}
coincides with the linear extension of the germ of $\psi$ at $x$. When $V$ and $W$ are equipped with bases, we define the \emph{presentation matrix of $\psi$ at a differentiable point $x$} to be the presentation matrix of the tangent map $(d\psi)_x:T_x V \to T_{\psi(x)} W$ with respect to the given bases. By the above observation, it coincides with the presentation matrix of the linear extension of the germ of $\psi$ at $x$. The following lemma clarifies the meaning of an \lq\lq eigenvector" of a PL map:
\begin{lem}
Suppose that $n=\dim V= \dim W$, and $V$ and $W$ are equipped with bases $(e_i)_{i=1}^n$ and $(e'_j)_{j=1}^n$, respectively. 
Consider the presentation matrix $E_x \in \mathrm{Mat}_{n\times n}(\bR)$ of a PL map $\psi:V \to W$ at a differentiable point $x \in V$. Then for $\lambda \in \bR$, the coordinate vector $\boldsymbol{v}=(v_i)_{i=1}^n \in \bR^n$ of $v=\sum_i v_i e_i \in V$ satisfies $E_x\boldsymbol{v} = \lambda \boldsymbol{v}$ if and only if $(d\psi)_x(t_x(v)) =\lambda t_{\psi(x)}(v)$.
\end{lem}
When $V=\X_{(v)}(\bR^\trop)$ and $W=\X_{(v')}(\bR^\trop)$ for some $v,v' \in \bExch_\bs$, we always consider the bases $(f_i^{(v)})_{i \in I}$ and $(f_i^{(v')})_{i \in I}$ respectively, unless otherwise specified. 

For a PL map $f: M \to N$ between two PL manifolds with specified PL atlases, we say that $f$ is \emph{intrinsically differentiable} at $x \in M$ if its coordinate expression is differentiable at the corresponding point for any charts in the given atlases. 
When $M=\X_\bs^\uf(\bR^\trop)$ and $N=\X_{\bs'}^\uf(\bR^\trop)$ for some seed patterns $\bs$ and $\bs'$, we discuss the intrinsic differentiability with respect to the cluster atlases unless otherwise specified.

\bigskip

Now recall the tropical cluster $\X$-transformation $\mu_\indk: \X^\uf_{(v)}(\bR^\trop) \to \X^\uf_{(v')}(\bR^\trop)$ associated with an edge $v \overbar{k} v'$ in $\bExch_\bs$.
For a real number $a \in \bR$, let $\sgn(a)$ denote its sign:
\[
\sgn(a):=
\begin{cases}
    + & \mbox{ if } a>0,\\
    0 & \mbox{ if } a=0,\\
    - & \mbox{ if } a<0.
\end{cases}
\]

\begin{lem}[{\cite[Lemma 3.1]{IK19}}]\label{l:x-cluster signed}
Fix a point $w \in \X^\uf_{(v)}(\bR^\trop)$. 
Then the tropical cluster $\X$-transformation $\mu_\indk: \X^\uf_{(v)}(\bR^\trop) \to \X^\uf_{(v')}(\bR^\trop)$ is given by
\begin{align}\label{eq:sign x-cluster}
    x^{(v')}_\indi(\mu_\indk(w)) =
\begin{cases}
    -x^{(v)}_\indk(w) & \mbox{if $\indi=\indk$}, \\
    x^{(v)}_\indi(w)+[\sgn(x^{(v)}_\indk(w))b^{(v)}_{\indi\indk}]_+x^{(v)}_\indk(w) & \mbox{if $\indi \neq \indk$}.
\end{cases}
\end{align}
\end{lem}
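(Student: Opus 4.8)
The plan is to obtain \eqref{eq:sign x-cluster} as the tropicalization of the subtraction-free rational cluster $\X$-mutation attached to the edge $v \overbar{k} v'$. Recall that, as a birational isomorphism of split tori, this mutation pulls back the coordinate functions by
\[
\mu_k^\ast x^{(v')}_i \;=\;
\begin{cases}
(x^{(v)}_k)^{-1} & \text{if } i=k,\\[4pt]
x^{(v)}_i\,\bigl(1+(x^{(v)}_k)^{\sgn(b^{(v)}_{ik})}\bigr)^{b^{(v)}_{ik}} & \text{if } i\neq k,
\end{cases}
\]
in the form of the $\X$-mutation fixed in \cite{IK20}. Each of these Laurent expressions is subtraction-free, so it may be evaluated over the semifield $\bR^\trop=(\bR,\max,+)$; and by the very definition of the $\bR^\trop$-points and of $\mu_k$ acting on them, the number $x^{(v')}_i(\mu_k(w))$ is obtained by substituting the coordinates $x^{(v)}_j(w)$ into the tropicalization of the right-hand side above, under the dictionary $\times\rightsquigarrow{+}$, $\div\rightsquigarrow{-}$, $1\rightsquigarrow 0$, $+\rightsquigarrow\max$.

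First I would dispose of the case $i=k$: the tropicalization of $(x^{(v)}_k)^{-1}$ is $w\mapsto -x^{(v)}_k(w)$, which is the first line of \eqref{eq:sign x-cluster}. For $i\neq k$, writing $b:=b^{(v)}_{ik}$ and $t:=x^{(v)}_k(w)$, the factor $1+(x^{(v)}_k)^{\sgn(b)}$ tropicalizes to $\max(0,\sgn(b)\,t)=[\sgn(b)\,t]_+$ with the convention $[a]_+:=\max(0,a)$ (and this reads $[0]_+=0$ when $b=0$, consistently), while raising to the $b$-th power multiplies by $b$. Hence
\[
x^{(v')}_i(\mu_k(w)) \;=\; x^{(v)}_i(w)\;+\;b\cdot[\sgn(b)\,t]_+ .
\]
It then remains to verify the elementary identity $b\cdot[\sgn(b)\,t]_+ = [\sgn(t)\,b]_+\cdot t$ for all $b\in\bZ$, $t\in\bR$: both sides vanish unless $\sgn(b)=\sgn(t)\neq 0$, in which case both equal $b\,|t|=|b|\,t$, a four-line case check. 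Substituting $b=b^{(v)}_{ik}$ and $t=x^{(v)}_k(w)$ turns the displayed formula into the second line of \eqref{eq:sign x-cluster}, completing the proof.

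I do not anticipate a genuine obstacle — the argument is entirely bookkeeping — but two conventional points deserve care. First, one must use the sign convention for the $\X$-mutation (the position of $\sgn(b_{ik})$ in the exponent, and the orientation $v\to v'$ rather than $v'\to v$) that is fixed in \cite{IK20}, since the opposite choice would tropicalize to the inverse PL map. Second, one must make sure the $\bR^\trop$-points of $\X^\uf_\bs$ are taken over the $\max$-plus semifield, so that $1\rightsquigarrow 0$ and $1+y\rightsquigarrow[\trop y]_+$; with the $\min$-plus convention one would obtain the analogous formula with $[\,\cdot\,]_-$ instead. Alternatively, one may simply \emph{define} $\mu_k$ on $\bR^\trop$-points by \eqref{eq:sign x-cluster}, in which case the statement becomes the assertion that this definition is compatible with the tropicalization of the rational mutation, which is precisely the computation above and is the content of \cite[Lemma 3.1]{IK19}.
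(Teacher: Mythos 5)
Your proof is correct, and it is the natural and essentially unique argument: tropicalize the subtraction-free $\X$-mutation term by term, then convert $b\cdot[\sgn(b)\,t]_+$ into $[\sgn(t)\,b]_+\cdot t$ by the four-case check you give. Note that the present paper does not prove this statement at all — it is imported verbatim as \cite[Lemma 3.1]{IK19} — so there is no in-paper proof to compare against; your two flagged conventional caveats (the exact form and orientation of the $\X$-mutation in \cite{IK20}, and $\max$-plus rather than $\min$-plus) are exactly the points one must pin down when unwinding that citation, and with the skew-symmetric convention $b^{(v)}_{ik}=-\varepsilon_{ki}$ your starting formula agrees with the Fock--Goncharov one, so the computation goes through.
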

This lemma says that the tropical cluster $\X$-transformation $\mu_\indk: \X^\uf_{(v)}(\bR^\trop) \to \X^\uf_{(v')}(\bR^\trop)$ is linear on each of the half-spaces
\begin{align*}
    \cH_{\indk,\epsilon}^{x,(v)}:= \{ w \in \X^\uf_{(v)}(\bR^\trop) \mid \epsilon x^{(v)}_\indk(w) \geq 0 \}
\end{align*}
for $\indk \in I_\uf$, $\epsilon \in \{+,-\}$ and $v \in \bExch_\bs$.

In order to introduce the \emph{sign} of a path in $\bExch_\Sigma$, we use the following notation.
\begin{conv}\label{path convention}
\begin{enumerate}
    \item For an edge path $\gamma: v_0 \overbar{k_0} v_1 \overbar{k_1} \cdots \overbar{k_{m-1}} v_m$ in $\bExch_\bs$ and $i=1,\dots,m$, let $\gamma_{\leq i}: v_0 \xrightarrow{(k_0,\dots,k_{i-1})} v_i$ be the sub-path of $\gamma$ from $v_0$ to $v_i$. Let $\gamma_{\leq 0}$ be the constant path at $v_0$. 
    Let $(k_{i(0)},\dots,k_{i(h-1)})$ be the subsequence of $\mathbf{k}=(k_0,\dots,k_{m-1})$ which corresponds to horizontal edges. 
    \item Fixing the initial vertex $v_0$, we simply denote the tropicalization of the coordinate expression $\phi_{(v_0)}^x: \cX^\uf_{(v_0)} \xrightarrow{\mu_\gamma^x} \cX^\uf_{(v_m)} \xrightarrow{\sim} \cX^\uf_{(v_0)}$ of $\phi$ with respect to $v_0$ by $\phi:=\phi_{(v_0)}: \X_{(v_0)}^\uf(\bR^\trop) \to \X_{(v_0)}^\uf(\bR^\trop)$, when no confusion can occur.
\end{enumerate}

\end{conv}

\begin{dfn}[sign of a path]\label{d:sign}
Let the notation as above, and fix a point $w \in \X_{(v_0)}^\uf(\bR^\trop)$.
\item The \emph{sign} of $\gamma$ at $w$ is the sequence $\boldsymbol{\epsilon}_\gamma(w)=(\epsilon_0,\dots,\epsilon_{h-1}) \in \{+,0,-\}^h$ of signs defined by
\[\epsilon_{\nu} := \sgn(x^{(v_{i(\nu)})}_{k_{i(\nu)}}(\mu_{\gamma_{\leq {i(\nu)}}} (w)))\]
for $\nu=0,\dots,h-1$.
\end{dfn}

\begin{dfn}[sign stability]\label{d:sign stability}
Let $\gamma$ be a path as above and suppose that $\phi:=[\gamma]_{\bs}$ is a mutation loop. Let $\Omega \subset \X_{(v_0)}^\uf(\bR^\trop)$ be a subset which is invariant under the rescaling action of $\bR_{> 0}$. 
Then we say that $\gamma$ is \emph{sign-stable} on $\Omega$ if there exists a sequence $\boldsymbol{\epsilon}^\stab_{\gamma,\Omega} \in \{+,-\}^h$ of strict signs such that for each $w \in \Omega \setminus \{0\}$, there exists an integer $n_0 \in \mathbb{N}$ such that   \[\boldsymbol{\epsilon}_\gamma(\phi^n(w)) = \boldsymbol{\epsilon}^\stab_{\gamma, \Omega} \]
for all $n \geq n_0$. 
We call $\boldsymbol{\epsilon}_{\gamma,\Omega}^\stab$ the \emph{stable sign} of $\gamma$ on $\Omega$.
\end{dfn}
For a mutation loop $\phi \in \Gamma_\bs$ and a vertex $v_0 \in \bE_I$, its presentation matrix at a differentiable point $w \in  \X_{(v_0)}^\uf(\bR^\trop)$ is the presentation matrix $E_\phi^{(v_0)}(w) \in GL_N(\bZ)$ of the tangent map 
\begin{align*}
    (d\phi)_w: T_w \X_{(v_0)}^\uf(\bR^\trop) \to T_{\phi(w)} \X_{(v_0)}^\uf(\bR^\trop)
\end{align*}
with respect to the basis $(f_i^{(v_0)})_{i \in I_\uf}$ of $M^{(v_0)}_\uf \otimes_\bZ \bR \cong \cX^\uf_{(v_0)}(\bR^\trop)$.

For a vertex $v_0 \in \bExch_\bs$, let $\cC^\pm_{(v_0)}:=\{w \in \X_{(v_0)}^\uf(\bR^\trop) \mid \pm x_i^{(v_0)}(w) \geq 0,~i \in I_\uf\}$
and let
\begin{align*}
    \Omega^{\mathrm{can}}_{(v_0)}:=\interior\cC^+_{(v_0)} \cup \interior\cC^-_{(v_0)}.
\end{align*}
It turns out that it is sufficient for the computation of the algebraic entropy of cluster transformations \cite{IK19}.
\begin{dfn}\label{d:cluster stretch factor}
If a mutation loop $\phi$ admits a representation path $\gamma:v_0 \to v$ which is sign-stable on the set $\Omega^{\mathrm{can}}_{(v_0)}$, then we call
$\lambda_\phi:=\lambda_{\phi,\Omega^{\mathrm{can}}_{(v_0)}}$ the \emph{cluster stretch factor} of $\phi$. 
\end{dfn}

\paragraph{\textbf{Uniform sign stability and the Fock--Goncharov cluster complex.}}
We define a kind of the \lq\lq strongest" sign stability, called the \emph{uniform sign stability}.

\begin{dfn}[Uniform sign stability]\label{d:uniform stability}
A mutation loop $\phi \in \Gamma_\bs$ is said to be \emph{uniformly sign-stable} if any representation path $\gamma:v_0 \to v$ of $\phi$ is sign-stable on $\bR_{>0}\cdot \X_{(v_0)}^\uf(\bQ^\trop)$. 
\end{dfn}
The uniform sign stability automatically implies the sign stability on a larger subset. 
We call the subset
\begin{align*}
    |\fF_{(v_0)}^\pm|:=\bigcup_{v \in \bExch_\bs} \mu_{\gamma(v)}^{-1}(\cC_{(v)}^\pm) \subset \X_{(v_0)}^\uf(\bR^\trop)
\end{align*}
the (geometric realization of) the \emph{Fock--Goncharov cluster complex}, according to \cite{FG09,GHKK}.
Here $\gamma(v): v_0 \to v$ is any path in $\bExch_\bs$.
It is known that the corresponding subsets $|\fF_\bs^\pm| \subset \X_\bs^\uf(\bR^\trop)$ are independent of the choice of $v_0 \in \bExch_\bs$ \cite[Theorem 2.13]{GHKK}.
Therefore we can consider the canonically defined $\bR_{>0}$-invariant subset 
\begin{align*}
    \Omega_\bs^\bQ:= |\fF_\bs^+| \cup |\fF_\bs^-| \cup \bR_{>0}\cdot \X_\bs^\uf(\bQ^\trop) \subset \X_\bs^\uf(\bR^\trop)
\end{align*}
and the corresponding sets $\Omega_{(v_0)}^\bQ \subset \X_{(v_0)}^\uf(\bR^\trop)$ in the coordinate charts. Then \cite[Proposition 3.14]{IK19} tells us that the sign stability on $\bR_{>0}\cdot \X_{(v_0)}^\uf(\bQ^\trop)$ is equivalent to that on the set $\Omega_{(v_0)}^\bQ$. Since $\Omega^{\mathrm{can}}_{(v_0)} \subset \Omega_{(v_0)}^\bQ$, a uniformly sign-stable mutation loop comes with its cluster stretch factor $\lambda_\phi$.

\subsection{Common differentiable points}
Recall that the PL automorphism on $\X_\bs^\uf(\bR^\trop) \to \X_\bs^\uf(\bR^\trop)$ induced by a mutation loop $\phi \in \Gamma_\bs$ is intrinsically differentiable at a point $w \in \X_\bs^\uf(\bR^\trop)$ if for any vertex $v_0 \in \bExch_\bs$, the PL automorphism $\phi_{(v_0)}$ on $\X_{(v_0)}^\uf(\bR^\trop)$ is differentiable at the point corresponding to $w$. 

The following lemma shows that we have enough differentiable points:

\begin{lem}\label{lem:differentiable_domain}
Let $\phi \in \Gamma_\bs$ be a mutation loop. Then for any vertex $v_0 \in \bExch_\bs$, the PL automorphism $\phi_{(v_0)}$ is differentiable at any point in the open subset
\begin{align*}
    \mathfrak{U}_{(v_0)}:=\bigcup_{v \in \bExch_\bs} \mu_{\gamma(v)}^{-1}(\interior\cC_{(v)}^+ \cup \interior\cC_{(v)}^-) \subset \X_{(v_0)}^\uf(\bR^\trop).
\end{align*}
In particular, the PL action of $\phi$ on the PL manifold $\X_\bs^\uf(\bR^\trop)$ is intrinsically differentiable at any point in the corresponding $\Gamma_\bs$-invariant subset $\mathfrak{U}_\bs \subset \X_\bs^\uf(\bR^\trop)$.
\end{lem}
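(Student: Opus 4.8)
The plan is to prove the sharper statement that $\phi_{(v_0)}$ is actually \emph{linear} on each maximal cone of the Fock--Goncharov cluster complex, and then to note that $\mathfrak{U}_{(v_0)}$ is exactly the union of the (relative) interiors of those cones. Fix a vertex $v_0 \in \bExch_\bs$ and a representation path $\gamma : v_0 \overbar{k_0} v_1 \overbar{k_1} \cdots \overbar{k_{m-1}} v_m$ of $\phi$ with $v_m \sim_\bs v_0$; then $\phi_{(v_0)}$ is the composite of the tropical $\X$-transformation $\mu_\gamma : \X^\uf_{(v_0)}(\bR^\trop) \to \X^\uf_{(v_m)}(\bR^\trop)$ with the (linear) seed isomorphism $\X^\uf_{(v_m)}(\bR^\trop) \xrightarrow{\sim} \X^\uf_{(v_0)}(\bR^\trop)$, so it suffices to treat $\mu_\gamma$.

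The one non-formal input is the following fact about tropical $\X$-transformations: for every path $\delta : v \to v'$ in $\bExch_\bs$ and every $\epsilon \in \{+,-\}$, the sign $\boldsymbol{\epsilon}_\delta(w)$ is independent of $w \in \interior\cC_{(v)}^\epsilon$, and consequently $\mu_\delta$ restricted to $\cC_{(v)}^\epsilon$ is the restriction of a linear isomorphism onto its image (namely the composite of the linear pieces of \cref{l:x-cluster signed} dictated by that constant sign). For a single flip this is immediate from \cref{l:x-cluster signed}, since $\cC_{(v)}^\epsilon \subset \cH_{k,\epsilon}^{x,(v)}$; in general it is precisely the assertion that the tropical sign vector of $\delta$ on the $\epsilon$-orthant is well defined, which is equivalent to the sign-coherence of the $C$-matrices \cite{NZ12} and is exactly the input used to realize $|\fF_\bs^\pm|$ as a simplicial fan \cite{FG09, GHKK}. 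I expect this to be the main point; everything else is formal manipulation using $\mu_{\delta_1 \ast \delta_2} = \mu_{\delta_2}\circ\mu_{\delta_1}$ and $\mu_{\delta^{-1}} = \mu_\delta^{-1}$.

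Granting this, write $\sigma_v^\epsilon := \mu_{\gamma(v)}^{-1}(\cC_{(v)}^\epsilon)$ for the maximal cones of $|\fF_{(v_0)}^+| \cup |\fF_{(v_0)}^-|$, where $\gamma(v) : v_0 \to v$ is any path. Since $\mu_{\gamma(v)}^{-1} = \mu_{\gamma(v)^{-1}}$ is linear on $\cC_{(v)}^\epsilon$ by the fact above, it carries $\interior\cC_{(v)}^+ \cup \interior\cC_{(v)}^-$ onto $\interior\sigma_v^+ \cup \interior\sigma_v^-$, so that
\begin{align*}
\mathfrak{U}_{(v_0)} = \bigcup_{v \in \bExch_\bs}\left(\interior\sigma_v^+ \cup \interior\sigma_v^-\right).
\end{align*}
On the other hand, from $\mu_\gamma = \mu_{\gamma(v)^{-1} \ast \gamma} \circ \mu_{\gamma(v)}$ we obtain $\mu_\gamma|_{\sigma_v^\epsilon} = \bigl(\mu_{\gamma(v)^{-1}\ast\gamma}|_{\cC_{(v)}^\epsilon}\bigr) \circ \bigl(\mu_{\gamma(v)}|_{\sigma_v^\epsilon}\bigr)$, in which the first factor is linear by the fact above (it is a tropical $\X$-transformation along a path starting at $v$, restricted to $\cC_{(v)}^\epsilon$), and the second is linear because it is the inverse of the linear isomorphism $\mu_{\gamma(v)^{-1}}|_{\cC_{(v)}^\epsilon}$. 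Hence $\mu_\gamma$, and therefore $\phi_{(v_0)}$, is linear on every $\sigma_v^\epsilon$, and in particular differentiable at every point of $\mathfrak{U}_{(v_0)}$.

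Finally, for the intrinsic statement, applying the same factorization to the representation path $\gamma$ and using the chart-independence of $|\fF_\bs^\pm|$ (\cite[Theorem 2.13]{GHKK}) one sees that $\phi_{(v_0)}$ maps each $\sigma_v^\epsilon$ linearly onto a maximal cone of the cluster complex; as $\phi$ is invertible it permutes these cones, hence their interiors, so that the subset $\mathfrak{U}_\bs$ corresponding to $\mathfrak{U}_{(v_0)}$ in every chart is well defined and $\Gamma_\bs$-invariant. The chart-wise differentiability of $\phi_{(v_0')}$ on $\mathfrak{U}_{(v_0')}$ established above, for every chart $v_0'$, is then precisely the intrinsic differentiability of the $\Gamma_\bs$-action on $\mathfrak{U}_\bs$.
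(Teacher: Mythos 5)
The paper states this lemma without giving a proof, evidently regarding it as a direct corollary of the linearity of tropical $\X$-transformations on the orthant cones $\cC_{(v)}^\pm$ (\cite[Lemma 3.12]{IK19}), the same fact the paper invokes in the proof of \cref{lem:pi_full_rk}. Your argument is correct and is the natural route: you isolate exactly that input and correctly trace it to sign-coherence of the $C$-matrices, and the factorization $\mu_\gamma|_{\sigma_v^\epsilon} = \bigl(\mu_{\gamma(v)^{-1}\ast\gamma}|_{\cC_{(v)}^\epsilon}\bigr)\circ\bigl(\mu_{\gamma(v)}|_{\sigma_v^\epsilon}\bigr)$ cleanly reduces linearity on the pulled-back cone $\sigma_v^\epsilon$ to linearity on an orthant cone; the appeal to \cite[Theorem 2.13]{GHKK} for chart-independence of the cluster complex then settles the ``intrinsic'' clause. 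There is no proof in the paper to compare against, but everything you wrote checks out.
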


\section{Reduction of mutation loops}\label{sec:reduction}
As a preparation for the study of the sign stability of a pseudo-Anosov mapping class on a marked surface (see \cref{def:hyp_bdry}), we investigate the classical reduction procedure of geodesic laminations and mapping classes along a multicurve in terms of the cluster varieties and mutation loops. 
Some of the observations in this section lead us to a generalized reduction procedure, which will be axiomatized in \cref{sec:C-reduction}.

Let $\Sigma$ be a marked surface, and $\cR=\{C_i\}_{k=1}^K$ a multicurve on $\Sigma$, that is, the isotopy class of a collection of mutually disjoint simple closed curves which are neither peripheral nor contractible. Consider the subgroup 
\begin{align*}
    MC_\cR(\Sigma):=\{ \phi \mid \phi(\cR) = \cR\} \subset MC(\Sigma)
\end{align*}
consisting of mapping classes which preserve the multicurve $\cR$ (but allowed to permute the curves in $\cR$). Let $\Sigma_\cR$ be the marked surface obtained by cutting $\Sigma$ along the curves in $\cR$ and then shrinking the new boundary components to punctures. The set of punctures on $\Sigma_\cR$ is $P_\cR:=P \sqcup \{p_k^+,p_k^- \mid k=1,\dots,K\}$, where $p_k^\pm$ denote the new punctures arising from the component $C_k$. Note that a mapping class $\phi \in MC_\cR(\Sigma)$ induces a mapping class $\pi_\cR(\phi) \in MC(\Sigma_\cR)$, and we have the exact sequence
\begin{align*}
    1 \to \bZ \langle T_{C_k} \mid k=1,\dots,K \rangle \to MC_\cR(\Sigma) \xrightarrow{\pi_\cR} MC(\Sigma_\cR) \to 1.
\end{align*}
We extend this group homomorphism $\pi_\cR$ to the group $\Gamma_{\Sigma,\cR}:=MC_\cR(\Sigma) \ltimes (\bZ/2)^P \subset \Gamma_\Sigma$ in the following way:
\begin{align}\label{eq:pi_S_cluster_modular}
    \pi_\cR^\mathrm{grp}: \Gamma_{\Sigma,\cR} \xrightarrow{\pi_\cR \times \mathrm{id}} MC(\Sigma_\cR) \ltimes (\bZ/2)^P \hookrightarrow MC(\Sigma_\cR) \ltimes (\bZ/2)^{P_\cR}=\Gamma_{\Sigma_\cR},
\end{align}
where the second map is induced by the natural inclusion $P \hookrightarrow P_\cR$, and the composite is again denoted by $\pi_\cR$ by an abuse of notation. We call the group homomorphism $\pi_\cR^\mathrm{grp}:\Gamma_{\Sigma,\cR} \to \Gamma_{\Sigma_\cR}$ the \emph{reduction homomorphism} with respect to the multicurve $\cR$.

Recall from \cref{thm:geom_models} that fixing a hyperbolic structure $F$ on $\Sigma$,
we have the PL isomorphism $\X_\Sigma^\uf(\bR^\trop) \cong  \eML(F)$.
Straighten each curve $C_k$ to a geodesic for $k=1,\dots,K$. Cutting the surface $F$ along these geodesics, we get a hyperbolic surface $F_\cR$ with two geodesic boundary components $\partial_k^\pm$ arising from the geodesics $C_k$ for $k=1,\dots,K$, whose interior is homeomorphic to $\Sigma_\cR$. 
Henceforth in this section, we tacitly use the PL isomorphisms $\X_\Sigma^\uf(\bR^\trop) \cong \eML(F)$ and $\X_{\Sigma_\cR}^\uf(\bR^\trop) \cong \eML(F_\cR)$ obtained in this way. 

Each measured geodesic lamination $(G,\mu)$ on $F$ induces a measured geodesic lamination on $F_\cR$, as follows. Cutting $L$ along the geodesics $C_k$ for $k=1,\dots,K$, we get a measured geodesic lamination $(\bar{G},\bar{\mu})$ on $F_\cR$ possibly with leaves perpendicular to the geodesic boundary. Let $\pi_\cR^\trop(G,\mu)$ be the unique measured geodesic lamination on $F_\cR$ which shares the compact part and the homotopy class of the non-compact part with $(\bar{G},\bar{\mu})$, but with the positive spiralling around each component $\partial_k^\pm$. It is uniquely determined by \cite[Lemma 2.18]{IK20}. Then we get a map
\begin{align}\label{eq:pi_S}
    \pi_\cR^\trop: \X_\Sigma^\uf(\bR^\trop) \to \X_{\Sigma_\cR}^\uf(\bR^\trop).
\end{align}

\begin{lem}\label{lem:pi_S_equivariance}
The map $\pi_\cR^\trop$ is continuous and  $\Gamma_{\Sigma,\cR}$-equivariant, where the latter acts on the target via the reduction homomorphism $\pi_\cR^\mathrm{grp}$.
\end{lem}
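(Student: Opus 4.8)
The plan is to work entirely within the geometric models $\X_\Sigma^\uf(\bR^\trop)\cong\eML(F)$ and $\X_{\Sigma_\cR}^\uf(\bR^\trop)\cong\eML(F_\cR)$ fixed above, under which $\pi_\cR^\trop$ is by construction the composite of the cutting operation $L=(G,\mu)\mapsto(\bar G,\bar\mu)$ along the geodesics $C_1,\dots,C_K$ followed by the positive-spiralling normalization of \cite[Lemma 2.18]{IK20}. For continuity I would use that the topology on $\eML$ is the weak topology induced by the geometric intersection functions $L\mapsto i(L,c)$ with test curves and arcs $c$: for a test curve $c$ on $F_\cR$ (equivalently on its double) lying in a compact part disjoint from collars of the boundary geodesics $\partial_k^\pm$, one has $i(\pi_\cR^\trop(L),c)=i(L,c')$ for the corresponding curve $c'$ on $F$, which depends continuously on $L$; and the remaining ``spiralling coordinates'' near each $\partial_k^\pm$ are expressed as continuous (in fact piecewise-linear) functions of the corresponding data of $L$ by the explicit formula of \cite[Lemma 2.18]{IK20}. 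Hence $\pi_\cR^\trop$ is continuous. (Alternatively, continuity can be read off from the piecewise-linearity of $\pi_\cR^\trop$ in suitable shear coordinates, a point revisited below.)

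For equivariance I would decompose $\psi\in\Gamma_{\Sigma,\cR}$ as $\psi=(\phi,\varsigma)$ with $\phi\in MC_\cR(\Sigma)$ and $\varsigma\in(\bZ/2)^P$, and treat the two factors separately. For $\phi\in MC_\cR(\Sigma)$: since $\phi$ preserves $\cR$ it carries the geodesic multicurve $C:=\bigcup_k C_k$ to itself (permuting its components), hence restricts to a homeomorphism of $F\setminus C$ which, after completing the ends, extends to a homeomorphism $F_\cR\to F_\cR$ whose isotopy class is precisely $\pi_\cR(\phi)\in MC(\Sigma_\cR)$. This homeomorphism visibly carries the cut lamination of $L$ to that of $\phi(L)$ and commutes with the canonical positive-spiralling normalization, giving $\pi_\cR^\trop(\phi\cdot L)=\pi_\cR(\phi)\cdot\pi_\cR^\trop(L)$. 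The choice of representative and of the completion of the ends is ambiguous only up to Dehn twists $T_{\partial_k^\pm}$ about boundary-parallel curves, which are trivial in $MC(\Sigma_\cR)$; correspondingly I would check that $\pi_\cR^\trop$ is invariant under the twists $T_{C_k}$ on $F$ — these are supported in annular collars of the $C_k$, so they alter only the spiralling of the cut lamination near the $\partial_k^\pm$, data which is discarded and re-imposed by the normalization — which is the geometric shadow of the fact that $\pi_\cR^\grp$ kills $\bZ\langle T_{C_k}\mid k\rangle$. For the reflection factor: a generator of $(\bZ/2)^P$ at $p\in P$ acts on $\eML(F)$ by a move localized near $p$ (reversing the spiralling/tag there), which commutes with the cutting and with the normalization near the $\partial_k^\pm$ because $p$ is disjoint from $C$ and its collars, and which corresponds under $P\hookrightarrow P_\cR$ to the same-named generator of $(\bZ/2)^{P_\cR}$ on $\eML(F_\cR)$. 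Assembling these and recalling the definition \eqref{eq:pi_S_cluster_modular} of $\pi_\cR^\grp$ then yields $\pi_\cR^\trop(\psi\cdot L)=\pi_\cR^\grp(\psi)\cdot\pi_\cR^\trop(L)$ for all $\psi\in\Gamma_{\Sigma,\cR}$.

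I expect the main obstacle to be the twist-invariance of $\pi_\cR^\trop$ together with the well-definedness of the induced homeomorphism of $F_\cR$: one must keep careful track of how the spiralling data of a measured geodesic lamination near the $C_k$ transforms under cutting and under Dehn twists about the $C_k$, and verify that the ``forget-then-renormalize'' prescription of \cite[Lemma 2.18]{IK20} washes out exactly these twists and nothing more. By contrast, the continuity assertion should be comparatively routine once it is phrased through intersection numbers with test curves and arcs.
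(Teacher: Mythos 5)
Your proof is correct and follows the same route as the paper: continuity via the intersection-number identity $\cI_C(\pi_\cR^\trop(\hL))=\cI_C(\hL)$ for lifted test curves, and equivariance from the observation that the cutting/normalization construction is natural with respect to mapping classes preserving $\cR$. The paper's own proof is essentially a one-liner (it states the intersection identity and declares the equivariance "clear"), so your elaboration — decomposing $\psi=(\phi,\varsigma)$, checking twist-invariance under $T_{C_k}$, and tracking the reflection factor through $P\hookrightarrow P_\cR$ — fills in exactly the details the paper leaves implicit.
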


\begin{proof}
The continuity follows from the equation
\begin{align*}
    \cI_C(\pi_\cR^\trop(\hL)) = \cI_C(\hL)
\end{align*}
for $\hL \in \X_\Sigma^\uf(\bR^\trop)$ and a simple closed curve $C$ in $\Sigma_\cR$, where the latter is naturally lifted to a curve in $\Sigma$. The $\Gamma_{\Sigma,\cR}$-equivariance is clear.
\end{proof}
When it is clear from the context, we simply denote the maps $\pi_\cR^\grp$ and $\pi_\cR^\trop$ by the same symbol $\pi_\cR$.

\subsection{Reduction of triangulations and the differentiability of the map \texorpdfstring{$\pi_\cR^\trop$}{pi R trop}}\label{subsec:differentiability}
For an ideal arc $\alpha$ on $\Sigma$, let $\pi_\cR(\alpha)$ be the collection of ideal arcs in $\Sigma_\cR$ naturally obtained from the intersection $\alpha \cap (\Sigma \setminus \bigcup\cR)$.
For a collection $C=\{\alpha_i\}$ of mutually disjoint ideal arcs on $\Sigma$, let $\pi_\cR(C):=\bigcup_i \pi_\cR(\alpha_i)$.
Then the following is a direct consequence of \cite[Lemma 2.18]{IK20}:

\begin{lem}\label{lem:pi_S_ideal_arc}
Let $C=\{\alpha_i\}$ be a collection of mutually disjoint ideal arcs on $\Sigma$. Assigning any transverse measure and the positive sign at each puncture to which some of curves in $C$ incident, we get a measured geodesic lamination $\hL_C \in \eML(F) = \X_\Sigma^\uf(\bR^\trop)$. Then the homotopy class of the support of $\pi_\cR(\hL_C) \in \eML(F_\cR) = \X_{\Sigma_\cR}^\uf(\bR^\trop)$ coincides with $\bigcup \pi_\cR(C)$.
\end{lem}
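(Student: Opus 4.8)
The plan is to reduce \cref{lem:pi_S_ideal_arc} to the single-arc case, which is then handled by \cite[Lemma 2.18]{IK20} together with the geometric description of $\pi_\cR^\trop$ as a cut-and-respiral operation. First I would record the combinatorial description of $\pi_\cR(\alpha)$: after straightening $\alpha$ to a geodesic (in the chosen hyperbolic structure $F$, where each $C_k$ is also geodesic), the geodesic $\alpha$ meets the union $\bigcup_k C_k$ transversally in finitely many points, and cutting along those points partitions $\alpha$ into a finite union of geodesic subsegments, each running between marked points or boundary circles of $F_\cR$. These subsegments are precisely the ideal arcs comprising $\pi_\cR(\alpha)$, as specified in the definition preceding the statement. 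So the content is that the geometric operation $\pi_\cR^\trop$ on the lamination $\hL_C$ produces, as the homotopy type of its support, exactly this collection of subsegments.

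The key steps, in order, would be: (1) Reduce to $C = \{\alpha\}$ a single arc, since $\pi_\cR^\trop$ acts on a disjoint union of laminations componentwise up to the global respiralling correction, and the supports of the components of $\hL_C$ are disjoint geodesics whose cut pieces remain disjoint in $F_\cR$; the respiralling only affects behaviour near the new boundary circles $\partial_k^\pm$, not the homotopy class of the compact/non-compact pieces away from those circles, so it suffices to track one arc at a time. (2) For a single geodesic arc $\alpha$ with a chosen transverse measure, form $\hL_\alpha \in \eML(F)$; the geodesic representative of its support is $\alpha$ itself. (3) Apply the construction of $\pi_\cR^\trop$ verbatim: cut $\alpha$ along the geodesics $C_k$ to obtain $(\bar G, \bar\mu)$ on $F_\cR$ — whose support is the union of geodesic subsegments of $\alpha$, possibly meeting $\partial_k^\pm$ non-perpendicularly or perpendicularly — and then replace it by the unique lamination sharing the compact part and the homotopy class of the non-compact part but spiralling positively onto each $\partial_k^\pm$, which exists and is unique by \cite[Lemma 2.18]{IK20}. (4) Observe that the respiralling modification does not change the homotopy class of the support \emph{as a collection of ideal arcs of $\Sigma_\cR$}: each subsegment is an arc between punctures (the new punctures $p_k^\pm$ being the shrunk boundary circles), and an ideal arc ending at a puncture is determined up to isotopy by its endpoints and its homotopy class in the punctured surface, independently of the direction or amount of spiralling — which is exactly the same phenomenon that makes $\pi_\cR^\trop$ well-defined. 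Hence the support of $\pi_\cR(\hL_\alpha)$ is homotopic to $\pi_\cR(\alpha)$, and assembling over the components of $C$ and noting disjointness gives the claim.

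The main obstacle I expect is Step (4): making precise that the positive-spiralling normalization built into $\pi_\cR^\trop$ does not alter the homotopy class of the support once we pass from "measured geodesic lamination" to "collection of ideal arcs." One must be careful that an arc spiralling infinitely onto a boundary circle, when the circle is collapsed to a puncture $p_k^\pm$, becomes a finite-length ideal arc ending at that puncture, and that two such arcs differing only in their spiralling (or in having a perpendicular versus spiralling end) are isotopic rel punctures in $\Sigma_\cR$. This is morally \cite[Lemma 2.18]{IK20} read in reverse — the uniqueness statement there is precisely what pins down a single homotopy class — so the right move is to invoke that lemma both for existence/uniqueness of $\pi_\cR^\trop(\hL_C)$ and for the identification of its support type, rather than re-proving the spiralling bookkeeping. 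A secondary point to check is that no two distinct components of $C$ produce subsegments that become homotopic or intersecting in $\Sigma_\cR$; this follows because disjoint simple geodesics in $F$ cut to disjoint simple geodesics in $F_\cR$, and the respiralling is performed simultaneously and consistently at each $\partial_k^\pm$.
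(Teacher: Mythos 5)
Your proof is correct and follows what the paper intends: the paper dispenses with the statement in one line (``a direct consequence of [IK20, Lemma 2.18]''), and your argument is a careful unpacking of that line — identifying that the cut-and-respiral construction of $\pi_\cR^\trop$ sends the geodesic support of $\hL_C$ to the union of geodesic subsegments of the $\alpha_i$, and that the positive-spiralling normalization, justified by the cited lemma, does not alter the homotopy class of those subsegments as ideal arcs of $\Sigma_\cR$. You have not introduced a different method, only supplied the bookkeeping the paper suppresses.
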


For a tagged triangulation $\tri = (\tri, \varsigma_\tri) \in \Exch_\Sigma$ of $\Sigma$, define a tagged triangulation $\oline{\tri} = (\oline{\tri}, \oline{\varsigma_\tri})$ of $\Sigma_\cR$ as follows:
\begin{itemize}
    \item $\oline{\varsigma_\tri}(p) :=
    \begin{cases}
    \varsigma_\tri(p) & \mbox{if } p \in P,\\
    + & \mbox{if $p=p_k^\pm$ for some $k=1,\dots,K$.}
    \end{cases}$
    \item $\oline{\tri} := \pi_\cR(\tri)$.
\end{itemize}
See \cref{fig:cutting_example} for an example.

\begin{lem}\label{lem:collapsed_tri}
\begin{enumerate}
    \item The collection $\oline{\tri}$ of arcs give a triangulation of $\Sigma_\cR$.
    \item For any $\alpha \in \tri$, we have
    \begin{align*}
    \oline{f_\alpha(\tri)} =
        \begin{cases}
            f_{\oline{\alpha}}(\oline{\tri}) & \mbox{if $x_\alpha^\tri(C_k)=0$ for all $k=1,\dots,K$},\\
            \oline{\tri} & \mbox{otherwise}.
        \end{cases}
    \end{align*}
    Here $\oline{\alpha} \in \pi_\cR(\alpha)$ is a unique ideal arc along which we can perform a flip of $\oline{\tri}$. 
\end{enumerate}
\end{lem}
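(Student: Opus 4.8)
The plan is to work entirely on the hyperbolic side, using the PL identifications $\X_\Sigma^\uf(\bR^\trop) \cong \eML(F)$ and $\X_{\Sigma_\cR}^\uf(\bR^\trop) \cong \eML(F_\cR)$ and the fact (from \cite[Lemma 2.18]{IK20}, recalled as \cref{lem:pi_S_ideal_arc}) that for a collection $C$ of disjoint ideal arcs the homotopy type of the support of $\pi_\cR(\hL_C)$ is $\bigcup \pi_\cR(C)$. For part (1), I would first observe that since each curve $C_k$ is essential and in minimal position with respect to the geodesic triangulation $\tri$ (as each leaf of $\tri$ is a geodesic), cutting along the $C_k$'s carries each triangle of $\tri$ to a union of truncated polygons in $F_\cR$; then $\pi_\cR(\tri)$ is obtained by subdividing each such polygon back into triangles, which is exactly what the spiralling convention in the definition of $\pi_\cR^\trop$ accomplishes. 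The key point is that an ideal arc $\alpha$ can be cut into several ``subarcs,'' and those subarcs whose endpoints both land on the same boundary geodesic $C_k$ become closed curves, not arcs — but these are precisely the ones that spiral into the new puncture $p_k^\pm$ under the positive-spiralling normalization, so in $\Sigma_\cR$ they still give ideal arcs. A counting argument, or better an invocation of the characterization of triangulations by local flatness (each complementary region is an ideal triangle), finishes part (1).

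For part (2) the dichotomy is governed by whether the curves in $\cR$ cross $\alpha$ (which by the shear-coordinate/intersection-number dictionary is detected by $x_\alpha^\tri(C_k) \ne 0$ for some $k$, since $x_\alpha^\tri$ of a simple closed curve measures its algebraic intersection with $\alpha$ weighted by shears, and this vanishes iff $C_k$ can be isotoped off $\alpha$). First I would treat the case $x_\alpha^\tri(C_k) = 0$ for all $k$: then $\alpha$ survives the cutting as a single ideal arc $\oline\alpha$ of $\oline\tri$, the flip $f_\alpha$ replaces $\alpha$ by the other diagonal $\alpha'$ of the quadrilateral containing it, and since that quadrilateral is disjoint from all $C_k$ it descends intact to a quadrilateral in $\Sigma_\cR$; hence $\pi_\cR(f_\alpha(\tri))$ differs from $\pi_\cR(\tri)$ exactly by the flip at $\oline\alpha$, giving $f_{\oline\alpha}(\oline\tri)$. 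In the complementary case, some $C_k$ crosses $\alpha$: then in $F_\cR$ the arc $\alpha$ is broken into pieces, each contained in a single triangle of $\oline\tri$ up to homotopy, and the new diagonal $\alpha'$ after the flip is homotopic (rel endpoints, after spiralling-normalization) to the union of $\alpha$ with pieces of the triangulation edges — in particular $\pi_\cR(\alpha') = \pi_\cR(\alpha)$ as collections of ideal arcs in $\Sigma_\cR$, so $\oline{f_\alpha(\tri)} = \oline\tri$. I would also need to verify that the asserted $\oline\alpha$ is well-defined, i.e.\ that among the arcs of $\pi_\cR(\alpha)$ exactly one is an internal edge of $\oline\tri$ along which a flip is possible; this follows because the two triangles of $\tri$ adjacent to $\alpha$ have disjoint interiors from $\cR$ on the side of $\alpha$ where $\alpha$ is ``not cut,'' and that side produces the flippable edge.

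The main obstacle I expect is the careful bookkeeping in the second case of part (2): after cutting a quadrilateral that is crossed by several of the $C_k$'s and then applying the positive-spiralling normalization of \cite[Lemma 2.18]{IK20}, one must check that the flipped diagonal really does become homotopic to a concatenation of the old diagonal with triangulation edges, so that no new isotopy class of arc is introduced in $\Sigma_\cR$. This is where the precise statement of the uniqueness in \cite[Lemma 2.18]{IK20} (sharing the compact part and the homotopy class of the non-compact part) must be used delicately, together with the observation that a flip is a local move supported in the union of the two triangles adjacent to $\alpha$, so only the portion of the picture inside those two triangles matters. A clean way to organize this is to reduce to the case $K=1$ of a single curve $C=C_1$ by an induction on $K$ (cutting along the remaining curves afterwards commutes with everything), and within that to a purely local planar picture of $C$ crossing an ideal quadrilateral, which can be analyzed by an explicit isotopy.
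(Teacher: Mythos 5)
There is a genuine gap in your treatment of part (2), rooted in a mischaracterization of what the shear coordinate detects. You assert that ``$x_\alpha^\tri(C_k)=0$ iff $C_k$ can be isotoped off $\alpha$'' and, in the first case, that ``the quadrilateral is disjoint from all $C_k$'' so that $Q_\alpha$ ``descends intact.'' This is false. The shear coordinate of a simple closed curve at $\alpha$ only registers those arcs of $C_k \cap Q_\alpha$ that connect a side of one triangle to a non-adjacent side of the other (the ``long diagonal'' crossings); the corner arcs at the two endpoints of $\alpha$ (which go from $a$ to $d$ near one endpoint, or from $b$ to $c$ near the other) do cross $\alpha$ geometrically, but contribute $0$ to $x_\alpha^\tri$. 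This is exactly what the left picture of \cref{fig:quadrilateral_intersection} depicts: the curves of $\cR$ cross both $Q_\alpha$ and $\alpha$ itself, yet the shear coordinates vanish. So in the zero-shear case $Q_\alpha$ is not left alone by $\cR$; it is in general cut into several pieces, $\alpha$ itself is broken into several sub-arcs, and one must identify the unique piece of $Q_\alpha \setminus (Q_\alpha \cap \cR)$ that becomes a quadrilateral after the shrinking procedure and check that its diagonal is the distinguished arc $\oline{\alpha} \in \pi_\cR(\alpha)$ --- which is precisely what the paper's case analysis does, and which your plan skips because of the erroneous disjointness claim.

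Your treatment of the nonzero-shear case is also not quite what the paper does, and I am not convinced of your claim that $\pi_\cR(\alpha')=\pi_\cR(\alpha)$ as collections of arcs (this is stronger than what is needed and requires a careful isotopy argument that you yourself flag as the main obstacle). The paper instead observes that when some $C_k$ has a long-diagonal crossing of $Q_\alpha$, no piece of $Q_\alpha\setminus(Q_\alpha\cap\cR)$ survives as a quadrilateral after shrinking, so $Q_\alpha$ and $f_\alpha(Q_\alpha)$ contribute identical sets of arcs to $\oline{\tri}$ and $\oline{f_\alpha(\tri)}$; this sidesteps the delicate bookkeeping entirely. Your reduction to $K=1$ is a reasonable organizational idea, but it does not repair the gap above. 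For part (1), your sketch is directionally plausible, but note that the paper's argument is one-triangle-at-a-time: each complementary region of $\tri$ descends either to a bigon (which is collapsed) or to a single triangle of $\oline{\tri}$, rather than being ``subdivided back into triangles'' as you suggest.
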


\begin{proof}
(1): 
Evidently $\oline{\tri}$ is a collection of ideal arcs in $\Sigma_\cR$. 
Some of them may coincide to each other, and each complementary region of $\tri$ either degenerates to a bigon (and collapsed) or descends to a triangle in $\Sigma_\cR$. By the construction, the surface $\Sigma_\cR$ is covered by the resulting triangles. 

(2):
Consider the quadrilateral $Q_\alpha$ in $\tri$ which contains $\alpha$ as a diagonal. It is cutted by the curves in $\cR$ into several pieces. If $x_\alpha^\tri(C_k)=0$ for all $k=1,\dots,K$, then the intersection $Q_\alpha \cap \cR$ looks as in the left of \cref{fig:quadrilateral_intersection}, and there exists a unique connected component of $Q_\alpha \setminus (Q_\alpha \cap \cR)$ which gives rise to a quadrilateral after the shrinking procedure. Moreover it contains a diagonal $\oline{\alpha}$ coming from $\pi_\cR(\alpha)$ and we get $\oline{f_\alpha(\tri)} = f_{\oline{\alpha}}(\oline{\tri})$. On the other hand, if $x_\alpha^\tri(C_k)\neq 0$ for some $k=1,\dots,K$, then the intersection $Q_\alpha \cap \cR$ looks as in the right of \cref{fig:quadrilateral_intersection}. In this case no quadrilateral survives after the shrinking procedure, and in particular $Q_\alpha$ and $f_\alpha(Q_\alpha)$ results in the same configuration. 
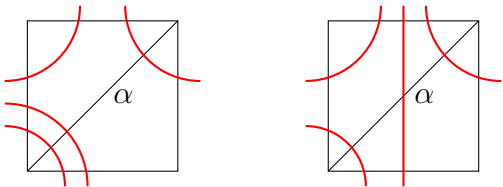
\begin{figure}[h]
\begin{tikzpicture}
\draw (0,2) -- (2,2) -- (2,0) -- (0,0) --cycle;
\draw (2,2) -- node[midway,right]{$\alpha$} (0,0);
\draw[red,thick] (0.7,2.2) arc[x radius=1cm, y radius=1cm, start angle=0, end angle=-90];
\draw[red,thick] (1.3,2.2) arc[x radius=1cm, y radius=1cm, start angle=180, end angle=270];
\draw[red,thick] (0.5,-0.2) arc[x radius=0.8cm, y radius=0.8cm, start angle=0, end angle=90];
\draw[red,thick] (0.8,-0.2) arc[x radius=1.1cm, y radius=1.1cm, start angle=0, end angle=90];
\begin{scope}[xshift=4cm]
\draw (0,2) -- (2,2) -- (2,0) -- (0,0) --cycle;
\draw (2,2) -- node[midway,right]{$\alpha$} (0,0);
\draw[red,thick] (1,2.2) -- (1,-0.2);
\draw[red,thick] (0.7,2.2) arc[x radius=1cm, y radius=1cm, start angle=0, end angle=-90];
\draw[red,thick] (1.3,2.2) arc[x radius=1cm, y radius=1cm, start angle=180, end angle=270];
\draw[red,thick] (0.5,-0.2) arc[x radius=0.8cm, y radius=0.8cm, start angle=0, end angle=90];
\end{scope}
\end{tikzpicture}
    \caption{The intersection $Q_\alpha \cap \cR$. The curves in $\cR$ are shown in red.}
    \label{fig:quadrilateral_intersection}
\end{figure}
\end{proof}

\begin{dfn}\label{def:compatible_arcs}
We say that an ideal arc $\alpha \in \tri$ satisfying $x_\alpha^\tri(C_k)=0$ for all $k=1,\dots,K$ is \emph{$\cR$-compatible}. 
\end{dfn}
Note that this is equivalent to the condition $x_\alpha^\tri(L)=0$ for all $L \in \cC(\cR)$, where $\cC(\cR)$ is the rational polyhedral cone in $\cU_\Sigma^\uf(\bZ^\trop)$ consisting of the integral laminations of the form $L=\bigsqcup_{k=1}^K w_k C_k$ with $w_k \geq 0$. 

From \cref{lem:collapsed_tri} we get a map of graphs $\pi_\cR:\Exch_\Sigma \to \Exch_{\Sigma_\cR}$, where an edge in $\Exch_\Sigma$ corresponding to an $\cR$-compatible arc is projected to an edge in $\Exch_{\Sigma_\cR}$ corresponding to $\oline{\alpha} \in \oline{\tri}$, while the others are projected to vertices.

\begin{ex}
Here is an example of the cutting procedure.
Let $\Sigma$ be an annulus with 2 special points on each of the boundary component.
Consider the unique essential simple closed curve $C$ in $\Sigma$, and let $\cR := \{C\}$.
Then the resulting surface $\Sigma_\cR$ has two connected components: a once-punctured disk with 4 special points and a once-punctured disk with 2 special points.

Let us consider the triangulation $\tri$ of $\Sigma$ like in the configuration of the middle left of \cref{fig:cutting_example}, which induces the triangulation $\oline{\tri}$ of $\Sigma_\cR$ as shown in the middle right.
Then for example, the ideal arc in $\tri$ labeled by $1$ is $\cR$-compatible, while the arc labeled by $2$ is not. The flip along the former arc induces a flip on the induced triangulation (as shown in the bottom row), while the latter does not (as in the top row).  

\begin{figure}[h]
    \centering
    \begin{tikzpicture}[scale=.9]
\draw  (0,3.5) node (v1) {} ellipse (0.5 and 0.5);
\draw  (v1) node (v2) {} ellipse (2 and 2);
\draw [cyan] (v2) ellipse (1.25 and 1.25);
\node [fill, circle, inner sep=1.3] (v4) at (0,4) {};
\node [fill, circle, inner sep=1.3] at (0,3) {};
\node [fill, circle, inner sep=1.3] (v3) at (0,5.5) {};
\node [fill, circle, inner sep=1.3] (v5) at (0,1.5) {};
\node [fill, circle, inner sep=1.3] at (-2,3.5) {};
\node [fill, circle, inner sep=1.3] at (2,3.5) {};
\draw [blue] (v3) -- (v4);
\draw [blue] (0,3) -- (v5);
\draw [blue] (0,4) .. controls (-0.5,4) and (-1.25,4) .. (-2,3.5);
\draw [blue] (0,3) .. controls (-0.5,3) and (-1.25,3) .. (-2,3.5);
\draw [blue] (0,4) .. controls (0.5,4) and (1.25,4) .. (2,3.5);
\draw [blue] (0,3) .. controls (0.5,3) and (1.25,3) .. (2,3.5);
\node [blue] at (0.25,5.05) {\scriptsize 1};
\node [blue] at (-1.5,4) {\scriptsize 2};
\node [blue] at (-1.5,3) {\scriptsize 3};
\node [blue] at (-0.2,1.95) {\scriptsize 4};
\node [blue] at (1.5,3) {\scriptsize 5};
\node [blue] at (1.5,4) {\scriptsize 6};
\node [cyan] at (0.95,2.25) {$C$};

\draw [thick, ->](3,3.5) -- (4.5,3.5);
\draw (6.5,3.5) ellipse (1 and 1);
\draw  (9.5,3.5) ellipse (1 and 1);
\node [fill, circle, inner sep=1.3] (v6) at (6.5,4.5) {};
\node [fill, circle, inner sep=1.3] (v8) at (5.5,3.5) {};
\node [fill, circle, inner sep=1.3] (v7) at (6.5,2.5) {};
\node [fill, circle, inner sep=1.3] (v9) at (7.5,3.5) {};
\node [fill, circle, inner sep=1.3] at (6.5,3.5) {};
\node [fill, circle, inner sep=1.3] (v10) at (9.5,4.5) {};
\node [fill, circle, inner sep=1.3] (v11) at (9.5,2.5) {};
\node [fill, circle, inner sep=1.3] at (9.5,3.5) {};
\draw [blue]  (v6) edge (v7);
\draw [blue]  (v8) edge (v9);
\draw [blue]  (v10) edge (v11);
\node [blue] at (6.7,4.05) {\scriptsize 1};
\node [blue] at (6,3.65) {\scriptsize 2,3};
\node [blue] at (6.3,2.95) {\scriptsize 4};
\node [blue] at (7,3.3) {\scriptsize 5,6};
\node [blue] at (9.9,3.95) {\scriptsize 6,2,1};
\node [blue] at (9.1,3.05) {\scriptsize 5,4,3};
\node at (3.75,3.9) {$\pi_{\mathcal{R}}$};
\node [blue] at (-1.9,5.4) {$\triangle$};
\node at (2.6,1.25) {\small $x_{\triangle}(C) = (0, -1, 1, 0,-1, 1)$};

\draw  (0,10) node (v1) {} ellipse (0.5 and 0.5);
\draw  (v1) node (v2) {} ellipse (2 and 2);
\draw [cyan] (v2) ellipse (1.25 and 1.25);
\node [fill, circle, inner sep=1.3] (v4) at (0,10.5) {};
\node [fill, circle, inner sep=1.3] at (0,9.5) {};
\node [fill, circle, inner sep=1.3] (v3) at (0,12) {};
\node [fill, circle, inner sep=1.3] (v5) at (0,8) {};
\node [fill, circle, inner sep=1.3] at (-2,10) {};
\node [fill, circle, inner sep=1.3] at (2,10) {};
\draw [blue] (v3) -- (v4);
\draw [blue] (0,9.5) -- (v5);
\draw [blue] (0,9.5) .. controls (-0.5,9.5) and (-1.25,9.5) .. (-2,10);
\draw [blue] (0,10.5) .. controls (0.5,10.5) and (1.25,10.5) .. (2,10);
\draw [blue] (0,9.5) .. controls (0.5,9.5) and (1.25,9.5) .. (2,10);
\node [blue] at (0.25,11.55) {\scriptsize 1};
\node [blue] at (-0.95,10.15) {\scriptsize 2};
\node [blue] at (-1.5,9.5) {\scriptsize 3};
\node [blue] at (-0.2,8.45) {\scriptsize 4};
\node [blue] at (1.5,9.5) {\scriptsize 5};
\node [blue] at (1.5,10.5) {\scriptsize 6};
\node [cyan] at (0.95,8.75) {$C$};

\draw [thick, ->](3,10) -- (4.5,10);
\draw (6.5,10) ellipse (1 and 1);
\draw  (9.5,10) ellipse (1 and 1);
\node [fill, circle, inner sep=1.3] (v6) at (6.5,11) {};
\node [fill, circle, inner sep=1.3] (v8) at (5.5,10) {};
\node [fill, circle, inner sep=1.3] (v7) at (6.5,9) {};
\node [fill, circle, inner sep=1.3] (v9) at (7.5,10) {};
\node [fill, circle, inner sep=1.3] at (6.5,10) {};
\node [fill, circle, inner sep=1.3] (v10) at (9.5,11) {};
\node [fill, circle, inner sep=1.3] (v11) at (9.5,9) {};
\node [fill, circle, inner sep=1.3] at (9.5,10) {};
\draw [blue]  (v6) edge (v7);
\draw [blue]  (v8) edge (v9);
\draw [blue]  (v10) edge (v11);
\node [blue] at (6.75,10.5) {\scriptsize 1,2};
\node [blue] at (6,10.15) {\scriptsize 3};
\node [blue] at (6.3,9.45) {\scriptsize 4};
\node [blue] at (7,9.8) {\scriptsize 5,6};
\node [blue] at (9.8,10.45) {\scriptsize 6,1};
\node [blue] at (9.5,9.55) {\scriptsize 5,4 3,2};
\node at (3.75,10.4) {$\pi_{\mathcal{R}}$};
\node [blue] at (-1.9,11.9) {$f_2(\triangle)$};

\draw [-implies, double distance=2pt](0,1) -- (0,-0.5);
\draw [-implies, double distance=2pt](0,6) -- (0,7.5);

\draw  (0,-3) node (v1) {} ellipse (0.5 and 0.5);
\draw  (v1) node (v2) {} ellipse (2 and 2);
\draw [cyan] (v2) ellipse (1.25 and 1.25);
\node [fill, circle, inner sep=1.3] (v4) at (0,-2.5) {};
\node [fill, circle, inner sep=1.3] at (0,-3.5) {};
\node [fill, circle, inner sep=1.3] (v3) at (0,-1) {};
\node [fill, circle, inner sep=1.3] (v5) at (0,-5) {};
\node [fill, circle, inner sep=1.3] at (-2,-3) {};
\node [fill, circle, inner sep=1.3] at (2,-3) {};
\draw [blue] (0,-3.5) -- (v5);
\draw [blue] (0,-2.5) .. controls (-0.5,-2.5) and (-1.25,-2.5) .. (-2,-3);
\draw [blue] (0,-3.5) .. controls (-0.5,-3.5) and (-1.25,-3.5) .. (-2,-3);
\draw [blue] (0,-2.5) .. controls (0.5,-2.5) and (1.25,-2.5) .. (2,-3);
\draw [blue] (0,-3.5) .. controls (0.5,-3.5) and (1.25,-3.5) .. (2,-3);
\node [blue] at (0,-1.35) {\scriptsize 1};
\node [blue] at (-1.45,-2.55) {\scriptsize 2};
\node [blue] at (-1.5,-3.5) {\scriptsize 3};
\node [blue] at (-0.2,-4.55) {\scriptsize 4};
\node [blue] at (1.5,-3.5) {\scriptsize 5};
\node [blue] at (1.45,-2.55) {\scriptsize 6};
\node [cyan] at (0.95,-4.25) {$C$};

\draw [thick, ->](3,-3) -- (4.5,-3);
\draw (6.5,-3) ellipse (1 and 1);
\draw  (9.5,-3) ellipse (1 and 1);
\node [fill, circle, inner sep=1.3] (v6) at (6.5,-2) {};
\node [fill, circle, inner sep=1.3] (v8) at (5.5,-3) {};
\node [fill, circle, inner sep=1.3] (v7) at (6.5,-4) {};
\node [fill, circle, inner sep=1.3] (v9) at (7.5,-3) {};
\node [fill, circle, inner sep=1.3] (v12) at (6.5,-3) {};
\node [fill, circle, inner sep=1.3] (v10) at (9.5,-2) {};
\node [fill, circle, inner sep=1.3] (v11) at (9.5,-4) {};
\node [fill, circle, inner sep=1.3] at (9.5,-3) {};
\draw [blue]  (v8) edge (v9);
\draw [blue]  (v10) edge (v11);
\node [blue] at (6.55,-2.35) {\scriptsize 1};
\node [blue] at (6,-3.2) {\scriptsize 2,3};
\node [blue] at (6.3,-3.6) {\scriptsize 4};
\node [blue] at (7,-3.2) {\scriptsize 5,6};
\node [blue] at (9.8,-2.55) {\scriptsize 6,2};
\node [blue] at (9.1,-3.45) {\scriptsize 5,4,3};
\node at (3.75,-2.6) {$\pi_{\mathcal{R}}$};
\node [blue] at (-1.9,-1.1) {$f_1(\triangle)$};

\draw [blue](-2,-3) .. controls (-1.5,-1) and (1.5,-1) .. (2,-3);
\draw [blue](0,9.5) .. controls (-1.45,9.5) and (-0.5,11.5) .. (0,12);
\draw [blue] (v12) edge (v7);
\draw [blue](5.5,-3) .. controls (5.6,-2.35) and (7.4,-2.35) .. (7.5,-3);
\node at (0.5,0.25) {$f_1$};
\node at (0.5,6.75) {$f_2$};
\draw [-implies, double distance=2pt](8,1) -- (8,-0.5);
\draw [double distance=2pt](8,6) -- (8,7.5);
\node at (8.5,0.25) {$f_1$};

\node [blue] at (8,11.5) {$\overline{f_2(\triangle)}$};
\node [blue] at (8,5) {$\overline{\triangle}$};
\node [blue] at (8,-1.5) {$\overline{f_1(\triangle)}$};
\end{tikzpicture}
    \caption{An example of the cutting procedure and the induction of flips. Left column: the surface $\Sigma$ and the reduction system $\cR$. Right column: the reduced surface $\Sigma_\cR$.\CORRECTED}
    \label{fig:cutting_example}
\end{figure}
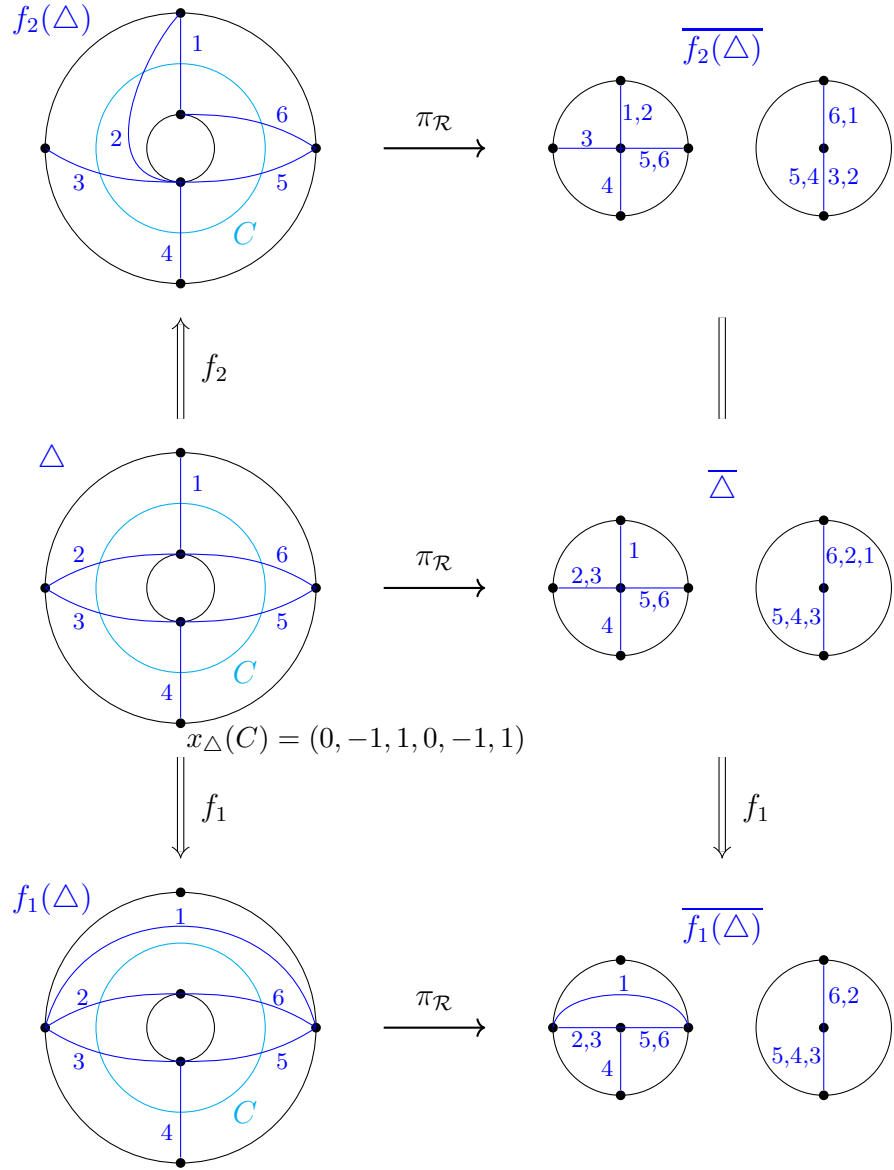

\end{ex}

Now let us study the differentiable points of the map \eqref{eq:pi_S}.
\begin{prop}\label{lem:pi_full_rk}
The set of points at which the map \eqref{eq:pi_S} is intrinsically differentiable contains the open $\Gamma_{\bs_\Sigma}$-invariant subset $\mathfrak{U}_\Sigma:=\mathfrak{U}_{\bs_\Sigma}$ (see \cref{lem:differentiable_domain}). Moreover, it is dense in $ \cX_\Sigma^\uf(\bR^\trop)$.
\end{prop}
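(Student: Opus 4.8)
The plan is to exploit the explicit geometric/combinatorial description of $\pi_\cR^\trop$ via cutting geodesic laminations, together with the fact, recorded in \cref{lem:differentiable_domain}, that on the chart $\X_{(v_0)}^\uf(\bR^\trop)$ the coordinate expression $(\pi_\cR^\trop)_{(v_0)}$ becomes \emph{linear} once we are inside one of the open cones $\mu_{\gamma(v)}^{-1}(\interior \cC_{(v)}^\pm)$. More precisely, I would first reduce to the case where $\hL \in \interior\cC_{(v_0)}^+$ (the $\cC^-$ case being symmetric, and a general point of $\mathfrak{U}_{(v_0)}$ can be moved into such an interior cone by a path $\gamma(v)$ in $\bExch_\Sigma$, under which both source and target transformations are PL isomorphisms that are linear near the relevant points). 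So it suffices to check that, with respect to the coordinate charts attached to a tagged triangulation $\tri$ of $\Sigma$ and the induced $\oline\tri$ of $\Sigma_\cR$, the map $\pi_\cR^\trop$ is linear --- hence certainly differentiable --- on the interior of the cone of positive $\X$-laminations.

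The key step is to write down this linear map. For a curve or lamination $\hL$ with all shear coordinates $x_\alpha^\tri(\hL) \geq 0$ at $\tri$, cutting along the geodesics $\cR$ and re-imposing positive spiralling (as in the construction of $\pi_\cR^\trop$ before \cref{lem:pi_S_ideal_arc}) should have the effect, on shear coordinates, of simply \emph{reading off} the shear coordinate of $\hL$ along the corresponding arc $\oline\alpha \in \oline\tri$ when $\alpha$ is $\cR$-compatible, and of an appropriate nonnegative integer combination of the $x_\beta^\tri(\hL)$ otherwise; the point is that for a tagged triangulation the shear coordinates of $\oline\tri$ are computed locally in the quadrilaterals $Q_\alpha$, and in the two configurations of \cref{fig:quadrilateral_intersection} the recipe is a fixed linear formula on the half-space $x_\alpha^\tri \geq 0$. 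I would verify this locally, quadrilateral by quadrilateral, using \cref{lem:collapsed_tri} and \cref{lem:pi_S_ideal_arc} to identify the arcs of $\oline\tri$ and the sign conventions at the new punctures $p_k^\pm$. Since each $x_{\oline\alpha}^{\oline\tri}\big(\pi_\cR^\trop(\hL)\big)$ is then expressed as a linear function of $(x_\beta^\tri(\hL))_\beta$ valid throughout the open cone $\interior\cC_{(v_0)}^+$, intrinsic differentiability on $\mathfrak{U}_\Sigma$ follows; $\Gamma_{\bs_\Sigma}$-invariance of $\mathfrak{U}_\Sigma$ is already in \cref{lem:differentiable_domain}, and openness is clear.

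For the density statement, I would argue that $\mathfrak{U}_{(v_0)}$ itself is dense in $\X_{(v_0)}^\uf(\bR^\trop)$: it is the increasing union over $v$ of the open cones $\mu_{\gamma(v)}^{-1}(\interior\cC_{(v)}^\pm)$, and these are exactly the top-dimensional cells of (the geometric realization of) the Fock--Goncharov cluster complex $|\fF_{(v_0)}^\pm|$; the closure of $|\fF_{(v_0)}^+| \cup |\fF_{(v_0)}^-|$ contains all rational points $\bR_{>0}\cdot\X_{(v_0)}^\uf(\bQ^\trop)$ (each integral $\X$-lamination, equivalently each rational point, lies in some maximal cone after enough mutations, by the surface dictionary of \cref{thm:geom_models} — every integral lamination is a nonnegative combination of the curves dual to some tagged triangulation), and the rational points are dense in $\X_{(v_0)}^\uf(\bR^\trop) \cong \bR^{I_\uf}$. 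Hence $\mathfrak{U}_\Sigma$ is dense, and a fortiori so is the (possibly larger) set of intrinsic differentiability of $\pi_\cR^\trop$. The main obstacle I anticipate is the bookkeeping in the local linear computation of the shear coordinates of $\oline\tri$ in terms of those of $\tri$ — in particular handling self-folded triangles, univalent punctures, and the tagged sign conventions at $p_k^\pm$ so that the formula is genuinely linear (and not merely piecewise-linear) on all of $\interior\cC_{(v_0)}^+$; everything else is a formal consequence of \cref{lem:differentiable_domain} and the structure of the cluster complex.
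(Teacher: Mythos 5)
Your plan for the differentiability part is the right shape — reduce to $\interior\cC_\tri^+$ via equivariance, show linearity there, and propagate to the rest of $\mathfrak{U}_\Sigma$ — but the description of the linear formula is muddled in a way that hides the reason it works. You propose to verify linearity ``quadrilateral by quadrilateral,'' distinguishing $\cR$-compatible arcs from the rest; that case split governs how \emph{flips} project (\cref{lem:collapsed_tri}), not how shear coordinates project. The clean computation (which is what the paper does) exploits that a point $\hL\in\interior\cC_\tri^+$ \emph{is} a positively weighted collection of arcs of $\tri$ (with positive spiralling), so by \cref{lem:pi_S_ideal_arc} the support of $\pi_\cR(\hL)$ is exactly $\pi_\cR(\tri)=\oline\tri$, and the weights are additive under cutting; one gets the single uniform formula
\begin{align*}
x_{\alpha'}^{\oline\tri}\bigl(\pi_\cR(\hL)\bigr)=\sum_{\substack{\alpha\in\tri\\ \pi_\cR(\alpha)\ni\alpha'}} x_\alpha^{\tri}(\hL),
\end{align*}
valid on all of $\cC_\tri^+$. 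There is no quadrilateral bookkeeping, self-folded-triangle or sign-convention issue to handle, precisely because you are not computing shear coordinates of a general lamination but of an arc-weighted one.

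The density argument contains a genuine error. You justify density of $\mathfrak{U}_\Sigma$ by asserting that every integral $\X$-lamination is a nonnegative combination of arcs of some tagged triangulation, hence that $|\fF^+_{(v_0)}|\cup|\fF^-_{(v_0)}|$ contains all rational points. This is false: a weighted simple closed curve is an integral $\X$-lamination, and its shear coordinates have mixed signs in \emph{every} tagged triangulation, so it lies in no cone $\cC^\pm_{(v)}$. (On a once-punctured torus, for instance, any non-peripheral simple closed curve has shear coordinates summing to zero, hence is never in a positive or negative orthant.) So the cluster complex does not contain all rational points, and rational density of $\bR^n$ does not yield density of $\mathfrak{U}_\Sigma$. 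The density of $|\fF^+_{\bs_\Sigma}|\cup|\fF^-_{\bs_\Sigma}|$ is a nontrivial theorem — the paper invokes Yurikusa's density-of-$g$-vector-cones result \cite{Yur} — and your argument needs to appeal to it (or to a Thurston-type density theorem for arc-weighted laminations) rather than to containment of rational points.
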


\begin{proof}
Fix a tagged triangulation $\tri \in \Exch_\Sigma$ and consider the coordinate expression $\pi_\tri$ of the map $\pi_\cR$, which makes the following diagram commutative:
\[
\begin{tikzcd}[row sep=small]
\cX^\uf_{\Sigma}(\bR^\trop) \ar[r, "\pi_\cR"] & \cX_{\Sigma_\cR}^\uf(\bR^\trop)\\
\cX^\uf_{\tri}(\bR^\trop) \ar[u,"\rotatebox{90}{$\sim$}"] \ar[r, "\pi_{\tri}"'] & \cX_{\oline{\tri}}^\uf(\bR^\trop). \ar[u,"\rotatebox{90}{$\sim$}"']
\end{tikzcd}
\]
First we prove that $\pi_{\tri}$ is differentiable in the interior of $\cC_{\tri}^+$, and compute the tangent map there.
Recall from \cite[Lemma]{IK20} that a point $\hL \in \interior\cC_{\tri}^+$ is given by a collection of real weighted ideal arcs parallel to the ideal triangulation $\tri$. 
By \cref{lem:pi_S_ideal_arc}, the homotopy class of the support of $\pi_\cR(\hL)$ again coincides with $\pi_\cR(\tri)$. Hence we get\footnote{The formula looks similar to those for the amalgamation \cite[(11)]{FG06} or the cluster folding \cite[Definition 3.7]{FG06}.} 
\[x_{\alpha'}^{\oline{\tri}}(\pi_\cR(\hL)) = \sum_{\substack{\alpha \in \tri\\ \pi_\cR(\alpha) \ni \alpha'}} x_{\alpha}^\tri(\hL),\]
for all $\hL \in \cC_\tri^+$, 
and thus $\pi_\tri$ is differentiable on $\interior \cC_\tri^+$.

For any path $\gamma:\tri \to \tri'$ in $\Exch_\Sigma$, the map $\mu_\gamma$ is linear on $\interior\cC^+_{\tri}$ (\cite[Lemma 3.12]{IK19}). A similar statement holds for the path $\oline{\gamma}:\oline{\tri} \to \oline{\tri'}$ in $\Exch_{\oline{\Sigma}}$ obtained by applying the graph projection $\Exch_\Sigma \to \Exch_{\bar{\Sigma}}$ to $\gamma$. 
Since $\mu_{\oline{\gamma}}\circ\pi_\tri=\pi_{\tri'}\circ \mu_\gamma$ by \cref{lem:collapsed_tri}, $\pi_{\tri'}$ is linear on $\interior\cC^+_{\tri'}$, we see that $\pi_\tri$ is differentiable as well on $\mu_\gamma^{-1}(\interior\cC_{\tri'}^+) \subset \cX^\uf_{\tri}(\bR^\trop)$. 
A similar argument applies for the cones $\cC_{\tri'}^-$. 
Hence $\pi_\tri$ is differentiable at any point in the set $\mathfrak{U}_\tri \subset \X_\tri^\uf(\bR^\trop)$.
In our case, it is known \cite{Yur} that the subset $|\mathfrak{F}_{\bs_\Sigma}^+| \cup |\mathfrak{F}_{\bs_\Sigma}^-|$ is dense in $\X_\Sigma^\uf(\bR^\trop)$, and so is the open subset $\mathfrak{U}_\Sigma$. 
\end{proof}

\subsection{Reduction of representation paths}\label{subsec:reduction_path}
Although \cref{lem:collapsed_tri} does not tell us how to project each vertical edge in $\bExch_\Sigma$, it turns out that we can project a representation path of a mutation loop. 

Fix $(\tri_0, \ell_0) \in \bExch_\Sigma$. 
For any path $\gamma: (\tri_0, \ell_0) \to (\tri, \ell)$ which represents a mutation loop $\phi$, we are going to associate a path $\oline{\gamma}$ in $\bExch_{\bar{\Sigma}}$ which represents the mutation loop $\pi_\cR(\phi)$. 
First deform $\gamma$ by a natural deformation into the form $\gamma = \gamma_{\mathrm{hori}} * \gamma_{\mathrm{ver}}$, where 
\begin{align*}
    \gamma_{\mathrm{hori}}: (\tri_0, \ell_0) \overbar{k_0} (\tri_1, \ell_1) \overbar{k_1} \cdots \overbar{k_{h-1}} (\tri, \ell_h)
\end{align*}
consists of horizontal edges and 
\begin{align*}
    \gamma_{\mathrm{ver}}: (\tri, \ell_h) \xrightarrow{\sigma} (\tri, \ell)
\end{align*}
consists of vertical edges. 
By definition, we have $\tri = \phi^{-1}(\tri_0)$ and $\sigma.\ell_h = \ell = \phi^{-1} \circ \ell_0$.
Recall from \cite[Lemma 5.8]{IK20} that this deformation preserves the sign of the path. 
Further we fix a labeling $\lambda_0$ of the tagged triangulation $\oline{\tri_0}$, namely we fix a vertex $(\oline{\tri_0}, \lambda_0) \in \bExch_{\Sigma_\cR}$.
Then, we define a path $\oline{\gamma}$ in $\bExch_{\Sigma_\cR}$ as follows:
\begin{itemize}
    \item Take the subsequence $(\alpha_0, \alpha_1, \dots, \alpha_{\oline{h}-1})$ of $(\ell_0(k_0), \ell_1(k_1), \dots, \ell_{h-1}(k_{h-1}))$ of $\cR$-compatible arcs, where $\oline{h} \leq h$. Let 
    \[
    \oline{\gamma_{\mathrm{hori}}}: (\oline{\tri_0}, \lambda_0) \xrightarrow{(\lambda_0^{-1}(\oline{\alpha}_0), \dots, \lambda_{\oline{h}-1}^{-1}(\oline{\alpha}_{\oline{h}-1}))}(\oline{\tri}, \lambda_{\oline{h}}).
    \]
    be the corresponding horizontal path starting from $(\oline{\tri_0},\lambda_0)$, where the labelings $\lambda_i$ for $i = 0, \dots, \oline{h}$ are determined inductively by the labeled flip.
    We have $\oline{\tri}=\oline{\phi^{-1}(\tri_0)}=\pi_\cR(\phi)^{-1}(\oline{\tri_0})$ by \cref{lem:collapsed_tri,lem:pi_S_equivariance,lem:pi_S_ideal_arc}.  
    \item Take a permutation $\rho \in \fS_{I(\Sigma_\cR)}$ satisfying $\rho.\lambda_{\oline{h}} = \pi_\cR(\phi)^{-1}\circ\lambda_0$, and let 
    \begin{align*}
        \oline{\gamma_{\mathrm{ver}}}: (\oline{\tri},\lambda_{\oline{h}}) \xrightarrow{\rho} (\oline{\tri},\rho.\lambda_{\oline{h}})
    \end{align*}
    be some corresponding vertical path.
\end{itemize}
Then define $\oline{\gamma} := \oline{\gamma_{\mathrm{hori}}} * \oline{\gamma_{\mathrm{ver}}}$, which is a path in $\bExch_{\Sigma_\cR}$.

\begin{prop}\label{prop:pi_MCG}
The above path $\oline{\gamma}$ represents the mutation loop $\pi_\cR(\phi) \in \Gamma_{\Sigma_\cR}$. 
If the path $\gamma$ has a strict sign at a point $\hL \in \X_\Sigma^\uf(\bR^\trop)$, then so is the path $\oline{\gamma}$ at the point $\pi_\cR(\hL)$. 
\end{prop}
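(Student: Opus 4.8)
The plan is to prove the two assertions separately: the mutation‑loop statement by locating the endpoints of $\oline\gamma$ and using rigidity of triangulations, and the sign statement by intertwining the tropical $\X$‑transformations of $\gamma$ and $\oline\gamma$ through $\pi_\cR^\trop$ and checking that $\pi_\cR^\trop$ preserves the shear coordinate along an $\cR$‑compatible arc.

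\textbf{$\oline\gamma$ represents $\pi_\cR(\phi)$.} The construction already records that $\oline\gamma=\oline{\gamma_{\mathrm{hori}}}\ast\oline{\gamma_{\mathrm{ver}}}$ is a path from $v_0':=(\oline{\tri_0},\lambda_0)$ to $(\oline{\tri},\rho.\lambda_{\oline h})$ with $\oline{\tri}=\oline{\phi^{-1}(\tri_0)}=\pi_\cR(\phi)^{-1}(\oline{\tri_0})$ (by \cref{lem:collapsed_tri,lem:pi_S_equivariance,lem:pi_S_ideal_arc}) and $\rho.\lambda_{\oline h}=\pi_\cR(\phi)^{-1}\circ\lambda_0$ (by the choice of $\rho$); in other words $\oline\gamma\colon v_0'\to\pi_\cR(\phi)^{-1}(v_0')$. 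Since $\pi_\cR(\phi)\in\Gamma_{\Sigma_\cR}$, the two endpoints are $\bs$‑equivalent, so $\oline\gamma$ represents some mutation loop $\psi\in\Gamma_{\Sigma_\cR}$ with $\psi^{-1}(v_0')=\pi_\cR(\phi)^{-1}(v_0')$; and because $\Sigma_\cR$ acquires the extra punctures $p_k^{\pm}$, \cref{thm:Tri_Exch} lets us regard $\oline\gamma$ as living in $\bTri(\Sigma_\cR)$, on which $\Gamma_{\Sigma_\cR}$ acts freely on vertices (a mapping class, possibly with reflection part, fixing a labeled tagged triangulation is the identity, by the Alexander method). Hence $\psi=\pi_\cR(\phi)$. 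Alternatively one may compare the PL automorphisms of $\X_{\Sigma_\cR}^\uf(\bR^\trop)$ induced by $\oline\gamma$ and by $\pi_\cR(\phi)$, using \cref{lem:pi_S_equivariance} and the intertwining below.

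\textbf{Transport of strict signs.} By \cite[Lemma 5.8]{IK20} we may assume $\gamma=\gamma_{\mathrm{hori}}\ast\gamma_{\mathrm{ver}}$ as in the construction, and since vertical edges do not affect signs it suffices to compare $\gamma_{\mathrm{hori}}$ with $\oline{\gamma_{\mathrm{hori}}}$. The first ingredient is the intertwining
\[
\pi_\cR^\trop\bigl(\mu_{\gamma_{\leq j}}(\hL)\bigr)=\mu_{(\oline{\gamma_{\mathrm{hori}}})_{\leq n(j)}}\bigl(\pi_\cR^\trop(\hL)\bigr)
\]
for each prefix $\gamma_{\leq j}$ of $\gamma_{\mathrm{hori}}$, where $n(j)$ counts the $\cR$‑compatible flips among the first $j$ flips of $\gamma_{\mathrm{hori}}$. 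This follows by induction on $j$ from \cref{lem:collapsed_tri}(2) — the projection of an elementary flip along $\alpha$ is the flip along $\oline\alpha$ if $\alpha$ is $\cR$‑compatible, and is (a relabeling of) the identity otherwise — combined with the commutation of the tropical $\X$‑transformation with $\pi_\cR^\trop$ in coordinates, i.e. the relation $\mu_{\oline\gamma}\circ\pi_\tri=\pi_{\tri'}\circ\mu_\gamma$ already used in the proof of \cref{lem:pi_full_rk} (ultimately: cutting a lamination along $\cR$ is insensitive to the triangulation used to coordinatize it).

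\textbf{Shear coordinates and conclusion.} The second ingredient is that for an $\cR$‑compatible arc $\alpha\in\tri$ one has $x_{\oline\alpha}^{\oline{\tri}}(\pi_\cR^\trop(\hL))=x_\alpha^{\tri}(\hL)$ for every $\hL$. Indeed $x_\alpha^{\tri}(\hL)$ depends only on the trace of $\hL$ on the quadrilateral $Q_\alpha$, and by \cref{lem:collapsed_tri}(2) (left picture of \cref{fig:quadrilateral_intersection}) $Q_\alpha$ contains the distinguished component $Q_{\oline\alpha}$ of $Q_\alpha\setminus(Q_\alpha\cap\cR)$ which shrinks to the quadrilateral of $\oline{\tri}$ with diagonal $\oline\alpha$; since $\pi_\cR^\trop$ retains the compact part and the homotopy class of the non‑compact part of $\hL$ and alters only the spiralling about the $\partial_k^{\pm}$, the crossing pattern of $\pi_\cR^\trop(\hL)$ in $Q_{\oline\alpha}$ coincides with that of $\hL$ in $Q_\alpha$, whence the two shear coordinates agree. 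Putting the two ingredients together with \cref{d:sign}, the $\nu$‑th sign of $\oline\gamma$ at $\pi_\cR(\hL)$ equals $\sgn x_{\oline{\alpha'}}^{\oline{\tri'}}(\pi_\cR^\trop(\mu_{\gamma_{\leq j_\nu}}(\hL)))=\sgn x_{\alpha'}^{\tri'}(\mu_{\gamma_{\leq j_\nu}}(\hL))$, where $\alpha'$ is the $\nu$‑th $\cR$‑compatible arc flipped by $\gamma_{\mathrm{hori}}$ (occurring at step $j_\nu$, in the triangulation $\tri'$); this is the $j_\nu$‑th entry of $\boldsymbol{\epsilon}_\gamma(\hL)$, hence strict by hypothesis. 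Therefore $\boldsymbol{\epsilon}_{\oline\gamma}(\pi_\cR(\hL))$ — which is exactly the subsequence of $\boldsymbol{\epsilon}_\gamma(\hL)$ indexed by the $\cR$‑compatible steps — is a strict sign. The step I expect to be the main obstacle is the shear‑coordinate identity above, more precisely controlling the spiralling contributions at a vertex of $Q_{\oline\alpha}$ that happens to be one of the new punctures $p_k^{\pm}$; this is exactly where the $\cR$‑compatibility $x_\alpha^{\tri}(C_k)=0$ must be used to pin down the intersection $Q_\alpha\cap\cR$, in the regime governed by \cite[Lemma 2.18]{IK20}.
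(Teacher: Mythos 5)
Your proof follows the paper's two‑step strategy exactly. For the first assertion the paper likewise just observes that by construction $\oline\gamma$ runs from $(\oline{\tri_0},\lambda_0)$ to $\pi_\cR(\phi)^{-1}(\oline{\tri_0},\lambda_0)$ and concludes; your write‑up makes explicit the underlying fact (freeness of the $\Gamma_{\bs_{\Sigma_\cR}}$‑action on vertices of $\bExch_{\Sigma_\cR}$) that a path's endpoint pins down the mutation loop. For the second assertion the paper's entire proof is the single sentence that $\boldsymbol{\epsilon}_{\oline\gamma}(\pi_\cR(\hL))$ is a subsequence of $\boldsymbol{\epsilon}_\gamma(\hL)$; you correctly unwind this via the intertwining $\pi_\cR^\trop\circ\mu_{\gamma_{\le j}}=\mu_{(\oline{\gamma_{\mathrm{hori}}})_{\le n(j)}}\circ\pi_\cR^\trop$ (established in the proof of \cref{lem:pi_full_rk}) into the shear statement $\sgn\,x_{\oline\alpha}^{\oline\tri}(\pi_\cR^\trop(\hL))=\sgn\,x_\alpha^\tri(\hL)$ for $\cR$‑compatible $\alpha$, which is precisely what the paper leaves implicit.

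One caution: you claim the stronger equality $x_{\oline\alpha}^{\oline\tri}(\pi_\cR^\trop(\hL))=x_\alpha^\tri(\hL)$ for all $\hL$, supported only by the ``crossing pattern coincides'' heuristic. The linear formula $x_{\alpha'}^{\oline\tri}(\pi_\cR(\hL))=\sum_{\beta:\,\pi_\cR(\beta)\ni\alpha'}x_\beta^\tri(\hL)$ from the proof of \cref{lem:pi_full_rk} visibly gives this only on the cones $\cC_{\tri}^{\pm}$ (and then on the Fock--Goncharov fan by the intertwining); the heuristic quietly skips the fact that $\pi_\cR^\trop$ forces positive spiralling at the new punctures $p_k^\pm$, which sit as vertices of $Q_{\oline\alpha}$, so one must check that this choice cannot flip the sign of a shear contribution — exactly the point where the $\cR$‑compatibility $x_\alpha^\tri(C_k)=0$ (i.e.\ the arcs of $\cR\cap Q_\alpha$ are all zero‑shear corner cuts) has to be used. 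Since the paper itself records the subsequence fact without elaboration, your argument is at the same level of rigor as the source, and you rightly flag this step as the crux.
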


\begin{proof}
By construction, we have
\begin{align*}
    (\oline{\tri},\rho.\lambda_{\oline{h}})=(\pi_\cR(\phi)^{-1}(\oline{\tri_0}),\pi_\cR(\phi)^{-1}\circ \lambda_0) = \pi_\cR(\phi)^{-1}(\tri_0,\lambda_0).
\end{align*}
Thus, the path $\oline{\gamma}$ represents $\pi_\cR(\phi)$. 
The second statement follows, since the sign $\boldsymbol{\epsilon}_{\oline{\gamma}}(\pi_\cR(\hL))$ is a subsequence of the sign $\boldsymbol{\epsilon}_{\gamma}(\hL)$. 
\end{proof}

In particular, the reduction homomorphism $\pi_\cR:\Gamma_{\Sigma,\cR} \to \Gamma_{\Sigma_\cR}$ is given by $\phi=[\gamma] \mapsto \pi_\cR(\phi)=[\oline{\gamma}]$. 

\begin{cor}\label{cor:reduction_SS}
If a path $\gamma:(\tri_0,\ell_0) \to (\tri,\ell)$ in $\bExch_\Sigma$ represents a mutation loop $\phi \in \Gamma_{\Sigma,\cR}$ and sign-stable on an $\bR_{>0}$-invariant subset $\Omega \subset \X_{(\tri_0,\ell_0)}^\uf(\bR^\trop)$, then the path $\oline{\gamma}$ is sign-stable on $\pi_\cR(\Omega) \subset \X_{(\oline{\tri_0},\lambda_0)}^\uf(\bR^\trop)$.
\end{cor}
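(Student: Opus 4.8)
The plan is to deduce this from two facts already in hand: the equivariance of $\pi_\cR$ under the cluster modular groups (\cref{lem:pi_S_equivariance}), and the subsequence relation between the signs of $\gamma$ and $\oline{\gamma}$ established in \cref{prop:pi_MCG}. First I would record two preliminary observations. The tropical projection $\pi_\cR^\trop$ is homogeneous of degree one — cutting a measured geodesic lamination along $\cR$ commutes with rescaling its transverse measure — so $\pi_\cR(\Omega)$ is again $\bR_{>0}$-invariant, and $\pi_\cR(w)=0$ iff $w=0$; in particular $\pi_\cR(\Omega)\setminus\{0\}=\pi_\cR(\Omega\setminus\{0\})$ up to the origin. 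Next, by the very construction of $\oline{\gamma}$ preceding \cref{prop:pi_MCG}, the set of positions of horizontal edges of $\gamma_{\mathrm{hori}}$ that survive in $\oline{\gamma_{\mathrm{hori}}}$ — those indices $i$ at which the flipped arc $\ell_i(k_i)$ is $\cR$-compatible, i.e.\ $x^{\tri_i}_{\ell_i(k_i)}(C_k)=0$ for all $k$ (\cref{def:compatible_arcs}) — depends only on the triangulations along $\gamma$ and on $\cR$, not on any lamination. Thus there is a fixed ``forgetful'' map $\mathrm{pr}\colon\{+,0,-\}^h\to\{+,0,-\}^{\oline h}$ extracting this subsequence, and \cref{prop:pi_MCG} says precisely that $\boldsymbol{\epsilon}_{\oline{\gamma}}(\pi_\cR(\hL))=\mathrm{pr}(\boldsymbol{\epsilon}_\gamma(\hL))$ for every $\hL\in\X_\Sigma^\uf(\bR^\trop)$.

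Then I would fix an arbitrary $\bar w\in\pi_\cR(\Omega)\setminus\{0\}$, choose $w\in\Omega\setminus\{0\}$ with $\pi_\cR(w)=\bar w$, write $\boldsymbol{\epsilon}^\stab:=\boldsymbol{\epsilon}^\stab_{\gamma,\Omega}$ for the stable sign, and take $n_0=n_0(w)$ with $\boldsymbol{\epsilon}_\gamma(\phi^n(w))=\boldsymbol{\epsilon}^\stab$ for all $n\geq n_0$. Translating \cref{lem:pi_S_equivariance} into the coordinate charts at $(\tri_0,\ell_0)$ and $(\oline{\tri_0},\lambda_0)$ — the identification being the commutative square used in the proof of \cref{lem:pi_full_rk} — gives $\pi_\cR(\phi^n(w))=\pi_\cR(\phi)^n(\pi_\cR(w))=\pi_\cR(\phi)^n(\bar w)$. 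Combining this with the subsequence identity above, for all $n\geq n_0$,
\[
\boldsymbol{\epsilon}_{\oline{\gamma}}\bigl(\pi_\cR(\phi)^n(\bar w)\bigr)
=\boldsymbol{\epsilon}_{\oline{\gamma}}\bigl(\pi_\cR(\phi^n(w))\bigr)
=\mathrm{pr}(\boldsymbol{\epsilon}^\stab).
\]
The right-hand side is a fixed sequence of strict signs, independent of both $\bar w$ and $n$; since $\bar w$ was arbitrary, this exhibits $\boldsymbol{\epsilon}^\stab_{\oline{\gamma},\pi_\cR(\Omega)}:=\mathrm{pr}(\boldsymbol{\epsilon}^\stab_{\gamma,\Omega})$ as a stable sign for $\oline{\gamma}$ on $\pi_\cR(\Omega)$, which is exactly the assertion of the corollary.

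The only point that genuinely needs care is the uniformity of the subsequence extraction: the argument collapses if the set of surviving indices could vary with the point. But, as noted, $\cR$-compatibility of an arc is a condition on the triangulation and $\cR$ alone, so the map $\mathrm{pr}$ is intrinsic to the chosen representative $\gamma=\gamma_{\mathrm{hori}}*\gamma_{\mathrm{ver}}$ and is unchanged under iterating $\phi$; here one uses that the deformation into the form $\gamma_{\mathrm{hori}}*\gamma_{\mathrm{ver}}$ is performed once and for all and, by \cite[Lemma 5.8]{IK20}, does not affect signs. Granting this, the corollary is a formal consequence of \cref{prop:pi_MCG} and \cref{lem:pi_S_equivariance}; no new estimates are involved. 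I would also remark that this incidentally computes the stable sign of $\oline{\gamma}$ in terms of that of $\gamma$, which is what \cref{introthm:reduction_spec} will need for the equality of spectral radii (together with the differentiability of $\pi_\cR$ from \cref{lem:pi_full_rk}).
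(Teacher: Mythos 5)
Your argument is the paper's proof: lift a nonzero point of $\pi_\cR(\Omega)$ to $\Omega\setminus\{0\}$, use the $\Gamma_{\Sigma,\cR}$-equivariance of $\pi_\cR^\trop$ from \cref{lem:pi_S_equivariance} to push forward the iterated dynamics, and invoke \cref{prop:pi_MCG} (the sign of $\oline{\gamma}$ at $\pi_\cR(\hL)$ is the subsequence of the sign of $\gamma$ at $\hL$ over the $\cR$-compatible flips, and strictness is inherited). One misstatement worth flagging: the claim ``$\pi_\cR(w)=0$ iff $w=0$'' is false — the fiber of $\pi_\cR^\trop$ over $0$ is the cone $\cC(\cR)$ spanned by the $C_k$ (cf.\ \cref{lem:ker_pi_S}, and \cref{lem:barU_in_X}(2) in the case $\cR=\cR_\partial$) — but you only use the trivial direction (a lift of a nonzero point is nonzero, since $\pi_\cR(0)=0$), so the argument is unaffected.
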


\begin{proof}
For any $\hL \in \Omega \setminus \{0\}$, the sign stability of $\gamma$ tells us that $\boldsymbol{\epsilon}_\gamma(\phi^n(\hL)) = \boldsymbol{\epsilon}_\gamma^\stab$ with a strict sign $\boldsymbol{\epsilon}_\gamma^\stab$ for sufficiently large $n$. Now write an arbitrary point in $\pi_\cR(\Omega) \setminus \{0\}$ as $\pi_\cR(\hL)$ for some $\hL \in \Omega \setminus \{0\}$. Then 
\begin{align*}
    \boldsymbol{\epsilon}_{\oline{\gamma}}(\pi_\cR(\phi^n)(\pi_\cR(\hL))) = \boldsymbol{\epsilon}_{\oline{\gamma}}(\pi_\cR(\phi^n(\hL))) 
\end{align*}
by \cref{lem:pi_S_equivariance}, and the latter is strict by \cref{prop:pi_MCG}. Thus $\oline{\gamma}$ is sign-stable on $\pi_\cR(\Omega)$. 
\end{proof}


\subsection{Block decomposition of the infinitesimal action}
Using the map $\pi_\cR: \X_\Sigma^\uf(\bR^\trop) \to \X_{\Sigma_\cR}^\uf(\bR^\trop)$, which is intrinsically differentiable almost everywhere by \cref{lem:pi_full_rk}, we give a block decomposition of the tangent map $d\phi$ of a $\cR$-reducible mapping class $\phi \in MC_\cR(\Sigma)$. 

In this subsection, we write $\hL^*:=\pi_\cR(\hL)$ for $\hL \in \mathfrak{U}_\Sigma \subset \X_\Sigma^\uf(\bR^\trop)$, and $\phi^*:=\pi_\cR(\phi)$ for $\phi \in MC_\cR(\Sigma)$.
Note that $\phi(\hL)^*=\phi^*(\hL^*)$ for all $\hL \in \mathfrak{U}_\Sigma$ and $\phi \in MC_\cR(\Sigma)$.
Let us begin with the study of the image and the kernel of the tangent map of $\pi_\cR$.

\begin{lem}\label{lem:im_pi_S}
The image of the map $\pi_\cR$ is characterized by the conditions $\theta_{p_k^+} = \theta_{p_k^-} \geq 0$ for all $k=1,\dots,K$. In particular, for any $\hL \in \mathfrak{U}_\Sigma$ with $\cI_{C_k}(\hL) >0$ for $k=1,\dots,K$, the image of the tangent map 
\begin{align*}
    (d\pi_\cR)_{\hL}: T_{\hL} \X_\Sigma^\uf(\bR^\trop) \to T_{\hL^*}\X_{\Sigma_\cR}^\uf(\bR^\trop)
\end{align*}
is characterized by the linear equations $\theta_{p_k^+} = \theta_{p_k^-}$ for all $k=1,\dots,K$.
\end{lem}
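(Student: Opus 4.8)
The plan is to establish the global description of $\mathrm{im}(\pi_\cR)$ first, in the geometric model of \cref{thm:geom_models}, and then differentiate it. Recall that $\pi_\cR$ cuts a measured geodesic lamination along the curves $C_k$ and re-normalizes so that the result spirals \emph{positively} around the new boundary components $\partial_k^\pm$. Because the two sides of $C_k$ see the same portion of $\hL$, the spirallings around $\partial_k^+$ and around $\partial_k^-$ carry equal transverse measure, so $\theta_{p_k^+}(\pi_\cR(\hL))=\theta_{p_k^-}(\pi_\cR(\hL))$, and this common value is $\geq 0$ since the spiralling is positive (and $>0$ as soon as $\cI_{C_k}(\hL)>0$); one may double-check this on $\interior\cC_\tri^+$, where $\hL=\sum_\alpha c_\alpha\alpha$ is a positive combination of $\tri$-parallel ideal arcs and the summing formula of \cref{lem:pi_full_rk} gives $\theta_{p_k^\pm}(\pi_\cR(\hL))=\sum_\alpha c_\alpha\,\cI_{C_k}(\alpha)=\cI_{C_k}(\hL)$, because each transverse point of $\alpha\cap C_k$ produces exactly one endpoint of $\pi_\cR(\alpha)$ at $p_k^+$ and one at $p_k^-$. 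Hence $\mathrm{im}(\pi_\cR)\subseteq\{\theta_{p_k^+}=\theta_{p_k^-}\geq 0 \mid k=1,\dots,K\}$. For the reverse inclusion, given $\hL'$ in the right-hand set, the equality $\theta_{p_k^+}(\hL')=\theta_{p_k^-}(\hL')$ permits re-gluing $\hL'$ along each $C_k$ (the $p_k^\pm$-ends carry equal transverse measure and can be matched up) and the non-negativity makes the gluing compatible with the positive-spiralling convention; \cite[Lemma 2.18]{IK20} then identifies the re-glued lamination $\hL$ as an element of $\pi_\cR^{-1}(\hL')$. This proves the first assertion.

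For the second assertion, fix $\hL\in\mathfrak{U}_\Sigma$ with $\cI_{C_k}(\hL)>0$ for all $k$ and set $\hL^*:=\pi_\cR(\hL)$, so that $\theta_{p_k^\pm}(\hL^*)=\cI_{C_k}(\hL)>0$. The first assertion gives the identity $(\theta_{p_k^+}-\theta_{p_k^-})\circ\pi_\cR\equiv 0$ on all of $\X_\Sigma^\uf(\bR^\trop)$; since $\pi_\cR$ is intrinsically differentiable at $\hL$ by \cref{lem:pi_full_rk} and each $\theta_p$ is linear in the cluster charts, the chain rule gives $(\theta_{p_k^+}-\theta_{p_k^-})\circ(d\pi_\cR)_\hL=0$, i.e. $\mathrm{im}(d\pi_\cR)_\hL\subseteq\bigcap_{k=1}^K\ker(\theta_{p_k^+}-\theta_{p_k^-})=:T$. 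The functionals $\theta_{p_k^+}-\theta_{p_k^-}$ ($k=1,\dots,K$) are linearly independent — the $2K$ puncture weights $\theta_{p_j^\pm}$ being themselves linearly independent coordinate directions — so $\dim T=\dim\X_{\Sigma_\cR}^\uf(\bR^\trop)-K=\dim\X_\Sigma^\uf(\bR^\trop)-K$, the last equality because cutting along a multicurve leaves $|I_\uf|=3(2g-2+h+b)+\spe$ unchanged.

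It remains to prove $\dim\ker(d\pi_\cR)_\hL\leq K$, which together with the inclusion above forces $\mathrm{im}(d\pi_\cR)_\hL=T$. By (the proof of) \cref{lem:pi_full_rk}, $\pi_\cR$ is not merely differentiable but affine on an open neighbourhood of $\hL$, so $\ker(d\pi_\cR)_\hL$ is the tangent space at $\hL$ of the fibre $\pi_\cR^{-1}(\hL^*)$. By the first assertion, $\hL_1$ lies in this fibre precisely when $\hL_1$ is obtained from $\hL$ by twisting along the curves $C_k$; thus the fibre is the orbit of $\hL$ under the $K$ commuting twist (earthquake) flows along $C_1,\dots,C_K$ — the infinitesimal shadow of the exact sequence $1\to\bZ\langle T_{C_k}\mid k\rangle\to MC_\cR(\Sigma)\to MC(\Sigma_\cR)\to 1$ — and these flows are non-degenerate at $\hL$ exactly because $\cI_{C_k}(\hL)>0$. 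Hence the fibre is $K$-dimensional near $\hL$, so $\dim\ker(d\pi_\cR)_\hL=K$ and $\mathrm{rank}(d\pi_\cR)_\hL=\dim\X_\Sigma^\uf(\bR^\trop)-K=\dim T$, giving $\mathrm{im}(d\pi_\cR)_\hL=T=\bigcap_{k=1}^K\ker(\theta_{p_k^+}-\theta_{p_k^-})$.

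I expect the delicate point to be the identification of the fibre $\pi_\cR^{-1}(\hL^*)$ with the $K$-parameter family of twistings of $\hL$ along the $C_k$: one must verify that re-gluing along $C_k$ is governed by exactly one real parameter — not fewer, which is where the hypothesis $\cI_{C_k}(\hL)>0$ is essential, and not more, since the matching of the $p_k^+$- and $p_k^-$-strands is rigid once their transverse measures agree — and that the fibre contains no laminations beyond these twistings. The remaining ingredients — the arc-counting identity $\theta_{p_k^\pm}(\pi_\cR(\hL))=\cI_{C_k}(\hL)$, the affineness of $\pi_\cR$ near points of $\mathfrak{U}_\Sigma$, and the invariance of $|I_\uf|$ under cutting — are routine.
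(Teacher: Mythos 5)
Your proof takes a genuinely different route from the paper's, and it is worth comparing the two because each does something the other does not.

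For the surjectivity of $\pi_\cR$ onto $\{\theta_{p_k^+}=\theta_{p_k^-}\geq 0\}$, the paper's proof is more explicit than yours: it straightens the spiralling leaves of $\hL'$ so that they hit $C_k$ transversely, takes the two resulting partitions of the common measure $w_k$ on either side of $C_k$, and uses their \emph{common refinement} to glue leaves $g_i\cup g'_j$ into geodesics on $F$. Your appeal to \cite[Lemma 2.18]{IK20} does not quite do this work: that lemma concerns the uniqueness of the positive-spiralling representative of a non-compact lamination, not the existence of a gluing across $C_k$, and the "non-negativity makes the gluing compatible with the positive-spiralling convention" sentence elides the actual matching of transverse measures. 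The common-refinement construction is the missing content and should be spelled out.

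For the "in particular" about the tangent map, your dimension count is an honest attempt to fill a step the paper only states, and the equality $|I_\uf(\Sigma_\cR)|=|I_\uf(\Sigma)|$ (since cutting along a multicurve preserves $-3\chi+\spe$) is correct and the right thing to use. However, the pivotal bound $\dim\ker(d\pi_\cR)_\hL\leq K$ is not justified by what you write. The sentence "by the first assertion, $\hL_1$ lies in this fibre precisely when $\hL_1$ is obtained from $\hL$ by twisting along the curves $C_k$" does not follow from the first assertion, which describes the \emph{image} of $\pi_\cR$ and says nothing about the structure of its fibres; nor is the fibre literally a twist-flow orbit (it also accommodates the weights of components of $\hL$ isotopic to $C_k$). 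The paper computes this kernel directly — in \cref{lem:ker_pi_S}, using a complete train track $\tau$ carrying both $\hL$ and the $C_k$: a path in the fibre can only change the measures on the two edges $e^{\epsilon_k}_{2,(k)},e^{\epsilon_k}_{3,(k)}$ around each $C_k$, the switch conditions force those two derivatives to coincide, and the resulting $K$ free parameters span exactly $\bigoplus_k\bR C_k\subset W(\tau)$. You would need an argument of that kind (or to invoke \cref{lem:ker_pi_S} if the ordering of the lemmas permits) to make your dimension count rigorous; the mapping-class-group exact sequence is suggestive but only controls the \emph{global} kernel of $\pi_\cR^{\grp}$, not the infinitesimal kernel of $\pi_\cR^{\trop}$ at a given $\hL$.

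Once those two gaps are closed, the rest of your argument — the chain-rule inclusion $\im(d\pi_\cR)_\hL\subseteq\bigcap_k\ker(\theta_{p_k^+}-\theta_{p_k^-})$ and the codimension computation — is sound and makes explicit a step the paper leaves to the reader.
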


\begin{proof}
By construction, it is clear that any lamination in the image of $\pi_\cR$ satisfies the conditions $\theta_{p_k^+} = \theta_{p_k^-} \geq 0$ for all $k=1,\dots,K$. Conversely, 
let $\hL' \in \X_{\Sigma_\cR}^\uf(\bR^\trop)$ be a measured geodesic lamination on the hyperbolic surface $F_\cR$ satisfying the conditions. Then the equality $\theta_{p_k^+}(\hL') = \theta_{p_k^-}(\hL')=:w_k$ tells us that the total transverse measures of the leaves spiralling around the both sides of $C_k$ are equal, so we can somehow glue these leaves together to get a geodesic measured lamination on $F$. This can be done as follows. 

First straighten the spiralling leaves so that they hit the curve $C_k$ in $F$ transversely. 
Fix a linear ordering on the resulting geodesics on each side, and label them as $g_1,\dots,g_{n_1}$ (with transverse measures $\mu_1,\dots,\mu_{n_1}$) on one side, and $g'_1,\dots,g'_{n_2}$ (with transverse measures $\nu_1,\dots,\nu_{n_2}$) on the other. 
Then we get two partitions $(\mu_1,\dots,\mu_{n_1})$ and $(\nu_1,\dots,\nu_{n_2})$ of the total measure $w_k$. Take their common refinement $(\rho_1,\dots,\rho_m)$, each of which corresponds to a geodesic of the form $g_i \cup g'_j$ with the transverse measure $\rho_q$ for $q=1,\dots,m$. Performing this construction for each $k=1,\dots,K$, we get a measured geodesic lamination $\hL$ which is clearly projected to $\hL'$ via the projection $\pi_\cR$.
\end{proof}

Recall the notations on train tracks from \cref{sec:train track}.
Let us consider the open dense subset
\begin{align*}
    \mathfrak{V}_{\Sigma, \cR} := \bigcup_{\tau \in CT_\cR} \interior \widehat{V}(\tau) \subset \cX_\Sigma^\uf(\bR^\trop).
\end{align*}
Here, $CT_\cR$ is the set of complete train tracks which carry the curves in $\cR$. 
Since $CT_\cR$ is $\Gamma_{\Sigma,\cR}$-invariant, so is $\mathfrak{V}_{\Sigma, \cR}$.
The density directly follows from \cref{thm:tt_cone_decomp_gen} and \cref{rem:connector_folded} if $\cR$ is a pants decomposition.
If not, consider a pants decomposition $\widetilde{\cR}$ containing $\cR$. Then $CT_{\widetilde{\cR}} \subset CT_{\cR}$ and $\mathfrak{V}_{\Sigma,\widetilde{\cR}} \subset \mathfrak{V}_{\Sigma,\cR}$, hence the latter is also dense. 
Indeed, the subset $\mathfrak{V}_{\Sigma, \cR}$ is obtained from $\mathfrak{V}_{\Sigma, \widetilde{\cR}}$ by adding the faces of $\widehat{V}(\tau)$ corresponding to the curves in $\widetilde{\cR} \setminus \cR$ by \cref{thm:tt_atlas}.

For $\tau \in CT_\cR$, let $W(\tau)$ denote the vector space of functions $\nu: \{\mbox{edges of } \tau\} \to \bR$ satisfying the switch conditions. Let $V_0(\tau) \subset \widehat{V}(\tau)$ be the subset consisting of the enhanced measures on $\tau$ with zero measures on the edges transverse to the boundary (if exists). Then we have a canonical inclusion $V_0(\tau) \subset W(\tau)$, and an identification
\begin{align*}
    T_{\hL} \X_\Sigma^\uf(\bR^\trop) \xrightarrow{(d\psi_\tau^{-1})_{\hL}} T_{\widehat{\nu}}\widehat{V}(\tau) \cong W(\tau)
\end{align*}
for $\hL=\psi_\tau(\widehat{\nu})$. Here the latter isomorphism is given by $\left.\frac{d}{ds}\middle|\right._{s=0}(\widehat{\nu}+s \mu) \mapsto \mu$. 

\begin{lem}\label{lem:ker_pi_S}
For $\hL \in \mathfrak{U}_{\Sigma} \cap \mathfrak{V}_{\Sigma,\cR}$, let $\tau$ be a complete train track which carries $\hL$ and $C_k$ for $k=1,\dots,K$ (recall \cref{rem:connector_folded}). Then under the identification $T_{\hL}\X_\Sigma^\uf(\bR^\trop) \cong W(\tau)$, we get
\begin{align*}
    \ker (d\pi_\cR)_{\hL} = T_{\hL} (\pi_\cR^{-1}(\hL^*)) \cong \bigoplus_{k=1}^K \bR C_k.
\end{align*}
Here $C_k \in V_0(\tau)$ is regarded as a tangent vector via the canonical inclusion $V_0(\tau) \subset W(\tau)$.
\end{lem}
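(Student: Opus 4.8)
The plan is to identify the kernel of $(d\pi_\cR)_{\hL}$ with the tangent space to the fiber $\pi_\cR^{-1}(\hL^*)$ at $\hL$, and then to describe that fiber geometrically. The first step uses \cref{lem:pi_full_rk}: since $\hL \in \mathfrak{U}_\Sigma$ and by \cref{lem:im_pi_S} the image of $(d\pi_\cR)_{\hL}$ is the codimension-$K$ subspace cut out by $\theta_{p_k^+}=\theta_{p_k^-}$, the rank of $(d\pi_\cR)_{\hL}$ equals $\dim \cX_{\Sigma_\cR}^\uf(\bR^\trop)-K$, whence $\dim \ker(d\pi_\cR)_{\hL}=K$ by rank--nullity. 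The inclusion $T_{\hL}(\pi_\cR^{-1}(\hL^*))\subset \ker(d\pi_\cR)_{\hL}$ is automatic, and by the dimension count it will suffice to exhibit $K$ linearly independent tangent directions to the fiber, or alternatively to show the fiber is (locally) a $K$-manifold.

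Next I would describe the fiber explicitly using the gluing construction from the proof of \cref{lem:im_pi_S}. Given $\hL$, a lamination $\hL'$ with $\pi_\cR(\hL')=\hL^*$ must share the compact part and the homotopy class of the non-compact part of $\hL$, and differ only in how the leaves spiralling around each $C_k$ are reglued; this freedom is precisely a real ``twist parameter'' $t_k$ around each $C_k$ (the amount of shear along $C_k$), giving a $K$-parameter family. Deforming $t_k$ by an infinitesimal amount adds to $\hL$ exactly a weighted copy of the curve $C_k$ (the derivative of an earthquake/shear along $C_k$ is the transverse-measure-one lamination supported on $C_k$), which under the identification $T_{\hL}\X_\Sigma^\uf(\bR^\trop)\cong W(\tau)$ is the vector $C_k\in V_0(\tau)\subset W(\tau)$. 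Since $\tau$ carries each $C_k$ as a subtrack disjoint in the relevant sense, these $K$ vectors $C_1,\dots,C_K$ are linearly independent in $W(\tau)$.

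Putting these together: the $K$ independent tangent vectors $C_1,\dots,C_K$ lie in $T_{\hL}(\pi_\cR^{-1}(\hL^*))\subset\ker(d\pi_\cR)_{\hL}$, which is $K$-dimensional, so all three subspaces coincide, giving $\ker(d\pi_\cR)_{\hL}=T_{\hL}(\pi_\cR^{-1}(\hL^*))=\bigoplus_{k=1}^K\bR C_k$. I would also remark that each $C_k$ indeed lies in $\ker(d\pi_\cR)_{\hL}$ directly: adding $sC_k$ to $\hL$ changes only the spiralling pattern around $C_k$, and $\pi_\cR$ forgets this, so $\pi_\cR(\hL+sC_k)=\hL^*$ for $s$ small, whence $(d\pi_\cR)_{\hL}(C_k)=0$; this gives the inclusion $\bigoplus_k \bR C_k\subset\ker(d\pi_\cR)_{\hL}$ without invoking the fiber, and combined with the dimension count closes the argument cleanly.

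The main obstacle I anticipate is justifying carefully that the derivative of the regluing/shear operation along $C_k$ is exactly the lamination $C_k$ as an element of $W(\tau)$, and that this lies in $V_0(\tau)$ (zero weight on boundary-transverse edges) — i.e.\ matching the train-track bookkeeping of \cref{sec:train track} with the hyperbolic-geometry description of shears. One must check that a train track $\tau\in CT_\cR$ carrying both $\hL$ and the $C_k$ really does have $C_k$ as an independent switch-condition solution supported away from the boundary edges, which is where \cref{rem:connector_folded} and the structure theory of complete train tracks carrying a multicurve are needed. The linear independence of $C_1,\dots,C_K$ in $W(\tau)$ follows because distinct components of $\cR$ have disjoint supports on $\tau$, but spelling this out requires the explicit local model of how $\tau$ carries a multicurve.
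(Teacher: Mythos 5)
Your proof is correct and takes a genuinely different route. The paper's proof works directly in the train-track chart: it considers any differentiable path $\hL_s$ in the fiber $\pi_\cR^{-1}(\hL^*)$, shows that the induced measures $\nu_s$ can only vary on the two loop-edges $e_{2,(k)}^{\epsilon_k}, e_{3,(k)}^{\epsilon_k}$ of each annular subtrack $\tau^{\epsilon_k}$, and uses the switch conditions to see that these vary coherently, exhibiting the tangent to the fiber as $\bigoplus_k \bR C_k$. You instead run a rank--nullity argument: by \cref{lem:im_pi_S} the image of $(d\pi_\cR)_{\hL}$ has codimension $K$, and since $\dim\X_\Sigma^\uf(\bR^\trop)=\dim\X_{\Sigma_\cR}^\uf(\bR^\trop)$ (cutting along circles and shrinking the new boundaries preserves both $\chi$ and $|M_\partial|$, and $\dim\X^\uf=-3\chi+|M_\partial|$ — this equality is used implicitly and is worth spelling out, since the paper does not state it), the kernel is $K$-dimensional, so the $K$ visibly independent vectors $C_1,\dots,C_K\in V_0(\tau)$ must span it. Your route is cleaner and bypasses the edge-by-edge bookkeeping, at the cost of the dimension count. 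Both proofs, yours and the paper's, ultimately rely on the same claim that neither spells out in full: that $s\mapsto\psi_\tau(\nu+sC_k)$ stays in the fiber, equivalently $(d\pi_\cR)_{\hL}(C_k)=0$; your justification (``adding $sC_k$ only changes the spiralling'') should be tied to the cut-and-straighten definition of $\pi_\cR^\trop$, and in particular checked when $\iota(\hL,C_k)>0$, where $\psi_\tau(\nu+sC_k)$ is not literally a disjoint union $\hL\sqcup sC_k$. Lastly, the invocation of the derivative of an earthquake along $C_k$ is a distraction: the tangent direction you actually use is the weight direction on $C_k$, not the twist direction, and the elementary fact that $\pi_\cR$ annihilates the curve $C_k$ itself is all you need.
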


\begin{proof}
Let $\hL_s \in \mathfrak{U}_\Sigma \cup \mathfrak{V}_{\Sigma,\cR}$ be a differentiable path with $\hL_0=\hL$. Since $\mathfrak{U}_\Sigma \cup \mathfrak{V}_{\Sigma,\cR}$ is open, there exists a path $\widehat{\nu}_s=(\nu_s,\sigma_{\nu}) \in \interior \widehat{V}(\tau)$ such that $\psi_\tau(\widehat{\nu}_s) = \hL_s$ and having the constant signature $\sigma_{\nu}$ for sufficiently small $s \in \bR$. 
For $k=1,\dots,K$, consider the subtrack of $\tau$ around the curve $C_k$ as shown in \cref{fig:connector_ver_Hat}, say of the type $\tau^{\epsilon_k}$ for some $k \in \{+,-\}$, whose edges are now labeled as $e_{1,(k)}^{\epsilon_k}$, $e_{2,(k)}^{\epsilon_k}$ and $e_{3,(k)}^{\epsilon_k}$. 

Suppose that the path $\hL_s$ lies in the fiber $\pi_\cR^{-1}(\hL^*)$. Then the measure $\nu_s(e)$ on an edge $e$ must be constant, except for the edges $e_{2,(k)}^{\epsilon_k}$ and $e_{3,(k)}^{\epsilon_k}$ for $k=1,\dots,K$. 
Moreover, the signature $\sigma_\nu$ must be constant. 
Hence the derivative $\left.\frac{d}{ds}\middle|\right._{s=0} \nu_s(e)$ is zero except for these edges, and we have 
\begin{align*}
    \left.\frac{d}{ds}\middle|\right._{s=0} \nu_s(e_{2,(k)}^{\epsilon_k})=\left.\frac{d}{ds}\middle|\right._{s=0} \nu_s(e_{3,(k)}^{\epsilon_k})=:w_k
\end{align*}
by the switch condition. Then the measure $\mu(\boldsymbol{w}) \in W(\tau)$ given by $\mu(e_{2,(k)}^{\epsilon_k})=\mu(e_{3,(k)}^{\epsilon_k}):=w_k$ for $k=1,\dots,K$ and $\mu(e):=0$ for other edges  coincides with the image of the vector $\sum_k w_kC_k$ under the canonical inclusion $V_0(\tau) \subset W(\tau)$, and thus we get 
\begin{align*}
    T_{\hL}(\pi_\cR^{-1}(\hL^*)) \to \bigoplus_{k=1}^K \bR C_k, \quad \left.\frac{d}{ds}\middle|\right._{s=0}\hL_s   \mapsto \mu(\boldsymbol{w})=\sum_k w_kC_k,
\end{align*}
which is clearly a linear isomorphism.
\end{proof}

For an $\cR$-reducible mapping class $\phi \in MC_\cR(\Sigma)$, let $\sigma_\cR(\phi) \in \fS_K$ denote the induced permutation of the curves in $\cR$.

\begin{lem}
Let $\phi \in MC_\cR(\Sigma)$.
Then the restriction 
\begin{align*}
    (d\phi)_{\hL}: T_{\hL} (\pi_\cR^{-1}(\hL^*)) \to T_{\phi(\hL)} (\pi_\cR^{-1}(\phi(\hL)^*))
\end{align*}
of the tangent map is presented by the matrix $\sigma_\cR(\phi)$ with respect to the basis $(C_1,\dots,C_K)$ given in \cref{lem:ker_pi_S}.
\end{lem}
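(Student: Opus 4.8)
The plan is to compute the restriction of $(d\phi)_{\hL}$ to the fiber direction directly in a train-track chart, where it becomes the pushforward of weight functions by the homeomorphism $\phi$ and hence visibly permutes the distinguished coordinates $C_1,\dots,C_K$.

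First, a few reductions. Since $\hL\in\mathfrak{U}_\Sigma$, the map $\phi$ is intrinsically differentiable at $\hL$ by \cref{lem:differentiable_domain}, so $(d\phi)_{\hL}$ is defined; and since $\mathfrak{U}_\Sigma$ and $\mathfrak{V}_{\Sigma,\cR}$ are $\Gamma_{\Sigma,\cR}$-invariant, the point $\phi(\hL)$ again lies in $\mathfrak{U}_\Sigma\cap\mathfrak{V}_{\Sigma,\cR}$, so \cref{lem:ker_pi_S} applies at $\phi(\hL)$ as well. From the equivariance $\pi_\cR\circ\phi=\phi^*\circ\pi_\cR$ (\cref{lem:pi_S_equivariance}) it follows that $\phi$ restricts to a bijection $\pi_\cR^{-1}(\hL^*)\to\pi_\cR^{-1}(\phi(\hL)^*)$, so that $(d\phi)_{\hL}$ carries $T_{\hL}(\pi_\cR^{-1}(\hL^*))$ into $T_{\phi(\hL)}(\pi_\cR^{-1}(\phi(\hL)^*))$ and the statement makes sense.

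Next, fix a complete train track $\tau\in CT_\cR$ carrying $\hL$ and all the curves $C_k$, with the local pictures around the $C_k$ as in \cref{lem:ker_pi_S}, and write $\hL=\psi_\tau(\widehat{\nu})$ with $\widehat{\nu}=(\nu,\sigma_\nu)\in\interior\widehat{V}(\tau)$. Because $\phi$ is a homeomorphism of $\Sigma$ preserving $\cR$, it maps $\tau$ to a complete train track $\phi(\tau)\in CT_\cR$ carrying $\phi(\hL)$ and the curves $\phi(C_k)=C_{\sigma(k)}$, where $\sigma:=\sigma_\cR(\phi)$; I would use $\phi(\tau)$ to compute the identification $T_{\phi(\hL)}\X_\Sigma^\uf(\bR^\trop)\cong W(\phi(\tau))$ and the distinguished basis of \cref{lem:ker_pi_S} at $\phi(\hL)$. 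The key point is the naturality of the train-track charts under the mapping class group, namely $\psi_{\phi(\tau)}(\phi_*\widehat{\mu})=\phi(\psi_\tau(\widehat{\mu}))$ for enhanced measures $\widehat{\mu}$ on $\tau$, where $\widehat{\mu}\mapsto\phi_*\widehat{\mu}$ is the linear pushforward $W(\tau)\to W(\phi(\tau))$ induced by $\phi$ carrying the edges of $\tau$ to those of $\phi(\tau)$ (the signature part of $\widehat{\mu}$ transforms by the permutation of punctures induced by $\phi$, which plays no role below). Differentiating this identity at $\hL$ identifies $(d\phi)_{\hL}\colon W(\tau)\to W(\phi(\tau))$ with $\phi_*$. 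Now by \cref{lem:ker_pi_S} the tangent vector $C_k\in T_{\hL}(\pi_\cR^{-1}(\hL^*))$ is the weight function on $\tau$ supported on the subtrack around $C_k$ with the values described there, realized by the path $\widehat{\nu}+s\,C_k\in\interior\widehat{V}(\tau)$, which has constant signature and stays in the fiber for small $s$; its image $\phi_*C_k$ is the analogous weight function on the subtrack of $\phi(\tau)$ around $\phi(C_k)=C_{\sigma(k)}$, i.e.\ exactly the $\sigma(k)$-th distinguished basis vector at $\phi(\hL)$. Hence $(d\phi)_{\hL}(C_k)=C_{\sigma(k)}$ for every $k$, and the restriction is presented by the permutation matrix of $\sigma_\cR(\phi)$.

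I expect the main obstacle to be this naturality of the train-track parametrizations under mapping classes, together with the verification that it descends to the tangent level at $\hL$ compatibly with the identification $T_{\hL}\X_\Sigma^\uf(\bR^\trop)\cong W(\tau)$ used in \cref{lem:ker_pi_S}; this is a bookkeeping statement about the train-track conventions recalled earlier rather than a new difficulty, but it carries essentially all the content of the lemma --- once it is in place, the evaluation on the basis and the appearance of $\sigma_\cR(\phi)$ are immediate.
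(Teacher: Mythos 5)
Your proof is correct and takes essentially the same route as the paper: both hinge on the equivariance $\phi(\psi_\tau(\widehat{\nu}))=\psi_{\phi(\tau)}(\phi_*\widehat{\nu})$ of the train-track parametrizations, then differentiate the path $\psi_\tau(\widehat{\nu}+s\mu(\boldsymbol{w}))$ to see that $(d\phi)_{\hL}$ just permutes the basis vectors $C_k$ by $\sigma_\cR(\phi)$. (You are in fact slightly more careful than the paper in spelling out that $\phi(\tau)$ is the chart used to identify the distinguished basis at $\phi(\hL)$; the paper's final displayed formula reads $w_{\sigma(k)}$ where your bookkeeping gives $w_{\sigma^{-1}(k)}$, but this is a harmless convention discrepancy about which side the permutation acts on.)
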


\begin{proof}
Recall that for any mapping class $\phi' \in MC(\Sigma)$ and a train track $\tau$, we have a bijection $\phi'_*: \{\mbox{edges of $\tau$}\} \xrightarrow{\sim} \{\mbox{edges of $\phi'(\tau)$}\}$ and 
\begin{align*}
    \phi'(\psi_\tau((\nu,\varsigma_\nu))) = \psi_{\phi'(\tau)}((\nu\circ (\phi'_*)^{-1}, \sigma_\Sigma(\phi')^*\varsigma_\nu))
\end{align*}
for any enhanced measure $\widehat{\nu}=(\nu,\varsigma_\nu) \in \widehat{V}(\tau)$ by the equivariance of the construction explained in \cref{subsec:train track_lamination}.
Let us write $\phi(\widehat{\nu}):=(\nu\circ (\phi'_*)^{-1}, \sigma_\Sigma(\phi')^*\varsigma_\nu)$. 

For $\boldsymbol{w}=(w_1,\dots,w_K) \in \bR^K$, let $\hL_s(\boldsymbol{w}):=\psi_\tau(\widehat{\nu}+s\mu(\boldsymbol{w}))$ be a path in the fiber $\pi_\cR^{-1}(\hL^*)$ whose derivative gives $\mu(\boldsymbol{w})=\sum_k w_kC_k$. Then we get
\begin{align*}
    \phi(\hL_s(\boldsymbol{w})) = \psi_{\phi(\tau)}(\phi(\widehat{\nu})+s\mu(\sigma.\boldsymbol{w})),
\end{align*}
where $\sigma:=\sigma_\cR(\phi)$; although the types of the subtracks of $\phi(\tau)$ around the curves in $\cR$ may differ from those of $\tau$, the edges $e_{2,(k)}^{\epsilon_k}$ and $e_{3,(k)}^{\epsilon_k}$ are shared and just permuted by $\phi \in MC_\cR(\Sigma)$. 
Hence
\begin{align*}
    (d\phi)_{\hL}(\mu(\boldsymbol{w})) = \left.\frac{d}{ds}\middle|\right._{s=0}\phi(\hL_s(\boldsymbol{w})) = \mu(\sigma.\boldsymbol{w}) = \sum_k w_{\sigma(k)}C_k
\end{align*}
as desired.
\end{proof}

As a consequence of the discussion above, we get the following:

\begin{thm}\label{thm:block decomposition}
For $\hL \in \mathfrak{U}_{\Sigma,\cR}$ and $\phi \in \Gamma_{\Sigma,\cR}$, choose arbitrary linear sections $s_{\hL}: \im (d\pi_\cR)_{\hL} \to T_{\hL}\X_\Sigma^\uf(\bR^\trop)$ and $s_{\phi(\hL)}: \im (d\pi_\cR)_{\phi(\hL)} \to T_{\phi(\hL)}\X_\Sigma^\uf(\bR^\trop)$. Then with respect to the direct sum decompositions
\begin{align*}
    T_{\hL}\X_\Sigma^\uf(\bR^\trop) &= T_{\hL} (\pi_\cR^{-1}(\hL^*)) \oplus s_{\hL}(\im (d\pi_\cR)_{\hL}), \\
    T_{\phi(\hL)}\X_\Sigma^\uf(\bR^\trop) &= T_{\phi(\hL)} (\pi_\cR^{-1}(\phi(\hL)^*)) \oplus s_{\phi(\hL)}(\im (d\pi_\cR)_{\phi(\hL)}),
\end{align*}
the tangent map $(d\phi)_{\hL}$ is block-decomposed as
\begin{align*}
    (d\phi)_{\hL}=\begin{pmatrix}
    \sigma_\cR(\phi) & \ast \\
    0 & \overline{(d\phi^*)}_{\hL^*}
    \end{pmatrix}.
\end{align*}
Here $\overline{(d\phi^*)}_{\hL^*}:=(d\phi^*)_{\hL^*}|_{\im (d\pi_\cR)_{\hL}}: \im (d\pi_\cR)_{\hL} \to \im (d\pi_\cR)_{\phi(\hL)}$.
\end{thm}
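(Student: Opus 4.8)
The plan is to assemble the block decomposition directly from the three lemmas just established, using naturality of the projection $\pi_\cR$ as the glue. First I would record the key commutation relation $\phi(\hL)^* = \phi^*(\hL^*)$, which comes from the $\Gamma_{\Sigma,\cR}$-equivariance of $\pi_\cR^\trop$ (\cref{lem:pi_S_equivariance}); differentiating it at a point $\hL \in \mathfrak{U}_{\Sigma,\cR}$ (where $\pi_\cR$ is intrinsically differentiable by \cref{lem:pi_full_rk}, and we may further assume $\hL$ lies in the $\Gamma_{\Sigma,\cR}$-invariant locus $\mathfrak{V}_{\Sigma,\cR}$ so that \cref{lem:ker_pi_S} applies) yields the intertwining identity
\begin{align*}
    (d\pi_\cR)_{\phi(\hL)} \circ (d\phi)_{\hL} = (d\phi^*)_{\hL^*} \circ (d\pi_\cR)_{\hL}.
\end{align*}
This single equation is the heart of the matter: it says $(d\phi)_{\hL}$ carries $\ker(d\pi_\cR)_{\hL}$ into $\ker(d\pi_\cR)_{\phi(\hL)}$, which gives the vanishing of the lower-left block, and it identifies the induced map on the quotients $\im(d\pi_\cR)$ with $(d\phi^*)_{\hL^*}$ restricted appropriately, which gives the lower-right block.

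Next I would identify the diagonal blocks explicitly. For the upper-left block: by \cref{lem:ker_pi_S} the fiber tangent space $T_{\hL}(\pi_\cR^{-1}(\hL^*))$ is canonically $\bigoplus_k \bR C_k$, and the preceding lemma computes the restriction of $(d\phi)_{\hL}$ to this subspace as the permutation matrix $\sigma_\cR(\phi)$ in the basis $(C_1,\dots,C_K)$ — so that block is exactly $\sigma_\cR(\phi)$. For the lower-right block: the intertwining identity shows that the map induced by $(d\phi)_{\hL}$ on $T_{\hL}\X_\Sigma^\uf(\bR^\trop)/\ker(d\pi_\cR)_{\hL} \cong \im(d\pi_\cR)_{\hL}$ is precisely $(d\phi^*)_{\hL^*}$ restricted to $\im(d\pi_\cR)_{\hL}$ (note $(d\pi_\cR)_{\phi(\hL)}$ is surjective onto $\im(d\pi_\cR)_{\phi(\hL)}$ with the prescribed kernel, so the quotient map is an isomorphism onto $\im(d\pi_\cR)_{\phi(\hL)}$), which is by definition $\overline{(d\phi^*)}_{\hL^*}$. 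Finally, choosing the linear sections $s_{\hL}$ and $s_{\phi(\hL)}$ splits each tangent space as a direct sum, and expressing $(d\phi)_{\hL}$ in the resulting block form gives the stated matrix, the off-diagonal $\ast$ being whatever the section-dependent coupling term turns out to be (it is not canonical, hence the $\ast$).

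The main obstacle I anticipate is not any of these algebraic steps but rather making sure the differentiability hypotheses line up: one needs $\hL$, $\phi(\hL)$ (and, implicitly, all intermediate points if one differentiates a representation-path expression rather than the intrinsic map) to be simultaneously in the locus where $\pi_\cR$, $\phi$, and $\phi^*$ are all differentiable, and where the train-track description of \cref{lem:ker_pi_S,lem:im_pi_S} is valid. This is exactly what the set $\mathfrak{U}_{\Sigma,\cR}$ (intersected with $\mathfrak{V}_{\Sigma,\cR}$, and invariant under $\phi$) is designed to provide, so the argument reduces to citing \cref{lem:pi_full_rk} together with the $\Gamma_{\Sigma,\cR}$-invariance of $\mathfrak{V}_{\Sigma,\cR}$; once that bookkeeping is in place, the block decomposition falls out formally from the intertwining identity and the two structural lemmas.
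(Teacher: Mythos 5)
Your route is genuinely cleaner than the paper's: you make the equivariance relation $\phi(\hL)^* = \phi^*(\hL^*)$ do explicit work by differentiating it into the intertwining identity $(d\pi_\cR)_{\phi(\hL)} \circ (d\phi)_{\hL} = (d\phi^*)_{\hL^*} \circ (d\pi_\cR)_{\hL}$, and that one equation immediately produces both the vanishing of the lower-left block (kernel goes to kernel) and the identification of the lower-right block with $\overline{(d\phi^*)}_{\hL^*}$. The paper's proof is terser — for $\phi \in MC_\cR(\Sigma)$ it simply says the block form ``directly follows from the discussions above,'' without isolating the intertwining identity as the organizing principle, and your formulation fills that in usefully.

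However, there is a gap you should plug: the lemma you invoke for the upper-left block — the one computing the restriction of $(d\phi)_{\hL}$ to the fiber tangent space as the permutation matrix $\sigma_\cR(\phi)$ — is stated and proved only for $\phi \in MC_\cR(\Sigma)$, not for general $\phi \in \Gamma_{\Sigma,\cR} = MC_\cR(\Sigma) \ltimes (\bZ/2)^P$. Your intertwining identity holds for all of $\Gamma_{\Sigma,\cR}$ (\cref{lem:pi_S_equivariance} is stated at that generality), but the upper-left computation does not automatically follow for elements with a nontrivial reflection factor. The paper handles this by writing $\phi = (\psi,\varsigma)$, factoring $(d\phi)_{\hL} = (d\psi)_{\varsigma(\hL)} \circ (d\varsigma)_{\hL}$, and observing that since the fiber-tangent paths $\hL_s$ are fixed by reflections, $(d\varsigma)_{\hL}$ itself has the required block form with identity in the upper-left; composing with the $\psi$-block then gives $\sigma_\cR(\phi) = \sigma_\cR(\psi)$. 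You should either incorporate that factorization step or, equivalently, observe directly that the fiber directions $\bR C_k$ are generated by simple closed curves disjoint from the punctures, so the reflection acts on them by the identity — which upgrades the cited lemma to all of $\Gamma_{\Sigma,\cR}$.
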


\begin{proof}
When $\phi \in MC_\cR(\Sigma)$, the assertion directly follows from the discussions above. For a general mutation loop $\phi=(\psi,\varsigma)$, the tangent map factorizes as $(d\phi)_{\hL}=(d\psi)_{\varsigma(\hL)} \circ (d\varsigma)_{\hL}$. Since the paths $\hL_s$ in the fiber $\pi_\cR^{-1}(\hL^*)$ is invariant under reflections, we have the block-decomposition
\begin{align*}
    (d\varsigma)_{\hL}=\begin{pmatrix}
    1 & \ast \\
    0 & \overline{(d\varsigma^*)}_{\hL^*}
    \end{pmatrix}.
\end{align*}
Then the assertion is clear. 
\end{proof}

Since the spectral radius of the permutation matrix $\sigma_\cR(\phi)$ is $1$, we get the following:

\begin{cor}\label{cor:reduction_spec}
For a mutation loop $\phi \in \Gamma_{\Sigma,\cR}$ which admits a representation path $\gamma:(\tri_0,\ell_0) \to (\tri,\ell)$ in $\bExch_\Sigma$ as in \cref{cor:reduction_SS}, we have $\rho(E_{\phi,\Omega}^{(\tri_0,\ell_0)})= \rho(E_{\pi_\cR(\phi),\pi_\cR(\Omega)}^{(\oline{\tri_0},\lambda_0)})$. Here $\lambda_0$ is an arbitrary labeling  of $\oline{\tri}_0$.
\end{cor}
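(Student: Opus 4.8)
The plan is to realize both stable presentation matrices as the tangent maps of $\phi$ and $\phi^*:=\pi_\cR(\phi)$ at one carefully chosen point, and then read off the two spectral radii from the block decomposition of \cref{thm:block decomposition}. Throughout I write $\hL^*:=\pi_\cR(\hL)$.

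First I would record that $E_{\phi,\Omega}^{(\tri_0,\ell_0)}$ is, by definition, the presentation matrix of $(d\phi)_{\hL}$ at \emph{any} point $\hL$ realizing the stable sign $\boldsymbol{\epsilon}^\stab_{\gamma,\Omega}$ of $\gamma$ — the germ of $\phi_{(\tri_0,\ell_0)}$ there depends only on the sign sequence — and likewise that $E_{\pi_\cR(\phi),\pi_\cR(\Omega)}^{(\oline{\tri_0},\lambda_0)}$ is the presentation matrix of $(d\phi^*)_{\hL^*}$ at any $\hL$ whose image $\hL^*$ realizes the stable sign of $\oline{\gamma}$ (which exists since $\oline{\gamma}$ is sign-stable on $\pi_\cR(\Omega)$ by \cref{cor:reduction_SS}). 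The set $A$ of points $\hL$ at which \emph{both} happen is open, since each strict-sign condition is open in $\hL$ and $\pi_\cR$ is continuous (\cref{lem:pi_S_equivariance}); and it is nonempty, because for any $\hL_0\in\Omega\setminus\{0\}$ with $\hL_0^*\neq 0$ (if no such point exists then $\pi_\cR(\Omega)=\{0\}$ and there is nothing to prove) the iterates $\phi^n(\hL_0)$ lie in $A$ for $n\gg 0$ by the sign stability of $\gamma$ on $\Omega$, of $\oline{\gamma}$ on $\pi_\cR(\Omega)$, and the equivariance $\pi_\cR\circ\phi^n=(\phi^*)^n\circ\pi_\cR$. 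Since $\mathfrak{U}_{\Sigma,\cR}$ and the loci $\{\cI_{C_k}>0\}$ ($k=1,\dots,K$) are open and dense, I may then pick $\hL\in A\cap\mathfrak{U}_{\Sigma,\cR}$ with $\cI_{C_k}(\hL)>0$ for all $k$; the same positivity holds at $\phi(\hL)$ because $\phi$ permutes the $C_k$ and preserves geometric intersection numbers. For this $\hL$ one has $E_{\phi,\Omega}^{(\tri_0,\ell_0)}=(d\phi)_{\hL}$ and $E_{\pi_\cR(\phi),\pi_\cR(\Omega)}^{(\oline{\tri_0},\lambda_0)}=(d\phi^*)_{\hL^*}$ as matrices in the fixed charts.

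Next I would invoke \cref{thm:block decomposition}. By \cref{lem:im_pi_S}, the positivity $\cI_{C_k}(\hL),\cI_{C_k}(\phi(\hL))>0$ makes both $\im(d\pi_\cR)_{\hL}$ and $\im(d\pi_\cR)_{\phi(\hL)}$ equal to the fixed subspace $U:=\{\theta_{p_k^+}=\theta_{p_k^-}\mid k=1,\dots,K\}\subset\X_{\oline{\tri_0}}^\uf(\bR^\trop)$, so $(d\phi^*)_{\hL^*}$ preserves $U$, and \cref{thm:block decomposition} gives $(d\phi)_{\hL}=\left(\begin{smallmatrix}\sigma_\cR(\phi)&\ast\\0&(d\phi^*)_{\hL^*}|_U\end{smallmatrix}\right)$, whence $\rho\bigl(E_{\phi,\Omega}^{(\tri_0,\ell_0)}\bigr)=\max\bigl(1,\rho((d\phi^*)_{\hL^*}|_U)\bigr)$ using $\rho(\sigma_\cR(\phi))=1$. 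It remains to check $\rho\bigl((d\phi^*)_{\hL^*}\bigr)=\rho\bigl((d\phi^*)_{\hL^*}|_U\bigr)$. As $U$ is invariant, $\rho\bigl((d\phi^*)_{\hL^*}\bigr)=\max\bigl(\rho((d\phi^*)_{\hL^*}|_U),\rho(q)\bigr)$ where $q$ is the induced map on the $K$-dimensional quotient $\X_{\oline{\tri_0}}^\uf(\bR^\trop)/U$; the classes of $\theta_{p_k^+}-\theta_{p_k^-}$ form a basis of this quotient, and since $\phi^*=\pi_\cR(\phi)$ permutes the new punctures $\{p_k^\pm\}$ following $\sigma_\cR(\phi)$ together with the possible side-swaps (the reflection part of $\phi$ acting trivially here, as it only moves the original punctures $P$), the matrix $q$ is a signed permutation, so $\rho(q)=1$. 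Moreover $|\det((d\phi^*)_{\hL^*}|_U)|=1$ — this follows from the displayed block form, since $(d\phi)_{\hL}\in GL_N(\bZ)$ and $|\det\sigma_\cR(\phi)|=1$ — hence $\rho((d\phi^*)_{\hL^*}|_U)\geq 1$. Combining, $\rho\bigl(E_{\pi_\cR(\phi),\pi_\cR(\Omega)}^{(\oline{\tri_0},\lambda_0)}\bigr)=\rho\bigl((d\phi^*)_{\hL^*}\bigr)=\rho\bigl((d\phi^*)_{\hL^*}|_U\bigr)=\rho\bigl(E_{\phi,\Omega}^{(\tri_0,\ell_0)}\bigr)$.

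The step I expect to be the main obstacle is the simultaneous realization in the first paragraph: one must knit together the two sign-stability statements with the density of $\mathfrak{U}_{\Sigma,\cR}$ and of the loci $\{\cI_{C_k}>0\}$ to produce a single point at which \cref{thm:block decomposition} applies \emph{and} both presentation matrices are literally computed by the respective tangent maps. The only other point requiring care is the identification of the quotient action $q$ with a signed permutation, which is a matter of bookkeeping how $\pi_\cR(\phi)$ moves the punctures $p_k^\pm$ created by cutting along $\cR$.
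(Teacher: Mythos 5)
Your proof is correct and follows the same strategy the paper has in mind: realize both presentation matrices as tangent maps at one well-chosen point, then read off spectral radii from the block decomposition of \cref{thm:block decomposition} together with $\rho(\sigma_\cR(\phi))=1$. The paper, however, states the corollary merely ``as a consequence of the discussion above'' and does not spell out the step you handle in your second paragraph: the block decomposition only produces the \emph{restriction} $\overline{(d\phi^*)}_{\hL^*}=(d\phi^*)_{\hL^*}|_{\im(d\pi_\cR)_{\hL}}$, while $E_{\pi_\cR(\phi),\pi_\cR(\Omega)}^{(\oline{\tri_0},\lambda_0)}$ is the presentation matrix of the \emph{full} tangent map $(d\phi^*)_{\hL^*}$ on $\X_{\Sigma_\cR}^\uf(\bR^\trop)$, and these a priori have different spectral radii since $\im(d\pi_\cR)_{\hL}$ is a proper (codimension $K$) subspace by \cref{lem:im_pi_S}. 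Your quotient argument --- observing that $(d\phi^*)_{\hL^*}$ preserves $U=\{\theta_{p_k^+}=\theta_{p_k^-}\}$, and that on the quotient $\X_{\Sigma_\cR}^\uf(\bR^\trop)/U$ with basis $[\theta_{p_k^+}-\theta_{p_k^-}]$ the induced map is a signed permutation (since $\pi_\cR(\phi)$ permutes the new punctures $p_k^\pm$ while its reflection component lives on $P$ only), whence $\rho$ of the quotient is $1$ --- supplies exactly the missing link, and the determinant observation giving $\rho((d\phi^*)_{\hL^*}|_U)\geq 1$ closes the loop. The first paragraph's density argument for producing a single point $\hL\in\mathfrak{U}_{\Sigma,\cR}\cap\{\cI_{C_k}>0\}$ at which both stable signs and the hypotheses of \cref{thm:block decomposition} hold simultaneously is also needed but is routine; the genuine contribution of your write-up is making the quotient-step explicit, which the paper glosses over.
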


\begin{rem}\label{rem:pi_component}
When $\Sigma_\cR$ is disconnected, the group $\Gamma_{\Sigma_\cR}$ is decomposed as follows. 
Let $\Sigma_1,\dots,\Sigma_r$ be the connected components of $\Sigma_\cR$. Regroup them into homeomorphism classes, and write
\begin{align*}
    \{\Sigma_1,\dots,\Sigma_r\} = \bigsqcup_{\nu=1}^m \{\Sigma_1^{(\nu)},\dots,\Sigma_{i_\nu}^{(\nu)}\}. 
\end{align*}
Fix a model surface $\Sigma_\nu$ for each homeomorphism class $\nu=1,\dots,m$, and a marking homeomorphism $f_k^{(\nu)}:\Sigma_k^{(\nu)} \xrightarrow{\sim} \Sigma_\nu$ for $k=1,\dots,i_\nu$. Let $f_{k'k}^{(\nu)}:=(f_{k'}^{(\nu)})^{-1}\circ f_k^{(\nu)}: \Sigma_k^{(\nu)} \xrightarrow{\sim} \Sigma_{k'}^{(\nu)}$. When $\Sigma_i=\Sigma_k^{(\nu)}$ and $\Sigma_{i'}=\Sigma_{k'}^{(\nu)}$, we also write $f_{i'i}:=f_{k'k}^{(\nu)}$. 

Let $\fS_\cR:=\prod_{\nu=1}^m \fS_{i_\nu}$, which acts on $\{1,\dots,r\}$ in the natural way. We have a group homomorphism $\tau_\cR: \Gamma_{\Sigma_\cR} \to \fS_\cR$ defined by the permutation of connected components. 
Then we get a group isomorphism
\begin{align*}
    \iota_\cR: \Gamma_{\Sigma_\cR} \xrightarrow{\sim} \fS_\cR \ltimes \prod_{\nu=1}^r \Gamma_{\Sigma_i},\quad \phi \mapsto (\tau_\cR(\phi), (f_{i,\tau_\cR(\phi)(i)}\circ (\phi|_{\Sigma_i}))_{i=1}^r).
\end{align*}
The inverse map is given by $\iota^{-1}(\tau,(\phi_i)_{i=1}^r)|_{\Sigma_i} = f_{\tau(i),i}\circ \phi_i$. Note that we also have a PL isomorphism
\begin{align*}
    \iota_\cR: \X_{\Sigma_\cR}^\uf(\bR^\trop) \xrightarrow{\sim} \prod_{i=1}^r \X_{\Sigma_i}^\uf(\bR^\trop),
\end{align*}
which is $\Gamma_{\Sigma_\cR}$-equivariant. Here $(\tau,(\phi_i)_i) \in \fS_\cR \ltimes \prod_i \Gamma_{\Sigma_i}$ acts on $\prod_i \X_{\Sigma_i}^\uf(\bR^\trop)$ by 
\begin{align*}
    (\hL_i)_{i=1}^r \mapsto (\phi_{\tau^{-1}(i)}(\hL_{\tau^{-1}(i)}))_{i=1}^r.
\end{align*}
\end{rem}

\section{Sign stability of pseudo-Anosov mapping classes: general case}\label{sec:weak_SS}
In this section, we define an appropriate notion of pA mapping class on a general marked surface via reductions, and discuss their characterization in terms of the sign stability.

\subsection{Dynamics on the space of \texorpdfstring{$\cX$}{X}-laminations.}\label{subsec:dyn_gen_X}
Let $\Sigma$ be a marked surface with possibly non-empty boundary. Let $\cR_\partial=\{ C_i \}_{i=1}^b$ be the multicurve on $\Sigma$ consisting of the simple closed curve $C_i$ parallel to the $i$-th boundary component $\partial_i$ for $i=1,\dots,b$. Then we get
\begin{align*}
    \Sigma_{\cR_\partial} = \bar{\Sigma} \sqcup \bD(\partial_1) \sqcup \dots \sqcup \bD(\partial_b),
\end{align*}
where $\bar{\Sigma}$ is the connected component disjoint from $\partial \Sigma$, which is a punctured surface; $\bD(\partial_i)$ is the connected component containing $\partial_i$ for $i=1,\dots,b$, which is a once-punctured disk with some special points on its boundary. See \cref{fig:pi_partial}.

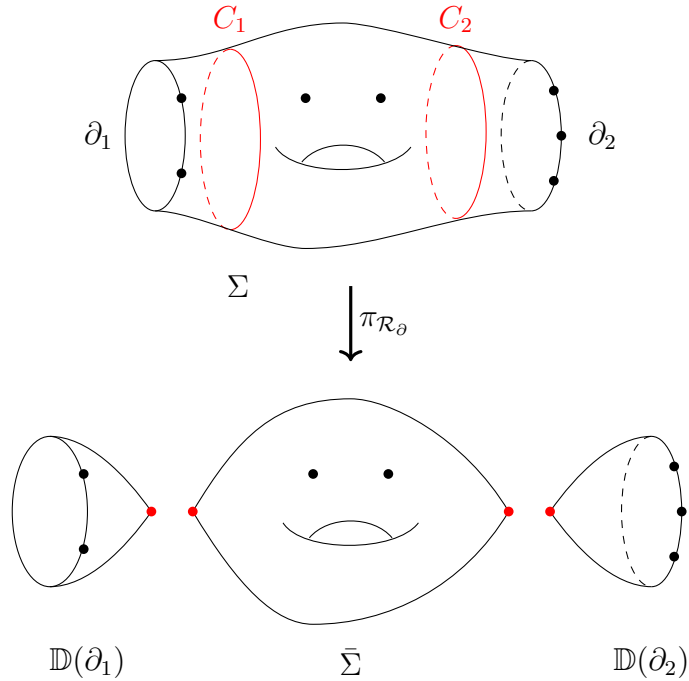
\begin{figure}[h]
    \centering
    \begin{tikzpicture}
    \draw  (-2.6,3) ellipse (0.4 and 1);
    \draw (2.4,2) coordinate (v2) arc [start angle=-90, end angle=90, x radius=.4cm, y radius=1cm];
    \draw [dashed] (2.4,4) coordinate (v1) arc [start angle=90, end angle=270, x radius=.4cm, y radius=1cm];
    
    \draw [red] (1.4,1.9) coordinate (v2) arc [start angle=-90, end angle=90, x radius=.4cm, y radius=1.15cm];
    \draw [red, dashed] (1.4,4.2) coordinate (v1) arc [start angle=90, end angle=270, x radius=.4cm, y radius=1.15cm];
    
    \draw [red] (-1.6,1.75) coordinate (v2) arc [start angle=-90, end angle=90, x radius=.4cm, y radius=1.2cm];
    \draw [red, dashed] (-1.6,4.15) coordinate (v1) arc [start angle=90, end angle=270, x radius=.4cm, y radius=1.2cm];
    
    \draw (-2.6,4) .. controls (-1.65,4) and (-1.1,4.5) .. (-0.1,4.5) .. controls (0.4,4.5) and (1.9,4) .. (2.4,4);
    \draw (-2.6,2) .. controls (-2.1,2) and (-1.1,1.5) .. (-0.6,1.5) .. controls (0.4,1.5) and (1.4,2) .. (2.4,2);
    \draw (-1,2.85) .. controls (-0.75,2.45) and (0.5,2.45) .. (0.8,2.85);
    \draw (-0.65,2.65) .. controls (-0.4,2.95) and (0.2,2.95) .. (0.45,2.65);
    \node [fill, circle, inner sep=1.3] at (-0.6,3.5) {};
    \node [fill, circle, inner sep=1.3] at (0.4,3.5) {};
    \node [fill, circle, inner sep=1.3] at (-2.25,3.5) {};
    \node [fill, circle, inner sep=1.3] at (-2.25,2.5) {};
    \node [fill, circle, inner sep=1.3] at (2.7,3.6) {};
    \node [fill, circle, inner sep=1.3] at (2.8,3) {};
    \node [fill, circle, inner sep=1.3] at (2.7,2.4) {};
    
    \draw (-2.1,-2) .. controls (-1.5,-1) and (-1,-0.5) .. (0,-0.5) .. controls (0.5,-0.5) and (1.5,-1) .. (2.1,-2);
    \draw (-2.1,-2) .. controls (-1.5,-3) and (-1,-3.5) .. (-0.5,-3.5) .. controls (0.5,-3.5) and (1.5,-3) .. (2.1,-2);
    \draw (-0.9,-2.15) .. controls (-0.65,-2.55) and (0.6,-2.55) .. (0.9,-2.15);
    \draw (-0.55,-2.35) .. controls (-0.3,-2.05) and (0.3,-2.05) .. (0.55,-2.35);
    \node [fill, circle, inner sep=1.3] at (-0.5,-1.5) {};
    \node [fill, circle, inner sep=1.3] at (0.5,-1.5) {};
    
    \draw  (-4,-2) ellipse (0.5 and 1);
    \draw (4,-3) coordinate (v2) arc [start angle=-90, end angle=90, x radius=.4cm, y radius=1cm];
    \draw [dashed] (4,-1) coordinate (v1) arc [start angle=90, end angle=270, x radius=.4cm, y radius=1cm];
    
    \draw (-4,-1) .. controls (-3.5,-1) and (-3,-1.5) .. (-2.65,-2);
    \draw (-4,-3) .. controls (-3.5,-3) and (-3,-2.5) .. (-2.65,-2);
    \draw (2.65,-2) .. controls (3,-1.5) and (3.5,-1) .. (4,-1);
    \draw (2.65,-2) .. controls (3,-2.5) and (3.5,-3) .. (4,-3);
    \node [fill, circle, inner sep=1.3] at (-3.55,-1.5) {};
    \node [fill, circle, inner sep=1.3] at (-3.55,-2.5) {};
    \node [fill, circle, inner sep=1.3] at (4.3,-1.4) {};
    \node [fill, circle, inner sep=1.3] at (4.4,-2) {};
    \node [fill, circle, inner sep=1.3] at (4.3,-2.6) {};
    \node [fill, circle, inner sep=1.3, red] at (-2.65,-2) {};
    \node [fill, circle, inner sep=1.3, red] at (-2.1,-2) {};
    \node [fill, circle, inner sep=1.3, red] at (2.1,-2) {};
    \node [fill, circle, inner sep=1.3, red] at (2.65,-2) {};
    \node [red] at (-1.6,4.55) {$C_1$};
    \node [red] at (1.4,4.55) {$C_2$};
    \node at (-1.5,1) {$\Sigma$};
    \node at (-3.5,-4) {$\mathbb{D}(\partial_1)$};
    \node at (0,-4) {$\bar{\Sigma}$};
    \node at (4,-4) {$\mathbb{D}(\partial_2)$};
    \node at (-3.35,3) {$\partial_1$};
    \node at (3.35,3) {$\partial_2$};
    \draw [->, very thick](0,1) -- (0,0);
    \node at (0.45,0.5) {$\pi_{\cR_\partial}$};
    \end{tikzpicture}
    \caption{An example of $\Sigma_{\cR_\partial}$.}
    \label{fig:pi_partial}
\end{figure}

Notice that $\Gamma_{\Sigma,\cR_\partial} = \Gamma_\Sigma$, since each mapping class preserves $\partial \Sigma$ setwise and hence perserves the set of boundary-parallel curves.
In view of \cref{rem:pi_component}, let us write the reduction homomorphism \eqref{eq:pi_S_cluster_modular} as
\begin{align*}
    \iota_{\cR_\partial}\circ \pi_{\cR_\partial}=(\tau;\pi,\pi_1,\dots,\pi_b): \Gamma_\Sigma \to \fS_{\cR_\partial} \ltimes \bigg(\Gamma_{\bar{\Sigma}} \times \prod_{i=1}^b \Gamma_{\bD(\partial_i)}\bigg),
\end{align*}
whose kernel being generated by the boundary Dehn twists $t_{C_i}$ for $i=1,\dots,b$. 
Fixing a hyperbolic structure $F$ on $\Sigma$, we identify $\X_\Sigma^\uf(\bR^\trop) \cong \eML(F)$.
The projection \eqref{eq:pi_S} is now of the form
\begin{align*}
    \iota_{\cR_\partial}\circ \pi_{\cR_\partial}=(\pi,\pi_1,\dots,\pi_b): \X_\Sigma^\uf(\bR^\trop) \to \X_{\bar{\Sigma}}(\bR^\trop) \times \prod_{i=1}^b \X_{\bD(\partial_i)}^\uf(\bR^\trop).
\end{align*}
The set of punctures on $\bar{\Sigma}$ is written as $\bar{P}$. We have $\bar{P}=P \sqcup \pi(\partial \Sigma)$, where $P$ is the set of punctures on $\Sigma$ and $\pi(\partial \Sigma)$ denotes the set of new punctures arising from the multicurve $\cR_\partial$ via the reduction. 
We identify $\X_{\bar{\Sigma}}(\bR^\trop) \cong \eML(\bar{F})$,
where $\bar{F}$ is the hyperbolic structure on $\bar{\Sigma}$ obtained as the restriction of $F_{\cR_\partial}$.

\begin{defi}\label{def:hyp_bdry}
A marked surface $\Sigma$ is \emph{of hyperbolic type} if the punctured surface $\bar{\Sigma}$ satisfies (S1), (S2) and (S3) in \cref{sec:surf_seed_pattern}. A mutation loop $\phi \in \Gamma_\Sigma$ is \emph{pseudo-Anosov} (again \emph{pA} for short) if the mapping class $\pi(\phi)$ on $\bar{\Sigma}$ is pseudo-Anosov in the usual sense.
\end{defi}
In what follows, we will consider a marked surface of hyperbolic type. 

First we are going to show that a pA mutation loop $\phi \in \Gamma_\Sigma$ has NS dynamics on $\bS \X_\Sigma^\uf(\bR^\trop)$ off a certain subset. To make the statement precise, let us prepare some notations. 
For two measured geodesic laminations $\hL_1,\hL_2 \in \X_{\Sigma}^\uf(\bR^\trop)$ with disjoint supports, let $\hL_1 \sqcup \hL_2 \in \X_{\Sigma}^\uf(\bR^\trop)$ be their disjoint union. This operation is clearly $\Gamma_\Sigma$-equivariant.

Let $D_{\cX, \Sigma}$ denote the $\bR_{>0}$-invariant subset in $\cX^\uf_\Sigma(\bR^\trop)$ consisting of positive real weighted curves which are either homotopic to some of boundary components or subarcs of them. 
The following lemma tells us that the laminations in $D_{\cX,\Sigma}$ are exactly those annihilated by $\pi$.


\begin{lem}\label{lem:barU_in_X}
\begin{enumerate}
    \item There exists an $\pos \times \Gamma_\Sigma$-equivariant section 
    \begin{align*}
        \iota:\cU_{\bar{\Sigma}}(\bR^\trop) \hookrightarrow \cU^\uf_\Sigma(\bR^\trop)
    \end{align*}
    of $\pi$. Here the equivariance means $\phi(\iota(\lambda L)) = \lambda\iota(\pi(\phi)(L))$ for any $L \in \cU_{\bar{\Sigma}}(\bR^\trop)$, $\lambda >0$ and $\phi \in \Gamma_\Sigma$. Similarly we have an $\pos \times\Gamma_\Sigma$-equivariant section
    \begin{align*}
        \iota_i: \cU_{\bD(\partial_i)}^\uf(\bR^\trop) \hookrightarrow \cU_\Sigma^\uf(\bR^\trop)
    \end{align*}
    of $\pi_i$ for $i=1,\dots,b$.
    \item We have $\pi^{-1}(L) = \{ \iota(L) \sqcup d \mid d \in D_{\cX, \Sigma}\}$ 
    for any $L \in \cU_{\bar{\Sigma}}(\bR^\trop)$. 
    \item The group $\Gamma_\Sigma$ preserves the subset $D_{\cX, \Sigma}$, where every orbits are finite.
\end{enumerate}
\end{lem}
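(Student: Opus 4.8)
The plan is to work entirely in the geometric models of \cref{thm:geom_models}, using the identifications $\cX^\uf_\Sigma(\bR^\trop)\cong\eML(F)$, $\cU^\uf_\Sigma(\bR^\trop)\cong\ML_0(F)$, $\cU_{\bar\Sigma}(\bR^\trop)\cong\ML_0(\bar F)$, and the analogous models for the $\bD(\partial_i)$, and to describe $\pi=\pi_{\cR_\partial}$ concretely via \eqref{eq:pi_S} as the operation of cutting along the boundary-parallel geodesics $C_1,\dots,C_b$, applying the normalization by positive spiralling, and reading off the appropriate component. For part (1), I would argue as follows. A lamination $L\in\ML_0(\bar F)$ has compact support, hence is carried by a compact subsurface of $\bar F$ disjoint from all ideal points; in particular it lies away from small neighborhoods of the punctures $\pi(\partial_i)$, and I identify this locus with the ``interior part'' of $F$, namely $F$ minus open annular collars of $C_1\cup\dots\cup C_b$. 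Declaring $\iota(L)\in\ML_0(F)\subset\eML(F)$ to be $L$ regarded in this way defines the map: since $\iota(L)$ is disjoint from every $C_i$, cutting does nothing and the $\bar\Sigma$-component of the result is again $L$, so $\pi\circ\iota=\mathrm{id}$. The construction commutes with rescaling transverse measures, and any $\phi\in\Gamma_\Sigma$ is represented by a homeomorphism preserving a collar of $\partial\Sigma$ and acting on the interior part as a representative of $\pi(\phi)$ (the $(\bZ/2)^P$-factor acting compatibly on the punctures $P\subset\bar P$); this gives the $\pos\times\Gamma_\Sigma$-equivariance. The sections $\iota_i$ are obtained by the same recipe, reading off the $\bD(\partial_i)$-component instead.

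For part (2), the inclusion $\supseteq$ reduces to the statement that $\pi$ annihilates $D_{\cX,\Sigma}$: by (the proof of) \cref{lem:pi_S_equivariance} a lamination lies in $\pi^{-1}(0)$ once it has zero intersection number with every simple closed curve $C$ of $\bar\Sigma$, and lifting such $C$ to $\Sigma$ it may be isotoped off a collar of $\partial\Sigma$, so $\cI_C(d)=0$ for all $d\in D_{\cX,\Sigma}$, whose components are boundary-parallel curves or arcs isotopic into $\partial\Sigma$; since $\iota(L)$ and such $d$ have disjoint supports, $\pi(\iota(L)\sqcup d)=\pi(\iota(L))=L$. For the inclusion $\subseteq$, take $\hL$ with $\pi(\hL)=L$. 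Because $L$ has compact support, $\pi(\hL)$ carries no leaf spiralling into a puncture of $\bar\Sigma$; unwinding the definition of $\pi$ this forces $\cI_{C_i}(\hL)=0$ for all $i$ and forbids $\hL$ from spiralling into a puncture of $P$ or accumulating onto any $C_i$. Consequently $\hL$ may be isotoped off $C_1\cup\dots\cup C_b$, and then every leaf of $\hL$ meeting $\partial\Sigma$ is trapped in the annular region $R_i$ between $C_i$ and $\partial_i$; a simple arc in $R_i$ with both endpoints on $\partial_i$ is inessential and hence isotopic into $\partial_i$, while the only essential closed curves in $R_i$ are $C_i$-parallel. Thus $\hL=\hL_{\mathrm{int}}\sqcup d$ with $\hL_{\mathrm{int}}$ carried by the interior part and compactly supported, and $d\in D_{\cX,\Sigma}$; then $\pi(\hL_{\mathrm{int}})=L$ forces $\hL_{\mathrm{int}}=\iota(L)$ by the section property. (Alternatively, this decomposition may be read off a complete train track carrying $\hL$ and all the $C_i$, in the spirit of the proof of \cref{lem:ker_pi_S}.)

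Finally, for part (3): there are only finitely many isotopy classes of curves that are boundary-parallel or isotopic into $\partial\Sigma$, and $D_{\cX,\Sigma}$ consists of the laminations each of whose components has such a class. Any $\phi\in\Gamma_\Sigma$ is represented by a homeomorphism preserving $\partial\Sigma$ setwise and (after isotopy) disjoint from $P$; it permutes these finitely many classes and preserves transverse measures, while the $(\bZ/2)^P$-factor fixes each of them. Hence $\Gamma_\Sigma$ preserves $D_{\cX,\Sigma}$, and the orbit of any $d\in D_{\cX,\Sigma}$ is finite.

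The hard part is the inclusion $\subseteq$ of part (2): one must be certain that $\hL$ has no leaves that linger near some $C_i$ without being recorded in $D_{\cX,\Sigma}$, in particular no leaves spiralling onto $C_i$. Ruling this out rests on the precise description of the model $\eML(F)\cong\cX^\uf_\Sigma(\bR^\trop)$ near $\partial\Sigma$, equivalently on the local form along $C_i$ of a complete train track carrying the lamination (\cref{rem:connector_folded}); the rest of the argument is bookkeeping.
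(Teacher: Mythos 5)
Your treatment of parts (1) and (3) is essentially the paper's own: the section $\iota$ is built by noting that compact laminations on $\bar{\Sigma}$ live away from the punctures and can therefore be regarded as compact laminations on $\Sigma$, and the finiteness of $\Gamma_\Sigma$-orbits in $D_{\cX,\Sigma}$ follows because there are only finitely many homotopy classes of boundary-parallel curves and boundary subarcs. So for those two parts there is nothing to add.

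Your argument for the inclusion $\pi^{-1}(L)\subseteq\{\iota(L)\sqcup d\}$ in part (2), however, takes a genuinely different route from the paper. The paper first writes down, ``from the construction'' of $\iota_{\cR_\partial}\circ\pi_{\cR_\partial}=(\pi,\pi_1,\dots,\pi_b)$, the structural description
$\pi^{-1}(L)=\bigl\{\iota(L)\sqcup\bigsqcup_{i=1}^b\bigl((C_i,w_i)\sqcup\iota_i(L_i)\bigr)\mid w_i\geq 0,\ L_i\in\cU_{\bD(\partial_i)}^\uf(\bR^\trop)\bigr\}$,
and then invokes a \emph{cluster-algebraic} fact (stated in their footnote) that the seed pattern of a once-punctured disk is of finite type $D_N$, so $\X_{\bD(\partial_i)}^\uf(\bR^\trop)$ contains no bi-infinite geodesic leaves and its laminations are weighted arc collections; this immediately identifies the fiber with $\{\iota(L)\sqcup d\mid d\in D_{\cX,\Sigma}\}$. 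You instead argue purely geometrically, leaf by leaf, decomposing $\hL$ into the part in the collars $R_i$ and the interior part and ruling out bad leaf behavior near each $C_i$. The two approaches buy different things: the paper's needs the cluster-algebraic classification but thereby dodges any analysis of what leaves can do near $C_i$, while yours is more elementary and self-contained in the lamination model but must carefully exclude leaves that spiral onto $C_i$ (from either side) and leaves that wind indefinitely inside $R_i$. You correctly flag this as the hard part. It is worth noting that excluding spiralling onto a closed geodesic for a \emph{measured} geodesic lamination is a nontrivial consequence of the structure theory (finite decomposition into minimal components, local finiteness of the transverse measure); without citing something to that effect, the step ``forbids $\hL$ from accumulating onto any $C_i$'' is an assertion rather than an argument. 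Also be careful with the phrase ``$\hL$ may be isotoped off $C_1\cup\dots\cup C_b$'': when $\hL$ contains some $C_i$ as a closed leaf (the case $w_i>0$ in the paper's formula, which you do need to allow), the geodesic representative \emph{is} $C_i$ and cannot be isotoped off it; what you actually use is only that no leaf crosses $C_i$ transversally, which is the correct statement and is what $\cI_{C_i}(\hL)=0$ gives you.
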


\begin{proof}
(1): 
Recall from
\cref{thm:geom_models}
that the space  $\cU_{\Sigma}^\uf(\bR^\trop) \cong \ML_0(F)$ consists of compact laminations, and simlilarly for other marked surfaces. 
Since the laminations in $\cU_{\bar{\Sigma}}(\bR^\trop)$ do not have leaves incident to the punctures in $\bar{P}$, they can be naturally regarded as compact laminations on $\Sigma$. This gives the desired section $\iota$. The construction of $\iota_i$ is similar.


(2): 
Note that
\begin{align*}
    \pi^{-1}(L) = \left\{ \iota(L) \sqcup \bigsqcup_{i=1}^b \big( (C_i,w_i) \sqcup \iota_i(L_i) \big) ~\middle|~ w_k \geq 0,~L_i \in \cU_{\bD(\partial_i)}^\uf(\bR^\trop)\right\}
\end{align*}
from the construction. On the other hand, any measured geodesic lamination in $\X_{\bD(\partial_i)}^\uf(\bR^\trop)$ is given by a real weighted collection of arcs (\emph{i.e.}, there are no bi-infinite geodesic leaves) for $i=1,\dots,b$.\footnote{From the cluster algebraic point of view, this is a consequence of the fact that the corresponding seed pattern is of finite type $D_N$ for some $N$ and hence the cluster complex coincides with the ambient space (\cite[Corollary 5.5]{FG09}).} Hence the right-hand side coincides with the desired one.

(3): 
The $\Gamma_\Sigma$-invariance of the set $D_{\X,\Sigma}$ is clear from the description above. 
The orbits are finite since there are only finitely many homotopy classes of curves contained in $D_{\cX,\Sigma}$. 
\end{proof}


Now the following is a generalization of \cref{prop:pA_NS_SS} $(1) \Longleftrightarrow (3)$:

\begin{thm}\label{thm:pA_NS_general}
A mutation loop $\phi \in \Gamma_{\Sigma}$ is pA if and only if 
there exist two points $L^\pm_{\phi} \in \iota(\cU_{\bar{\Sigma}}(\bR^\trop)) \subset \cX^\uf_\Sigma(\bR^\trop)$ such that
\begin{align}\label{eq:weak_NS}
    \lim_{n \to \infty} \phi^{\pm n}([\hL]) = [L^\pm_{\phi}] \quad \mbox{in }\  \bS\X_\Sigma^\uf(\bR^\trop)
\end{align}
for any $\hL \in \cX^\uf_\Sigma(\bR^\trop) \setminus \pi^{-1}(\bR_{\geq 0}\cdot \pi(L^\mp_{\phi}))$.
\end{thm}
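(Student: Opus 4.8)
The idea is to bootstrap from the punctured–surface case \cref{prop:pA_NS_SS} (applied to $\bar\Sigma$), transported through the reduction homomorphism $\pi\colon\Gamma_\Sigma\to\Gamma_{\bar\Sigma}$ and the projection $\pi\colon\cX^\uf_\Sigma(\bR^\trop)\to\cX_{\bar\Sigma}(\bR^\trop)$. By \cref{def:hyp_bdry}, $\phi$ is pA exactly when $\pi(\phi)$ is; and when $\pi(\phi)$ is pA, \cref{prop:pA_NS_SS} produces its $\cX$-filling attracting and repelling points, which moreover lie in $\cU_{\bar\Sigma}(\bR^\trop)\cong\ML_0(\bar F)$ because the invariant laminations of a pseudo-Anosov map are minimal, hence compactly supported. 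So, assuming $\phi$ is pA, I would put $L^\pm_\phi:=\iota(L^\pm_{\pi(\phi)})\in\iota(\cU_{\bar\Sigma}(\bR^\trop))\subset\cX^\uf_\Sigma(\bR^\trop)$, where $L^\pm_{\pi(\phi)}\in\cU_{\bar\Sigma}(\bR^\trop)$ are the invariant laminations of $\pi(\phi)$ and $\iota$ is the section from \cref{lem:barU_in_X}; note $\pi(L^\pm_\phi)=L^\pm_{\pi(\phi)}$.

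\textbf{Forward direction.}
Let $\hL\notin\pi^{-1}(\bR_{\geq0}\cdot\pi(L^-_\phi))$, so that $\pi(\hL)\neq0$ and $[\pi(\hL)]\neq[L^-_{\pi(\phi)}]$, and set $\hL_n:=\phi^n(\hL)$. By the equivariance of $\pi$ (\cref{lem:pi_S_equivariance}), $\pi(\hL_n)=\pi(\phi)^n(\pi(\hL))$, so \cref{prop:pA_NS_SS} gives $[\pi(\hL_n)]\to[L^+_{\pi(\phi)}]$ in $\bS\cX_{\bar\Sigma}(\bR^\trop)$. To lift this to $[\hL_n]\to[L^+_\phi]$ in $\bS\cX^\uf_\Sigma(\bR^\trop)\cong\bS\,\eML(F)$ I would compare intersection numbers. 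Fix a filling system of simple closed curves $\gamma_1,\dots,\gamma_m$ on $\bar\Sigma$ (viewed on $\Sigma$ as curves disjoint from $\cR_\partial$), and normalize by $Z_n:=\sum_j\cI_{\gamma_j}(\hL_n)=\sum_j\cI_{\gamma_j}(\pi(\phi)^n\pi(\hL))$, the equality holding since the $\gamma_j$ do not meet the $C_i$. For an arbitrary simple closed curve or arc $\delta$ on $\Sigma$, the key estimate is
\[
\cI_\delta(\hL_n)=\cI_{\pi(\delta)}(\pi(\phi)^n\pi(\hL))+R_\delta(n),
\]
where the remainder $R_\delta(n)$ is $O(n)$: it receives a linearly growing contribution from the twisting of $\hL_n$ along the boundary collars of the $C_i$ (the transverse measures $\cI_{C_i}(\hL_n)=\cI_{C_{\sigma_{\cR_\partial}(\phi)^{-n}(i)}}(\hL)$ being bounded), and a bounded contribution from the finite–type pieces $\bD(\partial_i)$, whose cluster modular groups are finite so that the orbits $\pi_i(\phi)^n(\pi_i(\hL))$ stay bounded. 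On the other hand $Z_n$ grows exponentially: $\cI_{L^-_{\pi(\phi)}}(\pi(\phi)^n\pi(\hL))=\lambda_{\pi(\phi)}^n\,\cI_{L^-_{\pi(\phi)}}(\pi(\hL))$ with $\cI_{L^-_{\pi(\phi)}}(\pi(\hL))>0$ (as $L^-_{\pi(\phi)}$ is filling), so $Z_n\geq c\,\lambda_{\pi(\phi)}^n$ for large $n$. Dividing by $Z_n$, the remainder $R_\delta(n)/Z_n\to0$, while $\cI_{\pi(\delta)}(\pi(\phi)^n\pi(\hL))/Z_n\to\cI_{\pi(\delta)}(L^+_{\pi(\phi)})\big/\sum_j\cI_{\gamma_j}(L^+_{\pi(\phi)})=\cI_\delta(L^+_\phi)\big/\sum_j\cI_{\gamma_j}(L^+_\phi)$ by the North–South dynamics of $\pi(\phi)$ on $\bS\cX_{\bar\Sigma}(\bR^\trop)$ together with $\pi\circ\iota=\mathrm{id}$ and the fact that $L^+_\phi\in\iota(\cU_{\bar\Sigma}(\bR^\trop))$ misses the collars. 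As this holds for every $\delta$, $[\hL_n]\to[L^+_\phi]$. The statement for $\phi^{-n}$ follows by applying the same argument to $\phi^{-1}$, which is again pA with $L^+$ and $L^-$ exchanged.

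\textbf{Backward direction.}
Assume the dynamics \eqref{eq:weak_NS} holds with some $L^\pm_\phi\in\iota(\cU_{\bar\Sigma}(\bR^\trop))$, and put $L^\pm_{\pi(\phi)}:=\pi(L^\pm_\phi)\in\cU_{\bar\Sigma}(\bR^\trop)\cong\ML_0(\bar F)$, which projectivizes into $\PMF(\bar\Sigma)$. For any class $[M]\in\PMF(\bar\Sigma)$ with $[M]\neq[L^-_{\pi(\phi)}]$, choose a representative $L'\in\cU_{\bar\Sigma}(\bR^\trop)$ and set $\hL:=\iota(L')$; then $\pi(\hL)=L'$ is not proportional to $L^-_{\pi(\phi)}$, so $\hL$ avoids the excluded set and the hypothesis gives $[\phi^n(\hL)]\to[L^+_\phi]$. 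Applying the continuous map $\pi$ (which commutes with the $\pos$–rescaling) and using $\pi(\phi^n(\hL))=\pi(\phi)^n(\pi(\hL))$ yields $[\pi(\phi)^n([M])]\to[L^+_{\pi(\phi)}]$; likewise for $\phi^{-n}$. Hence $\pi(\phi)$ has North–South dynamics on $\PMF(\bar\Sigma)$ (with $[L^+_{\pi(\phi)}]\neq[L^-_{\pi(\phi)}]$, which one checks directly), so by the Nielsen–Thurston classification $\pi(\phi)$ is pseudo-Anosov, and therefore so is $\phi$.

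\textbf{Main obstacle.}
The technical heart is the estimate in the forward direction: splitting $\cI_\delta(\hL_n)$ into a ``core'' part controlled by $\pi(\hL_n)$ and a ``collar'' remainder, and proving the remainder is sub-exponential uniformly in $\delta$. The block decomposition \cref{thm:block decomposition} — whose diagonal blocks are the spectral–radius–$1$ permutation $\sigma_{\cR_\partial}(\phi)$ and the pseudo-Anosov block carrying the expansion — explains conceptually why the collar directions cannot outpace the core, but converting that infinitesimal picture into the stated control over an honest $\phi$–orbit, together with pinning down that pseudo-Anosov invariant laminations lie in $\cU_{\bar\Sigma}(\bR^\trop)$ and that the $\bD(\partial_i)$–orbits are bounded, is where the real work lies.
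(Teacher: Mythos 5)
Your backward direction is essentially the paper's; the genuinely different content is the forward direction, where you try to prove $[\hL_n]\to[L_\phi^+]$ by quantitative intersection-number estimates, whereas the paper takes a soft topological route. The paper considers the $\omega$-limit set $\Lambda^+_\phi([\hL])=\bigcap_n\overline{\{\phi^k([\hL]):k\geq n\}}$, uses continuity and equivariance of $\pi$ to show $\pi(\Lambda^+_\phi([\hL]))\subset\{[L^+_{\pi(\phi)}]\}$, invokes \cref{lem:barU_in_X}~(2) to identify $\pi^{-1}([L^+_{\pi(\phi)}])$ as $\bS\{d\sqcup L^+_\phi : d\in D_{\cX,\Sigma}\}$, and then uses $\phi$-invariance of $\Lambda^+_\phi$ together with \cref{lem:barU_in_X}~(3) (finiteness of $\phi$-orbits in $D_{\cX,\Sigma}$) and the scaling $\phi^n(L^+_\phi)=\lambda_{\pi(\phi)}^nL^+_\phi$ to squeeze $\phi^n([d\sqcup L^+_\phi])\to[L^+_\phi]$, concluding $\Lambda^+_\phi([\hL])=\{[L^+_\phi]\}$. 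No estimate on twist parameters or orbit growth is needed at all.

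Your intersection-number approach has two genuine gaps. First, intersection numbers $\cI_\delta(\cdot)$ with curves and arcs $\delta$ on $\Sigma$ do not determine a point of $\cX^\uf_\Sigma(\bR^\trop)\cong\eML(F)$: the enhanced lamination carries the spiraling-sign data $\sigma_p$ at punctures, which is invisible to $\cI_\delta$ (intersection numbers are unsigned). So even granting your estimates, you would only obtain convergence of the intersection data, not convergence in $\bS\cX^\uf_\Sigma(\bR^\trop)$. Since $L^+_\phi$ is compactly supported, you would additionally need to show the renormalized angle coordinates $\theta_p(\hL_n)/Z_n\to 0$, which is a separate argument. Second, your key estimate $R_\delta(n)=O(n)$ is asserted from the block picture in \cref{thm:block decomposition}, but this decomposition is infinitesimal (of tangent maps at differentiable points) and does not directly control honest orbit growth under iterated PL maps; converting it into a growth bound on the twist coordinates of $\hL_n$ is exactly the "real work" you flag, and it is not done. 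The paper's $\omega$-limit-set argument is designed precisely to sidestep both of these issues: compactness of $\bS\cX^\uf_\Sigma(\bR^\trop)$ and the concrete fiber description from \cref{lem:barU_in_X} replace quantitative estimates, and the $\phi$-invariance trick turns "bounded in each fiber" into "collapses to the tip."
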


We call $L^\pm_\phi$ the attracting/repelling points of $\phi$, similarly to those of pA mapping classes on a punctured surface.

\begin{proof}
Recall that for a continuous map $f:X \to X$ on a compact Hausdorff space $X$, the \emph{($\omega$-)limit set} of a point $x \in X$ with respect to $f$ is defined to be 
\begin{align*}
    \Lambda^+_f(x):= \bigcap_{n \geq 0}\overline{\{f^k(x) \mid k \geq n\}},
\end{align*}
which is clearly $f$-stable (and $f$-invariant if $f$ is a homeomorphism). 

Let $\phi \in \Gamma_\Sigma$ be a pA mutation loop. 
Then $\pi(\phi) \in \Gamma_{\bar{\Sigma}}$ is pA by definition, so there exist attracting/repelling points $L^\pm_{\pi(\phi)} \in \cU_{\bar{\Sigma}}(\bR^\trop)$ of $\pi(\phi)$. 
Let $L^\pm_\phi := \iota(L^\pm_{\pi(\phi)})$ and consider a point $\hL \in \cX^\uf_\Sigma(\bR^\trop) \setminus \pi^{-1}(\bR_{\geq 0}\cdot L^\mp_{\pi(\phi)})$. It suffices to show that $\Lambda^+_\phi([\hL])=\{[L^+_{\phi}]\}$.
By \cref{prop:pA_NS_SS}, we have $\Lambda_{\pi(\phi)}([\pi(\hL)]) = \{ [L^+_{\pi(\phi)}] \}$.
Since the projection $\pi$ is continuous and $\phi$-equivariant by \cref{lem:pi_S_equivariance}, we have
\begin{align*}
    \pi(\Lambda^+_\phi([\hL])) &= \pi\bigg(\bigcap_{n \geq 0}\overline{\{\phi^k([\hL]) \mid k \geq n\}}\bigg) \\
    &\subset \bigcap_{n \geq 0}\pi\bigg(\overline{\{\phi^k([\hL]) \mid k \geq n\}}\bigg) \\
    &=\bigcap_{n \geq 0}\overline{\{\pi(\phi)^k(\pi(\hL)) \mid k \geq n\}} \\
    &= \Lambda^+_{\pi(\phi)}([\pi(\hL)])=\{[L_{\pi(\phi)}^+]\}.
\end{align*}
Combining with \cref{lem:barU_in_X} (3), we get $\Lambda^+_\phi([\hL]) \subset \pi^{-1}([L^+_{\pi(\phi)}])= \bS (\{d \sqcup L_{\phi}^+ \mid d \in D_{\X,\Sigma}\})$.
Moreover we have 
\begin{align*}
    \Lambda^+_\phi([\hL]) =\phi^n(\Lambda^+_\phi([\hL])) \subset \bS (\{\phi^n(d \sqcup L_{\phi}^+) \mid d \in D_{\X,\Sigma}\})
\end{align*}
for all $n \geq 0$, since $\Lambda^+_\phi([\hL])$ is $\phi$-invariant. \cref{lem:barU_in_X} (3) tells us that the orbit $\phi^n(d)$ is finite for all $d \in D_{\X,\Sigma}$, and the $\Gamma_\Sigma$-equivariance of $\iota$ implies that
\begin{align*}
    \phi^n(L^+_\phi)=\phi^n(\iota(L^+_{\pi(\phi)}))=\iota(\pi(\phi)^n(L^+_{\pi(\phi)}))=\iota(\lambda_{\pi(\phi)}^nL^+_{\pi(\phi)})=\lambda_{\pi(\phi)}^nL_\phi^+.
\end{align*}
Then by the $\Gamma_\Sigma$-equivariance of $\sqcup$ and that $\lambda_{\pi(\phi)} >1$,
\begin{align*}
    \phi^n([d \sqcup L^+_{\phi}]) = [\phi^n(d) \sqcup \phi^n(L^+_{\phi})] = [\lambda_{\pi(\phi)}^{-n}\phi^n(d) \sqcup L^+_{\phi}] \xrightarrow{n \to \infty} [L_{\phi}^+],
\end{align*}
and hence $\Lambda^+_\phi([\hL]) = \{[L^+_{\phi}]\}$. 
Applying the above argument for the pA mutation loop $\phi^{-1}$, we get the desired statement.

Conversely, suppose that $\phi$ satisfies \eqref{eq:weak_NS} on $\cX^\uf_\Sigma(\bR^\trop)$ for some $L^\pm_\phi \in \cU^\uf_\Sigma(\bR^\trop)$.
Then by applying the continuous $\pi$ on the both sides of \eqref{eq:weak_NS}, we get
\begin{align*}
    \lim_{n \to \infty} \pi(\phi)^{\pm n}([\hL]) = [\pi(L_\phi^\pm)] \quad \mbox{in $\bS \X_{\bar{\Sigma}}(\bR^\trop)$}
\end{align*}
for any $\hL \in \X_{\bar{\Sigma}}(\bR^\trop) \setminus \bR_{\geq 0}\cdot \pi(L_\phi^\mp)$.
It implies that $\pi(\phi)$ satisfies the condition (3) in \cref{prop:pA_NS_SS}, hence it is pA. 
\end{proof}

\subsection{Weak sign stability of pA mutation loops.}\label{subsec:weak_SS}
It turns out that the pA property is not equivalent to the uniform sign stability in the general case. 
A crucial difficulty arises from the fact that the equivalence of the arationality and the $\cX$-filling property given in \cite[Proposition 2.14]{IK20} (1) is no more valid in general. 
Therefore it happens that for a pA mutation loop $\phi = [\gamma]_{\bs_\Sigma} \in \Gamma_\Sigma$, the attracting point $L_{\phi}^+ \in \cX_\Sigma(\bR^\trop)$ in \cref{thm:pA_NS_general} may not be $\cX$-filling.
In particular, the sign $\boldsymbol{\epsilon}_\gamma(L_{\phi}^+)$ may not be strict. It means that the sign $\boldsymbol{\epsilon}_\gamma(\phi^n(\hL))$ for a point $[\hL] \in \bS\cX^\uf_\Sigma(\bR^\trop) \setminus \pi^{-1}([L^\mp_{\pi(\phi)}])$ might not stabilize to $\boldsymbol{\epsilon}_\gamma(L_\phi^+)$, in spite of the fact that $\phi^n([\hL])$ converges to $[L^+_{\phi}]$ as we have just seen in \cref{thm:pA_NS_general}. Indeed, the condition $\boldsymbol{\epsilon}_\gamma(\hL)=\boldsymbol{\epsilon}_\gamma(L_\phi^+)$ is now not an open condition for $\hL$. 
Nevertheless, one can observe that the subsequence of $\boldsymbol{\epsilon}_\gamma(\phi^n(\hL)) = (\epsilon^n_0, \dots, \epsilon^n_{h-1})$ corresponding to the non-zero entries in $\boldsymbol{\epsilon}_\gamma(L_\phi^+)$ stabilizes as $n \to \infty$. See \cref{ex:3bdries+1pct_sph} below. 

We are going to formulate this phenomenon, which we call the \emph{weak sign stability}. Let us prepare some notations.

\begin{defi}
\begin{enumerate}
    \item For two sign sequences $\boldsymbol{\epsilon}$, $\boldsymbol{\epsilon}' \in \{+,0,-\}^h$ of the same length $h >0$, we write $\boldsymbol{\epsilon} \leq \boldsymbol{\epsilon}'$ if $\boldsymbol{\epsilon}$ is obtained from $\boldsymbol{\epsilon}'$ by replacing some $+$ or $-$ components with $0$. For example, $(+, 0, 0, 0, +) \leq (+, -, +, 0, +)$. We write $\boldsymbol{\epsilon} < \boldsymbol{\epsilon}'$ if $\boldsymbol{\epsilon} \leq \boldsymbol{\epsilon}'$ and $\boldsymbol{\epsilon} \neq \boldsymbol{\epsilon}'$. 
    We say that $(\boldsymbol{\epsilon}_n)_{n \geq 0}$ \emph{weakly stabilizes to} $\boldsymbol{\epsilon}$ if $\boldsymbol{\epsilon}_n \geq \boldsymbol{\epsilon}$ for all but finitely many $n$.
    \item For a seed pattern $\bs$ and a path $\gamma: (t,\sigma) \to (t', \sigma')$ in $\bE_I$, let
    \[ \bS(\gamma) := \{ \boldsymbol{\epsilon} \mid \boldsymbol{\epsilon}_\gamma(x) = \boldsymbol{\epsilon} \mbox{ for some } x \in \cX^\uf_{(t, \sigma)}(\bR^\trop) \} \cap \{+, -\}^{h(\gamma)}, \]
    which is the set of \emph{realizable} sequence of \emph{strict} signs of length $h(\gamma)$.
\end{enumerate}
\end{defi}

\begin{ex}
Let us consider the seed pattern of type $A_2$ satisfying
\begin{align*}
    B^{(v_0)} = 
    \begin{pmatrix}
    0 & 1\\
    -1 & 0
    \end{pmatrix}
\end{align*}
and consider the following path:
    \[ \gamma: v_0 \overbar{1} v_1 \overbar{2} v_2 \overbar{1} 
    v_3. \]
    Then, 
    \[ \bS(\gamma) = \{(+,+,-), (+,-,-), (-,+,+), (-,-,+), (-,-,-)\}. \]

\begin{figure}[h]
    \centering
    \begin{tikzpicture}
    \draw (-3,0) -- (3,0);
    \draw (0,3) -- (0,-3);
    \draw (0,0) -- (-3,3);
    \node at (1.5,1.5) {$(+,+,-)$};
    \node at (-1,2.35) {$(-,+,+)$};
    \node at (-2.3,1) {$(-,-,+)$};
    \node at (-1.5,-1.5) {$(-,-,-)$};
    \node at (1.5,-1.5) {$(+,-,-)$};
    \node at (3.5,0) {$x_1$};
    \node at (0,3.35) {$x_2$};
    \end{tikzpicture}
    \caption{Signs of $\gamma$.\CORRECTED}
    \label{fig:fan_A_2}
\end{figure}
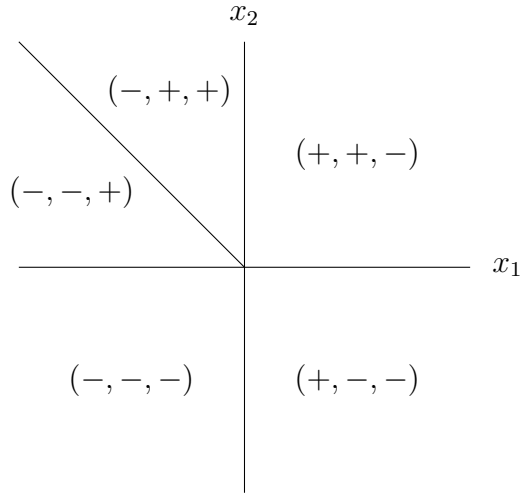
\end{ex}

\begin{defi}[weak sign stablility]\label{def:weak_SS}
Let $\gamma$ be a path which represents a mutation loop $\phi = [\gamma]_{\bs} \in \Gamma_\bs$.
We say that the path $\gamma: v_0 \xrightarrow{\mathbf{k}} v$ is \emph{weakly sign-stable} on an $\bR_{>0}$-invariant set $\Omega \subset \cX^\uf_{(v_0)}(\bR^\trop)$ if there is a sign sequence $\boldsymbol{\epsilon}^\stab_{\gamma, \Omega} \in \{+, 0, -\}^h \setminus \{(0, \dots, 0)\}$ such that
    for each $w \in \Omega$, there exists an integer $n_0 >0$ satisfying 
    \[ \boldsymbol{\epsilon}_\gamma(\phi^n(w)) \geq \boldsymbol{\epsilon}^\stab_{\gamma, \Omega} \]
    for all $n \geq n_0$.
    
We still call $\boldsymbol{\epsilon}^\stab_{\gamma, \Omega}$ the \emph{stable sign} of $\gamma$ on $\Omega$.

\end{defi}

The following additional condition is needed to characterize the pA property, which in some sense refer to a property of the mutation loop obtained after a reduction:

\begin{dfn}[Hereditariness]\label{d:hereditary}
A weakly sign-stable path $\gamma$ as above is said to be  \emph{$\cC$-hereditary} for some rational polyhedral cone $\cC \subset \cX^\uf_\bs(\bR^\trop)$ if the following condition for the stable sign $\boldsymbol{\epsilon}^\stab_{\gamma, \Omega} = (\epsilon_0, \epsilon_1, \dots, \epsilon_{h-1})$ holds:
\begin{itemize}
    \item The equation $x^{(v_{i(\nu)})}_{k_{i(\nu)}}(w) = 0$ for all $w \in \cC$ implies $\epsilon_\nu \neq 0$.
\end{itemize}
Here $(k_{i(0)}, k_{i(1)}, \dots, k_{i(h-1)})$ is the subsequence of $\mathbf{k}=(k_0, k_1, \dots, k_{m-1})$ which consists of horizontal indices.
\end{dfn}
For a multicurve $\cR$ on $\Sigma$, we denote the rational polyhedral cone generated by the curves in $\cR$ by $\cC(\cR)$.
The following result is a generalization of $(1) \Longleftrightarrow (2)$ in \cref{prop:pA_NS_SS}:

\begin{thm}\label{thm:weak_SS}
Let $\Sigma$ be a marked surface, 
and $\phi \in \Gamma_\Sigma$ a mutation loop.
Then, $\phi$ is pA if and only if any representation path $\gamma: (\tri, \ell) \to (\tri', \ell')$ of $\phi$ is weakly sign-stable on $\Omega_{(\tri, \ell)}^\bQ \setminus D_{\cX, \Sigma}$ and $\cC(\cR_\partial)$-hereditary.


\end{thm}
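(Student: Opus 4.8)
The plan is to reduce everything to the already-established punctured-surface characterization \cref{prop:pA_NS_SS} by pushing the data through the reduction homomorphism $\pi=\pi_{\cR_\partial}$ along the boundary-parallel multicurve $\cR_\partial$. The first step is to set up a precise dictionary between a representation path $\gamma$ of $\phi$ and its reduction $\oline{\gamma}$ from \cref{subsec:reduction_path}: by \cref{prop:pi_MCG} and the construction there, for every $\hL\in\cX^\uf_\Sigma(\bR^\trop)$ the sequence $\boldsymbol{\epsilon}_{\oline{\gamma}}(\pi(\hL))$ is exactly the subsequence of $\boldsymbol{\epsilon}_\gamma(\hL)$ indexed by the $\cR_\partial$-compatible positions, and (using \cref{def:compatible_arcs} together with the linearity of the shear coordinates on the cone of the pairwise disjoint curves $\cR_\partial$) these are precisely the positions $\nu$ at which $x^{(v_{i(\nu)})}_{k_{i(\nu)}}$ vanishes identically on $\cC(\cR_\partial)$. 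Hence ``$\gamma$ is weakly sign-stable on $\Omega^\bQ_{(\tri,\ell)}\setminus D_{\cX,\Sigma}$ and $\cC(\cR_\partial)$-hereditary'' is equivalent, via the $\Gamma_\Sigma$-equivariance of $\pi$ (\cref{lem:pi_S_equivariance}), to ``$\oline{\gamma}$ is sign-stable on $\pi(\Omega^\bQ_{(\tri,\ell)}\setminus D_{\cX,\Sigma})$'': the hereditariness forces the $\cR_\partial$-compatible entries of the stable sign to be strict, so weak stabilization on these entries is genuine stabilization, and conversely a stable sign for $\gamma$ can always be taken with nonzero entries exactly at the $\cR_\partial$-compatible positions. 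Here \cref{lem:barU_in_X} is used to see that removing $D_{\cX,\Sigma}$ discards precisely the laminations annihilated by $\pi$, and that (since $\pi$ sends integral laminations to integral laminations and the Fock--Goncharov cluster complex to the cluster complex) $\pi(\Omega^\bQ_{(\tri,\ell)}\setminus D_{\cX,\Sigma})$ contains the loci on which uniform sign stability is tested on the components of $\Sigma_{\cR_\partial}$; the finite-type factors $\bD(\partial_i)$ enter only through $\oline{\gamma}$ by \cref{rem:pi_component}.

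With the dictionary in hand the two implications are short. If $\phi$ is pA then $\pi(\phi)$ is pA on $\bar\Sigma$ by definition, hence uniformly sign-stable by \cref{prop:pA_NS_SS} $(1)\Rightarrow(2)$, so every reduced path $\oline{\gamma}$ is sign-stable on $\Omega^\bQ$; translating back through the dictionary shows $\gamma$ is weakly sign-stable on $\Omega^\bQ_{(\tri,\ell)}\setminus D_{\cX,\Sigma}$ with a stable sign that is $\cC(\cR_\partial)$-hereditary by construction. Conversely, if every representation path $\gamma$ of $\phi$ is weakly sign-stable on $\Omega^\bQ_{(\tri,\ell)}\setminus D_{\cX,\Sigma}$ and $\cC(\cR_\partial)$-hereditary, the dictionary gives the sign stability of $\oline{\gamma}$, hence of its $\bar\Sigma$-component path (split off using \cref{rem:pi_component}), on a set containing $\Omega^\bQ$ of $\bar\Sigma$; then \cref{prop:pA_NS_SS} $(2)\Rightarrow(1)$ shows $\pi(\phi)$ is pA, i.e.\ $\phi$ is pA. One should also record that \cref{thm:pA_NS_general} is compatible with this: the limit point $L^+_\phi=\iota(L^+_{\pi(\phi)})$ lies in $\iota(\cU_{\bar\Sigma}(\bR^\trop))$, which is exactly why only the $\cR_\partial$-compatible coordinates of $\boldsymbol{\epsilon}_\gamma(\phi^n(w))$ need to be controlled.

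The main obstacle is the careful handling of the \emph{non-strict} signs, which is the whole reason the weak notion is introduced. Since $L^+_\phi$ has vanishing shear coordinate along every arc crossing the peripheral annuli, $\boldsymbol{\epsilon}_\gamma(L^+_\phi)$ is not strict, and one cannot argue as in the punctured case that ``having a fixed strict sign'' is an open condition preserved by the dynamics. The crux is therefore to show that the $\cR_\partial$-compatible subsequence of $\boldsymbol{\epsilon}_\gamma(\phi^n(w))$ nevertheless stabilizes for every $w\in\Omega^\bQ_{(\tri,\ell)}\setminus D_{\cX,\Sigma}$: this combines the homogeneity of the tropical $\cX$-coordinates, the convergence $\phi^n(w)\to[L^+_\phi]$ of \cref{thm:pA_NS_general}, the $\cX$-filling property of $L^+_{\pi(\phi)}$ on $\bar\Sigma$ (via \cref{prop:pA_NS_SS} and \cite[Proposition 2.14]{IK20}), and the equivariance and continuity of $\pi$, together with the fact that on the finite-type factors $\bD(\partial_i)$ the reduced dynamics meet $\Omega^\bQ\setminus D_{\cX,\Sigma}$ only through laminations whose reduced images already lie in the cluster complex. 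A secondary, more routine point is to verify the surjectivity of $\pi$ onto the relevant rational and cluster-complex loci of the components of $\Sigma_{\cR_\partial}$, so that sign stability of $\oline{\gamma}$ on $\pi(\Omega^\bQ_{(\tri,\ell)}\setminus D_{\cX,\Sigma})$ already yields uniform sign stability of $\pi(\phi)$.
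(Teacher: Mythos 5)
Your overall route is the same as the paper's: reduce along the boundary-parallel multicurve $\cR_\partial$ and appeal to the punctured-surface characterization \cref{prop:pA_NS_SS}. The ``dictionary'' you set up — that $\boldsymbol{\epsilon}_{\oline\gamma}(\pi(\hL))$ is the $\cR_\partial$-compatible subsequence of $\boldsymbol{\epsilon}_\gamma(\hL)$, that those positions are exactly the ones where the coordinate vanishes on $\cC(\cR_\partial)$, and hence that ``weakly sign-stable and $\cC(\cR_\partial)$-hereditary'' for $\gamma$ is equivalent to ``sign-stable'' for $\oline\gamma$ — is correct and is the right mechanism. For the forward direction your route (pA $\Rightarrow$ $\pi(\phi)$ uniformly sign-stable $\Rightarrow$ $\oline\gamma$ sign-stable $\Rightarrow$ dictionary) is a valid and in fact slightly cleaner variant of the paper's argument, which instead takes $\boldsymbol{\epsilon}^\stab_\gamma:=\boldsymbol{\epsilon}_\gamma(L^+_\phi)$ and derives the weak stabilization from the NS dynamics of \cref{thm:pA_NS_general} together with the fact that $L^+_\phi$ lies in the interior of the star of its sign cone; the paper's version yields a finer stable sign but either suffices.

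The gap is in the reverse direction. You need to pass from ``$\oline\gamma$ is sign-stable on $\Omega^\bQ_{(\oline\tri,\lambda)}\setminus\{0\}$ for every $\gamma$'' to ``$\pi(\phi)$ is uniformly sign-stable'', and you attribute this to the surjectivity of $\pi$ onto the relevant rational and cluster-complex loci. That statement ($\pi(\Omega^\bQ_{(\tri,\ell)}\setminus D_{\cX,\Sigma})=\Omega^\bQ_{(\oline\tri,\lambda)}\setminus\{0\}$, both inclusions) is indeed needed — and the harder inclusion requires actually lifting curves from $\bar\Sigma$ back to $\Sigma$, which you gloss over — but it only controls the set on which each individual $\oline\gamma$ is sign-stable. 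It does \emph{not} tell you that the family $\{\oline\gamma : \gamma \text{ a representation path of }\phi\}$ exhausts the representation paths of $\pi(\phi)$, which is what uniform sign stability requires. In fact they do not: the reduced paths only cover paths in the subgraph $\bExch_{\bar\Sigma}^{\pi(\partial\Sigma)}$, and even that requires the nontrivial graph-level surjectivity of \cref{lem:graph_proj}. One then still needs an extra input (the paper invokes \cite[Remark 7.8]{IK19}) to upgrade sign stability on paths in this tag-restricted subgraph to the pA conclusion. Without \cref{lem:graph_proj} and this last step, ``$\pi(\phi)$ is pA'' does not follow from what you have established.
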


For a punctured surface $\Sigma_0$ and a subset $P' \subset P$, let $\bExch_{\Sigma_0}^{P'} \subset \bExch_{\Sigma_0}$ denote the subgraph consisting of labeled tagged triangulations $(\tri,\varsigma_\tri,\ell)$ such that $(\varsigma_\tri)_p =+$ for all $p \in P'$.

\begin{proof}
Let $\gamma: (\tri, \ell) \to (\tri', \ell')$ be any representation path of a pA mutation loop $\phi$ and we put $\boldsymbol{\epsilon}^\stab_\gamma := \boldsymbol{\epsilon}_\gamma(L^+_\phi)$. 
Since the star
\begin{align*}
    \bigcup_{\boldsymbol{\epsilon} \in \bS(\gamma),~ \boldsymbol{\epsilon}\geq \boldsymbol{\epsilon}_\gamma^\stab} \cC_\gamma^{\boldsymbol{\epsilon}} \subset \X_\Sigma^\uf(\bR^\trop)
\end{align*}
of $\cC_\gamma^{\boldsymbol{\epsilon}_\gamma^\stab}$
contains $L_\phi^+$ in its interior, \cref{thm:pA_NS_general} implies that the weak sign stability on $\Omega_{(\tri, \ell)}^\bQ \setminus D_{\cX, \Sigma}$ holds. Here note that the set $\Omega_{(\tri, \ell)}^\bQ \setminus D_{\cX, \Sigma}$ is contained in $\X_\Sigma^\uf(\bR^\trop) \setminus \pi^{-1}(\bR_{\geq 0}\cdot L_{\pi(\phi)}^-)$.
To prove the hereditariness, let us write $\gamma: (\tri,\ell) \xrightarrow{\mathbf{k}} (\tri',\ell')$, and consider the subsequence $(k_{i(0)},\dots,k_{i(h-1)})$ of $\mathbf{k}$ which consists of horizontal edges. 
From the construction, the index $k_{i(\nu)}$ appears in the path $\oline{\gamma}$ precisely if it satisfies the condition $x_{k_{i(\nu)}}^{(v_{i(\nu)}}(w)=0$ for all $w \in \cC(\cR_\partial)$. Since the path $\oline{\gamma}$ is sign-stable, the corresponding sign $\epsilon_\nu$ must be non-zero. Thus the path $\gamma$ is $\cC(\cR_\partial)$-hereditary.

Conversely, suppose that a representation path $\gamma: (\tri, \ell) \to (\tri', \ell')$ is weakly sign-stable on $\Omega^\bQ_{(\tri, \ell)}$ and $\cC(\cR_\partial)$-hereditary, and fix a labeling $\lambda$ of $\oline{\tri}$.
Then the $\cC(\cR_\partial)$-hereditariness implies that the path $\oline{\gamma}: (\oline{\tri}, \lambda) \to (\oline{\tri'}, \lambda')$ is sign-stable on $\pi(\Omega_{(\tri, \ell)}^\bQ \setminus D_{\cX, \Sigma}) \subset \pi(\Omega_{(\tri, \ell)}^\bQ) \setminus \pi(D_{\cX, \Sigma}) = \Omega^\bQ_{(\oline{\tri}, \lambda)} \setminus \{0\}$. The converse inclusion is also true. Indeed, since $\Omega^\bQ_{(\oline{\tri}, \lambda)} \setminus \{0\}$ consists of non-empty rational $\X$-laminations, it suffices to show that any curve on $\bar{\Sigma}$ can be lifted to a curve on $\Sigma$. Observe the followings: 
\begin{itemize}
    \item Each closed curve on $\bar{\Sigma}$ can be lifted by using the section $\iota$ defined in \cref{lem:barU_in_X}.
    \item For each ideal arc on $\bar{\Sigma}$, if both of its endpoints are punctures coming from $\Sigma$, then we can naturally regard it as an ideal arc on $\Sigma$.
    Otherwise, first lift it to a curve in $\Sigma_{\cR_\partial}$ with one or two endpoints on the multicurve $\cR_\partial$. Then extend it arbitrarily  across the multicurve $\cR_\partial$ and inside one of the components $\bD(\partial_i)$ until it hits a boundary interval. Then it gives a lift.
\end{itemize}
Thus for any representation path $\gamma$ as above, the reduced path $\oline{\gamma}$ is sign-stable on $\Omega^\bQ_{(\oline{\tri}, \lambda)} \setminus \{0\}$. 
By the following \cref{lem:graph_proj}, such paths $\oline{\gamma}$ exhaust the representation paths of $\pi(\phi)$ which are contained in the subgraph $\bExch_{\bar{\Sigma}}^{\pi(\partial \Sigma)} \subset \bExch_{\bar{\Sigma}}$.
Then $\pi(\phi)$ is pA by the implication (2) $\Longrightarrow$ (1) of \cref{prop:pA_NS_SS} and \cite[Remark 7.8]{IK19}. Hence $\phi$ is pA.
\end{proof}

Let $\Exch_{\Sigma_0}^{P'}$ denote the unlabeled version of $\bExch_{\Sigma_0}^{P'}$.

\begin{lem}\label{lem:graph_proj}
Let $\pi: \Exch_\Sigma \to \Exch_{\bar{\Sigma}}^{\pi(\partial \Sigma)}$ be the map of graphs obtained from the map $\pi_\cR:\Exch_\Sigma \to \Exch_{\Sigma_\cR}$ by forgetting the triangulation of the surfaces $\bD(\partial_i)$ for $i=1,\dots,b$. Then it is surjective. 
\end{lem}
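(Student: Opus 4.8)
The plan is to prove that $\pi$ is surjective on both vertices and edges; lifting of paths then follows, which is what \cref{thm:weak_SS} really uses. For surjectivity on vertices I would argue by an explicit lifting construction. Let $\oline{\tri}$ be a tagged triangulation of $\bar{\Sigma}$ with $(\varsigma_{\oline{\tri}})_p = +$ for every $p \in \pi(\partial\Sigma)$, and represent it by a signed triangulation whose underlying ideal triangulation has no self-folded triangle at any puncture of $\pi(\partial\Sigma)$; this is possible since the tags there are $+$. Now reverse the shrinking procedure of \cref{sec:reduction}: near each new puncture $p_i$, replace a small once-punctured disk neighbourhood of $p_i$ by the collar annulus bounded by $C_i$ and $\partial_i$ (so that $\bar{\Sigma}$ is turned back into $\Sigma$), prolong each arc of $\oline{\tri}$ incident to $p_i$ across $C_i$ until it reaches a fixed marked point of $\partial_i$, and fill the rest of the collar by an arbitrary ideal triangulation all of whose arcs have both endpoints on $\partial_i$. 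This yields a tagged triangulation $\tri$ of $\Sigma$, and I would check, using \cref{lem:pi_S_ideal_arc} together with the definition of $\oline{\varsigma_{\tri}}$, that $\pi_{\cR_\partial}(\tri)$ coincides with $\oline{\tri}$ on the $\bar{\Sigma}$-component: the collar-filling arcs reduce into the components $\bD(\partial_i)$ and contribute nothing to $\bar{\Sigma}$, the prolonged arcs reduce (among other pieces) to the original arcs of $\oline{\tri}$, and the new punctures automatically receive the $+$ tag. Hence $\pi(\tri) = \oline{\tri}$.

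For surjectivity on edges, let $\oline{f}\colon \oline{\tri}\ \overbar{\oline{\alpha}}\ \oline{\tri}'$ be a flip in $\Exch_{\bar{\Sigma}}^{\pi(\partial\Sigma)}$, flipping the arc $\oline{\alpha}$. I would first produce a lift $\tri$ of $\oline{\tri}$ that is \emph{adapted} to $\oline{f}$, meaning that $\tri$ contains an $\cR_\partial$-compatible arc $\alpha$ whose distinguished flippable reduction (\cref{lem:collapsed_tri}) is the given $\oline{\alpha}$. When $\oline{\alpha}$ avoids $\pi(\partial\Sigma)$ this is immediate, since $\oline{\alpha}$ is already such an arc in the lift constructed above. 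When $\oline{\alpha}$ is incident to some $p_i$ one must instead choose the collar triangulations so that $\oline{\alpha}$ is the $\bar{\Sigma}$-part of the reduction of a single arc $\alpha$ of $\tri$ crossing the curves $C_k$ only in a balanced way, i.e.\ with $x_\alpha^{\tri}(C_k) = 0$ for all $k$. Granting such an adapted lift, \cref{lem:collapsed_tri}(2) gives $\pi(f_\alpha(\tri)) = f_{\oline{\alpha}}(\oline{\tri}) = \oline{\tri}'$, so the edge $\tri\ \overbar{\alpha}\ f_\alpha(\tri)$ of $\Exch_\Sigma$ maps onto $\oline{f}$. To lift a whole path in $\Exch_{\bar{\Sigma}}^{\pi(\partial\Sigma)}$ I would iterate this: at each step, use that any two lifts of the current target vertex are joined by flips at non-$\cR_\partial$-compatible arcs, which project to the constant path by \cref{lem:collapsed_tri}(2), in order to move from the lift produced at the previous step to one adapted to the next flip. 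Connectedness of $\Exch_\Sigma$ (\cref{thm:Tri_Exch}; if $b \ge 1$ then $\Sigma$ has boundary and so is not a once-punctured surface) keeps everything inside a single component.

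The hard part is the statement about fibres used in the second paragraph: for each $\oline{\tri} \in \Exch_{\bar{\Sigma}}^{\pi(\partial\Sigma)}$, the set of its lifts in $\Exch_\Sigma$ is non-empty, is connected through flips at non-$\cR_\partial$-compatible arcs, and contains a lift adapted to any prescribed flip of $\oline{\tri}$. This amounts to a combinatorial analysis of how an ideal triangulation of $\Sigma$ can lie relative to the boundary-parallel multicurve $\cR_\partial$: the relevant local data near $\partial_i$ are the crossing pattern of the arcs with $C_i$ together with an ideal triangulation of the collar, which one identifies with a tagged triangulation of the once-punctured disk $\bD(\partial_i)$; since $\bD(\partial_i)$ is of finite type $D_n$ its exchange graph is finite and connected, and one checks that a flip inside $\bar{\Sigma}$ at an arc touching $p_i$ can always be made visible by first adjusting the collar triangulation through non-$\cR_\partial$-compatible flips. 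I expect this bookkeeping, rather than any conceptual obstruction, to be the bulk of the proof.
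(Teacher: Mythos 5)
The paper's proof is considerably simpler than your plan, and the step you single out as ``the hard part'' is exactly what the paper avoids rather than works through, so the comparison is worth making explicit.

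Both approaches begin by lifting a vertex $\tri' \in \Exch_{\bar\Sigma}^{\pi(\partial\Sigma)}$. The paper does this by choosing, for each new puncture $p \in \pi(\partial\Sigma)$, a single arc $\alpha'_p$ of $\tri'$ incident to $p$, and then building a lift $\tri$ in which every arc $\alpha' \in \tri' \setminus \{\alpha'_p\}_p$ has a \emph{unique} preimage in $\tri$; this unique preimage is then automatically $\cR_\partial$-compatible by \cref{lem:collapsed_tri}. Your vertex construction (prolong every arc incident to $p_i$ to one fixed marked point on $\partial_i$ and fill the collar arbitrarily) is in the same spirit, but note that it does not by itself pin down \emph{which} arcs of the resulting $\tri$ are $\cR_\partial$-compatible; in general a prolonged arc meets $C_i$ transversely once and may well have nonzero shear coordinate against $C_i$. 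So even for vertices your construction needs the extra statement that one can arrange $x_\alpha^\tri(C_k)=0$ for the designated lifts — which is precisely what the paper's ``unique preimage'' condition delivers.

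The real divergence is at the edge step. You propose to find an ``adapted lift'' for each flip and then to iterate, moving between different lifts of the same vertex through non-$\cR_\partial$-compatible flips, and you state that the existence of adapted lifts and the connectivity of the fibre are the hard part, expected to be ``the bulk of the proof.'' This is a genuine gap: that paragraph is where the lemma actually lives, and it is not established. The paper sidesteps it entirely with one observation: given the edge $\tri'_1 \overbar{\alpha'} \tri'_2$, choose the exceptional arcs $\alpha'_p$ to \emph{avoid} $\alpha'$. This is always possible because $\alpha'$ is a flippable arc of a tagged triangulation, hence not the interior arc of a self-folded triangle, hence has valency at least two at each endpoint — so if $\alpha'$ is incident to some new puncture $p$, there is another arc at $p$ to play the role of $\alpha'_p$. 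With this choice, $\alpha'$ has a unique, $\cR_\partial$-compatible lift $\alpha$ in $\tri_1$, and \cref{lem:collapsed_tri}(2) immediately gives that $\tri_1 \overbarnear{\alpha} f_\alpha(\tri_1)$ projects to the given edge. No fibre navigation, no iteration, no connectivity of fibres is needed. Note also that the lemma only claims surjectivity of the graph map, i.e.\ surjectivity on vertices and on edges; you are aiming at path-lifting, which is stronger than what is asserted, and that extra ambition is what forces you into the unproven combinatorics.

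In short: the high-level outline (lift vertices, then lift edges, hard case when the flipped arc touches $\pi(\partial\Sigma)$) is right, but the crucial trick — re-choosing $\alpha'_p$ per edge so that the flipped arc is never the exceptional one, justified by the valency-$\geq 2$ fact — is missing, and without it your argument is incomplete at exactly the point you flag.
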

\begin{figure}[h]
    \centering
    \begin{tikzpicture}[scale=1.2]
    \draw [blue] (4,2.5) coordinate (v1) -- (2.4,1.5) coordinate (v3) -- (3,-0.5) coordinate (v4) -- (5,-0.5) coordinate (v5) -- (5.6,1.5) coordinate (v7) -- (v1);
    \coordinate [fill, circle, inner sep=1.3] (v2) at (4,0.85) {};
    \draw [blue] (v2)-- (v1);
    \draw [blue] (v2) -- (v3);
    \draw [blue] (v2) -- (v4);
    \draw [red] (v2) -- (v5);
    \draw [blue] (v2) -- (v7);
    
    \draw  [blue] (-2.9,2.5) coordinate (v1) -- (-4.5,1.5) coordinate (v3) -- (-3.9,-0.5) coordinate (v4) -- (-1.9,-0.5) coordinate (v5) -- (-1.3,1.5) coordinate (v7) -- (v1);
    \coordinate [fill, circle, inner sep=1.3] (v2) at (-2.9,1.3) {} {} {} {};
    \draw [blue] (v2)-- (v1);
    \draw [blue] (v2) -- (v3);
    \draw [blue] (v2) -- (v7);
    \draw  (-2.9,1) ellipse (0.3 and 0.3);
    \node [fill, circle, inner sep=1.3] at (-3.2,1) {};
    \node [fill, circle, inner sep=1.3] at (-2.9,0.7) {};
    \node [fill, circle, inner sep=1.3] at (-2.6,1) {};
    \draw [blue] (-2.9,1.3) .. controls (-3.9,1.4) and (-3.9,0.5) .. (-3.9,-0.5);
    \draw [blue] (-2.9,1.3) .. controls (-1.9,1.4) and (-1.9,0.5) .. (-1.9,-0.5);
    \draw [blue] (-2.9,1.3) .. controls (-3.6,1.35) and (-3.4,0.5) .. (-2.9,0.7);
    \draw [blue] (-2.9,1.3) .. controls (-2.2,1.35) and (-2.4,0.5) .. (-2.9,0.7);
    \draw [blue] (-2.9,1.3) .. controls (-3.8,1.35) and (-3.55,0.35) .. (-2.9,0.35) .. controls (-2.2,0.35) and (-2,1.35) .. (-2.9,1.3);
    \draw [blue] (-2.9,1.3) .. controls (-4.25,1.35) and (-3.55,-0.3) .. (-1.9,-0.5);
    
    \draw [->, very thick](-0.5,1) -- (1.5,1);
    \node at (0.5,1.35) {$\pi$};
    \node at (3.7,0.7) {$p$};
    \node[red] at (4.6,0.4) {$\alpha'_p$};
    \end{tikzpicture}
    \caption{Right: an ideal triangulation of $\bar{\Sigma}$. Left: the image under the graph projection $\pi$.}
    \label{fig:graph_proj_inv}
\end{figure}
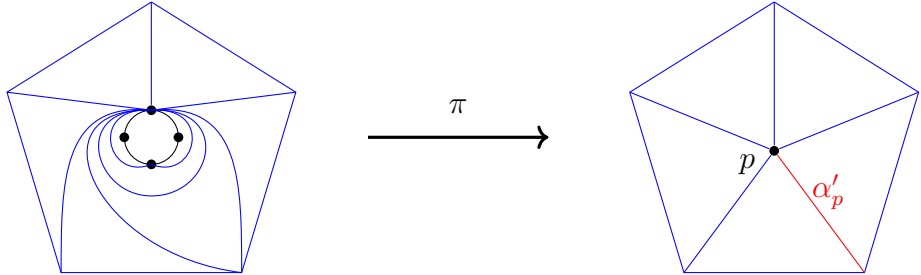
\begin{proof}
For any vertex $\tri' \in \Exch_{\bar{\Sigma}}^{\pi(\partial\Sigma)}$, by choosing an ideal arc $\alpha'_p$ incident to $p$ for each $p \in \pi(\partial \Sigma)$, we can find an ideal triangulation $\tri$ of $\Sigma$ such that
\begin{itemize}
    \item $\oline{\tri} = \tri'$,
    \item $\# \{\alpha \in \tri \mid \pi(\alpha)=\alpha' \} = 1$ for $\alpha' \in \tri' \setminus  \{\alpha'_p\}_{p \in \pi(\partial\Sigma)}$,
\end{itemize}
as shown in the left of \cref{fig:graph_proj_inv}.
Note that the second condition implies that
the unique ideal arc $\alpha$ is $\cR_\partial$-compatible for $\alpha' \in \tri' \setminus \{\alpha'_p\}_{p \in \pi(\partial\Sigma)}$. Given an edge $\tri'_1 \overbar{\alpha'} \tri'_2$ in $\Exch_{\bar{\Sigma}}^{\pi(\Sigma)}$, we can choose the arcs $\alpha'_p$ avoiding the given one $\alpha'$, since it is not the interior arc of a self-folded triangle and hence the valency at each endpoint is at least two. 
Then the unique edge $\tri_1 \overbarnear{\alpha} \tri_2$ such that $\pi(\alpha) = \alpha'$ projects to the edge $\tri'_1 \overbar{\alpha'} \tri'_2$.
\end{proof}

\begin{rem}
Essentially due to the constraint on the image of $\pi_{\cR_\partial}$ given in \cref{lem:im_pi_S}, the entire map $\pi_{\cR_\partial}: \mathrm{Exch}_\Sigma \to \mathrm{Exch}_{\Sigma_{\partial_\cR}}^{\bar{P}}$ is not surjective.
\end{rem}

\begin{ex}\label{ex:3bdries+1pct_sph}
Let $\Sigma$ be a sphere with one puncture and three boundary components $\partial_1$, $\partial_2$ and $\partial_3$, each of which has two special points. See \cref{fig:tri_1punc_3bdr}. 
Note that the reduced surface $\bar{\Sigma}$ is a sphere with 4 punctures.
Let us consider the following mapping classes on $\Sigma$:
\begin{itemize}
    \item $\tau_{ij}$: the braiding of the boundary components $\partial_i$ and $\partial_j$ in the clockwise direction, where $i \neq j \in \{1, 2, 3\}$.
    \item $\phi := \tau_{23} \circ \tau_{12}^{-1}$.
\end{itemize}
Then the mapping class $\pi(\phi) \in MC(\bar{\Sigma})$ is the one studied in \cite[Example 7.10]{IK20},
hence $\phi$ is a pA mutation loop.

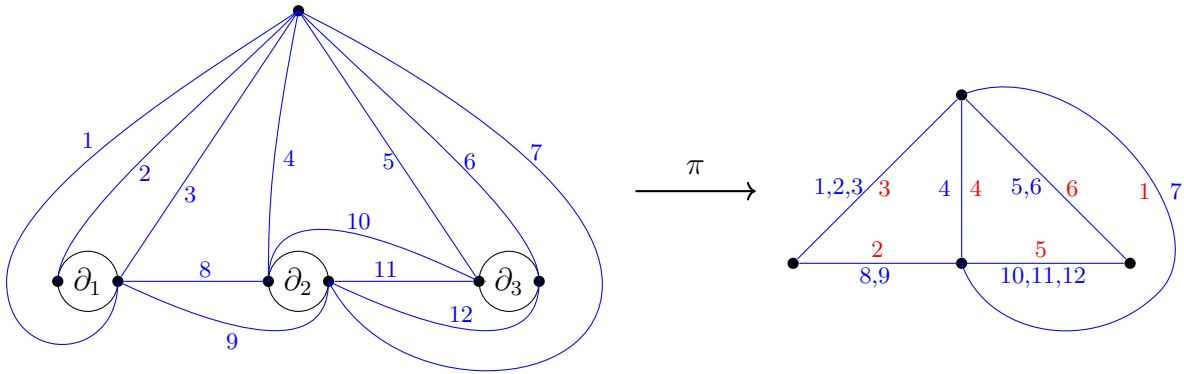
\begin{figure}[ht]
    \centering
    \begin{tikzpicture}[scale=0.8]
    \draw  (-3,0.5) ellipse (0.5 and 0.5);
    \draw  (0.5,0.5) ellipse (0.5 and 0.5);
    \draw  (4,0.5) ellipse (0.5 and 0.5);
    \node [fill, circle, inner sep=1.5pt] at (0.5,5) {};
    \node [fill, circle, inner sep=1.5pt] at (-3.5,0.5) {};
    \node [fill, circle, inner sep=1.5pt] (v5) at (-2.5,0.5) {};
    \node [fill, circle, inner sep=1.5pt] at (0,0.5) {};
    \node [fill, circle, inner sep=1.5pt] (v1) at (1,0.5) {};
    \node [fill, circle, inner sep=1.5pt] (v2) at (3.5,0.5) {};
    \node [fill, circle, inner sep=1.5pt] at (4.5,0.5) {};
    \draw [blue] (0.5,5) coordinate (v3) {} .. controls (0.05,2.7) and (0,1.8) .. (0,0.5) coordinate (v4) {};
    \draw [blue] (v1) -- (v2) -- (v2) -- (v3);
    \draw [blue] (v4) -- (v5) -- (v3);
    \draw [blue] (-3.5,0.5) .. controls (-3.5,1.5) and (-1,3.5) .. (0.5,5);
    \draw [blue] (4.5,0.5) .. controls (4.5,1.5) and (2,3.5) .. (0.5,5);
    \draw [blue] (0,0.5) .. controls (0,2.1) and (2.25,1.1) .. (3.5,0.5);
    \draw [blue] (4.5,0.5) .. controls (4.5,-1) and (2.25,-0.1) .. (1,0.5);
    \draw [blue] (1,0.5) .. controls (1,-1) and (-1.25,-0.1) .. (-2.5,0.5);
    \draw [blue] (-2.5,0.5);
    \draw [blue] (-2.5,0.5) .. controls (-2.5,-0.9) and (-4.35,-0.9) .. (-4.35,0.5) .. controls (-4.35,2) and (-1,4) .. (0.5,5);
    \draw [blue] (1,0.5) .. controls (2.05,-1.55) and (5.55,-1.4) .. (5.55,0.45) .. controls (5.55,2.2) and (2.3,4.1) .. (0.5,5);
    \node at (-3,0.5) {$\partial_1$};
    \node at (0.5,0.5) {$\partial_2$};
    \node at (4,0.5) {$\partial_3$};
    \node [blue] at (-3,2.85) {\scriptsize 1};
    \node [blue] at (-2.05,2.3) {\scriptsize 2};
    \node [blue] at (-1.3,1.95) {\scriptsize 3};
    \node [blue] at (0.35,2.55) {\scriptsize 4};
    \node [blue] at (2,2.5) {\scriptsize 5};
    \node [blue] at (3.35,2.5) {\scriptsize 6};
    \node [blue] at (4.45,2.65) {\scriptsize 7};
    \node [blue] at (-1.05,0.7) {\scriptsize 8};
    \node [blue] at (-0.6,-0.5) {\scriptsize 9};
    \node [blue] at (1.5,1.5) {\scriptsize 10};
    \node [blue] at (1.95,0.7) {\scriptsize 11};
    \node [blue] at (3.2,-0.05) {\scriptsize 12};
    
    \draw [thick, ->](6.1,2) -- (8.1,2);
    \node at (7.1,2.4) {$\pi$};
    
    \begin{scope}[scale=0.8, xshift=1.4cm, yshift=.5cm]
    \node [fill, circle, inner sep=1.5pt] (v6) at (13,4) {};
    \node [fill, circle, inner sep=1.5pt] (v9) at (13,0.5) {};
    \node [fill, circle, inner sep=1.5pt] (v7) at (9.5,0.5) {};
    \node [fill, circle, inner sep=1.5pt] (v8) at (16.5,0.5) {};
    \draw [blue](v6) -- (v7) -- (v8) -- (v6) -- (v9);
    \draw [blue](13,0.5) .. controls (13.5,-1) and (15.75,-1.45) .. (17.1,-0.1) .. controls (18.45,1.25) and (15.5,5) .. (13,4);
    \node [blue] at (10.45,2.05) {\scriptsize 1,2,3};
    \node [blue] at (12.65,2.05) {\scriptsize 4};
    \node [blue] at (14.35,2.05) {\scriptsize 5,6};
    \node [blue] at (17.45,2) {\scriptsize 7};
    \node [blue] at (11.2,0.2) {\scriptsize 8,9};
    \node [blue] at (14.7,0.2) {\scriptsize 10,11,12};
	\node [red] at (16.8,2) {\scriptsize 1};
	\node [red] at (11.25,0.8) {\scriptsize 2};
	\node [red] at (11.4,2.05) {\scriptsize 3};
	\node [red] at (13.3,2.05) {\scriptsize 4};
	\node [red] at (14.65,0.8) {\scriptsize 5};
	\node [red] at (15.3,2.05) {\scriptsize 6};
    \end{scope}
    \end{tikzpicture}
    \vspace{-5mm}
    \caption{Left: labeled triangulation $(\tri, \ell)$ of $\Sigma$, right: the triangulation $\oline{\tri}$ of $\bar{\Sigma}$ and red numbers denote a labeling $\lambda$ of $\oline{\tri}$}
    \label{fig:tri_1punc_3bdr}
\end{figure}

First, we give representation paths of these mapping classes.
Take a triangulation $\tri$ of $\Sigma$ and its labeling $\ell$ as in \cref{fig:tri_1punc_3bdr}.
The triangulations $\tau_{12}(\tri, \ell)$ and $\tau_{23}^{-1}(\tri, \ell)$ are shown in \cref{fig:tris_twisted}.
Then one can compute the representation paths of $\tau^{-1}_{12}$, as follows:
\begin{align*}
\gamma_1&: (\tri, \ell) \xrightarrow{(\textcolor{red}{7}, 8, 10, 3, 2, 9, 12, \textcolor{red}{11}, 9)} (\tri', \ell') \xrightarrow{(1\ 7\ 12\ 10\ 3\ 8) \circ (2\ 9\ 11\ 4)} \tau_{AB}(\tri, \ell)\\
\gamma_2&: (\tri, \ell) \xrightarrow{(\textcolor{red}{7}, 12, 10, 5, 9, \textcolor{red}{8}, 11)} (\tri'', \ell'') \xrightarrow{(4\ 5\ 12\ 8) \circ (6\ 7\ 8\ 10)} \tau_{BC}^{-1}(\tri, \ell)
\end{align*}
Here $(\tri_i, \ell_i) \xrightarrow{\sigma} (\tri_j, \ell_j)$ for $\sigma \in \fS_{12}$ denotes the sequence of vertical edges corresponds any reduced expression of $\sigma$, and the $\cR_\partial$-compatible edges are colored red.
Thus the concatenation of paths $\gamma' := \gamma_2 * \gamma_1$ represents the mutation loop $\phi$:
\[
\gamma: (\tri, \ell) \xrightarrow{(\textcolor{red}{7}, 12, 10, 5, 9, \textcolor{red}{8}, 11, \textcolor{red}{6}, 12, 9, 3, 2, 7, 5, \textcolor{red}{11}, 7)} (\tri''', \ell''') \xrightarrow{(1\ 7\ 11\ 4\ 5\ 10\ 6\ 12) \circ (2\ 9\ 3\ 8)} \phi^{-1}(\tri, \ell).
\]

The triangulation $\tri$ descends to a triangulation $\oline{\tri}$ of the sphere $\bar{\Sigma}$ with 4 punctures, which coincides with the one considered in \cite[Example 7.10]{IK20}.
Let $\lambda$ be a labeling of $\oline{\tri}$ as in \cite[Figure 28]{IK20}, which is also shown in the right of \cref{fig:tri_1punc_3bdr} in red. 
One can verify that the reduced path $\oline{\gamma}: (\oline{\tri}, \lambda) \to (\oline{\phi^{-1}(\tri)}, \lambda')$ obtained from the path $\gamma$ above via the procedure given in \cref{subsec:reduction_path} coincides with the path in \cite[Example 7.10]{IK20}.

Using the attracting/repelling points $L^\pm_{\pi(\phi)}$ of $\pi(\phi)$,
we can calculate\footnote{
A reader familiar with train tracks is suggested to draw the train track which carries $L^\pm_\phi$ and calculate its shear coordinates.
}
the coordinates of $L^\pm_\phi=\iota(L_{\pi(\phi)}^\pm)$ as
\begin{align*}
\boldsymbol{x}_{(\tri, \ell)}(L^+_\phi) 
&= \left(
1, 0, 0, \frac{1-\sqrt{5}}{2}, 0, -1, \frac{-1+\sqrt{5}}{2}, -1, 0, \frac{1+\sqrt{5}}{2}, \frac{-1-\sqrt{5}}{2}, 1\right),\\
\boldsymbol{x}_{(\tri, \ell)}(L^-_\phi) 
&= \left(
0, 0, -1, \frac{-1+\sqrt{5}}{2}, 1, 0, \frac{1-\sqrt{5}}{2}, \frac{1-\sqrt{5}}{2}, \frac{1+\sqrt{5}}{2}, 0, -1, 0\right).
\end{align*}
Then the stable sign should be
\begin{align*}
    \boldsymbol{\epsilon}^\stab_{\gamma} := \boldsymbol{\epsilon}_{\gamma}(L^+_\phi) = (
    \textcolor{red}{+}, +, +, 0, 0, \textcolor{red}{-}, +, \textcolor{red}{-}, -, +, 0, 0, -, +, \textcolor{red}{+}, +). 
\end{align*}
The signs corresponding to $\cR_\partial$-compatible edges (colored red in the equation above) are strict and coincides with the stable sign of $\oline{\gamma}$.

Let us observe the weak stabilization behavior of signs to $\boldsymbol{\epsilon}^\stab_\gamma$. 
Consider the points $l^\pm_\tri$ with coordinates $\boldsymbol{x}_{(\tri, \ell)}(l^\pm_\tri) = (\pm1, \dots, \pm1)$, which are not contained in $\bR_{\geq 0}\pi^{-1}(L_{\pi(\phi)}^-)$. Then by \cref{thm:pA_NS_general}, $\phi^n([\ell^\pm_\tri])$ converges to $[L_\phi^\pm]$. 
The signs of $\phi^n(\ell^\pm_\tri)$ for small $n$ are shown in \cref{tab:signs_l_pm}.
By looking at the subsequences colored by green, one can observe the signs $\boldsymbol{\epsilon}_\gamma(\phi^n(l^\pm_\tri)))$ weakly stabilizes to $\boldsymbol{\epsilon}^\stab_\gamma$.

\begin{table}[h]
    \centering
    \caption{Orbit of signs.}
    \vspace{-2mm}
    \begin{tabular}{c||c|c}
    \hline
    $n$ & $\boldsymbol{\epsilon}_\gamma(\phi^n(l^+_\tri))$ & $\boldsymbol{\epsilon}_\gamma(\phi^n(l^-_\tri))$  \\
    \hline
    1 & $\scriptstyle (\redtext{+,+,+},+,+,\redtext{+,+,+,-,+},+,+,\redtext{-,+,+,+})$ 
    & $\scriptstyle (\redtext{-,-,-},-,-,\redtext{-,-,-,-,+},-,-,\redtext{-,+,+,+})$ \\
    2 & $\scriptstyle (\redtext{+,+,+},+,-,\redtext{-,+,+,-,+},+,+,\redtext{-,+,+,+})$
    & $\scriptstyle (\redtext{-,+,+},-,-,\redtext{-,+,-,-,+},+,+,\redtext{-,+,+,+})$\\
    3 & $\scriptstyle (\redtext{+,+,+},-,-,\redtext{-,+,-,-,+},-,+,\redtext{-,+,+,+})$
    & $\scriptstyle (\redtext{+,+,+},-,-,\redtext{-,+,-,-,+},+,+,\redtext{-,+,+,+})$\\
    4 & $\scriptstyle (\redtext{+,+,+},-,-,\redtext{-,+,-,-,+},+,+,\redtext{-,+,+,+})$
    & $\scriptstyle (\redtext{+,+,+},-,-,\redtext{-,+,-,-,+},+,+,\redtext{-,+,+,+})$\\
    5 & $\scriptstyle (\redtext{+,+,+},+,-,\redtext{-,+,-,-,+},+,+,\redtext{-,+,+,+})$
    & $\scriptstyle (\redtext{+,+,+},-,-,\redtext{-,+,-,-,+},+,+,\redtext{-,+,+,+})$\\
    6 & $\scriptstyle (\redtext{+,+,+},-,-,\redtext{-,+,-,-,+},-,+,\redtext{-,+,+,+})$
    & $\scriptstyle (\redtext{+,+,+},-,-,\redtext{-,+,-,-,+},+,+,\redtext{-,+,+,+})$\\
    7 & $\scriptstyle (\redtext{+,+,+},-,-,\redtext{-,+,-,-,+},+,+,\redtext{-,+,+,+})$
    & $\scriptstyle (\redtext{+,+,+},-,-,\redtext{-,+,-,-,+},+,+,\redtext{-,+,+,+})$\\
    8 & $\scriptstyle (\redtext{+,+,+},+,-,\redtext{-,+,-,-,+},+,+,\redtext{-,+,+,+})$
    & $\scriptstyle (\redtext{+,+,+},-,-,\redtext{-,+,-,-,+},+,+,\redtext{-,+,+,+})$\\
    9 & $\scriptstyle (\redtext{+,+,+},-,-,\redtext{-,+,-,-,+},-,+,\redtext{-,+,+,+})$
    & $\scriptstyle (\redtext{+,+,+},-,-,\redtext{-,+,-,-,+},+,+,\redtext{-,+,+,+})$\\
    10 & $\scriptstyle (\redtext{+,+,+},-,-,\redtext{-,+,-,-,+},+,+,\redtext{-,+,+,+})$
    & $\scriptstyle (\redtext{+,+,+},-,-,\redtext{-,+,-,-,+},+,+,\redtext{-,+,+,+})$\\
    11 & $\scriptstyle (\redtext{+,+,+},+,-,\redtext{-,+,-,-,+},+,+,\redtext{-,+,+,+})$
    & $\scriptstyle (\redtext{+,+,+},-,-,\redtext{-,+,-,-,+},+,+,\redtext{-,+,+,+})$\\
   \vdots & \vdots & \vdots
    \end{tabular}
    \label{tab:signs_l_pm}
\end{table}

Finally we verify the $\cC(\cR_\partial)$-hereditariness of $\gamma$.
In \cref{fig:tris_corresp_zero}, the triangulations $(\tri_1, \ell_1)$ and $(\tri_3, \ell_3)$ which appear in the following subpath of $\gamma$ is shown:
\begin{align*}
    (\tri, \ell) \xrightarrow{(7,12,10)} (\tri_1, \ell_1) \overbar{5} (\tri_2, \ell_2) \xrightarrow{(8,11,6,12,9)} (\tri_3, \ell_3) \overbar{3} (\tri_4, \ell_4) \overbar{2} \cdots.
\end{align*}
Then we have
\begin{align*}
    x^{(\tri_1,\ell_1)}_5(C_2) = 1,\ x^{(\tri_2,\ell_2)}_9(C_3) = 1,\ x^{(\tri_3,\ell_3)}_3(C_1) = -1,\ x^{(\tri_4,\ell_4)}_2(C_1) = -1,
\end{align*}
and thus $\gamma$ satisfies the $\cC(\cR_\partial)$-hereditary condition.

\begin{figure}[h]
    \[
    \begin{tikzpicture}[scale=0.77, baseline=(v1)]
    \draw  (-3,0.5) ellipse (0.5 and 0.5);
    \draw  (0.5,0.5) ellipse (0.5 and 0.5);
    \draw  (4,0.5) ellipse (0.5 and 0.5);
    \node [fill, circle, inner sep=1.5pt] at (0.5,5) {};
    \node [fill, circle, inner sep=1.5pt] at (-3.5,0.5) {};
    \node [fill, circle, inner sep=1.5pt] (v5) at (-2.5,0.5) {};
    \node [fill, circle, inner sep=1.5pt] at (0,0.5) {};
    \node [fill, circle, inner sep=1.5pt] (v1) at (1,0.5) {};
    \node [fill, circle, inner sep=1.5pt] (v2) at (3.5,0.5) {};
    \node [fill, circle, inner sep=1.5pt] at (4.5,0.5) {};
    \draw [blue] (0.5,5) coordinate (v3) {} .. controls (0.05,2.7) and (0,1.8) .. (0,0.5) coordinate (v4) {};
    \draw [blue] (v2) -- (v3);
    \draw [blue] (4.5,0.5) .. controls (4.5,1.5) and (2,3.5) .. (0.5,5);
    \draw [blue] (4.5,0.5) .. controls (4.5,-2.25) and (-1.95,-3.3) .. (-2.5,0.5);
    \draw [blue] (-2.5,0.5) .. controls (-2.5,-3.85) and (3.75,-2.8) .. (4.7,-0.95) .. controls (6.25,2.25) and (2.65,4) .. (0.5,5);
    \node at (-3,0.5) {$\partial_2$};
    \node at (0.5,0.5) {$\partial_1$};
    \node at (4,0.5) {$\partial_3$};
    \draw [blue] (3.5,0.5) .. controls (3.5,-1.75) and (-1.65,-2.45) .. (-2.5,0.5);
    \draw [blue] (1,0.5) .. controls (1,1.8) and (1,2.7) .. (0.5,5);
    \draw [blue] (1,0.5);
    \draw [blue] (1,0.5) .. controls (0.95,-0.55) and (-0.5,-0.5) .. (-0.5,0.7) .. controls (-0.5,2.3) and (0.5,5) .. (0.5,5);
    \draw [blue] (1,0.5) .. controls (1.05,-1.2) and (-0.75,-0.25) .. (-1.5,0.85) .. controls (-2.3,2) and (-3.95,2.2) .. (-3.95,0.5);
    \draw [blue] (1,0.5) .. controls (1.4,-1.45) and (-0.9,-0.4) .. (-1.5,0.45) .. controls (-2.1,1.35) and (-3.8,2.3) .. (-3.5,0.5);
    \draw [blue] (0.5,5);
    \draw [blue] (0.5,5) .. controls (1.5,3) and (2.05,1.75) .. (2,0.5) .. controls (1.9,-1.4) and (-0.8,-1) .. (-1.5,0.05) .. controls (-2.1,0.95) and (-3.5,2.2) .. (-3.5,0.5);
    \draw [blue] (-3.95,0.5) .. controls (-3.9,-0.45) and (-2.5,-0.5) .. (-2.5,0.5);
    \draw [blue] (-3.5,0.5) .. controls (-3.2,2) and (-2.15,0.65) .. (-1.6,-0.3) .. controls (-0.85,-1.45) and (1.85,-1.6) .. (3.5,0.5);
    \node at (-0.45,2.5) {\textcolor{blue}{\scriptsize 1}};
    \node at (0.25,2.3) {\textcolor{blue}{\scriptsize 2}};
    \node at (1.1,2.2) {\textcolor{blue}{\scriptsize 3}};
    \node at (2.2,0.6) {\textcolor{blue}{\scriptsize 4}};
    \node at (2.65,2) {\textcolor{blue}{\scriptsize 5}};
    \node at (3.55,2.3) {\textcolor{blue}{\scriptsize 6}};
    \node at (4.45,2.6) {\textcolor{blue}{\scriptsize 7}};
    \node [fill, white, circle, inner sep=1pt] at (-1.5,0.45) {\textcolor{blue}{\scriptsize 8}};
    \node at (-1.55,1.15) {\textcolor{blue}{\scriptsize 9}};
    \node at (2.5,-0.2) {\textcolor{blue}{\scriptsize 10}};
    \node at (1.7,-1.45) {\textcolor{blue}{\scriptsize 11}};
    \node at (1.8,-2.05) {\textcolor{blue}{\scriptsize 12}};
    \end{tikzpicture}
    \hspace{3mm}
    \begin{tikzpicture}[scale=0.77, baseline=(v1)]
    \draw  (-3,0.5) ellipse (0.5 and 0.5);
    \draw  (0.5,0.5) ellipse (0.5 and 0.5);
    \draw  (4,0.5) ellipse (0.5 and 0.5);
    \node [fill, circle, inner sep=1.5pt] at (0.5,5) {};
    \node [fill, circle, inner sep=1.5pt] at (-3.5,0.5) {};
    \node [fill, circle, inner sep=1.5pt] (v5) at (-2.5,0.5) {};
    \node [fill, circle, inner sep=1.5pt] at (0,0.5) {};
    \node [fill, circle, inner sep=1.5pt] (v1) at (1,0.5) {};
    \node [fill, circle, inner sep=1.5pt] (v2) at (3.5,0.5) {};
    \node [fill, circle, inner sep=1.5pt] at (4.5,0.5) {};
    \draw [blue] (0.5,5) coordinate (v3) {} .. controls (0.05,2.7) and (0,1.8) .. (0,0.5) coordinate (v4);
    \draw [blue] (-3.5,0.5) .. controls (-3.5,1.5) and (-1,3.5) .. (0.5,5);
    \draw [blue] (4.5,0.5) .. controls (4.75,-2.2) and (-1.8,-2.8) .. (-2.5,0.5);
    \draw [blue] (-2.5,0.5);
    \draw [blue] (-2.5,0.5) .. controls (-2.5,-1.3) and (-4.3,-0.9) .. (-4.3,0.5) .. controls (-4.3,2) and (-1,4) .. (0.5,5);
    \draw [blue] (4.5,0.5) .. controls (5.55,2.2) and (2.25,4.3) .. (0.5,5);
    \draw [blue] (v5) -- (v3);
    \node at (-3,0.5) {$\partial_1$};
    \node at (0.5,0.5) {$\partial_3$};
    \node at (4,0.5) {$\partial_2$};
    \draw [blue] (1,0.5) .. controls (1,1.8) and (0.95,2.7) .. (0.5,5);
    \draw [blue] (0.5,5) .. controls (-2.9,-1.25) and (0.25,-2.55) .. (3.5,0.5);
    \draw [blue] (3.5,0.5) .. controls (2.2,-2.1) and (-1,-1.9) .. (-2.5,0.5);
    \draw [blue] (0,0.5);
    \draw [blue] (0,0.5) .. controls (-0.75,-1.15) and (1.3,-1) .. (3.5,0.5);
    \draw [blue] (0,0.5) .. controls (0,-2.35) and (4.5,3.1) .. (4.5,0.5);
    \draw [blue] (1,0.5) .. controls (2,1.5) and (4.9,2.65) .. (4.5,0.5);
    \node [blue] at (-3,2.8) {\scriptsize 1};
    \node [blue] at (-2.1,2.3) {\scriptsize 2};
    \node [blue] at (-1.5,1.55) {\scriptsize 3};
    \node [blue] at (-1.1,0.6) {\scriptsize 4};
    \node [blue] at (-0.1,2.05) {\scriptsize 5};
    \node [blue] at (1.05,2.25) {\scriptsize 6};
    \node [blue] at (3.3,3.55) {\scriptsize 7};
    \node [blue] at (1.85,-1.35) {\scriptsize 8};
    \node [blue] at (1.45,-1.95) {\scriptsize 9};
    \node [fill, white, circle, inner sep=.5pt] at (1.8,-0.38) {\textcolor{blue}{\scriptsize 10}};
    \node [blue] at (2.15,0.6) {\scriptsize 11};
    \node [blue] at (3,1.8) {\scriptsize 12};
    \end{tikzpicture}
    \]
    \vspace{-1.2cm}
    \caption{Left: $\tau_{12}(\tri, \ell)$, right: $\tau_{23}^{-1}(\tri, \ell)$}
    \label{fig:tris_twisted}
\end{figure}

\begin{figure}[h]
    \centering
    \begin{tikzpicture}[scale=.75, baseline=(b)]
    \draw  (-3,0.5) ellipse (0.5 and 0.5);
    \draw  (0.5,0.5) ellipse (0.5 and 0.5);
    \draw  (4,0.5) ellipse (0.5 and 0.5);
    \node [fill, circle, inner sep=1.5pt] (b) at (0.5,5) {};
    \node [fill, circle, inner sep=1.5pt] at (-3.5,0.5) {};
    \node [fill, circle, inner sep=1.5pt] (v5) at (-2.5,0.5) {};
    \node [fill, circle, inner sep=1.5pt] at (0,0.5) {};
    \node [fill, circle, inner sep=1.5pt] (v1) at (1,0.5) {};
    \node [fill, circle, inner sep=1.5pt] (v2) at (3.5,0.5) {};
    \node [fill, circle, inner sep=1.5pt] at (4.5,0.5) {};
    \draw [blue] (0.5,5) coordinate (v3) {} .. controls (0.05,2.7) and (0,1.8) .. (0,0.5) coordinate (v4) {};
    \draw [blue] (v1) -- (v2) -- (v2) -- (v3);
    \draw [blue] (v4) -- (v5) -- (v3);
    \draw [blue] (-3.5,0.5) .. controls (-3.5,1.5) and (-1,3.5) .. (0.5,5);
    \draw [blue] (4.5,0.5) .. controls (4.5,1.5) and (2,3.5) .. (0.5,5);
    \draw [blue] (1,0.5) .. controls (1,-1) and (-1.25,-0.1) .. (-2.5,0.5);
    \draw [blue] (-2.5,0.5);
    \draw [blue] (-2.5,0.5) .. controls (-2.5,-0.9) and (-4.35,-0.9) .. (-4.35,0.5) .. controls (-4.35,2) and (-1,4) .. (0.5,5);
    \node at (-3,0.5) {$\partial_1$};
    \node (v6) at (0.5,0.5) {$\partial_2$};
    \node (v7) at (4,0.5) {$\partial_3$};
    \node [blue] at (-3,2.85) {\scriptsize 1};
    \node [blue] at (-2.05,2.3) {\scriptsize 2};
    \node [blue] at (-1.3,1.95) {\scriptsize 3};
    \node [blue] at (-0.2,2) {\scriptsize 4};
    \node [blue] at (2,2.5) {\scriptsize 5};
    \node [blue] at (3.35,2.5) {\scriptsize 6};
    \node [blue] at (3.25,-1.35) {\scriptsize 7};
    \node [blue] at (-1.05,0.7) {\scriptsize 8};
    \node [blue] at (-0.45,-0.05) {\scriptsize 9};
    \node [blue] at (1.3,2) {\scriptsize 10};
    \node [blue] at (1.95,0.7) {\scriptsize 11};
    \node [blue] at (2.45,-0.55) {\scriptsize 12};
    \draw [blue](1,0.5) .. controls (1,1.8) and (1,2.7) .. (0.5,5);
    \draw [blue](-2.5,0.5) .. controls (-1,-1.25) and (2,-1) .. (3.5,0.5);
    \draw [blue](-2.5,0.5) .. controls (-1,-2.25) and (4.5,-2) .. (4.5,0.5);
    \draw [red] (v6) ellipse (1 and 1);
    \draw [red] (v7) ellipse (1 and 1);
    \node [red] at (0.5,-1) {$C_2$};
    \node [red] at (4,-1) {$C_3$};
    \end{tikzpicture}
    \hspace{1cm}
    \begin{tikzpicture}[scale=.75, baseline=(b)]
    \draw  (-3,0.5) ellipse (0.5 and 0.5);
    \draw  (0.5,0.5) ellipse (0.5 and 0.5);
    \draw  (4,0.5) ellipse (0.5 and 0.5);
    \node [fill, circle, inner sep=1.5pt] (b) at (0.5,5) {};
    \node [fill, circle, inner sep=1.5pt] at (-3.5,0.5) {};
    \node [fill, circle, inner sep=1.5pt] (v5) at (-2.5,0.5) {};
    \node [fill, circle, inner sep=1.5pt] at (0,0.5) {};
    \node [fill, circle, inner sep=1.5pt] (v1) at (1,0.5) {};
    \node [fill, circle, inner sep=1.5pt] (v2) at (3.5,0.5) {};
    \node [fill, circle, inner sep=1.5pt] at (4.5,0.5) {};
    \draw [blue] (0.5,5) coordinate (v3) {} .. controls (0.05,2.7) and (0,1.8) .. (0,0.5) coordinate (v4);
    \draw [blue] (-3.5,0.5) .. controls (-3.5,1.5) and (-1,3.5) .. (0.5,5);
    \draw [blue] (4.5,0.5) .. controls (4.75,-2.2) and (-1.8,-2.8) .. (-2.5,0.5);
    \draw [blue] (-2.5,0.5);
    \draw [blue] (-2.5,0.5) .. controls (-2.5,-1.3) and (-4.3,-0.9) .. (-4.3,0.5) .. controls (-4.3,2) and (-1,4) .. (0.5,5);
    \draw [blue] (v5) -- (v3);
    \node (v6) at (-3,0.5) {$\partial_1$};
    \node at (0.5,0.5) {$\partial_2$};
    \node at (4,0.5) {$\partial_3$};
    \draw [blue] (1,0.5) .. controls (1,1.8) and (0.95,2.7) .. (0.5,5);
    \draw [blue] (0.5,5) .. controls (-2.85,-1.25) and (0.25,-2.55) .. (3.5,0.5);
    \draw [blue] (0,0.5);
    \draw [blue] (0,0.5) .. controls (0,-2.35) and (4.5,3.45) .. (4.5,0.5);
    \draw [blue] (1,0.5) .. controls (2,1.5) and (4.9,2.65) .. (4.5,0.5);
    \node [blue] at (-3,2.8) {\scriptsize 1};
    \node [blue] at (-2.1,2.3) {\scriptsize 2};
    \node [blue] at (-2,1.5) {\scriptsize 3};
    \node [blue] at (2.6,-0.5) {\scriptsize 8};
    \node [blue] at (0.25,2) {\scriptsize 4};
    \node [blue] at (1.15,2.25) {\scriptsize 10};
    \node [blue] at (-0.55,1) {\scriptsize 9};
    \node [blue] at (-1.6,0) {\scriptsize 12};
    \node [blue] at (3.05,-1.55) {\scriptsize 7};
    \node [fill, white, circle, inner sep=.5pt] at (4.2,-1.7) {\textcolor{blue}{\scriptsize 6}};
    \node [blue] at (2.05,0.65) {\scriptsize 11};
    \node [blue] at (3,1.8) {\scriptsize 5};
    \draw [blue] (1,0.5) .. controls (2.7,2.75) and (5.1,2.95) .. (5.1,0) .. controls (5.1,-2.5) and (-2,-3.5) .. (-2.5,0.5);
    \draw [blue](4.5,0.5) .. controls (4.4,-1) and (2,-1.5) .. (0.5,-1.5) .. controls (-1.5,-1.5) and (-2.45,0.25) .. (0.5,5);
    \draw [red] (v6) ellipse (1 and 1);
    \node [red] at (-3.5,-1) {$C_1$};
    \draw [blue](0.5,5) .. controls (-1.7,-1) and (0.4,-1.5) .. (1.95,-0.15) .. controls (3.5,1.15) and (4.4,1.6) .. (4.5,0.5);
    \end{tikzpicture}
    \vspace{-1cm}
    \caption{Left: $(\tri_1, \ell_1)$, $C_2$ and $C_3$, right: $(\tri_3, \ell_3)$ and $C_1$.}
    \label{fig:tris_corresp_zero}
\end{figure}
\end{ex}

\section{Topological and algebraic entropies}\label{subsec:entropy}

In this section, we give a partial estimate for the algebraic entropies of the cluster transformations induced by a pA mutation loops on the general marked surface.

\subsection{Algebraic entropy of weakly sign-stable mutation loops}
Recall the following result from the previous paper \cite{IK20}:

\begin{thm}[{\cite[Corollary 1.2]{IK19}}]\label{thm:alg_entropy}
Let $\phi=[\gamma]_{\bs}$ be a mutation loop represented by a path $\gamma: v_0 \to v$ which is sign-stable on the set $\Omega^{\mathrm{can}}_{(v_0)}$. Assuming that the spectral duality conjecture \cite[Conjecture 5.14]{IK20} holds true for the stable sign of $\gamma$ on $\Omega^{\mathrm{can}}_{(v_0)}$, we have
\begin{align*}
    \cE_\phi^a = \cE_\phi^x = \log \lambda_\phi,
\end{align*}
where $\lambda_\phi \geq 1$ is the cluster stretch factor.
\end{thm}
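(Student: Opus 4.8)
The plan is to reduce both equalities to a single computation — the exponential growth rate of the tropicalized iterates of $\phi$ — and then to read that rate off the stable presentation matrix provided by sign stability.

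First I would recall how the algebraic entropies are measured. For $z\in\{a,x\}$, $\cE^z_\phi$ is the exponential growth rate in $n$ of the complexity of the iterated birational map $(\phi^z_{(v_0)})^n$ on $\cX^\uf_{(v_0)}$ (for $z=x$), resp.\ on $\cA_{(v_0)}$ (for $z=a$). By the separation formulas of \cite{FZ-CA4} this complexity is controlled by the tropical data along the orbit — the $C$-matrices $C^{v_0}_{v_n}$ in the $\cX$-case, and the $C$- and $G$-matrices in the $\cA$-case, where $v_n$ is the terminal vertex of a representation path of $\phi^n$ beginning at $v_0$ — and, since cluster transformations are subtraction-free (Laurent positivity \cite{GHKK}), no cancellation occurs between the leading tropical terms of the iterates, so the complexity growth rate equals the growth rate of the tropicalized dynamics. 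Concretely, the $c$-vectors forming the columns of $C^{v_0}_{v_n}$ evolve, as $n\mapsto n+1$, by one application of the tropical $\cX$-transformation along $\gamma$ (and the $g$-vectors dually, via \eqref{eq:tropical_duality_exchange}); hence it suffices to compute the exponential growth rate of $\|(d\phi^n)_w\|$ for the tropicalized iterate $\phi^n$ on $\cX^\uf_{(v_0)}(\bR^\trop)$ at a point $w$ in general position.

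Second I would compute this rate on the $\cX$-side. Choose $w\in\Omega^{\mathrm{can}}_{(v_0)}$; since $\Omega^{\mathrm{can}}_{(v_0)}\subset\mathfrak{U}_{(v_0)}$ and the latter is $\Gamma_\bs$-invariant and contained in the differentiable locus of $\phi$ by \cref{lem:differentiable_domain}, every iterate $\phi^k(w)$ is a differentiable point of $\phi$, so the chain rule gives $(d\phi^n)_w=(d\phi)_{\phi^{n-1}(w)}\circ\cdots\circ(d\phi)_w$. The presentation matrix of each factor $(d\phi)_{\phi^k(w)}$ depends only on the sign $\boldsymbol{\epsilon}_\gamma(\phi^k(w))$, and sign stability of $\gamma$ on $\Omega^{\mathrm{can}}_{(v_0)}$ gives $n_0$ with $\boldsymbol{\epsilon}_\gamma(\phi^k(w))=\boldsymbol{\epsilon}^\stab_{\gamma,\Omega^{\mathrm{can}}_{(v_0)}}$ for all $k\geq n_0$. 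Writing $E^\stab:=E^{(v_0)}_{\phi,\Omega^{\mathrm{can}}_{(v_0)}}$ for the associated stable presentation matrix, the presentation matrix of $\phi^n$ at $w$ is therefore $(E^\stab)^{\,n-n_0}$ times a matrix independent of $n$, so Gelfand's spectral radius formula yields $\lim_n\|(d\phi^n)_w\|^{1/n}=\rho(E^\stab)$. Since the stable sign is strict, $E^\stab$ has the Perron--Frobenius property (\cite[Theorem 5.11]{IK20}), so $\rho(E^\stab)$ is an honest eigenvalue with a positive eigenvector; together with Laurent positivity this forces the algebraic complexity to attain the rate $\rho(E^\stab)$, whence $\cE^x_\phi=\log\rho(E^\stab)=\log\lambda_\phi$ by \cref{d:cluster stretch factor}.

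Third, for the $\cA$-side, the reduction of the first step identifies $\cE^a_\phi$ with the exponential growth rate of the $\cA$-side tropical dynamics, which through tropical duality \eqref{eq:tropical_duality_exchange} is controlled by the same orbit data; the spectral duality conjecture \cite[Conjecture 5.14]{IK20}, applied to the stable sign of $\gamma$ on $\Omega^{\mathrm{can}}_{(v_0)}$, is precisely the assertion that this rate again equals $\rho(E^\stab)$. Granting it, $\cE^a_\phi=\log\rho(E^\stab)=\log\lambda_\phi=\cE^x_\phi$, completing the proof. The main obstacle is the lower bound in the second step: showing that the algebraic complexity genuinely grows like $\rho(E^\stab)^n$ and is not killed by cancellation. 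This is exactly where the Perron--Frobenius property of the strict stable sign and Laurent positivity are both indispensable — a priori the leading tropical terms of the subtraction-free iterates could cancel — and it constitutes the technical heart of \cite{IK19}. On the $\cA$-side the genuine obstruction is the spectral duality conjecture itself: without it one obtains only the bound governed by the $\cA$-side stable matrix, which is why that half of the statement remains conditional.
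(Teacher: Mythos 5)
Note first that \cref{thm:alg_entropy} is recalled in this paper from \cite[Corollary~1.2]{IK19} without proof — it is imported as a black box — so there is no in-paper argument to compare your sketch against. With that caveat, your outline matches the strategy the authors do use for the analogous estimates that \emph{are} proved here: in the proof of \cref{cor:trop/alg-entropy} one tracks the $C$-matrices via $C^{(n+n_0)}=E_n\cdots E_1\,C^{(n_0)}$, where each $E_m$ is the presentation matrix determined by the sign $\boldsymbol{\epsilon}_\gamma(\phi^{m-1}(\cdot))$ along the orbit, one uses sign stability to freeze $E_m=E^\stab$ for large $m$, and one extracts a growth rate through the Lyapunov exponent; the $\cA$-side lemma in \cref{subsec:discussion_A-transf} does the same with the $G$-matrices and $\check E^\stab$ via tropical duality \eqref{eq:tropical_duality_exchange}, and it is exactly the equality $\rho(E^\stab)=\rho(\check E^\stab)$ — the spectral duality conjecture — that joins the two sides, as you state. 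Your passage through the tangent maps $(d\phi^n)_w$ and Gelfand's formula is a repackaging of the same $C$-matrix recursion, since both are governed by the identical product of stable presentation matrices, so the two framings are equivalent in content.

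One imprecision worth flagging: the cluster $\cX$-variables are \emph{not} Laurent polynomials (the Laurent phenomenon is an $\cA$-side statement), so "Laurent positivity" is the wrong label for what prevents cancellation on the $\cX$-side; the relevant structural fact is that the cluster $\X$-transformation is a subtraction-free rational map whose tropicalization governs the degree growth. You correctly isolate the no-cancellation lower bound — upgrading the tropical rate $\rho(E^\stab)$, via the Perron--Frobenius property of \cite[Theorem~5.11]{IK20}, to an equality of entropies — as the technical heart of \cite{IK19}, so this terminological slip does not undermine the soundness of the outline; it just should not be cited as \cite{GHKK}-style Laurent positivity when discussing $\cE^x_\phi$.
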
  

Until now we have no complete analogue of \cref{thm:alg_entropy} for mutation loops with a weakly sign-stable representation path. 
Indeed, the presentation matrices $E_\gamma^{\boldsymbol{\epsilon}}$ for $\boldsymbol{\epsilon} \in \bS(\gamma)$, $\boldsymbol{\epsilon}\geq \boldsymbol{\epsilon}_\gamma^\stab$ do not have the Perron--Frobenius property as in \cite[Theorem 5.11]{IK20}, so that we have no satisfactory analogue of the cluster stretch factor which is intrinsic to the mutation loop in the sense explained in \cite[Remark 5.13]{IK20}. 
Nevertheless, we have the following stability result:


\begin{prop}\label{prop:entropy_weak_SS}
Let $\phi$ be a pA mutation loop on a marked surface $\Sigma$, and $\Omega:=\Omega^\bQ \setminus D_{\X,\Sigma}$.  
Then for any representation path $\gamma:(\tri,\ell) \to (\tri',\ell')$ and $\boldsymbol{\epsilon} \in \bS(\gamma)$ satisfying $\boldsymbol{\epsilon} \geq \boldsymbol{\epsilon}^\stab_{\gamma, \Omega}$, the presentation matrices $E_{\gamma}^{\boldsymbol{\epsilon}}$ have the stretch factor $\lambda_{\pi(\phi)}$ as a positive eigenvalue. Moreover, there exists a corresponding eigendirection $\bR \boldsymbol{x}_{\phi,\Omega} \subset \bR^I$ which does not depend on $\boldsymbol{\epsilon}$.
\end{prop}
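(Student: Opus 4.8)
The plan is to take as the common eigendirection the line spanned by the attracting lamination of $\phi$, written in coordinates. Since $\phi$ is pA, \cref{thm:pA_NS_general} (and its proof) provides a point $L^+_\phi = \iota(L^+_{\pi(\phi)}) \in \iota(\cU_{\bar{\Sigma}}(\bR^\trop)) \subset \cX^\uf_\Sigma(\bR^\trop)$ which is a nonzero measured lamination and satisfies $\phi(L^+_\phi) = \lambda_{\pi(\phi)}\,L^+_\phi$ in $\cX^\uf_\Sigma(\bR^\trop)$, where $\lambda_{\pi(\phi)}>1$ is the dilatation of the pA mapping class $\pi(\phi)$ on $\bar{\Sigma}$, equal to its cluster stretch factor (\cref{prop:pA_NS_SS}). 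I would then set $\boldsymbol{x}_{\phi,\Omega} := \boldsymbol{x}_{(\tri,\ell)}(L^+_\phi) \in \bR^{I_\uf}\setminus\{0\}$, the coordinate vector of $L^+_\phi$ in the chart attached to $(\tri,\ell)$; it depends only on $\phi$ and the chart, not on $\boldsymbol{\epsilon}$, which is precisely what is needed.

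Next I would recall from the proof of \cref{thm:weak_SS} that, for $\Omega = \Omega^\bQ_{(\tri,\ell)}\setminus D_{\cX,\Sigma}$, one has $\boldsymbol{\epsilon}^\stab_{\gamma,\Omega} = \boldsymbol{\epsilon}_\gamma(L^+_\phi)$, so that $L^+_\phi \in \cC_\gamma^{\boldsymbol{\epsilon}^\stab_{\gamma,\Omega}}$, and that the cones $\cC_\gamma^{\boldsymbol{\epsilon}}$ with $\boldsymbol{\epsilon}\in\bS(\gamma)$, $\boldsymbol{\epsilon}\ge\boldsymbol{\epsilon}^\stab_{\gamma,\Omega}$ form the star of $\cC_\gamma^{\boldsymbol{\epsilon}^\stab_{\gamma,\Omega}}$; in particular each such $\cC_\gamma^{\boldsymbol{\epsilon}}$ has $\cC_\gamma^{\boldsymbol{\epsilon}^\stab_{\gamma,\Omega}}$ among its faces, whence $L^+_\phi \in \overline{\cC_\gamma^{\boldsymbol{\epsilon}}}$. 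Fix such an $\boldsymbol{\epsilon}$. By \cref{l:x-cluster signed} and \cref{d:sign}, the coordinate expression $\phi_{(\tri,\ell)}$ is linear on $\interior\cC_\gamma^{\boldsymbol{\epsilon}}$, where it coincides with the linear map given by the matrix $E_\gamma^{\boldsymbol{\epsilon}}$ (each horizontal step being linear on the half-space prescribed by the corresponding sign, each vertical step a linear relabelling); by continuity, $\phi_{(\tri,\ell)}$ coincides with $E_\gamma^{\boldsymbol{\epsilon}}$ on all of $\overline{\cC_\gamma^{\boldsymbol{\epsilon}}}$. Evaluating at $L^+_\phi$ and using homogeneity of the tropical $\cX$-transformation,
\[
E_\gamma^{\boldsymbol{\epsilon}}\,\boldsymbol{x}_{\phi,\Omega} = \boldsymbol{x}_{(\tri,\ell)}\big(\phi(L^+_\phi)\big) = \lambda_{\pi(\phi)}\,\boldsymbol{x}_{\phi,\Omega},
\]
which, since $\lambda_{\pi(\phi)}>0$ and $\boldsymbol{x}_{\phi,\Omega}\ne 0$, shows that $\lambda_{\pi(\phi)}$ is a positive eigenvalue of $E_\gamma^{\boldsymbol{\epsilon}}$ with the $\boldsymbol{\epsilon}$-independent eigendirection $\bR\,\boldsymbol{x}_{\phi,\Omega}$.

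The step I expect to require the most care is the face relation invoked above: that for $\boldsymbol{\epsilon}\in\bS(\gamma)$ with $\boldsymbol{\epsilon}\ge\boldsymbol{\epsilon}^\stab_{\gamma,\Omega}$ the cone $\cC_\gamma^{\boldsymbol{\epsilon}^\stab_{\gamma,\Omega}}$ genuinely occurs as a face of $\cC_\gamma^{\boldsymbol{\epsilon}}$ — equivalently, that the partial order $\le$ on sign sequences matches the face poset of the linearity-domain subdivision of $\phi_{(\tri,\ell)}$. This is the combinatorial content already packaged into the \emph{star} terminology in the proof of \cref{thm:weak_SS}, and I would make it explicit by exhibiting $\cC_\gamma^{\boldsymbol{\epsilon}^\stab_{\gamma,\Omega}}$ as the intersection of $\cC_\gamma^{\boldsymbol{\epsilon}}$ with the supporting hyperplanes $\{x^{(v_{i(\nu)})}_{k_{i(\nu)}} = 0\}$ over the indices $\nu$ with $\epsilon^\stab_\nu = 0 < \epsilon_\nu$. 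Everything else is formal; in particular the argument never uses differentiability of $\phi_{(\tri,\ell)}$ at $L^+_\phi$ (which typically fails), only that $\phi_{(\tri,\ell)}$ agrees with a single linear map on a closed cone containing $L^+_\phi$. This also dovetails with \cref{thm:block decomposition}: $L^+_\phi$ is a $\phi$-fixed direction lying in the $\im(d\pi_{\cR_\partial})$-summand, on which $d\phi$ acts through $d\pi(\phi)$, whose pertinent eigenvalue is $\lambda_{\pi(\phi)}$.
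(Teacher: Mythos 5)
Your proof is correct and takes essentially the same approach as the paper: both identify the common eigendirection as the coordinate vector of $L^+_\phi=\iota(L^+_{\pi(\phi)})$, derive the eigenvalue $\lambda_{\pi(\phi)}$ from the $\pos\times\Gamma_\Sigma$-equivariance of $\iota$ proved in \cref{lem:barU_in_X}\,(1), and observe that this eigendirection is manifestly independent of $\boldsymbol{\epsilon}$. The only difference is packaging: the paper routes the computation through the tangent-map formalism of [IK20, Lemmas~5.1--5.2] (computing $(d\phi)_{\hL}(t_{\hL}(L^+_\phi))$ at an auxiliary differentiable point $\hL$ and then translating to the presentation matrix), whereas you argue directly that $\phi_{(\tri,\ell)}$ agrees with the linear map $E_\gamma^{\boldsymbol{\epsilon}}$ on the closed cone $\overline{\cC_\gamma^{\boldsymbol{\epsilon}}}$ containing $L^+_\phi$, making explicit the face relation $\cC_\gamma^{\boldsymbol{\epsilon}^\stab_{\gamma,\Omega}}\subset\overline{\cC_\gamma^{\boldsymbol{\epsilon}}}$ that is implicit in the \lq\lq star'' terminology from the proof of \cref{thm:weak_SS}; this is a clean, self-contained reformulation of the same argument.
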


\begin{proof}
One of the desired eigenvector of $E_\gamma^{\boldsymbol{\epsilon}}$ is given by $L_\phi^+=\iota(L_{\pi(\phi)}^+)$ (recall \cref{thm:pA_NS_general}). Indeed, we can compute
\begin{align*}
    (d\phi)_{\hL}(t_{\hL}(L_\phi^+)) 
    &= t_{\phi(\hL)}\circ \phi \circ \iota(L_{\pi(\phi)}^+) && (\mbox{\cite[Lemma 5.1]{IK20}}) \\
    &= t_{\phi(\hL)}\circ \iota \circ \pi(\phi)(L_{\pi(\phi)}^+) &&  (\mbox{\cref{lem:barU_in_X} (1)})\\
    &= t_{\phi(\hL)}\circ \iota (\lambda_{\pi(\phi)}\cdot L_{\pi(\phi)}^+) \\
    &=\lambda_{\pi(\phi)}\cdot t_{\phi(\hL)}(L_\phi^+) &&  (\mbox{\cref{lem:barU_in_X} (1)}).
\end{align*}
Here, $t_x: V \xrightarrow{\sim} T_x V$ is the translation isomorphism given by $v \mapsto \left.\frac{d}{ds}\middle|\right._{s=0}(x+sv)$
for a vector space $V$ and a point $x \in V$.
Then \cite[Lemma 5.2]{IK20} tells us that the coordinate vector $\boldsymbol{x}_{\phi,\Omega}:=\boldsymbol{x}_{(\tri,\ell)}(L_\phi^+) \in \bR^I$ is an eigenvector of $E_\gamma^{\boldsymbol{\epsilon}}$ with eigenvector $\lambda_{\pi(\phi)}$, which is manifestly independent of the sign $\boldsymbol{\epsilon}$. 
\end{proof}

As a corollary, we obtain an estimate of the algebraic entropy of the cluster $\X$-transfomation induced by a pA mutation loop on a marked surface:

\begin{cor}\label{cor:trop/alg-entropy}
We have \[\cE_\phi^x \geq \log \lambda_{\pi(\phi)}=\cE_{\pi(\phi)}^{\mathrm{top}}.\]
\end{cor}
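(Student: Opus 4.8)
The plan is to obtain \eqref{eq:inequality_weak_SS} as a short consequence of \cref{prop:entropy_weak_SS}, combined with the (unconditional) lower bound of the algebraic entropy of the cluster $\X$-transformation by the exponential growth rate of the tropical presentation matrices along an orbit staying in the differentiable locus; this inequality is part of the analysis underlying \cref{thm:alg_entropy} in \cite{IK19} and, unlike the full statement there, does not require the spectral duality conjecture.

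First I would fix a representation path $\gamma:(\tri,\ell)\to(\tri',\ell')$ of the pA mutation loop $\phi$, set $\Omega:=\Omega^\bQ_{(\tri,\ell)}\setminus D_{\X,\Sigma}$, and let $\boldsymbol{\epsilon}^\stab:=\boldsymbol{\epsilon}^\stab_{\gamma,\Omega}=\boldsymbol{\epsilon}_\gamma(L^+_\phi)$ be the stable sign furnished by \cref{thm:weak_SS}. As in the proof of \cref{thm:weak_SS}, the star $\bigcup_{\boldsymbol{\epsilon}\in\bS(\gamma),~\boldsymbol{\epsilon}\geq\boldsymbol{\epsilon}^\stab}\cC_\gamma^{\boldsymbol{\epsilon}}$ contains $L^+_\phi$ in its interior. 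I would then pick a generic point $w\in\interior\cC^+_{(\tri,\ell)}\subset\Omega$: by \cref{thm:pA_NS_general} the orbit $\phi^n([w])$ converges to $[L^+_\phi]$, and genericity of $w$ (the walls of the relevant fan form a countable, hence null, family that a generic orbit converging to $[L^+_\phi]$ misses) ensures that for all large $n$ the point $\phi^n(w)$ lies in an \emph{open} cone $\interior\cC^{\boldsymbol{\epsilon}_n}_\gamma$ with $\boldsymbol{\epsilon}_n\in\bS(\gamma)$ and $\boldsymbol{\epsilon}_n\geq\boldsymbol{\epsilon}^\stab$; in particular $\phi$ is differentiable at each such $\phi^n(w)$ through the expression given by $\gamma$.

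Next, using the chain rule together with the fact (\cref{l:x-cluster signed}) that $\phi$ is linear on each $\interior\cC^{\boldsymbol{\epsilon}}_\gamma$ with presentation matrix $E^{\boldsymbol{\epsilon}}_\gamma$, I would factor, for large $n$, the presentation matrix of $\phi^n$ at $w$ as $E_{\phi^n}^{(\tri,\ell)}(w)=E_{\phi^{n-n_0}}^{(\tri,\ell)}(w')\cdot E_{\phi^{n_0}}^{(\tri,\ell)}(w)$, where $w':=\phi^{n_0}(w)$ and $E_{\phi^{n-n_0}}^{(\tri,\ell)}(w')$ is a product of $n-n_0$ matrices of the form $E^{\boldsymbol{\epsilon}}_\gamma$ with $\boldsymbol{\epsilon}\in\bS(\gamma)$, $\boldsymbol{\epsilon}\geq\boldsymbol{\epsilon}^\stab$. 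By \cref{prop:entropy_weak_SS} every such factor has the nonzero common vector $\boldsymbol{x}_{\phi,\Omega}=\boldsymbol{x}_{(\tri,\ell)}(L^+_\phi)$ as an eigenvector with eigenvalue $\lambda_{\pi(\phi)}$, so their product scales $\boldsymbol{x}_{\phi,\Omega}$ by $\lambda_{\pi(\phi)}^{\,n-n_0}$, forcing $\|E_{\phi^{n-n_0}}^{(\tri,\ell)}(w')\|\geq\lambda_{\pi(\phi)}^{\,n-n_0}$. Hence $\limsup_m\frac1m\log\|E_{\phi^m}^{(\tri,\ell)}(w')\|\geq\log\lambda_{\pi(\phi)}$, and the lower bound cited above gives $\cE^x_\phi\geq\log\lambda_{\pi(\phi)}$. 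Finally, $\log\lambda_{\pi(\phi)}=\cE^{\mathrm{top}}_{\pi(\phi)}$ is the classical identity (topological entropy equals the logarithm of the dilatation) for the pseudo-Anosov map $\pi(\phi)$ on the punctured surface $\bar{\Sigma}$, cf.\ \cite[Theorem 1.3]{IK20}, completing the argument.

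The main obstacle I expect is not conceptual but a matter of care: justifying cleanly that a generic orbit converging to $[L^+_\phi]$ eventually enters and remains inside the open cones $\interior\cC^{\boldsymbol{\epsilon}}_\gamma$ with $\boldsymbol{\epsilon}\geq\boldsymbol{\epsilon}^\stab$ (so that the chain-rule factorization is valid and every intervening sign lies in $\bS(\gamma)$), and isolating from \cite{IK19} the precise unconditional lower bound for $\cE^x_\phi$ in terms of $\limsup_m\frac1m\log\|E_{\phi^m}^{(\tri,\ell)}(w')\|$. Both steps are routine given the machinery already in place, but the wall-avoidance point likely deserves an explicit short lemma.
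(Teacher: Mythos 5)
Your plan follows essentially the same route as the paper's proof: both exploit the common eigenvector $\boldsymbol{x}_{\phi,\Omega}$ of the stable-range presentation matrices (supplied by \cref{prop:entropy_weak_SS}) together with weak sign stability to force exponential growth of the factored presentation matrices along an orbit, and then reduce to the argument of \cite[Proposition~4.7]{IK19}. Two details are handled differently. First, the paper takes an arbitrary $w\in\Omega^{\mathrm{can}}_{(\tri,\ell)}$ and asserts outright that the resulting signs $\boldsymbol{\epsilon}_m=\boldsymbol{\epsilon}_\gamma(\phi^{m-1}(\phi^{n_0}(w)))$ lie in $\bS(\gamma)$ and satisfy $\boldsymbol{\epsilon}_m\geq\boldsymbol{\epsilon}^\stab_\gamma$, so the generic-point/wall-avoidance detour you propose is not taken; the obstacle you rightly flag is simply passed over by assertion there, and a short lemma of the kind you describe would be in order in either version. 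Second, rather than bounding operator norms, the paper conjugates each $E_m$ by a fixed $Q$ into block upper-triangular form with $\lambda_{\pi(\phi)}$ in the top-left entry, computes the iterated product explicitly, derives the Lyapunov estimate $\mathsf{\Lambda}_\phi(w)\geq\log\lambda_{\pi(\phi)}$, and -- matching the input \cite[Proposition~4.7]{IK19} actually requires -- transfers this to the growth of the $C$-matrices via $C^{(n+n_0)}=E_n\cdots E_1 C^{(n_0)}$. Your operator-norm shortcut yields the same rate once one uses the invertibility of $C^{(n_0)}$, but that conversion to a $C$-matrix growth estimate should be made explicit before citing \cite{IK19}; the paper builds it in.
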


\begin{proof}
For a non-zero vector $w \in \Omega:=\Omega_{(\tri,\ell)}^{\mathrm{can}} \subset \Omega^\bQ_{(\tri,\ell)} \setminus D_{\X,\Sigma}$, there exists an integer $n_0 \geq 0$ such that $\boldsymbol{\epsilon}_\gamma(\phi^n(w)) \geq \boldsymbol{\epsilon}_\gamma^\stab$ for all $n \geq n_0$. Then we have
\begin{align*}
    \boldsymbol{x}_{(\tri,\ell)}(\phi^n(\phi^{n_0}(w))) = E_n E_{n-1} \dots E_1 \cdot \boldsymbol{x}_{(\tri,\ell)}(\phi^{n_0}(w)) 
\end{align*}
for all $n \geq 0$. Here $E_m:=E_\gamma^{\boldsymbol{\epsilon}_m}$ with $\boldsymbol{\epsilon}_m:=\boldsymbol{\epsilon}_\gamma(\phi^{m-1}(\phi^{n_0}(w)))$, which satisfies $\boldsymbol{\epsilon}_m \in \bS(\gamma)$ and $\boldsymbol{\epsilon}_m \geq \boldsymbol{\epsilon}$ from the definition of $\Omega^{\mathrm{can}}_{(\tri,\ell)}$ and the condition (1) in \cref{def:weak_SS} for $m=1,\dots,n$. 

Let $\boldsymbol{x}_{\phi,\Omega} \in \bR^I$ be the common eigenvector of the matrices $E_\gamma^{\boldsymbol{\epsilon}}$ for the signs $\boldsymbol{\epsilon} \in \bS(\gamma)$, $\boldsymbol{\epsilon} \geq \boldsymbol{\epsilon}_\gamma^\stab$. 
Then by using a suitable invertible matrix $Q$, we can write
\begin{align*}
    E'_m:=QE_mQ^{-1}=
    \begin{pmatrix}
    \lambda_{\pi(\phi)} & T_m \\
    0 & E_m^0
    \end{pmatrix}
\end{align*}
for some $E_m^0 \in GL_{N-1}(\bR)$ and an $1\times (N-1)$ matrix $T_m$. 
Writing $Q\cdot\boldsymbol{x}_{(\tri,\ell)}(\phi^{n_0}(w)) = (a,\boldsymbol{v}) \in \bR \oplus \bR^{N-1}$, one can inductively compute
\begin{align*}
    Q\cdot\boldsymbol{x}_{(\tri,\ell)}(\phi^n(\phi^{n_0}(w)))
    &=E'_n \dots E'_1 (a,\boldsymbol{v}) \\
    &= \left(\lambda_{\pi(\phi)}^n a + \sum_{m=1}^n \lambda_{\pi(\phi)}^{n-m}T_mE^0_{m-1}\dots E^0_1\boldsymbol{v},~ E^0_n\dots E^0_1 \boldsymbol{v}\right) \\
    &= \lambda_{\pi(\phi)}^n\left( a + \sum_{m=1}^n \lambda_{\pi(\phi)}^{-m}T_mE^0_{m-1}\dots E^0_1\boldsymbol{v},~ \lambda_{\pi(\phi)}^{-n}E^0_n\dots E^0_1 \boldsymbol{v}\right).
\end{align*}
For the standard Euclidean norm $\|\cdot\|_2$ on $\bR^N$, we get
\begin{align*}
    \|E'_n \dots E'_1 (a,\boldsymbol{v})\|_2 \geq \lambda_{\pi(\phi)}^n \left\|a + \sum_{m=1}^n \lambda_{\pi(\phi)}^{-m}T_mE^0_{m-1}\dots E^0_1\boldsymbol{v}\right\|_2.
\end{align*}
Since the Lyapunov exponent
\begin{align*}
    \mathsf{\Lambda}_\phi(w)&:=\limsup_{n \to \infty} \frac{1}{n} \log \| \phi^n(w)\| 
    = \limsup_{n \to \infty} \frac{1}{n} \log \| \phi^n(\phi^{n_0}(w))\| 
\end{align*}
does not depend on the choice of the norm $\|\cdot\|$ on $\X_{(\tri,\ell)}^\uf(\bR^\trop)$ or the choice of a coordinate system, 
we can estimate it as 
\begin{align}\label{eq:estimate_Lyapunov}
    \mathsf{\Lambda}_\phi(w) = \limsup_{n \to \infty} \frac{1}{n} \log \| E'_n\dots E'_1 (a,\boldsymbol{v})\|_2 \geq \lambda_{\pi(\phi)}
\end{align}
for any $w \in \Omega$. 

Writing $\gamma:(\tri,\ell) \xrightarrow{\mathbf{k}} (\tri_1,\ell_1)$, let 
\begin{align*}
    \gamma^n: (\tri,\ell) \xrightarrow{\mathbf{k}} (\tri_1,\ell_1)\xrightarrow{\mathbf{k}} (\tri_2,\ell_2)\xrightarrow{\mathbf{k}} \dots \xrightarrow{\mathbf{k}} (\tri_n,\ell_n)
\end{align*}
be the iterated path, which represents the mutation loop $\phi^n$. 
Let $C^{(n)}:=C^{(\tri,\ell)}_{(\tri_n,\ell_n)}$ be the $C$-matrix assigned to the vertex $(\tri_n,\ell_n) \in \bExch_\Sigma$, which satisfies
\begin{align*}
    C^{(n+n_0)}= E_n\dots E_1 C^{(n_0)}
\end{align*}
for all $n \geq 0$. Its growth is estimated from below as
\begin{align*}
    \limsup_{n \to \infty} \frac{1}{n} \log\|C^{(n)} \| = \limsup_{n \to \infty} \frac{1}{n} \log\|E_n\dots E_1 C^{(n_0)} \| = \mathsf{\Lambda}_\phi(C^{(n_0)}) \geq \lambda_{\pi(\phi)},
\end{align*}
where $\mathsf{\Lambda}_\phi(C^{(n_0)})$ denotes the maximum of the Lyapunov exponents of the column vectors of $C^{(n_0)}$. 
Then the same line of arguments as in the proof of \cite[Proposition 4.7]{IK19} shows the inequality $\cE_\phi^x \geq \log\lambda_{\pi(\phi)}$.
\end{proof}

\begin{ex}
Here, we again use the notations in \cref{ex:3bdries+1pct_sph}.
The sign sequences $\boldsymbol{\epsilon}$ with $\boldsymbol{\epsilon} > \boldsymbol{\epsilon}^\stab_\gamma$ are of the form 
\begin{align*}
    \boldsymbol{\epsilon}=(+, +, +, \epsilon_1, \epsilon_2, -, +, -, -, +, \epsilon_3, \epsilon_4, -, +, +, +)
\end{align*}
with $\epsilon_i \in \{+,-\}$, 
all of which belong to $\bS(\gamma)$ in this case.
The characteristic polynomials of the presentation matrices which corresponds to these signs are shown below:
\begin{itemize}
    \item For $(+,+,+,+)$ and $(-,-,-,-)$, the characteristic polynomial is
    \[(\nu - 1) (\nu^3 - 1) (\nu^2 - 3\nu + 1) (\nu^6 - \nu^3 + 1).\]
    \item For $(+,-,+,-)$ and $(-,+,-,+)$, it is
    \[(\nu - 1) (\nu^3 - 1) (\nu^2 - 3\nu + 1) (\nu^6 - 3\nu^3 + 1).\]
    \item Otherwise, it is
    \[(\nu - 1) (\nu^3 - 1)^3 (\nu^2 - 3\nu + 1).\]
\end{itemize}
Thus, one can verify that the spectral radii of them is $(3+\sqrt{5})/2$.
Note that the factor $(\nu^3-1)$ corresponds to the permutation $\sigma_{\cR_\partial}(\phi)$ in the decomposition given in \cref{thm:block decomposition}.
\end{ex}

\begin{rem}
In view of the above examples, 
the authors expect that $\lambda_{\pi(\phi)}$ is the common spectral radius of the presentation matrices $E_\gamma^{\boldsymbol{\epsilon}}$ for $\boldsymbol{\epsilon} \in \bS(\gamma)$, $\boldsymbol{\epsilon} \geq \boldsymbol{\epsilon}_\gamma^\stab$, but have not been able to give a proof. 
\end{rem}

\subsection{Discussion: an estimate of \texorpdfstring{$\cE_\phi^a$}{E a} for a weakly sign-stable mutation loop}\label{subsec:discussion_A-transf}
It would be natural to expect the following:

\begin{conj}\label{conj:A-entropy}
For any pA mutation loop $\phi$ on a marked surface $\Sigma$, we have
\begin{align*}
    \cE_\phi^a \geq \log\lambda_{\pi(\phi)}.
\end{align*}
\end{conj}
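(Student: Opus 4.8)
The plan is to transport the argument of \cref{cor:trop/alg-entropy} to the $\cA$-side. Fix a representation path $\gamma:(\tri,\ell)\to(\tri_1,\ell_1)$ of $\phi$, and for $n\geq 1$ let $\gamma^n:(\tri,\ell)\to(\tri_n,\ell_n)$ be the iterated path representing $\phi^n$ as in the proof of \cref{cor:trop/alg-entropy}; put $G^{(n)}:=G^{(\tri,\ell)}_{(\tri_n,\ell_n)}$. The first step is an $\cA$-side analogue of \cite[Proposition 4.7]{IK19}: the algebraic entropy $\cE_\phi^a$ is bounded below by the exponential growth rate $\limsup_{n\to\infty}\frac1n\log\|G^{(n)}\|$ of the $G$-matrices along $\gamma^n$ (equivalently, by the growth of the degrees of the $g$-vectors appearing in the separation formula). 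As in \cref{cor:trop/alg-entropy}, only the lower bound is needed, so the failure of the Perron--Frobenius property of the presentation matrices recorded after \cite[Theorem 5.11]{IK20} causes no trouble.

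The second step produces a common eigenvector, mirroring \cref{prop:entropy_weak_SS}. By \cref{lem:barU_in_X}~(1) the section $\iota$ intertwines $\pi(\phi)$ with $\phi$, so the unstable lamination $L^+_{\pi(\phi)}\in\cU_{\bar{\Sigma}}(\bR^\trop)$ of the pA mapping class $\pi(\phi)$ lifts to $L^+_\phi=\iota(L^+_{\pi(\phi)})$ with $\phi(L^+_\phi)=\lambda_{\pi(\phi)}L^+_\phi$. Transporting the block decomposition of the infinitesimal action along the fibres of $\pi_{\cR_\partial}$ (\cref{thm:block decomposition}) to the $\cA$-side via the tropical duality \eqref{eq:tropical_duality_exchange} $G^{v_0}_v=\check C^{v_0}_v$, and invoking \cref{lem:pi_full_rk} together with the $\cA$-analogues of \cite[Lemma 5.1]{IK20} and \cite[Lemma 5.2]{IK20}, one would obtain that the coordinate vector $\boldsymbol y_\phi$ of $L^+_\phi$ in the cluster $\cA$-chart at $(\tri,\ell)$ is an eigenvector with eigenvalue $\lambda_{\pi(\phi)}$ of the presentation matrix of the tropical $\cA$-transformation of $\phi$ at every intrinsically differentiable point of a suitable dense open $\Gamma_{\bs_\Sigma}$-invariant set. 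Since that presentation matrix, up to conjugation by a permutation matrix, equals the matrix $E^{a,\boldsymbol\epsilon}_\gamma$ attached to the sign $\boldsymbol\epsilon$ of the point, every $E^{a,\boldsymbol\epsilon}_\gamma$ with $\boldsymbol\epsilon\in\bS(\gamma)$ and $\boldsymbol\epsilon\geq\boldsymbol\epsilon_\gamma^\stab$ carries the positive eigenvalue $\lambda_{\pi(\phi)}$ along the common line $\bR\boldsymbol y_\phi$.

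The third step then copies the proof of \cref{cor:trop/alg-entropy}: for $w$ in the $\bR_{>0}$-invariant set $\Omega^{\mathrm{can}}_{(\tri,\ell)}\subset\Omega^\bQ_{(\tri,\ell)}\setminus D_{\cX,\Sigma}$, \cref{thm:weak_SS} yields $n_0$ with $\boldsymbol\epsilon_\gamma(\phi^n(w))\geq\boldsymbol\epsilon_\gamma^\stab$ for all $n\geq n_0$, whence $G^{(n+n_0)}=E_n\cdots E_1 G^{(n_0)}$ with $E_m=E^{a,\boldsymbol\epsilon_m}_\gamma$ and $\boldsymbol\epsilon_m\geq\boldsymbol\epsilon_\gamma^\stab$; block-triangularizing all the $E_m$ simultaneously with respect to $\bR\boldsymbol y_\phi$ and estimating the Lyapunov exponent exactly as around \eqref{eq:estimate_Lyapunov} gives $\limsup_{n\to\infty}\frac1n\log\|G^{(n)}\|\geq\log\lambda_{\pi(\phi)}$, and hence $\cE_\phi^a\geq\log\lambda_{\pi(\phi)}$ by the first step. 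One could instead hope to combine $\cE^a_{\pi(\phi)}=\log\lambda_{\pi(\phi)}$ from \cite[Theorem 1.3]{IK20} with an inequality $\cE_\phi^a\geq\cE^a_{\pi(\phi)}$, but it is unclear that the reduction along $\cR_\partial$ induces such a comparison of $\cA$-cluster transformations, as the boundary Dehn twists lie in $\ker\pi$ and the number of cluster variables is not preserved.

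The step I expect to be the main obstacle is the compatibility underlying the third step: the piecewise-linear chamber structure of the tropical $\cA$-transformation at a tropical point is governed by the signs of the $c$-vectors along the corresponding orbit, which is an $\cX$-type datum, whereas \cref{thm:weak_SS} only controls the signs $\boldsymbol\epsilon_\gamma(\phi^n(w))$ of the orbit of an $\cX$-point; one must verify that these two orbits determine the same eventual sequence of matrices $E^{a,\boldsymbol\epsilon}_\gamma$. In the punctured case this reconciliation is precisely what the spectral duality conjecture \cite[Conjecture 5.14]{IK20} provides, and that conjecture is still open in general. My hope is that for the \emph{lower} bound it is enough to know that the $G$-matrices actually occurring along a generic orbit all carry $\lambda_{\pi(\phi)}$ as an eigenvalue, and that this weaker statement is already forced by \cref{thm:block decomposition} and tropical duality --- in particular that the $D_{\cX,\Sigma}$-directions, on which $\phi$ acts through the boundary Dehn twists, contribute only the spectral radius $1$ --- without the full strength of \cite[Conjecture 5.14]{IK20}.
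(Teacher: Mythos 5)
This statement is labelled a \emph{conjecture} in the paper, and the paper does not prove it. Section~\ref{subsec:discussion_A-transf} records only the following: there is an unnumbered lemma showing that the conjecture \emph{would} follow from the additional hypothesis that, for every $\boldsymbol{\epsilon}\in\bS(\gamma)$ with $\boldsymbol{\epsilon}\geq\boldsymbol{\epsilon}^\stab_{\gamma,\Omega}$, the matrix $\check{E}_\gamma^{\boldsymbol{\epsilon}}$ has $\lambda_{\pi(\phi)}$ as a positive eigenvalue with a common eigendirection $\bR\boldsymbol{a}_{\phi,\Omega}$; and the subsequent remark states explicitly that the authors ``have not been able to confirm the additional condition above,'' since $\check{E}_\gamma^{\boldsymbol{\epsilon}}$ cannot be interpreted as the presentation matrix of a tangent map of $\phi^a$ unless $\cC_\gamma^{\boldsymbol{\epsilon}}\cap\cU_\Sigma^\uf(\bR^\trop)\neq\{0\}$. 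So the honest target for your write-up is the conditional lemma, not the conjecture itself.

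Your proposal is, in substance, the same route the paper sketches: transport \cref{prop:entropy_weak_SS} and \cref{cor:trop/alg-entropy} to the $\cA$-side via tropical duality, replacing $C$-matrices by $G$-matrices. Your Steps~1 and~3 are the contents of the paper's conditional lemma (the product formula $G^{(n+n_0)}=\check{E}_n\cdots\check{E}_1 G^{(n_0)}$ and the Lyapunov estimate). The gap is precisely your Step~2. Writing ``one would obtain that the coordinate vector $\boldsymbol{y}_\phi$ of $L^+_\phi$ in the cluster $\cA$-chart \dots is an eigenvector of $\check{E}_\gamma^{\boldsymbol{\epsilon}}$ for every such $\boldsymbol{\epsilon}$'' is not a deduction from \cref{thm:block decomposition}, tropical duality, and putative $\cA$-analogues of \cite[Lemmas~5.1--5.2]{IK20}: those analogues would realize $\check{E}_\gamma^{\boldsymbol{\epsilon}}$ as a tangent map of $\phi^a$ only at a differentiable point of $\cA_\Sigma(\bR^\trop)$ whose chamber is the one carrying the sign $\boldsymbol{\epsilon}$, and when $\cC_\gamma^{\boldsymbol{\epsilon}}\cap\cU^\uf_\Sigma(\bR^\trop)=\{0\}$ there is no such point in $\cU^\uf_\Sigma(\bR^\trop)\subset\cA_\Sigma(\bR^\trop)$; $L^+_\phi$ itself sits on the boundary, having sign $\boldsymbol{\epsilon}^\stab_\gamma$ with zero entries. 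The obstacle you name in your final paragraph (reconciling the $c$-vector chamber data on the $\cA$-side with the $\cX$-orbit signs controlled by \cref{thm:weak_SS}) is the same difficulty in a different guise; the ``hope'' that it is already forced for the lower bound is exactly what the spectral duality question is about and is not settled here. So your sketch correctly locates where the proof would have to be, but it does not close the gap, and neither does the paper.
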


For instance, the following additional condition, which is an ``$\A$-version'' of \cref{prop:entropy_weak_SS}, gives us a sufficient condition for the conjecture:
\begin{itemize}
    \item For any $\boldsymbol{\epsilon} \in \bS(\gamma)$ satisfying $\boldsymbol{\epsilon} \geq \boldsymbol{\epsilon}^\stab_{\gamma, \Omega}$, the
     presentation matrices $\check{E}_{\gamma}^{\boldsymbol{\epsilon}}$ have the stretch factor $\lambda_{\pi(\phi)}$ as positive eigenvalues. Moreover, there exists a corresponding eigendirection $\bR \boldsymbol{a}_{\phi,\Omega} \subset \bR^I$ which does not depend on $\boldsymbol{\epsilon}$.
\end{itemize}

\begin{lem}
Let $\phi=[\gamma]_\bs$ be a pA mutation loop on a marked surface $\Sigma$. If moreover it satisfies the condition above, then \cref{conj:A-entropy} holds true.
\end{lem}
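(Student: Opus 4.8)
The plan is to transcribe the proof of \cref{cor:trop/alg-entropy}, replacing the tropical $\cX$-presentation matrices $E_\gamma^{\boldsymbol{\epsilon}}$ by their $\A$-counterparts $\check E_\gamma^{\boldsymbol{\epsilon}}$ and the $C$-matrices by the $G$-matrices. First I would fix a nonzero vector $w \in \Omega_{(\tri,\ell)}^{\mathrm{can}} \subset \Omega^\bQ_{(\tri,\ell)} \setminus D_{\X,\Sigma}$ and invoke \cref{thm:weak_SS} for the weak sign stability of $\gamma$ on $\Omega := \Omega^\bQ \setminus D_{\X,\Sigma}$, producing $n_0 \geq 0$ with $\boldsymbol{\epsilon}_m := \boldsymbol{\epsilon}_\gamma(\phi^{m-1}(\phi^{n_0}(w))) \in \bS(\gamma)$ and $\boldsymbol{\epsilon}_m \geq \boldsymbol{\epsilon}^\stab_{\gamma,\Omega}$ for all $m \geq 1$. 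Writing $\gamma^n$ for the $n$-fold iterate and $G^{(n)} := G^{(\tri,\ell)}_{(\tri_n,\ell_n)}$ for the associated $G$-matrix, one has $G^{(n+n_0)} = \check E_n \cdots \check E_1\, G^{(n_0)}$ with $\check E_m := \check E_\gamma^{\boldsymbol{\epsilon}_m}$.

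Next, using the assumed condition (the $\A$-version of \cref{prop:entropy_weak_SS}), the common eigendirection $\bR\boldsymbol{a}_{\phi,\Omega}$ lets us choose an invertible $Q$ sending $\boldsymbol{a}_{\phi,\Omega}$ to the first basis vector, which simultaneously conjugates every $\check E_m$ into block upper-triangular form $\check E'_m = Q \check E_m Q^{-1} = \begin{pmatrix} \lambda_{\pi(\phi)} & T_m \\ 0 & \check E^0_m\end{pmatrix}$. The inductive computation from \cref{cor:trop/alg-entropy} then gives, for the Euclidean norm and $Q\cdot\boldsymbol{g} = (a,\boldsymbol{v})$ with $\boldsymbol{g}$ any fixed column of $G^{(n_0)}$,
\[
\|\check E'_n \cdots \check E'_1 (a,\boldsymbol{v})\|_2 \geq \lambda_{\pi(\phi)}^{\,n}\left\|a + \sum_{m=1}^{n}\lambda_{\pi(\phi)}^{-m} T_m \check E^0_{m-1}\cdots \check E^0_1 \boldsymbol{v}\right\|_2 ,
\]
so that, exactly as in \emph{loc. cit.}, the Lyapunov exponent of the $\A$-orbit, hence $\limsup_n \tfrac1n \log\|G^{(n)}\|$, is at least $\log\lambda_{\pi(\phi)}$.

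Finally I would feed this lower bound into the $\A$-analogue of the argument of \cite[Proposition 4.7]{IK19} — using the tropical duality \eqref{eq:tropical_duality_exchange} $G^{v_0}_v = \check C^{v_0}_v$ to identify the exponential growth rate of $G^{(n)}$ with the quantity computing $\cE^a_\phi$ — to conclude $\cE^a_\phi \geq \log\lambda_{\pi(\phi)}$, which is \cref{conj:A-entropy}. The main obstacle is not the eigenvalue estimate, which is formal once the common eigenvector is granted, but this last step: one must adapt the cluster $\A$-transformation entropy estimate of \cite{IK19} to the present setting in which the presentation matrices only weakly stabilize, i.e.\ check that the relevant degree/$F$-polynomial bookkeeping is insensitive to which stable sign $\boldsymbol{\epsilon} \geq \boldsymbol{\epsilon}^\stab_{\gamma,\Omega}$ occurs along the orbit — and this insensitivity is precisely what the $\boldsymbol{\epsilon}$-independence of $\boldsymbol{a}_{\phi,\Omega}$ in the hypothesis provides.
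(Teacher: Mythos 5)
Your proposal is correct and follows essentially the same route as the paper: tropical duality yields $G^{(n+n_0)} = \check E_n \cdots \check E_1 G^{(n_0)}$, the assumed common eigendirection $\bR\boldsymbol{a}_{\phi,\Omega}$ gives a simultaneous block-triangularization of all $\check E_\gamma^{\boldsymbol{\epsilon}}$ with $\boldsymbol{\epsilon} \geq \boldsymbol{\epsilon}^\stab$, the Lyapunov exponent of the $G$-matrix growth is bounded below by $\log\lambda_{\pi(\phi)}$, and the entropy bound is then extracted as in \cite[Proposition 4.7]{IK19}. Your closing remark — that the $\boldsymbol{\epsilon}$-independence of $\boldsymbol{a}_{\phi,\Omega}$ is exactly what makes this estimate uniform along the weakly-stabilizing orbit — is a correct reading of where the hypothesis enters, which the paper's terser proof leaves implicit.
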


\begin{proof}
The argument is similar to the proof of \cref{prop:entropy_weak_SS}. The growth of the $G$-matrices $G^{(n)}:=G^{(\tri,\ell)}_{(\tri_n,\ell_n)}$, which satisfies
\begin{align*}
    G^{(n+n_0)} = \check{E}_n \dots \check{E}_1 G^{(n_0)}
\end{align*}
for all $n \geq 0$ by the tropical duality \eqref{eq:tropical_duality_exchange}, is estimated from below as
\begin{align*}
    \limsup_{n \to \infty} \frac{1}{n} \log\|G^{(n)} \| = \limsup_{n \to \infty} \frac{1}{n} \log\|\check{E}_n\dots \check{E}_1 C^{(n_0)} \| = \mathsf{\Lambda}_\phi(G^{(n_0)}) \geq \lambda_{\pi(\phi)}.
\end{align*}
Then the inequality $\cE^a_\phi \geq \log\lambda_{\pi(\phi)}$ again follows from the same line of arguments as in the proof of \cite[Proposition 4.7]{IK19}.
\end{proof}

\begin{rem}
We have not been able to confirm the additional condition above. 
The difficulty arises from the fact that the matrix $\check{E}_\gamma^{\boldsymbol{\epsilon}}$ cannot be interpreted as the presentation matrix of a tangent map of $\phi^a$ unless $\cC_\gamma^{\boldsymbol{\epsilon}} \cap \cU_\Sigma^\uf(\bR^\trop) \neq \{0\}$. 
\end{rem}

\section{The cluster reduction and sign stability}\label{sec:cluster_reduction}
Here we reformulate the \emph{cluster reduction} procedure introduced in \cite{Ish19} in a way suited for our setting. 
As a slight generalization of \cite[Proposition 6.1]{IK20}, we show that a cluster Dehn twist admits a sign-stable representation path on a certain invariant set. 

A unification of the cluster reduction and the classical reduction procedure along a multicurve will be discussed in \cref{sec:C-reduction}. 
\subsection{Cluster reduction and sign stability}\label{subsec:cluster_reduction} 
Let $\bs:\bE_I \ni (t,\sigma) \mapsto (N^{(t,\sigma)},B^{(t,\sigma)})$ be any seed pattern. 
Fixing a vertex $v_0 \in \Exch_\bs$, let $\Delta_\bs^{\mathrm{FZ}}$ denote the Fomin--Zelevinsky cluster complex (see \emph{e.g.}, \cite{FST}). It is the simplicial complex with vertices parametrized by the cluster variables $A_i^{(v;v_0)}:=\mu_\gamma^* A_i^{(v)} \in \cO(\A_{(v_0)})$ for $i \in I_\uf$, $v \in \Exch_\bs$ and $\gamma:v_0 \to v$, and there is an edge between $A_i^{(v;v_0)}$ and $A_j^{(v';v_0)}$ if and only if $v=v'$. 
Its dual graph is the exchange graph $\Exch_\bs$. 

Fix a simplex $S$ of $\Delta_\bs^{\mathrm{FZ}}$ and consider its star $\Delta_\bs^{\mathrm{FZ}}(S)$, which is a subcomplex obtained as the union of the simplices that contain $S$. Let $\cTr \subset \Exch_\bs$ be the corresponding subgraph. See \cref{fig:cluster_complex}.
Then $\cTr$ is the exchange graph of the seed pattern $\bs|_\cTr$ obtained by \lq\lq freezing" the cluster variables that span the simplex $S$. In our setting, this seed pattern is described as follows.

\begin{figure}[h]
\begin{tikzpicture}
\foreach \i in {0,1,2,3,4}
{
\draw (72*\i:3) -- (72*\i+72:3);
\draw (0,0) -- (72*\i:3);
\draw[red,thick] (72*\i+36:1.5) -- (72*\i+108:1.5);
\draw[dashed] (72*\i+36:1.5) -- (72*\i+36:3.5);
}
\filldraw[myblue] (0,0) circle(2pt);
\draw[myblue] (0.2,0) node[above]{$S=A_3^{(v_0)}$};
\draw(3,0) node[right]{$\Delta_\bs^{\mathrm{FZ}}(S)$};
\draw (3,2.5) node{$\Exch_\bs$};
\draw[red] (-1.5,0.3) node[above]{$\cTr$};
\draw (216:3) node[below]{$A_1^{(v_0)}$};
\draw (-72:3) node[below]{$A_2^{(v_0)}$};
\draw[myblue](252:2.5) node[left]{$3$};
\draw[red] (-72:1.4) node[below]{$1$};
\draw[red] (-144:1.5) node[left]{$2$};
\end{tikzpicture}
    \caption{A portion of the Fomin--Zelevinsky cluster complex $\Delta_\bs^{\mathrm{FZ}}$ and its dual graph $\Exch_\bs$. Here we pick a local labeling of some vertices and edges (\emph{i.e.}, their lifts to $\bExch_\bs$).}
    \label{fig:cluster_complex}
\end{figure}
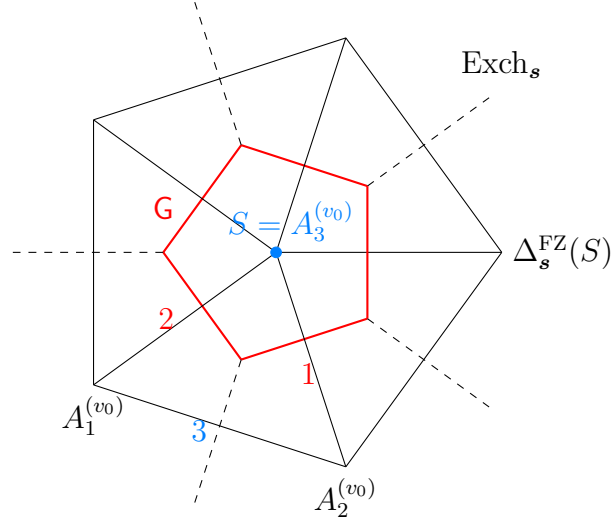

Fix a maximal simplex $S_0$ of $\Delta_\bs^{\mathrm{FZ}}$ containing $S$, which corresponds to a vertex $v_0$ of $\cTr \subset \Exch_\bs$. Choose a lift $(t_0,\sigma_0) \in \bE_I$ such that $v_0=\pi_{\mathrm{Ex}}([t_0,\sigma_0]_{\triv,\bs})$, where we denote the relation $\sim_{\triv}$ for the seed pattern $\bs$ by $\sim_{\triv,\bs}$. Then the face $S \subset S_0$ corresponds to a subset $K \subset I_\uf$. Let $J_\uf:=I_\uf \setminus K$, $J_\f:=K \cup I_\f$ and $J:=J_\uf \sqcup J_\f$. 
Notice that $\bE_J$ is not a subgraph of $\bE_I$, since $\fS_J \nsubseteq \fS_I$, Nevertheless, the initial seed $\bs^{(t_0,\sigma_0)}=(N^{(t_0,\sigma_0)},B^{(t_0,\sigma_0)})$ can be regarded as a seed for the index data $J$. 
Then the seed pattern $\bs|_\cTr$ is the unique seed pattern on $\bE_J$ with the same initial seed $\bs^{(t_0,\sigma_0)}$ at a vertex $(t'_0,\sigma'_0) \in \bE_J$. 
We have a unique isomorphism $\cTr \cong \Exch_{\bs|_\cTr}$ extending $v_0 \mapsto \pi_{\mathrm{Ex}}([t'_0,\sigma'_0]_{\triv,\bs|_\cTr})$. 
Under this isomorphism, let $\bs_\Exch|_\cTr: \widetilde{\cTr}:=\bExch_{\bs|_\cTr} \ni v \mapsto (N^{(v)}, B^{(v)})$ denote the induced pattern on the exchange graph. 

Let $N_\uf^{(v)}|_J:=\bigoplus_{j \in J_\uf} \bZ e_j^{(v)} \subset N^{(v)}$, and $M^{(v)}_\uf|_J$ its dual lattice. 
For each $v \in \cTr$, let $\X^\uf_{(v)}|_J:= T_{M_\uf^{(v)}|_J}$ be the torus associated with $M_\uf^{(v)}|_J$. 
The natural projection $M_\uf^{(v)}\to M_\uf^{(v)}|_J$ induces a projection $\X^\uf_{(v)} \to \X^\uf_{(v)}|_J$. Note that for any horizontal edge $v \overbar{k} v'$ in $\cTr$ with $k \in J_\uf$, the expression $\mu_k^*X_j^{(v')}$ for $j \in J_\uf$ does not contain $X_i^{(v)}$ for $i \in I\setminus J_\uf$. Hence these projections commute with the cluster transformations, and combine to give a positive rational surjective map 
\begin{align}\label{eq:reduction_morphism}
    \pi_\cTr: \X^\uf_\bs \to \X^\uf_{\bs|_\cTr},
\end{align}
which is defined on the union of the tori $\X^\uf_{(v)}$ associated with $v \in \cTr \subset \bExch_\bs$. 

\bigskip
\paragraph{\textbf{The cluster reduction homomorphism}}
Let $\bE_I^J \subset \bE_I$ denote the subgraph obtained by removing all the edges except for the horizontal edges labeled by $J$ and the vertical edges labeled by transpositions in the subgroup $\fS_I^J \subset \fS_I$ that preserves $J$. Note that the permutations in the subgroup $\fS_I^J$ are of the form
\begin{align*}
    \sigma= \begin{pmatrix}
    \ast & 0 & 0 \\
    0 & \ast & 0 \\
    0 & 0 & \ast
    \end{pmatrix}
\end{align*}
with respect to the decomposition $I=J_\uf \sqcup K \sqcup I_f$. Then we also have an inclusion $\bE_I^J \subset \bE_J$, and the restrictions of the seed pattens $\bs$ and $\bs|_\cTr$ coincide on $\bE_I^J$. 
We say that a mutation loop $\phi \in \Gamma_\bs$ is \emph{$\cTr$-reducible} if it admits a representation path contained in the subgraph $\bE_I^J$. A mutation loop $\phi \in \Gamma_\bs$ is said to be \emph{cluster-reducible} if it is $\cTr$-reducible for some $\cTr$. 
The $\cTr$-reducible mutation loops form a subgroup $\Gamma_\bs^\cTr \subset \Gamma_\bs$. 


\begin{lem}
We have an injective group homomorphism $\pi_\cTr^\grp:\Gamma_\bs^\cTr \hookrightarrow \Gamma_{\bs|_\cTr}$.
\end{lem}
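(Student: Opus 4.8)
The plan is to build $\pi_\cTr^\grp$ by restricting representation paths. Given a $\cTr$-reducible $\phi$, choose a representation path $\gamma\colon v\to v'$ of $\phi$ lying in the subgraph $\bE_I^J\subset\bE_I$; since $\bE_I^J$ also sits inside $\bE_J$, the image $\gamma|_\cTr$ of $\gamma$ under the quotient $\bE_J\to\bExch_{\bs|_\cTr}$ is an edge path. Because $\bs$ and $\bs|_\cTr$ restrict to the same labeled seed pattern on $\bE_I^J$ and $I=J$ as index sets, the endpoints $w,w'$ of $\gamma|_\cTr$ satisfy $B^{(w)}=B^{(v)}=B^{(v')}=B^{(w')}$, i.e.\ $w\sim_{\bs|_\cTr}w'$, so $\gamma|_\cTr$ represents a mutation loop; we set $\pi_\cTr^\grp(\phi):=[\gamma|_\cTr]_{\bs|_\cTr}$. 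The operation $\gamma\mapsto\gamma|_\cTr$ is compatible with concatenation and $\bE_I^J$ is closed under concatenation, so once well-definedness is known the homomorphism property $\pi_\cTr^\grp(\phi_2\phi_1)=\pi_\cTr^\grp(\phi_2)\pi_\cTr^\grp(\phi_1)$ follows at once, using that $\Gamma_\bs^\cTr$ is a subgroup.

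For well-definedness I would recognize $[\gamma|_\cTr]_{\bs|_\cTr}$ as an object manufactured from $\phi$ with no choice, namely the pair of birational automorphisms it induces on the cluster $\cA$- and $\cX$-varieties of $\bs|_\cTr$. Since $I=J$ and the $\cA$-mutations along edges of $\cTr$ agree for $\bs$ and for $\bs|_\cTr$, the variety $\cA_{\bs|_\cTr}$ is canonically the open subscheme of $\cA_\bs$ obtained by gluing only the cluster tori $\cA_{(v)}$ with $v\in\cTr$; likewise $\pi_\cTr\colon\cX^\uf_\bs\to\cX^\uf_{\bs|_\cTr}$ of \eqref{eq:reduction_morphism} intertwines cluster transformations. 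A representation path of $\phi$ lies in $\bE_I^J$ and hence performs no mutation in a direction of $K$, so $\phi$ fixes the cluster variables spanning the simplex $S$ and therefore preserves the subgraph $\cTr$; consequently $\phi$ preserves $\cA_{\bs|_\cTr}\subset\cA_\bs$ and its $\cX$-action descends along $\pi_\cTr$ to $\cX^\uf_{\bs|_\cTr}$. This restricted-and-descended automorphism pair is exactly the one represented by any $\gamma|_\cTr$, so $\pi_\cTr^\grp(\phi)$ is independent of $\gamma$.

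For injectivity, suppose $\pi_\cTr^\grp(\phi)$ is trivial. Then $\gamma|_\cTr$ is $\bs|_\cTr$-equivalent to a constant path, which forces $(\text{seed isomorphism})\circ\mu^z_{\gamma|_\cTr}=\mathrm{id}$ for $z\in\{a,x\}$. On the $\cA$-side, the agreement of the two patterns on $\bE_I^J$ (together with $I=J$) gives $(\text{seed isomorphism})\circ\mu^a_\gamma=(\text{seed isomorphism})\circ\mu^a_{\gamma|_\cTr}=\mathrm{id}$; in particular $\gamma$ fixes $B^{(v)}$ and has trivial $C$-matrix $C^{v}_{v'}$ between its endpoints. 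The $\cX$-side then follows from $\pi_\cTr$-equivariance together with the fact that $\gamma$ never mutates in the frozen directions $K$, so the frozen $\cX$-coordinates are only multiplied by monomial-type factors that are trivialized exactly when the reduced $\cX$-transformation is. Hence $\gamma$ is $\bs$-equivalent to a constant path, i.e.\ $\phi=\mathrm{id}$.

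I expect the main obstacle to be precisely the $\cX$-side of both well-definedness and injectivity: unlike $\cA$, the $\cX$-tori for $\bs$ and $\bs|_\cTr$ genuinely differ by the frozen directions $K$, so one cannot simply embed $\cX^\uf_{\bs|_\cTr}$ into $\cX^\uf_\bs$, and one must control the monomial factors acquired by the frozen $\cX$-coordinates under mutations in the $J_\uf$-directions. A secondary but necessary point is to check carefully that a $\cTr$-reducible mutation loop really preserves $\cTr$ (equivalently fixes $S$), which should follow from the description of $\cTr$ as the star of $S$ in $\Delta_\bs^{\mathrm{FZ}}$; compare \cite{Ish19}.
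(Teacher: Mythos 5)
Your framework is sound: since $I=J$ and the two patterns agree on $\bE_I^J$, the $\cA$-mutations along edges of $\cTr$ are literally equal for $\bs$ and $\bs|_\cTr$, so the $\cA$-side of both well-definedness and injectivity is indeed immediate. You also correctly identify the $\cX$-side as the crux, but the step you dismiss as following from ``$\pi_\cTr$-equivariance'' together with control of ``monomial-type factors'' is precisely where a real theorem is needed, and the description is not accurate. Under a single mutation $\mu_k$ with $k\in J_\uf$, the coordinate $X_i$ for $i\in K$ acquires a factor involving $(1+X_k^{\pm 1})^{b_{ik}}$, which is not a monomial; after the whole path $\gamma$ one gets $X_i\mapsto X_i\cdot M_i$ with $M_i$ a genuine rational function of the initial $J_\uf$-coordinates. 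Your assertion that $M_i$ is trivialized exactly when the reduced $\cX$-transformation is trivial is an unproven claim, and $\pi_\cTr$-equivariance alone only shows that $\mu_\gamma^x$ preserves the fibers of $\pi_\cTr$ --- it says nothing about what happens within a fiber.

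The paper closes exactly this gap by invoking Nakanishi's synchronicity phenomenon \cite{Nak20}: applied to the seed pattern $\bs|_\cTr$, in which the $K$-directions are frozen, periodicity of the unfrozen data forces periodicity of the frozen data, in particular forcing the $K$-components of the $\cX$-transformation to return to the identity. This is a nontrivial theorem and is the missing ingredient in your argument. As a secondary point, the paper's route to well-definedness is more direct than your geometric one: it simply projects the commutative diagram \eqref{eq:equivalence of paths} along $\pi_\cTr$, noting that the connecting path $\delta$ can be taken inside $\cTr$ (both of its endpoints lie there and $\cTr$ is connected), which avoids any appeal to faithfulness of the action on the cluster varieties.
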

We call $\pi_\cTr^\grp$ the \emph{cluster reduction homomorphism} for $\cTr$.

\begin{proof}
Note that for two vertices $v,v' \in \bE_I^J$, they are $\bs|_\cTr$-equivalent if and only if they are $\bs$-equivalent. Two edge paths contained in the subgraph $\cTr$ are $\bs|_\cTr$-equivalent whenever they are $\bs$-equivalent, since the path $\delta$ in \eqref{eq:equivalence of paths} can be chosen inside $\cTr$, and the commutative diagram \eqref{eq:equivalence of paths} can be projected via $\pi_\cTr$. 
The converse is also true by the synchronicity phenomenon \cite{Nak20}, which states that a periodicity of unfrozen variables implies that of frozen variables.
Hence we get a well-defined, injective group homomorphism $\pi_\cTr^\grp:\Gamma_\bs^\cTr \to \Gamma_{\bs|_\cTr}$. 
\end{proof}

\begin{rem}
The cluster reduction homomorphism is not surjective in general. 
For example, consider the seed pattern $\bs$ of finite mutation type $X_7$, whose initial quiver at $v_0$ is shown in \cref{fig:type_X7}. Here we set $I_\uf:=\{0,1,2,3,4\}$ and $I_\f:=\{5,6\}$. After the cluster reduction for the simplex $S:=\{A_3^{(v_0)},A_4^{(v_0)}\}$ with $J_\uf:=\{0,1,2\}$ and $J_\f:=\{3,4,5,6\}$, the path 
\begin{align*}
    \gamma: v_0 \xrightarrow{0,1,2} v_1 \xrightarrow{\sigma} v_2
\end{align*}
with $\sigma:=(0\ 1\ 2)(3\ 4\ 5\ 6)$ 
represents a mutation loop in $\Gamma_{\bs|_\cTr}$. However this does not come from $\Gamma_\bs^\cTr$, since $\sigma \notin \fS_I$ and hence the path $\gamma$ is not contained in the original labeled exchange graph $\bExch_\bs$. 

\end{rem}

\begin{figure}[ht]
\begin{tikzpicture}[>=latex]
\draw(0,0) circle(2pt) node[above=0.5em]{$0$};
\foreach \i in {30,150}
{
\draw(\i+30:2) circle(2pt);
\draw(\i-30:2) circle(2pt);
\draw[->>, thick, shorten >=4pt,shorten <=4pt]
(\i+30:2) -- (\i-30:2);
\qarrow{\i-30:2}{0,0}
\qarrow{0,0}{\i+30:2}
}
\foreach \i in {270}
{
{\color{myblue}
\draw(\i+30:2) circle(2pt);
\draw(\i-30:2) circle(2pt);
\draw[->>, thick, shorten >=4pt,shorten <=4pt]
(\i+30:2) -- (\i-30:2);
\qarrow{\i-30:2}{0,0}
\qarrow{0,0}{\i+30:2}
}
}
\draw(-2,0) node[left]{$1$};
\draw(120:2) node[left]{$2$};
\draw(60:2) node[right]{$3$};
\draw(2,0) node[right]{$4$};
{\color{myblue}
\draw(-60:2) node[below]{$5$};
\draw(-120:2) node[below]{$6$};
}
\end{tikzpicture}
    \caption{The initial quiver of type $X_7$, where the vertices $5$ and $6$ are frozen.}
    \label{fig:type_X7}
\end{figure}
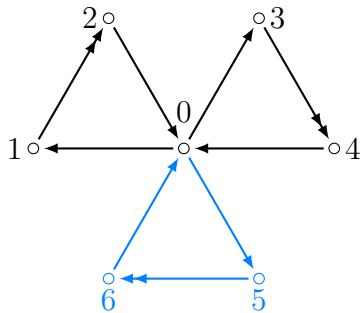

\begin{ex}
For the seed pattern $\bs_\Sigma$ associated with a marked surface $\Sigma$, recall the identification $\Exch_\Sigma \cong \Tri(\Sigma)$. The Fomin--Zelevinsky cluster complex $\Delta_\bs^\mathrm{FZ}$ is identified with (a connected component of) the \emph{tagged arc complex} (\cite[Theorem 7.11]{FST}). 

Fix an ideal arc $\alpha \in \Exch_\Sigma$ and set $S:=\{\alpha\}$. 
Then the vertices of the dual graph $\cTr$ of $\Delta_\bs^{\mathrm{FZ}}(S)$ are the ideal triangulations containing $\alpha$. Therefore, the seed pattern $\bs_\Sigma|_\cTr$ is obtained by prohibiting the flips along $\alpha$ from these triangulations. 
In effect, this amounts to view $\alpha$ as a boundary arc, and thus we have an identification $\X_{\bs_\Sigma|_\cTr}^\uf(\bR^\trop) \cong \X_{\Sigma(\alpha)}^\uf(\bR^\trop)$, where 
$\Sigma(\alpha)$ is the marked surface obtained by cutting $\Sigma$ along the arc $\alpha$. If we use the arc $\alpha'$ obtained by the flip along $\alpha$ inside some ideal triangulation, we get another seed pattern and another marked surface $\Sigma(\alpha')$. See \cref{fig:reduction} for an example. 
A mapping class on $\Sigma$ is $\cTr$-reducible if and only if it fixes the arc $\alpha$.

Since the cutting interpretation makes a duplication of the arc $\alpha$ into the boundary arcs $\beta^{(1)}$ and $\beta^{(2)}$, it is valid only for the cluster varieties $\X_\Sigma^\uf$, which do not care about frozen variables. The reader should not mix it up with the \emph{amalgamation} construction \cite{FG06}, which is rather defined for the cluster varieties $\X_\Sigma$ with frozen variables and goes in the opposite direction.
\end{ex}

\begin{figure}
\begin{tikzpicture}

\foreach \i in {0,1,2,3,4,5}
{
\node[fill, circle, inner sep=1pt] at (60*\i:2) {};
\draw[blue] (60*\i:2) -- (60*\i+60:2);
}
\draw[blue] (0:2) -- (120:2);
\draw[red] (0:2) -- node[midway,above]{$\alpha$} (180:2);
\draw[blue] (0:2) -- (240:2);
\node at (2,-2) {$\Sigma$};
\draw[-implies, double distance=2pt](3,0) to node[midway,above]{$f_\alpha$} (4,0);

\begin{scope}[xshift=7cm]
\foreach \i in {0,1,2,3,4,5}
{
\node[fill, circle, inner sep=1pt] at (60*\i:2) {};
\draw[blue] (60*\i:2) -- (60*\i+60:2);
}
\draw[blue] (0:2) -- (120:2);
\draw[red] (120:2) -- node[midway,right]{$\alpha'$} (240:2);
\draw[blue] (0:2) -- (240:2);
\end{scope}

\draw [ultra thick,-{Classical TikZ Rightarrow[length=4pt]},decorate,decoration={snake,amplitude=2pt,pre length=2pt,post length=3pt}] (0,-2.5) -- (0,-3.5);

\begin{scope}[yshift=-6cm]
\foreach \i in {0,1,2,3}
\node[fill, circle, inner sep=1pt] at (60*\i:2) {};
\foreach \i in {0,1,2}
\draw[blue] (60*\i:2) -- (60*\i+60:2);

\draw[blue] (0:2) -- (120:2);
\draw[red] (0:2) -- node[midway,above]{$\beta^{(1)}$} (180:2);
\end{scope}

\begin{scope}[yshift=-6.5cm]
\foreach \i in {0,3,4,5}
\node[fill, circle, inner sep=1pt] at (60*\i:2) {};
\draw[blue] (180:2) -- (240:2);
\draw[blue] (240:2) -- (300:2);
\draw[blue] (300:2) -- (0:2);
\draw[blue] (0:2) -- (240:2);
\draw[red] (0:2) -- node[midway,below]{$\beta^{(2)}$} (180:2);
\node at (2,-2) {$\Sigma(\alpha)$};
\end{scope}

\draw [ultra thick,-{Classical TikZ Rightarrow[length=4pt]},decorate,decoration={snake,amplitude=2pt,pre length=2pt,post length=3pt}] (7,-2.5) -- (7,-3.5);

\begin{scope}[xshift=7cm, yshift=-6cm]
\foreach \i in {2,3,4}
\node[fill, circle, inner sep=1pt] at (60*\i:2) {};
\draw[blue] (120:2) -- (180:2);
\draw[blue] (180:2) -- (240:2);
\draw[red] (240:2) --node[midway,left]{${\beta'}^{(1)}$} (120:2);
\end{scope}

\begin{scope}[xshift=7.5cm, yshift=-6cm]
\foreach \i in {0,1,2,4,5}
\node[fill, circle, inner sep=1pt] at (60*\i:2) {};
\foreach \i in {0,1,4,5}
\draw[blue] (60*\i:2) -- (60*\i+60:2);
\draw[red] (120:2) --node[midway,right]{${\beta'}^{(2)}$} (240:2);
\draw[blue] (0:2) -- (120:2);
\draw[blue] (0:2) -- (240:2);
\node at (2,-2.5) {$\Sigma(\alpha')$};
\end{scope}

\end{tikzpicture}
    \caption{Two examples of the cluster reduction.}
    \label{fig:reduction}
\end{figure}
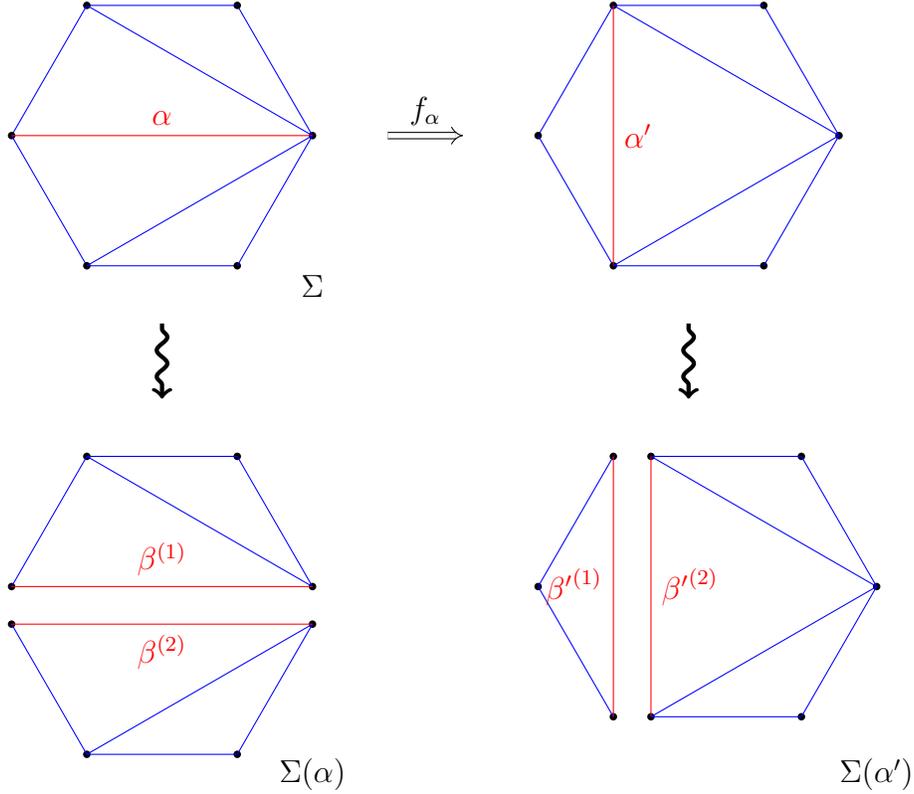

Slightly abusing the notation, let us denote a path $\gamma$ in $\Exch_\bs$ contained in the subgraph $\cTr$ by $\pi_\cTr(\gamma)$ when we regard it as a representation path of a mutation loop in $\Gamma_{\bs|_\cTr}$.  
Let us consider the tropicalization $\pi_\cTr^\trop: \X^\uf_\bs(\bR^\trop) \to \X^\uf_{\bs|_\cTr}(\bR^\trop)$ of the map \eqref{eq:reduction_morphism}, which is a well-defined surjective map. When no confusion can occur, we simply denote $\pi_\cTr^\grp$ and $\pi_\cTr^\trop$ by the same symbol $\pi_\cTr$. 

\begin{prop}\label{prop:SS_cluster_reduction}
Let $\gamma: v \to v'$ be a path contained in $\cTr$ which represents a mutation loop in $\Gamma_\bs^\cTr \subset \Gamma_\bs$. Then the path $\pi_\cTr(\gamma)$ is sign-stable on an $\bR_{>0}$-invariant set $\Omega \subset \X^\uf_{\bs|_\cTr}(\bR^\trop)$ if and only if $\gamma$ is sign-stable on $\widetilde{\Omega}:=\pi_\cTr^{-1}(\Omega\setminus \{0\}) \subset \X^\uf_\bs(\bR^\trop)$. Moreover we have 
\begin{align*}
    \rho(E_{\phi,\widetilde{\Omega}}^{(v_0)})=\rho(E_{\pi_{\cTr}(\phi),\Omega}^{(v_0)}),
\end{align*}
where $\phi:=[\gamma]_\bs \in \Gamma_\bs^{\cTr}$.
\end{prop}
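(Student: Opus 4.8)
The plan is to reduce both assertions to two facts about the projection $\pi_\cTr$. The first is that, in cluster coordinates, $\pi_\cTr^\trop$ is the tropical-linear map that forgets the coordinates indexed by $K$, and that it commutes with the cluster $\X$-transformations along the horizontal edges of $\cTr$ (this is exactly the compatibility used to define \eqref{eq:reduction_morphism}); together with the compatibility of the terminal seed isomorphisms that makes $\pi_\cTr^\grp$ a homomorphism, this yields the equivariance $\pi_\cTr\circ\phi = \pi_\cTr(\phi)\circ\pi_\cTr$ on $\X_\bs^\uf(\bR^\trop)$, where $\phi=[\gamma]_\bs$. The second is that, since $\gamma$ lies in $\cTr$, every horizontal edge of $\gamma$ carries a label in $J_\uf$; by \cref{l:x-cluster signed} the coordinate $x_k^{(v)}$ with $k\in J_\uf$ of any iterate $\mu_{\gamma_{\le i}}(w)$ depends only on the $J_\uf$-coordinates of $w$, hence only on $\pi_\cTr(w)$. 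Consequently no horizontal edge of $\gamma$ is dropped in passing to $\pi_\cTr(\gamma)$, so $h(\gamma)=h(\pi_\cTr(\gamma))$, and one gets the entry-by-entry identity
\[ \boldsymbol{\epsilon}_\gamma(w) = \boldsymbol{\epsilon}_{\pi_\cTr(\gamma)}(\pi_\cTr(w)) \qquad \text{for all } w \in \X_{(v_0)}^\uf(\bR^\trop). \]

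Granting these, the equivalence of sign stabilities is formal. First I would note that $\widetilde\Omega=\pi_\cTr^{-1}(\Omega\setminus\{0\})$ is $\bR_{>0}$-invariant (because $\pi_\cTr^\trop$ is positively homogeneous and $\Omega$ is $\bR_{>0}$-invariant) and that $0\notin\widetilde\Omega$ (since $\pi_\cTr(0)=0$). If $\pi_\cTr(\gamma)$ is sign-stable on $\Omega$ with stable sign $\boldsymbol{\epsilon}^\stab$, then for $w\in\widetilde\Omega$ we have $\pi_\cTr(w)\in\Omega\setminus\{0\}$, so $\boldsymbol{\epsilon}_{\pi_\cTr(\gamma)}(\pi_\cTr(\phi)^n(\pi_\cTr(w)))=\boldsymbol{\epsilon}^\stab$ for $n\gg 0$; applying the equivariance and the entry-by-entry identity gives $\boldsymbol{\epsilon}_\gamma(\phi^n(w))=\boldsymbol{\epsilon}^\stab$ for $n\gg 0$, so $\gamma$ is sign-stable on $\widetilde\Omega$ with the same stable sign. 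Conversely, if $\gamma$ is sign-stable on $\widetilde\Omega$, then given $w'\in\Omega\setminus\{0\}$ I use the surjectivity of $\pi_\cTr^\trop$ to write $w'=\pi_\cTr(w)$ with $w\in\widetilde\Omega$ and run the same computation backwards. In particular the two stable signs coincide.

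For the spectral radii I would compute the presentation matrix in the stable range explicitly. Since $[\gamma]_\bs\in\Gamma_\bs^\cTr$, after replacing $\gamma$ by an $\bs$-equivalent path we may assume it lies in $\bE_I^J$, so the terminal seed permutation $\sigma$ lies in $\fS_I^J$ and hence preserves the splitting $I_\uf=J_\uf\sqcup K$. Recalling that in the stable range $E_{\phi,\widetilde\Omega}^{(v_0)}=E_\gamma^{\boldsymbol{\epsilon}^\stab}$ is the product, over the horizontal edges and then $P_\sigma$, of the elementary matrices of \cref{l:x-cluster signed}, I observe that for $k\in J_\uf$ each such elementary matrix is block lower-triangular for the ordering $(J_\uf,K)$: its $J_\uf\times J_\uf$ block is the corresponding elementary matrix of the seed pattern $\bs|_\cTr$ (because the restriction of $B^{(v)}$ to $J_\uf$ is the exchange matrix of $\bs|_\cTr$), its $J_\uf\times K$ block vanishes, and, since no index in $K$ is the mutation direction, its $K\times K$ block is the identity. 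As $P_\sigma$ is block-diagonal with $K$-block a permutation matrix, multiplying out shows $E_{\phi,\widetilde\Omega}^{(v_0)}$ is block lower-triangular with $J_\uf\times J_\uf$ block equal to $E_{\pi_\cTr(\phi),\Omega}^{(v_0)}=E_{\pi_\cTr(\gamma)}^{\boldsymbol{\epsilon}^\stab}$ and $K\times K$ block a permutation matrix. Hence
\[ \rho\bigl(E_{\phi,\widetilde\Omega}^{(v_0)}\bigr) = \max\bigl\{\rho\bigl(E_{\pi_\cTr(\phi),\Omega}^{(v_0)}\bigr),\,1\bigr\} = \rho\bigl(E_{\pi_\cTr(\phi),\Omega}^{(v_0)}\bigr), \]
the last equality because $E_{\pi_\cTr(\phi),\Omega}^{(v_0)}\in GL$ forces its spectral radius to be at least $1$. (That the presentation matrix is constant on the stable cone, so that these symbols are well defined and may be evaluated at a convenient point, is immediate from sign stability and the piecewise-linearity of the $\mu_k$ on the half-spaces $\cH_{k,\epsilon}^{x,(v)}$.)

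The step I expect to be the main obstacle is the bookkeeping around the labelings: one must check that the identification $\cTr\cong\Exch_{\bs|_\cTr}$ can be matched with the chosen lifts so that $E_{\pi_\cTr(\gamma)}^{\boldsymbol{\epsilon}^\stab}$ is \emph{literally} the $J_\uf\times J_\uf$ block of $E_\gamma^{\boldsymbol{\epsilon}^\stab}$, and that the reduction of a representation path survives the replacement of $\gamma$ by an $\bs$-equivalent path inside $\bE_I^J$; these are routine but need care because $\bE_J$ is not a subgraph of $\bE_I$ and the relevant permutation groups differ. Everything else is a direct consequence of \cref{l:x-cluster signed}, the construction of $\pi_\cTr$, and the surjectivity and equivariance already recorded.
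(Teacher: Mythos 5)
Your proof is correct and follows essentially the same route as the paper's: the entry-by-entry identity $\boldsymbol{\epsilon}_\gamma(w)=\boldsymbol{\epsilon}_{\pi_\cTr(\gamma)}(\pi_\cTr(w))$ gives the sign-stability equivalence, and the block-lower-triangular form of the presentation matrices gives the spectral radius identity. You in fact supply a detail the paper leaves implicit — that the $K\times K$ block of the product is a permutation matrix and that $\rho(E_{\pi_\cTr(\phi),\Omega}^{(v_0)})\geq 1$ since it lies in $GL(\bZ)$, so the extra block cannot increase the spectral radius.
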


\begin{proof}
Since the path $\gamma$ is contained in $\cTr$, its sign $\boldsymbol{\epsilon}_\gamma(w)$ at a point $w \in \X^\uf_\bs(\bR^\trop)$ only concerns with the coordinates $x_j^{(v)}$ for $j \in J_\uf$. In other words, we have $\boldsymbol{\epsilon}_\gamma(w)=\boldsymbol{\epsilon}_{\pi_{\cTr}(\gamma)}(\pi_\cTr(w))$ for all $w \in \X^\uf_\bs(\bR^\trop)$ and hence the first statement holds. For the second statement, note that for a horizontal edge $v \overbar{k} v'$ in $\cTr$, the matrix $E=E_{k,\epsilon}^{(v)}$ is block-decomposed into the form
\begin{align*}
    E = 
    \begin{pmatrix} 
    E|_J & 0 \\
    \ast & 1
    \end{pmatrix},
\end{align*}
where $E|_J$ is the $J_\uf \times J_\uf$-submatrix of $E$.
Since the presentation matrix $E_{\phi,\widetilde{\Omega}}^{(v_0)}$ is a product of such matrices and permutation matrices which preserve the block decomposition, it has the same block-decomposed form. Then $E_{\pi_{\cTr}(\phi),\Omega}^{(v_0)}$ is its $J_\uf\times J_\uf$-submatrix, and thus their spectral radii are the same.
\end{proof}

We mention to a relation between the cluster reduction and the \emph{special completion} introduced by Fock--Goncharov \cite{FG16}.

\begin{prop}\label{prop:cTr<->cone} 
There is a one-to-one correspondence between the set of $|J_\uf|$-dimensional simplices in  $\Delta_\bs^\mathrm{FZ}$ and the set of codimension $|J_\uf|$ cones in the fan $\fF_\bs^+$. In other words, the cluster varieties $\X_{\bs|_\cTr}^\uf$ are exactly the codimension $|J_\uf|$ strata of the special completion $\widehat{\X}_\bs^\uf$.
\end{prop}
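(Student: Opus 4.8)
The plan is to derive both assertions from two facts already available in the literature: the description of the geometric realization $|\fF_\bs^+| \subset \X_\bs^\uf(\bR^\trop)$ of the Fock--Goncharov cluster complex as a simplicial fan whose cone poset is isomorphic to the face poset of $\Delta_\bs^{\mathrm{FZ}}$ (\cite{FG09,GHKK}), and Fock--Goncharov's construction of the special completion $\widehat{\X}_\bs^\uf$ by gluing partial toric compactifications of the seed tori along $\fF_\bs^+$ (\cite{FG16}). With these in hand the first sentence becomes a purely combinatorial matching, and the second sentence is a reinterpretation of the strata that this construction produces.

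For the first sentence, I would recall that the cones of $\fF_\bs^+$ are exactly the sets $\mu_{\gamma(v)}^{-1}(\tau)$ for $v \in \bExch_\bs$ and $\tau$ a face of the top cone $\cC_{(v)}^+$, that these glue into a simplicial fan independent of the chosen base vertex, and that the assignment sending the face $\tau_K := \{\, w \in \cC_{(v_0)}^+ \mid x_i^{(v_0)}(w) = 0 \text{ for all } i \in K \,\}$ to the simplex of $\Delta_\bs^{\mathrm{FZ}}$ spanned by the cluster variables $A_k^{(v_0)}$, $k \in K$, is a well-defined, base-point-independent poset isomorphism. The subgraph $\cTr$ of the paper is by construction dual to the star $\Delta_\bs^{\mathrm{FZ}}(S)$, and the complementary index data of (a lift of) $S$ is $J_\uf = I_\uf \setminus K$; restricting the above isomorphism to the cones of codimension $|J_\uf|$, resp. to the $|J_\uf|$-dimensional simplices of $\Delta_\bs^{\mathrm{FZ}}$, then gives the stated bijection.

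For the second sentence, I would recall from \cite{FG16} that $\widehat{\X}_\bs^\uf$ is built by partially compactifying each seed torus $\X_{(v)}^\uf = T_{M_\uf^{(v)}}$ along the subfan of $\fF_\bs^+$ supported in a neighbourhood of $\cC_{(v)}^+$, and gluing these charts by the tropicalised cluster transformations; its boundary strata are the images of the toric orbits, indexed $\Gamma_\bs$-equivariantly by the cones of $\fF_\bs^+$. For the cone $\sigma$ attached to $S$ as above, the associated orbit is the quotient torus of $\X_{(v_0)}^\uf$ obtained by degenerating the coordinates $x_i^{(v_0)}$, $i \in K$, to their limiting values in the compactification, i.e. the quotient torus $\X_{(v_0)}^\uf|_J = T_{M_\uf^{(v_0)}|_J}$ that appears in the construction of $\pi_\cTr$; restricted to this orbit the gluing data reduce to the cluster transformations along the horizontal edges of $\cTr$ labelled by $J_\uf$ together with the relabellings in $\fS_I^J$, which is exactly the atlas of $\X_{\bs|_\cTr}^\uf$ over $\widetilde{\cTr} = \bExch_{\bs|_\cTr}$. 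Hence the stratum is canonically identified with the cluster variety $\X_{\bs|_\cTr}^\uf$, which is the content of the ``in other words'' clause.

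The step I expect to be the main obstacle is this last identification: matching the ad hoc definition of $\bs|_\cTr$ made earlier in the section --- which goes through the auxiliary graph $\bE_J$ (not a subgraph of $\bE_I$) and transports the initial seed $\bs^{(t_0,\sigma_0)}$ to a vertex $(t'_0,\sigma'_0) \in \bE_J$ --- with the intrinsic seed pattern that governs a stratum of $\widehat{\X}_\bs^\uf$ in the construction of \cite{FG16}. The agreement of the initial seeds is immediate, since both amount to freezing the cluster variables spanning $S$; the delicate point is that the chart cover of the stratum is exactly $\cTr$ and no larger --- that a seed torus contributes a chart of this stratum precisely when the corresponding seed contains every cluster variable of $S$ --- for which I would invoke the explicit description of the star $\Delta_\bs^{\mathrm{FZ}}(S)$ together with the synchronicity phenomenon \cite{Nak20} already used in this section. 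Once the two presentations of $\bs|_\cTr$ are reconciled, the proposition is just the transport of the GHKK poset isomorphism through the FG16 construction, and needs no further substantial input.
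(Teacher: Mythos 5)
The strategy you outline is in the same spirit as the paper's, but it papers over the step the paper actually found delicate. Your first paragraph invokes, as a black box, a ``base-point-independent poset isomorphism'' between the face poset of $\Delta_\bs^{\mathrm{FZ}}$ and the cone poset of $\fF_\bs^+$, attributed to \cite{FG09,GHKK}. That is not a statement those references give you off the shelf at the generality used here (the proposition is stated at the beginning of \cref{sec:cluster_reduction} for an arbitrary seed pattern); what the paper takes from \cite[Theorem 2.13]{GHKK} is only that $|\fF_\bs^\pm|$ is a well-defined, base-point-independent fan. The paper then \emph{constructs} the map $S\mapsto\cC_S^+$ directly, checks well-definedness by a coordinate computation along edges labelled by $J_\uf$, notes surjectivity from the form of the codimension-$|J_\uf|$ cones, and --- this is the crux --- derives injectivity from \cite[Theorem~6.2]{CL20}, which establishes the ``equivalence property'' (T2) of \cite[Lemma~2.11]{Ish19} for general skew-symmetrizable seed patterns: the rays of such a cone are the $g$-vectors of a set of cluster variables, that set forms a simplex of $\Delta_\bs^{\mathrm{FZ}}$, and the cone determines the simplex. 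If you rely purely on a ``GHKK poset isomorphism'' you have hidden precisely this equivalence/injectivity statement, which is the one non-formal input; you should either locate an explicit statement in GHKK at the right level of generality, or (as the paper does) bring in CL20.

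Your second paragraph, by contrast, is doing more than the paper asks: the ``in other words'' clause is presented as a reinterpretation of the bijection via \cite{FG16}, not as a second claim with its own proof, so the paper gives no argument there. Your sketch --- identifying the toric orbit of the FG16 compactification over $\cC_S^+$ with the quotient torus $T_{M_\uf^{(v_0)}|_J}$ and matching the gluing data with the atlas of $\X_{\bs|_\cTr}^\uf$, and then invoking synchronicity \cite{Nak20} to pin down the chart cover --- is plausible and in fact closer to a genuine verification than the paper provides, but it is not needed to prove the proposition as stated and should be separated clearly from the bijection argument, which is where the real content and the CL20 dependence lie.
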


\begin{proof}
For a $|J_\uf|$-dimensional simplex $S$ in $\Delta_\bs^{\mathrm{FZ}}$ and the corresponding subgraph $\cTr \subset \Exch_\bs$, take a vertex $v \in \cTr$ and consider the cone 
\begin{align*}
    \cC^{+,J_\uf}_{(v)}:=\{ w \in \X_{(v)}^\uf(\bR^\trop) \mid x_i^{(v)}(w) \geq 0\mbox{ for all $i \in I_\uf$},~x_j^{(v)}(w) = 0 \mbox{ for all $j \in J_\uf$}\},
\end{align*}
where we choose a labeling (\emph{i.e.}, a lift of $v$ to $\bExch_{\bs|_\cTr}$) and $J_\uf \subset I_\uf$ is the index set corresponding to the simplex $S$ in this labeling. 
Then the corresponding subset in $\X_\bs^\uf(\bR^\trop)$ is a codimension $|J_\uf|$ cone in the fan $\fF_\bs^+$, which depends only on the simplex $S$, since $x_i^{(v')}(w) = x_i^{(v)}(w)$ for all $i \in I_\uf$, $w \in \cC^{+,J}_{(v)}$ and any edge $v \overbar{k} v'$ labeled by $k \in J_\uf$. Thus we denote this cone by $\cC_S^+ \subset \X_\bs^\uf(\bR^\trop)$. 

Since each codimension $|J_\uf|$ cone in the fan $\fF_\bs^+$ is of the form $\cC=\cC_{(v)}^{+,J_\uf}$ for some $v \in \Exch_\bs$, 
the correspondence $S \mapsto \cC_S^+$ is surjective. The injectivity follows from \cite[Theorem 6.2]{CL20}\footnote{This theorem shows that the \lq\lq equivalence property" (T2) in \cite[Lemma 2.11]{Ish19} holds for any skew-symmetrizable seed pattern, which was one of the basic assumptions in the Nielsen--Thurston theory.}, since generators of the cone $\cC_{(v)}^{+,J_\uf} \subset \X_{(v_0)}^\uf(\bR^\trop)$ in the chart associated with a vertex $v_0 \in \Exch_\bs$ correspond to the cluster variables $A_j^{(v;v_0)}$ for $j \in J_\uf$ as their $g$-vectors, and the set $S=\{A_j^{(v;v_0)}\}_{j \in J_\uf}$ defines a simplex in $\Delta_\bs^{\mathrm{FZ}}$.
\end{proof}

\subsection{Sign stability of cluster Dehn twists}
Here we slightly generalize \cite[Proposition 6.1]{IK20} on the sign stability of Dehn twists to that on \emph{cluster Dehn twists}, which is introduced in \cite{Ish19} as a clsuter algebraic analogue of Dehn twists (and half-twists) using the cluster reduction.

For any seed pattern $\bs$, a mutation loop $\phi \in \Gamma_{\bs}$ of infinite order is called a \emph{cluster Dehn twist} if it admits a representation path of the form $\gamma: v \overbar{k_1} v' \overbar{(k_1\ k_2)} v''$ for some $k_1\neq k_2 \in \{1,\dots,N_\uf\}$. Equivalently, a cluster Dehn twist is a mutation loop $\phi$ of infinite order which is $\cTr$-reducible for the dual graph $\cTr$ of the simplex $S=\{A_j^{(v;v_0)}\}_{j \in I \setminus \{k_1,k_2\}}$ for some $v \in \bExch_\bs$ and $k_1 \neq k_2$. 

We call a representation path $\gamma$ as above a \emph{shortest path} for $\phi$. 
Let $\varepsilon = \sgn(b^{(v_0)}_{k_1, k_2})$ and $\ell:=|b^{(v_0)}_{k_1, k_2}|$, and consider the $\bR_{>0}$-invariant subset
\begin{align*}
    \Omega_{(v_0)}^{k_1,k_2}:=
    \begin{cases}
        \{ w \in \X_{(v_0)}(\bR^\trop) \mid (x_{k_1}^{(v_0)}(w),x_{k_2}^{(v_0)}(w)) \neq (0,0)\}, &\hspace{-3cm} \mbox{if $\ell = 2$,} \vspace{2.5mm}\\
        \{ w \in \X_{(v_0)}(\bR^\trop) \mid \varepsilon(x_{k_1}^{(v_0)}(w),x_{k_2}^{(v_0)}(w)) \notin \bR_{\geq 0} (1,(-\ell -\sqrt{\ell^2 -4})/2)\},
        & \\
        & \hspace{-3cm} \mbox{if $\ell \geq 3$},
    \end{cases}
\end{align*}
which contains the subset $\Omega^{\mathrm{can}}_{(v_0)}$. 

\begin{prop}[Sign stability of cluster Dehn twists]\label{thm:cluster_Dehn_SS}
For any (skew-symmetric) seed pattern $\bs$ and a cluster Dehn twist $\phi \in \Gamma_\bs$, each shortest path $\gamma: v_0 \overbar{k_1} v_1 \overbar{(k_1\ k_2)} v_2$ for $\phi$ is sign-stable on the $\bR_{>0}$-invariant subset $\Omega_{(v_0)}^{k_1,k_2}$ with stable sign $(-\varepsilon)$, where $\varepsilon = \sgn(b^{(v_0)}_{k_1, k_2})$.
Its cluster stretch factor is
\begin{align*}
    \lambda_\phi=\frac{\ell+\sqrt{\ell^2-4}}{2},
\end{align*}
where $\ell:=|b_{k_1,k_2}^{(v_0)}| \geq 2$. 
If $N_\uf \geq 3$, then we necessarily have $\ell=2$ and $\lambda_\phi=1$. 
\end{prop}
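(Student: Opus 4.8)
The plan is to reduce the statement, via the cluster reduction, to a two‑dimensional piecewise‑linear system, after first settling two algebraic points. Write $B=B^{(v_0)}$. Since $\gamma$ represents a mutation loop, $B^{(v_2)}=P_{(k_1k_2)}\mu_{k_1}(B)P_{(k_1k_2)}^{-1}=B$; comparing the $(i,k_1)$‑ and $(i,k_2)$‑entries for $i\neq k_1,k_2$ (using $B_{k_1k_2}=\varepsilon\ell$) gives $B_{ik_2}=-B_{ik_1}$ together with
\begin{align*}
 4B_{ik_1}=\ell\bigl(\varepsilon|B_{ik_1}|+B_{ik_1}\bigr).
\end{align*}
So for each such $i$, either $B_{ik_1}=0$, or ($\sgn B_{ik_1}=\varepsilon$ and $\ell=2$); hence if $\ell\geq 3$ then $B_{ik_1}=B_{ik_2}=0$ for all $i\neq k_1,k_2$, so $\{k_1,k_2\}$ spans a mutation component of $\bs$, and connectedness of $\bs$ forces $N_\uf=2$. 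This is the contrapositive of $N_\uf\geq 3\Rightarrow\ell=2$. Also, a rank‑two skew‑symmetric cluster pattern with $|b_{k_1k_2}|\leq 1$ has only finitely many seeds, so the corresponding mutation loop has finite order; since $\phi$ has infinite order, $\ell\geq 2$.

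Next, $\phi$ is $\cTr$‑reducible for the dual graph $\cTr$ of the simplex $S=\{A_j^{(v_0)}\mid j\neq k_1,k_2\}$, and $\pi_\cTr(\gamma)$ is a shortest path for the cluster Dehn twist $\pi_\cTr(\phi)$ of the rank‑two seed pattern $\bs|_\cTr$ with exchange matrix $\bigl(\begin{smallmatrix}0&\varepsilon\ell\\-\varepsilon\ell&0\end{smallmatrix}\bigr)$. Since $\Omega^{k_1,k_2}_{(v_0)}$, the sign of $\gamma$, and the stable presentation matrix all involve only $x_{k_1},x_{k_2}$, the set $\Omega^{k_1,k_2}_{(v_0)}$ is the $\pi_\cTr$‑preimage of its rank‑two analogue, so \cref{prop:SS_cluster_reduction} reduces both the sign‑stability claim and the equality of spectral radii to the case $N_\uf=2$. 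There $\X^\uf_{(v_0)}(\bR^\trop)=\bR^2$ with coordinates $(u,v):=(x^{(v_0)}_{k_1},x^{(v_0)}_{k_2})$ and $\boldsymbol{\epsilon}_\gamma(w)=(\sgn u(w))$, and a direct computation from \cref{l:x-cluster signed} (together with the relabelling $(k_1\,k_2)$ and the identification $v_2\sim_\bs v_0$) shows that $\phi=\phi_{(v_0)}$ is the piecewise‑linear homeomorphism
\begin{align*}
 \Psi(u,v)=
 \begin{cases}
 (v+\ell u,\,-u), & \sgn u=-\varepsilon,\\
 (v,\,-u), & \sgn u=+\varepsilon,
 \end{cases}
\end{align*}
the two branches agreeing on $\{u=0\}$; the first branch is $A=\bigl(\begin{smallmatrix}\ell&1\\-1&0\end{smallmatrix}\bigr)$, with $\det A=1$, trace $\ell$, and eigenvalues $\lambda=(\ell+\sqrt{\ell^2-4})/2\geq 1$ and $\lambda^{-1}$, while the second is the rotation $R=\bigl(\begin{smallmatrix}0&1\\-1&0\end{smallmatrix}\bigr)$.

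I would then analyse the induced circle homeomorphism $\bar\Psi$ of $\bS\X^\uf_{(v_0)}(\bR^\trop)\cong\bS^1$. On the open half‑plane $\{\sgn u=-\varepsilon\}$ one has $\bar\Psi=\bar A$, whose fixed rays there are the eigen‑rays of $A$: for $\ell\geq 3$ an attractor $p^+$ (the $\lambda$‑eigen‑ray) and a repeller $p^-$ (the $\lambda^{-1}$‑eigen‑ray), and for $\ell=2$ a single parabolic fixed ray. On $\{\sgn u=+\varepsilon\}$, $\bar\Psi=\bar R$ has no fixed ray, and $\Psi$ interchanges the two rays of $\{u=0\}$; hence every fixed point of $\bar\Psi$ lies in $\interior\{\sgn u=-\varepsilon\}$. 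Being an orientation‑preserving circle homeomorphism with this fixed‑point set, $\bar\Psi$ carries every point of the subset of $\bS^1$ corresponding to $\Omega^{k_1,k_2}_{(v_0)}$ into $\interior\{\sgn u=-\varepsilon\}$ after finitely many steps — for $\ell=2$ because the unique fixed direction is globally forward‑attracting, and for $\ell\geq 3$ after excising the cone over the one exceptional ray appearing in the definition of $\Omega^{k_1,k_2}_{(v_0)}$. Hence for each $w\in\Omega^{k_1,k_2}_{(v_0)}$ there is $n_0$ with $\boldsymbol{\epsilon}_\gamma(\phi^n(w))=(-\varepsilon)$ for all $n\geq n_0$, which is sign‑stability on $\Omega^{k_1,k_2}_{(v_0)}$ with stable sign $(-\varepsilon)$. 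Finally, the stable presentation matrix is $A$ (block‑triangular with a complementary identity block in the ambient picture), of spectral radius $\lambda$; since $\Omega^{\mathrm{can}}_{(v_0)}\subset\Omega^{k_1,k_2}_{(v_0)}$ carries the same stable sign, the cluster stretch factor is $\lambda_\phi=\lambda=(\ell+\sqrt{\ell^2-4})/2$, which equals $1$ precisely when $\ell=2$ — in particular whenever $N_\uf\geq 3$.

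The step I expect to be the main obstacle is the circle‑homeomorphism analysis: one must pin down the sign conventions so that the ``Dehn‑twist'' branch $A$ genuinely occupies the half‑plane $\{\sgn u=-\varepsilon\}$ (making the stable sign $(-\varepsilon)$ rather than $(+\varepsilon)$), identify precisely the ray that must be deleted to form $\Omega^{k_1,k_2}_{(v_0)}$ when $\ell\geq 3$, and handle the parabolic case $\ell=2$, where $p^+=p^-$ is only semi‑stable for the linear map $A$ but is globally forward‑attracting as the unique fixed point of $\bar\Psi$.
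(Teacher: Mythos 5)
Your proposal follows essentially the same route as the paper: reduce to the rank-two seed pattern $\bs_\ell$ via the cluster reduction associated with the simplex $S=\{A_j^{(v_0;v_0)}\mid j\notin\{k_1,k_2\}\}$, then invoke \cref{prop:SS_cluster_reduction} and the known dynamics of the shortest loop on the $\ell$-Kronecker quiver. Where you differ is in how the two rank-two ingredients are supplied. The paper cites \cite[Lemma 2.32]{Ish19} to conclude $\ell=2$ when $N_\uf\geq 3$; you instead perform the matrix mutation directly, arriving at $4B_{ik_1}=\ell(\varepsilon|B_{ik_1}|+B_{ik_1})$, which is correct and is a nice self-contained argument. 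Do note, as you do, that this derivation shows that $\ell\geq 3$ forces $\{k_1,k_2\}$ to be disconnected from the rest of $I_\uf$; so strictly speaking the conclusion $N_\uf=2$ (and the last assertion of the proposition) needs a connectedness hypothesis on $\bs$, which the statement leaves implicit. Similarly, the paper cites \cite[Example 5.3]{IK19} for the rank-two sign stability; you derive the PL map $\Psi$ and the eigen-rays of $A=\bigl(\begin{smallmatrix}\ell&1\\-1&0\end{smallmatrix}\bigr)$ directly, and your circle-homeomorphism argument is the right shape.

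The one genuine gap is the step you explicitly defer: pinning down that the ray to be excised to form $\Omega^{k_1,k_2}_{(v_0)}$ for $\ell\geq 3$ is exactly the one in the proposition's statement, and that the stable sign is $(-\varepsilon)$. This is not a formality. With $A$ occupying the half-plane $\{\sgn x_{k_1}=-\varepsilon\}$, the repelling eigen-ray of $\bar A$ there is $\bR_{>0}\bigl(-\varepsilon\lambda^{-1},\,\varepsilon\bigr)=\bR_{>0}\bigl(-\varepsilon,\,\varepsilon\lambda\bigr)$, whereas the set excluded in the definition of $\Omega^{k_1,k_2}_{(v_0)}$ is $\varepsilon\,\bR_{\geq 0}\bigl(1,-\lambda\bigr)=\bR_{\geq 0}\bigl(\varepsilon,-\varepsilon\lambda\bigr)$, which is the \emph{antipodal} ray. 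Matching this against \cite[Example 5.3]{IK19} (where $b_{12}^{(\bar v_0)}=-\ell$, i.e.\ $\varepsilon=-1$, and the ray $\bR_{>0}(1,-\lambda)$ is removed, which is exactly the $\lambda^{-1}$-eigen-ray of $A$ in $\{x_1>0\}$) shows that the formula for $\Omega^{k_1,k_2}_{(v_0)}$ should carry $-\varepsilon$ rather than $\varepsilon$, and the paper's identity $\pi_\cTr^{-1}(\oline\Omega\setminus\{0\})=\Omega^{k_1,k_2}_{(v_0)}$ as stated is then off by this same sign. So if you had carried out the verification you postponed, you would have uncovered a discrepancy that needs to be reconciled (most plausibly as a sign slip in the displayed definition of $\Omega^{k_1,k_2}_{(v_0)}$). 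Apart from this unfinished verification, your proposal is a correct and somewhat more self-contained rendering of the paper's argument.
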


\begin{proof}
When the rank $N_\uf=2$ and $I=\{k_1,k_2\}$, the mutation loop $\phi$ has infinite order if and only if $|b^{(v_0)}_{k_1,k_2}| \geq 2$. 
When $N_\uf \geq 3$, the path $\gamma$ represents a mutation loop if and only if the following conditions hold \cite[Lemma 2.32]{Ish19}:
\begin{itemize}
    \item $|b^{(v_0)}_{k_1,k_2}| = 2$;
    \item For all $j \neq k_1,k_2$, we have $b^{(v_0)}_{k_1,j} = b^{(v_0)}_{j,k_2}$ and $\sgn(b^{(v_0)}_{k_1,j}) = -\sgn(b^{(v_0)}_{k_1,k_2})$.
\end{itemize}

Recall the sign stability of the shortest mutation loop on the $\ell$-Kronecker quiver.  
Namely, let $\bs_\ell$ be a seed pattern such that
\begin{align*}
    B^{(\oline{v_0})} = \begin{pmatrix}0 & -\ell \\ \ell & 0\end{pmatrix}
\end{align*}
for $\oline{v_0} \in \bExch_{\bs_\ell}$. 
Consider the path
\begin{align*}
\oline{\gamma}: \oline{v_0} \overbar{1} \oline{v_1} \overbar{(1\ 2)} \oline{v_2},
\end{align*}
which represents a mutation loop $\oline{\phi}$.
Then in \cite[Example 5.3]{IK19}, we have seen
that the path $\oline{\gamma}$ is sign-stable on
\begin{align*}
    \oline{\Omega}:=\begin{cases}
        \cX_{(\oline{v_0})}(\bR^\trop) & \mbox{if $\ell=2$}, \\
        \cX_{(\oline{v_0})}(\bR^\trop) \setminus \bR_{>0} \cdot (1, (-\ell -\sqrt{\ell^2 -4})/2) & \mbox{if $\ell \geq 3$}. 
    \end{cases}
\end{align*}
In both cases, the cluster stretch factor of  $\oline{\phi}$ is given by $\lambda_\phi= ({\ell+\sqrt{\ell^2-4}})/{2}$.

Take the simplex $S:=\{A_j^{(v_0;v_0)}\}_{j \in I \setminus \{k_1,k_2\}}$ in $\Delta_\bs^\mathrm{FZ}$. 
Then the seed pattern $\bs|_\cTr$ coincides with $\bs_{\ell}$, where $\ell = |b_{k_1, k_2}^{(v_0)}|$.
We consider the case $\bs|_\cTr = \bs_{\ell}$, equivalently $b_{k_1, k_2}^{(v_0)} <0$.
The other case is proven in the same way.
It is clear that $\pi_\cTr^{-1}(\Omega \setminus \{0\}) = \Omega^{k_1, k_2}_{(v_0)}$.
Thus the assertion follows from \cref{prop:SS_cluster_reduction}.
\end{proof}

\section{Generalized reduction and the hereditariness}
\label{sec:C-reduction}
Recall the cluster reduction (\cref{subsec:cluster_reduction}) and the reduction along a multicurve (\cref{sec:reduction}), both parametrized by a suitable rational polyhedral cone $\cC \in \X_\bs^\uf(\bR^\trop)$. We propose here the following unified and generalized construction. We say that an edge $v \overbar{k} v'$ in $\bExch_\bs$ is \emph{$\cC$-compatible} if $x_k^{(v)}(w) = 0$ (or equivalently, $x_k^{(v')}(w) = 0$) holds for all $w \in \cC$. We also say that an edge of $\mathrm{Exch}_\bs$ is \emph{$\cC$-compatible} if the associated coordinate satisfies the same condition. 

\begin{dfn}[$\cC$-reduction]\label{d:C-reduction}
Let $\bs$ be a seed pattern and consider a rational polyhedral cone $\cC \subset \cX^\uf_\bs(\bR^\trop)$.
Let $\Gamma_\bs^\cC \subset \Gamma_\bs$ denote the subgroup consisting of mutation loops that preserve $\cC$ setwisely. We call $\cC$ a \emph{reduction cone} if it satisfies the following axioms:
\begin{enumerate}
    \item[(i)] Let $E(\cC) \subset \mathrm{Exch}_\bs$ be the $\Gamma_\bs^\cC$-invariant subgraph consisting of the $\cC$-compatible edges. 
    Then there exists
    a $\Gamma_\bs^\cC$-invariant subgraph $\cTr^\bot$ of $\mathrm{Exch}_\bs$ with the corresponding edge contraction $c_{\cTr^\bot}:\mathrm{Exch}_\bs \to \mathrm{Exch}_\bs/\cTr^\bot$\footnote{By convention, an edge contraction produces no multiple edges.}
    such that $c_{\cTr^\bot}(E(\cC))$ is identified with a connected subgraph of the exchange graph $\mathrm{Exch}_{\bs_\cC}$ of a seed pattern $\bs_\cC$.\footnote{
    Of course, this condition is not sufficient to characterize the seed pattern $\bs_\cC$.
     } 
    \item[(ii)] There exists a group homomorphism $\pi_\cC=\pi_\cC^{\mathrm{grp}}:\Gamma_\bs^\cC \to \Gamma_{\bs_\cC}$ such that 
    each mutation loop $\phi \in \Gamma_\bs^\cC$ (resp. $\pi_\cC(\phi)$) admits a representation path $\gamma$ (resp. $\oline{\gamma}$) such that the horizontal edges of $\oline{\gamma}$ are precisely the $\cC$-compatible edges of $\gamma$. 
    \item[(iii)] There exists a continuous map $\pi_\cC=\pi_\cC^\trop: \X_{\bs}^\uf(\bR^\trop) \to \X_{\bs_\cC}^\uf(\bR^\trop)$, which is $\Gamma_\bs^\cC$-equivariant and PL on an open dense subset and satisfies $\pi_\cC(\cC)=\{0\}$.
    \item[(iv)] If $\phi \in \Gamma_\bs^\cC$ admits a representation path $\gamma$ as in (ii) which is weakly sign-stable on a subset $\Omega \subset \X_\bs^\uf(\bR^\trop)$ and $\cC$-hereditary (\cref{d:hereditary}), then $\oline{\gamma}$ is sign-stable on $\pi_\cC(\Omega) \subset \X_{\bs_\cC}^\uf(\bR^\trop)$.
    \item[(v)] For any representation path $\gamma$ of a mutation loop $\phi \in \Gamma_\bs^\cC$ which is weakly sign-stable on a subset $\Omega \subset \X_\bs^\uf(\bR^\trop)$ and $\cC$-hereditary, we have $\lambda_{\gamma,\Omega}= \lambda_{\pi_\cC(\phi),\pi_\cC(\Omega)}$.
    Here 
    \begin{align*}
        \lambda_{\gamma, \Omega} := \max_{\bS(\gamma) \ni \boldsymbol{\epsilon} \geq \boldsymbol{\epsilon}^\stab_\gamma} \rho(E_\gamma^\epsilon).
    \end{align*}
\end{enumerate}
Then we call the tuple $(\cC,\cTr^\bot,\pi_\cC^{\mathrm{grp}},\pi_\cC^\trop)$ a \emph{reduction data}. 
\end{dfn}

As a trivial example, let us consider the cone $\cC:=\{0\}$. Then any edge in $\mathrm{Exch}_\bs$ is $\cC$-compatible. We can choose $\cTr^\bot:=\emptyset$, so that $\bs_\cC=\bs$. Together with the identity maps $\pi_\cC^\mathrm{grp}$ and $\pi_\cC^\trop$, the tuple $(\cC,\cTr^\bot,\pi_\cC^{\mathrm{grp}},\pi_\cC^\trop)$ forms a reduction data. The condition (iv) holds since a weakly sign-stable and $\cC$-hereditary path is necessarily sign-stable. 

For the other extremity, consider the cone $\cC:=\X_\bs^\uf(\bR^\trop)$. Then it is not a reduction cone. Indeed, in this case $E(\cC)=\emptyset$ and this forces $\bs_\cC$ to be the trivial seed pattern with $|I|=0$. In particular the maps $\pi_\cC^\mathrm{grp}$ and $\pi_\cC^\trop$ must be trivial maps. Hence $\lambda_{\pi_cC(\phi),\pi_\cC(\Omega)}=\lambda_{\mathrm{id},\{0\}}=-\infty$\footnote{The unique linear map on $\{0\}$ has no eigenvalues, so has empty spectrum.} (in other words, it loses the information on the algebraic entropy) and the condition (v) fails to hold. Informally speaking, when the cone $\cC$ is \lq\lq too large" so that it possesses non-trivial dynamical complexity, then the reduction procedure may reduces the entropy so that the condition (v) fails to hold.

\begin{lem}\label{lem:reduction_connectivity}
Let $\bs$ a seed pattern, and $\cC \subset \X_\bs^\uf(\bR^\trop)$ a rational polyhedral cone. If we set $\cTr^\bot:=\Exch_\bs \setminus E(\cC)$, then $c_{\cTr^\bot}(E(\cC))$ is a connected graph. 
\end{lem}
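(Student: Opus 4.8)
The plan is to deduce the statement from the connectivity of the exchange graph $\Exch_\bs$ together with the elementary fact that contracting a set of edges of a connected graph cannot disconnect it. First I would recall that $\Exch_\bs$ is a connected graph, being the surjective image of the tree $\bT_{I_\uf}$ under the graph morphism of the bottom row of the commutative square in \cref{sec:sign stability}. Since $\cTr^\bot = \Exch_\bs \setminus E(\cC)$ has the same vertex set as $\Exch_\bs$, the contraction map $c_{\cTr^\bot}$ identifies two vertices $v,v'$ of $\Exch_\bs$ exactly when they are joined by an edge path consisting entirely of edges that are \emph{not} $\cC$-compatible; moreover the surviving edges of $\Exch_\bs/\cTr^\bot$ are, by construction, precisely the images of the edges in $E(\cC)$, with a $\cC$-compatible edge discarded when it becomes a loop and parallel images merged, according to the stated no-multiple-edges convention. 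Hence $c_{\cTr^\bot}(E(\cC))$ is exactly the subgraph of $\Exch_\bs/\cTr^\bot$ formed by these image edges together with the quotient-vertices incident to them.

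The main step is then a straightforward push-forward of paths. Given two vertices $\bar w_0, \bar w_1$ of $c_{\cTr^\bot}(E(\cC))$, each is incident to the image of some $\cC$-compatible edge, so I can choose lifts $w_0, w_1 \in V(\Exch_\bs)$ lying on $\cC$-compatible edges. Connectivity of $\Exch_\bs$ provides an edge path $w_0 = u_0, u_1, \dots, u_m = w_1$. Applying $c_{\cTr^\bot}$ edgewise, every non-$\cC$-compatible edge $u_{i-1} u_i$ satisfies $c_{\cTr^\bot}(u_{i-1}) = c_{\cTr^\bot}(u_i)$ and so contributes nothing, while every $\cC$-compatible edge either maps to an edge of $c_{\cTr^\bot}(E(\cC))$ or, if its two endpoints coincide in the quotient, is skipped. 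After deleting the stalls (consecutive equal vertices), the sequence $c_{\cTr^\bot}(u_0), \dots, c_{\cTr^\bot}(u_m)$ becomes a walk in $c_{\cTr^\bot}(E(\cC))$ joining $\bar w_0$ to $\bar w_1$: every vertex actually traversed is an endpoint of a surviving image edge, hence genuinely lies in $c_{\cTr^\bot}(E(\cC))$. Therefore $c_{\cTr^\bot}(E(\cC))$ is connected.

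I do not anticipate a real obstacle here; the only points needing care are the bookkeeping forced by the edge-contraction convention (killing loops and identifying multiple edges) and the degenerate case in which $\cTr^\bot$ is already connected and spanning. In that degenerate case $E(\cC)$ is empty or all its edges become loops, $\Exch_\bs/\cTr^\bot$ is a single vertex, and $c_{\cTr^\bot}(E(\cC))$ is that single vertex, which is trivially connected; I would dispatch this case at the very beginning so that in the main argument one may assume at least one image edge survives.
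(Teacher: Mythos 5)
Your proof is correct and follows a somewhat more elementary route than the paper's. The paper argues extremally: assuming $E(\cC)$ has two connected components $G_1, G_2$, it takes a shortest edge path in $\Exch_\bs$ joining them, asserts that all its edges lie in $\cTr^\bot$, and concludes that $c_{\cTr^\bot}$ identifies the images of $G_1$ and $G_2$. You instead take an \emph{arbitrary} path in $\Exch_\bs$ joining lifts of two vertices and push it forward edge by edge, collapsing the $\cTr^\bot$-edges to stalls and discarding loops, with no minimality assumption needed. Your version is in fact a bit more robust: read literally, the paper's shortest path between $G_1$ and $G_2$ could still pass through a third component of $E(\cC)$, so the claim that all its edges lie in $\cTr^\bot$ implicitly needs the pair $(G_1,G_2)$ to be chosen at minimal mutual distance; your edgewise push-forward absorbs such intermediate components automatically as extra stops along the resulting walk. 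Both arguments ultimately rest on the connectivity of $\Exch_\bs$ (as a quotient of the tree $\bT_{I_\uf}$) and on the observation that a $\cTr^\bot$-edge has its endpoints identified by the contraction; yours simply trades the extremal choice for a little more bookkeeping about stalls, loops, and the degenerate case where every $\cC$-compatible edge collapses.
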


\begin{proof}
If $E(\cC)$ is connected, then we have nothing to prove. Otherwise, let $G_1,G_2 \subset E(\cC)$ be two connected components and take a shortest edge path $\gamma$ connecting $G_1$ and $G_2$. Since each edge in $\gamma$ belongs to the graph $\cTr^\bot$, the path $\gamma$ is contracted to a vertex via $c_{\cTr^\bot}$. Thus $c_{\cTr^\bot}(G_1 \sqcup G_2)$ is connected. 
\end{proof}

Here are non-trivial examples:

\begin{lem}\label{lem:C-red_arcs}
For any seed pattern, take a simplex $S$ in $\Delta_\bs^{\mathrm{FZ}}$ with the dual graph $\cTr \subset \Exch_\bs$, and consider the cone $\cC:=\cC_S^+ \subset \X_\bs^\uf(\bR^\trop)$. Then we have $\Gamma_\bs^\cC=\Gamma_\bs^\cTr$ and $\cC$ is a reduction cone such that $\bs_\cC=\bs|_\cTr$, $\pi_\cC^{\mathrm{grp}}=\pi_{\cTr}^\grp$ and $\pi_\cC^\trop=\pi_\cTr^\trop$.
\end{lem}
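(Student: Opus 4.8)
The plan is to verify each of the five axioms (i)--(v) of Definition~\ref{d:C-reduction} for the cone $\cC:=\cC_S^+$, making systematic use of the machinery already set up in Section~\ref{sec:cluster_reduction}. First I would identify $\Gamma_\bs^\cC$ with $\Gamma_\bs^\cTr$. A mutation loop $\phi$ preserves $\cC_S^+$ setwise if and only if it maps the cone to itself; by Proposition~\ref{prop:cTr<->cone} the cone $\cC_S^+$ corresponds bijectively to the simplex $S$, and the generators of $\cC_{(v)}^{+,J_\uf}$ are exactly the $g$-vectors of the cluster variables $A_j^{(v;v_0)}$, $j\in J_\uf$, spanning $S$. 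Hence $\phi(\cC_S^+)=\cC_S^+$ is equivalent to $\phi$ fixing the simplex $S$ (up to relabeling), which by the discussion preceding the $\cTr$-reduction homomorphism is equivalent to $\phi$ being $\cTr$-reducible; so $\Gamma_\bs^\cC=\Gamma_\bs^\cTr$.

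Next I would verify (i). The key observation is that an edge $v\overbar{k}v'$ is $\cC$-compatible, i.e. $x_k^{(v)}(w)=0$ for all $w\in\cC_S^+$, precisely when $k\in J_\uf$: indeed the coordinates $x_j^{(v)}$ with $j\in J_\uf$ vanish identically on $\cC_{(v)}^{+,J_\uf}$ by construction, while for $i\in I_\uf\setminus J_\uf$ one has points of $\cC_S^+$ with $x_i^{(v)}(w)>0$. Thus $E(\cC)$ is the subgraph of $\Exch_\bs$ whose edges carry labels in $J_\uf$. Setting $\cTr^\bot:=\Exch_\bs\setminus E(\cC)$, Lemma~\ref{lem:reduction_connectivity} gives that $c_{\cTr^\bot}(E(\cC))$ is connected; and I claim it is naturally identified with (a connected subgraph of) $\Exch_{\bs|_\cTr}$. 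For this, note that contracting all edges labeled outside $J_\uf$ identifies two vertices exactly when they lie in a common $\cTr$-coset, and the resulting quotient graph is the exchange graph obtained by freezing the cluster variables of $S$ --- which is precisely $\Exch_{\bs|_\cTr}$ by the description given in Section~\ref{subsec:cluster_reduction}. Hence $\bs_\cC=\bs|_\cTr$.

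For (ii) I would take $\pi_\cC^{\mathrm{grp}}:=\pi_\cTr^\grp$, which is well-defined by the Lemma establishing the cluster reduction homomorphism; given $\phi\in\Gamma_\bs^\cTr$, a representation path $\gamma$ contained in $\bE_I^J$ has all its horizontal edges labeled in $J_\uf$, i.e. exactly the $\cC$-compatible ones, and $\oline\gamma:=\pi_\cTr(\gamma)$ is the corresponding path in $\cTr\cong\Exch_{\bs|_\cTr}$, so (ii) holds. For (iii), take $\pi_\cC^\trop:=\pi_\cTr^\trop$, the tropicalization of \eqref{eq:reduction_morphism}: it is continuous and PL on the union of charts $\X_{(v)}^\uf(\bR^\trop)$ for $v\in\cTr$ (which is open and dense), it is $\Gamma_\bs^\cTr$-equivariant by construction, and $\pi_\cC(\cC_S^+)=\{0\}$ since the coordinates surviving the projection are exactly the $x_j^{(v)}$ with $j\in J_\uf$, all of which vanish on $\cC_S^+$. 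For (iv): if $\gamma$ is weakly sign-stable on $\Omega$ and $\cC$-hereditary, then by definition of hereditariness every horizontal sign of $\gamma$ corresponding to a $\cC$-compatible index is nonzero in the stable sign $\boldsymbol\epsilon^\stab_{\gamma,\Omega}$; since $\boldsymbol\epsilon_\gamma(w)=\boldsymbol\epsilon_{\pi_\cTr(\gamma)}(\pi_\cTr(w))$ on these indices (the argument in the proof of Proposition~\ref{prop:SS_cluster_reduction}, as the sign of a path inside $\cTr$ only sees the $J_\uf$-coordinates), the strict stabilization of these signs gives sign stability of $\oline\gamma$ on $\pi_\cC(\Omega)$. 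Finally (v) follows from the block-decomposition argument in the proof of Proposition~\ref{prop:SS_cluster_reduction}: the presentation matrix $E_\gamma^{\boldsymbol\epsilon}$ for any $\boldsymbol\epsilon\geq\boldsymbol\epsilon^\stab_\gamma$ is block-triangular with $J_\uf\times J_\uf$ block equal to the presentation matrix of $\oline\gamma$ for the induced sign, and the remaining diagonal block a permutation-type matrix of spectral radius $1$; taking the maximum over $\boldsymbol\epsilon$ of $\rho(E_\gamma^{\boldsymbol\epsilon})$ therefore equals $\max_{\boldsymbol\epsilon'}\rho(E_{\oline\gamma}^{\boldsymbol\epsilon'})=\lambda_{\pi_\cC(\phi),\pi_\cC(\Omega)}$, provided $\lambda_{\pi_\cC(\phi),\pi_\cC(\Omega)}\geq 1$, which holds whenever $\pi_\cC(\phi)$ has positive entropy (and when it does not, both sides are $1$ by the permutation block).

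The main obstacle I anticipate is axiom (i): cleanly matching the edge-contracted graph $c_{\cTr^\bot}(E(\cC))$ with $\Exch_{\bs|_\cTr}$ requires care, because the identification depends on choosing lifts to labeled exchange graphs and on the fact (used in Section~\ref{subsec:cluster_reduction}) that $\bE_J$ is not literally a subgraph of $\bE_I$. I would handle this by working at the level of $\bE_I^J\subset\bE_I$, observing that the quotient of $E(\cC)$ by $\cTr^\bot$-contractions is exactly the image of $\bE_I^J$ under the projection $\pi_\mathrm{Ex}$ composed with the $\sim_{\triv,\bs|_\cTr}$-quotient, and invoking the synchronicity phenomenon \cite{Nak20} (as in the proof that $\pi_\cTr^\grp$ is well-defined) to see that $\sim_{\triv,\bs}$ and $\sim_{\triv,\bs|_\cTr}$ agree on $\bE_I^J$. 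A secondary subtlety is making sure the ``$J_\uf$-coordinate'' characterization of $\cC$-compatibility in (i) is independent of the chosen chart $v\in\cTr$, which is exactly the content of the remark in the proof of Proposition~\ref{prop:cTr<->cone} that $x_i^{(v')}(w)=x_i^{(v)}(w)$ along edges labeled in $J_\uf$.
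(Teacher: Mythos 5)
Your overall plan --- verify axioms (i)--(v) of Definition~\ref{d:C-reduction} using the machinery already set up in Section~\ref{sec:cluster_reduction} --- is the right one, and your arguments for (ii)--(v) essentially agree with the paper's. But there are two genuine gaps.

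\textbf{The choice of $\cTr^\bot$.} You set $\cTr^\bot := \Exch_\bs\setminus E(\cC)$ and invoke Lemma~\ref{lem:reduction_connectivity} to get connectivity of the image. But Lemma~\ref{lem:reduction_connectivity} only guarantees connectivity, \emph{not} that the contraction is injective on $E(\cC)$, and injectivity can in fact fail. If two distinct vertices $v,v' \in \cTr$ are joined by a path all of whose edges lie outside $\cTr$ (flip out of the star of $S$, wander, flip back in), then $c_{\cTr^\bot}$ identifies $v$ and $v'$, and the image $c_{\cTr^\bot}(E(\cC))$ is a proper quotient of $\cTr$, not a subgraph of $\Exch_{\bs|_\cTr}$. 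Concretely, in the pentagon exchange graph of type $A_2$ with $S$ a single cluster variable, $\cTr$ is an edge with two vertices and $\cTr^\bot$ is the complementary path of four edges; contracting $\cTr^\bot$ collapses the whole graph to a point. The paper avoids this entirely by observing that once $E(\cC)=\cTr$ is known, $\cTr$ is already connected (it is the dual graph of a star), so one can simply take $\cTr^\bot:=\emptyset$ and $c_{\cTr^\bot}(E(\cC)) = \cTr = \Exch_{\bs|_\cTr}$ with no contraction at all. The nonempty choice $\Exch_\bs\setminus E(\cC)$ is the right one in the multicurve case (Lemma~\ref{lem:C-red_curves}), where $E(\cC)$ is genuinely disconnected and Lemma~\ref{lem:collapsed_tri} guarantees the image is still a subgraph; but for arcs it is the wrong choice.

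\textbf{The role of \cite[Theorem 6.2]{CL20}.} Two of your intermediate claims are presented as if they were routine but actually rely on the enough-$g$-pairs theorem, which the paper cites explicitly. First, to show $\Gamma_\bs^\cC\subset\Gamma_\bs^\cTr$ you need to pass from ``$\phi$ fixes the simplex $S$'' to ``$\phi$ admits a representation path contained in $\cTr$''; Proposition~\ref{prop:cTr<->cone} gives you the cone/simplex dictionary, but does not by itself produce a representation path inside $\cTr$ --- this is exactly where the paper invokes \cite[Theorem 6.2]{CL20}. Second, your characterization ``$E(\cC)$ is the subgraph whose edges carry labels in $J_\uf$'' is only transparent in charts $v\in\cTr$, where $\cC_S^+$ coincides with the coordinate face $\cC_{(v)}^{+,J_\uf}$; for $v\notin\cTr$, the chart-expression of $\cC_S^+$ is not a priori controlled, and showing that no $\cC$-compatible edge appears outside $\cTr$ (equivalently, $E(\cC)=\cTr$ rather than some larger subgraph) again needs \cite[Theorem 6.2]{CL20}. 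Your remark about $x_i^{(v')}=x_i^{(v)}$ along $J_\uf$-edges addresses chart-independence \emph{within} $\cTr$, but not the more delicate question of what happens outside $\cTr$.

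Once you take $\cTr^\bot=\emptyset$ and insert the \cite{CL20} citations at those two points, the rest of your argument --- the reduction of (ii)--(v) to the block-triangular structure established in the proof of Proposition~\ref{prop:SS_cluster_reduction} --- matches the paper's proof.
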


\begin{proof}
It is clear that $\Gamma_\bs^\cTr \subset \Gamma_\bs^\cC$. Conversely, let $\phi \in \Gamma_\bs^\cC$. Then it fixes the cluster variables $A_j^{(v;v_0)}$ for $j \in J_\uf$, and \cite[Theorem 6.2]{CL20} implies that $\phi$ admits a representation path contained in $\cTr$ (that is, a path with the only indices in $J_\uf$). Thus $\Gamma_\bs^\cC=\Gamma_\bs^\cTr$. 

It also follows from \cite[Theorem 6.2]{CL20} that the subgraph $E(\cC)$ of $\cC$-compatible edges coincides with the subgraph $\cTr$. 
Choosing $\cTr^\bot:=\emptyset$, we get $c_{\cTr^\bot}(E(\cC))=E(\cC) = \mathrm{Exch}_{\bs|_\cTr}$ as explained in \cref{subsec:cluster_reduction}.
The maps $\pi_\cC^{\mathrm{grp}}=\pi_{\cTr}^\grp$ and $\pi_\cC^\trop=\pi_\cTr^\trop$ clearly satisfies the conditions (ii) and (iii) with $\oline{\gamma}:=\pi_\cTr(\gamma)$.
The condition (iv) holds since a weakly sign-stable path contained in $\cTr$ which is $\cC$-hereditary is necessarily sign-stable. 
Then the condition (v) follows from \cref{prop:SS_cluster_reduction}. 
\end{proof}

\begin{lem}\label{lem:C-red_curves}
For the seed pattern $\bs=\bs_\Sigma$ associated with a marked surface $\Sigma$, take the cone $\cC:=\cC(\cR)$ associated with a curve system $\cR$. Then we have $\Gamma_\Sigma^\cC=\Gamma_{\Sigma,\cR}$ and $\cC$ is a reduction cone such that $\bs_\cC=\bs_{\Sigma_\cR}$, $\pi_\cC^\mathrm{grp}=\pi_\cR^\grp$ and $\pi_\cC^\trop=\pi_\cR^\trop$.
\end{lem}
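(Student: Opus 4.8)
The plan is to verify the five axioms (i)--(v) of \cref{d:C-reduction} for the cone $\cC=\cC(\cR)$, reusing the work already done in \cref{sec:reduction}. First I would establish the identification $\Gamma_\Sigma^\cC=\Gamma_{\Sigma,\cR}$: a mutation loop $\phi$ preserves the cone $\cC(\cR)$ generated by the curves in $\cR$ (regarded as integral laminations in $\cU_\Sigma^\uf(\bZ^\trop)$) if and only if the underlying mapping class permutes the component curves $C_1,\dots,C_K$, i.e.\ $\phi\in MC_\cR(\Sigma)\ltimes(\bZ/2)^P=\Gamma_{\Sigma,\cR}$. One direction is clear since a mapping class acts on laminations by homeomorphisms, carrying $\bigsqcup w_k C_k$ to $\bigsqcup w_k \phi(C_k)$; the converse follows because the extremal rays of $\cC(\cR)$ are exactly the rays $\bR_{\geq0}C_k$, so preserving the cone forces preserving the set of curves. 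The reflections in $(\bZ/2)^P$ act trivially on the compact laminations in $\cC(\cR)$, so they always preserve it.

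Next I would check the structural axiom (i). Here the $\cC$-compatible edges are precisely the edges of $\bExch_\Sigma$ labeled by $\cR$-compatible arcs in the sense of \cref{def:compatible_arcs}: an edge for an arc $\alpha\in\tri$ is $\cC(\cR)$-compatible iff $x_\alpha^\tri(L)=0$ for all $L\in\cC(\cR)$, which by \cref{def:compatible_arcs} and the remark following it is equivalent to $x_\alpha^\tri(C_k)=0$ for all $k$. By \cref{lem:collapsed_tri}(2), flipping along such an arc induces a genuine flip $f_{\oline\alpha}$ of $\oline\tri=\pi_\cR(\tri)$, while flipping along a non-$\cR$-compatible arc leaves $\oline\tri$ unchanged. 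Therefore I would take $\cTr^\bot\subset\Exch_\Sigma$ to be the subgraph of edges labeled by non-$\cR$-compatible arcs (equivalently $\Exch_\Sigma\setminus E(\cC)$), and the edge contraction $c_{\cTr^\bot}$ collapses exactly these; by \cref{lem:collapsed_tri} and \cref{thm:Tri_Exch} the contracted graph $c_{\cTr^\bot}(E(\cC))$ is identified with (a connected subgraph of) $\Exch_{\Sigma_\cR}$, so $\bs_\cC=\bs_{\Sigma_\cR}$. Connectivity follows from \cref{lem:reduction_connectivity} (applied with $\cTr^\bot=\Exch_\bs\setminus E(\cC)$, which equals our choice here), or directly from the surjectivity statement of \cref{lem:graph_proj} in the boundary case and its analogue for a general multicurve.

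For axioms (ii)--(iv), I would invoke the constructions of \cref{subsec:reduction_path}. Given $\phi\in\Gamma_{\Sigma,\cR}$ and a representation path $\gamma$, deform it into $\gamma_{\mathrm{hori}}*\gamma_{\mathrm{ver}}$ and build $\oline\gamma=\oline{\gamma_{\mathrm{hori}}}*\oline{\gamma_{\mathrm{ver}}}$ exactly as there; \cref{prop:pi_MCG} says $\oline\gamma$ represents $\pi_\cR(\phi)$, and by construction the horizontal edges of $\oline\gamma$ are precisely the $\cR$-compatible (hence $\cC$-compatible) edges of $\gamma$, giving (ii) with $\pi_\cC^{\mathrm{grp}}=\pi_\cR^\grp$. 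Axiom (iii) is \cref{lem:pi_S_equivariance} together with \cref{lem:pi_full_rk} (intrinsic differentiability on a dense open set) and the obvious fact that $\pi_\cR$ sends each curve $C_k$, and hence all of $\cC(\cR)$, to the zero lamination. For (iv): if $\gamma$ is weakly sign-stable on $\Omega$ with stable sign $\boldsymbol\epsilon^\stab_\gamma$ and $\cC(\cR)$-hereditary, then the $\cC$-hereditariness (\cref{d:hereditary}) guarantees that every horizontal index of $\oline\gamma$ corresponds to a component of $\boldsymbol\epsilon^\stab_\gamma$ that is forced to be non-zero; passing to the subsequence and using $\Gamma_{\Sigma,\cR}$-equivariance of $\pi_\cR$ (as in the proof of \cref{cor:reduction_SS}) shows $\oline\gamma$ is sign-stable on $\pi_\cR(\Omega)$ with a strict stable sign. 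Finally, axiom (v), the equality of stretch factors $\lambda_{\gamma,\Omega}=\lambda_{\pi_\cR(\phi),\pi_\cR(\Omega)}$, is precisely the content of the block decomposition \cref{thm:block decomposition} and its corollary \cref{cor:reduction_spec}: the tangent map of $\phi$ on $\X_\Sigma^\uf(\bR^\trop)$ is block upper-triangular with the permutation matrix $\sigma_\cR(\phi)$ (spectral radius $1$) in one diagonal block and $\overline{(d\pi_\cR(\phi))}$ in the other, so the spectral radii coincide; the passage from "$\phi$" to the presentation matrices $E_\gamma^{\boldsymbol\epsilon}$ for $\boldsymbol\epsilon\geq\boldsymbol\epsilon^\stab_\gamma$ in the weakly sign-stable range is handled exactly as in the proof of \cref{cor:trop/alg-entropy}. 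The main obstacle I anticipate is making the identification in axiom (i) fully precise when $\Sigma_\cR$ is disconnected and when $\cR$ contains curves cutting off low-complexity pieces: one must check that the seed pattern $\bs_{\Sigma_\cR}$ produced by the abstract edge-contraction really matches the surface seed pattern of the cut surface, which requires care with self-folded triangles and the tagging conventions, but this is essentially bookkeeping on top of \cref{lem:collapsed_tri}, \cref{thm:Tri_Exch}, and \cref{rem:pi_component}.
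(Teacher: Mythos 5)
Your proposal is correct and follows essentially the same route as the paper's proof: you take $\cTr^\bot=\Exch_\Sigma\setminus E(\cC)$ and invoke \cref{lem:collapsed_tri} together with \cref{lem:reduction_connectivity} for axiom (i), the constructions of \cref{subsec:reduction_path} and \cref{prop:pi_MCG}, \cref{lem:pi_S_equivariance}, \cref{lem:pi_full_rk} for (ii)--(iv), and the block decomposition \cref{thm:block decomposition} via \cref{cor:reduction_spec} for (v). Your added elaborations (the extremal-ray argument for $\Gamma_\Sigma^\cC=\Gamma_{\Sigma,\cR}$, the discussion of disconnected $\Sigma_\cR$) are compatible with the paper's terser treatment, and your appeal to the argument of \cref{cor:trop/alg-entropy} for the weakly sign-stable range plays the same role as the paper's citation of the invariance of the cluster stretch factor under the choice of representation path.
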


\begin{proof}
The equality $\Gamma_\Sigma^\cC=\Gamma_{\Sigma,\cR}$ follows immediately from the definitions. 
Choose $\cTr^\bot:=\mathrm{Exch}_\Sigma \setminus E(\cC)$, then \cref{lem:collapsed_tri} tells us that $c_{\cTr^\bot}(E(\cC))$ is a subgraph of $\mathrm{Exch}_{\Sigma_\cR}$, which is connected by \cref{lem:reduction_connectivity}. The maps $\pi_\cC^{\mathrm{grp}}:=\pi_{\cR}^\grp$ and $\pi_\cC^\trop:=\pi_\cR^\trop$ clearly satisfies the conditions (ii) and (iii) with the reduced path $\oline{\gamma}$ described in \cref{subsec:reduction_path}.
The condition (iv) holds since a weakly sign-stable path $\gamma:(\tri,\ell) \to (\tri',\ell')$ contained in $c_{\cTr^\bot}(E(\cC))$ and $\cC$-hereditary necessarily produces a sign-stable path $\oline{\gamma}:(\oline{\tri},\lambda) \to (\oline{\tri'},\lambda')$ after the reduction.
Since the cluster stretch factor $\lambda_{\pi_\cR(\phi),\pi_\cR(\Omega)}$ does not depend on the choice of a sign-stable path by \cite[Remark 5.13]{IK20}, by \cref{cor:reduction_spec} we get
\begin{align*}
    \lambda_{\pi_\cR(\phi),\pi_\cR(\Omega)} =  \rho(E_{\pi_\cR(\phi),\pi_\cC(\Omega)}^{(\oline{\tri},\lambda)}) 
    = \max_{\bS(\gamma) \ni \boldsymbol{\epsilon} \geq \boldsymbol{\epsilon}^\stab_\gamma} \rho(E_{\gamma}^{\boldsymbol{\epsilon}}) =\lambda_{\gamma,\Omega}
\end{align*}
for a point $\hL \in \Omega$ with $\boldsymbol{\epsilon}_\gamma(\hL)=\boldsymbol{\epsilon}_{\gamma,\Omega}^\stab$. This proves the condition (v).
\end{proof}
Now we can consider a \lq\lq mixed" reduction involving both ideal arcs and simple closed curves. Let $\Sigma$ be a marked surface, and consider a collection $\cR=\{\alpha_1,\dots,\alpha_{K_1},C_1,\dots,C_{K_2}\}$ of mutually disjoint curves, where $\alpha_i$ are ideal arcs and $C_j$ are simple closed curves. Let $\Sigma_\cR$ be the marked surface obtained by cutting $\Sigma$ along the curves in $\cR$, each ideal arc (resp. simple closed curve) producing a pair of boundary intervals (resp. punctures). See \cref{fig:general_reduction}. Take an ideal triangulation $\tri$ containing $\alpha_i$ for $i=1,\dots,K_1$. For $i=1,\dots,K_1$, consider the basic lamination $l_i^\tri$ defined by the condition $x_j^\tri(l_i^\tri)=\delta_{ij}$ for $j=1,\dots,N$. They do not depend on the choice of $\tri$. 

\begin{figure}
    \centering
    \begin{tikzpicture}[scale=.65]
    \draw(0,0) circle[x radius=5cm, y radius=3cm];
    \begin{scope}[xshift=-1.5cm,yshift=-0.5cm]
        \draw(1,0) arc[start angle=0, end angle=-180, radius=1cm];
    \draw(0.707,-0.707) to[out=135, in=45] (-0.707,-0.707);
    \end{scope}
    \draw[fill](-2.5,1) circle(3.5pt);
    \draw[fill](-0.5,1) circle(3.5pt);
    {\color{red}
    \draw[shorten >=1.7pt, shorten <=1.7pt](-2.5,1) -- (-0.5,1);
    \draw(-1.5,-1.5) arc[start angle=90, end angle=270, y radius=0.68cm, x radius=0.5cm];
    \draw[dashed] (-1.5,-1.5) arc[start angle=90, end angle=-90, y radius=0.68cm, x radius=0.5cm];}
    \begin{scope}[xshift=2cm, yshift=0.5cm]
        \draw [fill=gray!20] (0,0) circle(0.8cm);
        \foreach \i in {0,90,180,270}
    	    \fill(\i:0.8) circle(4pt) coordinate(A\i); 
    \end{scope}
    
    \draw [ultra thick,-{Classical TikZ Rightarrow[length=4pt]},decorate,decoration={snake,amplitude=2pt,pre length=2pt,post length=3pt}](6,0) -- (8.5,0);
    
    \draw[fill](11.85,-2) circle(3.5pt);
    \draw[fill](14.5,-2) circle(3.5pt);
    \draw [fill=gray!20] (13,1) ellipse (1 and 0.75);
    \draw[fill](12,1) circle(3.5pt);
    \draw[fill](14,1) circle(3.5pt);
    \begin{scope}[xshift=2cm, yshift=0.5cm, shift={(14.5,0)}]
        \draw [fill=gray!20] (0,0) circle(0.8cm);
        \foreach \i in {0,90,180,270}
    	    \fill(\i:0.8) circle(4pt) node (A\i) {}; 
    \end{scope}
    \draw (11.85,-2) .. controls (10.45,-1.2) and (12,-0.5) .. (13.25,-0.5) .. controls (14.5,-0.5) and (16,-1.3) .. (14.5,-2);
    \draw (12,-2) .. controls (8.8,-2.65) and (7.5,3) .. (14,3) .. controls (21,3) and (20,-3) .. (14.5,-2);
    \end{tikzpicture}
    \caption{The reduction along a collection $\cR$ including both an ideal arc and a simple closed curve.}
    \label{fig:general_reduction}
\end{figure}
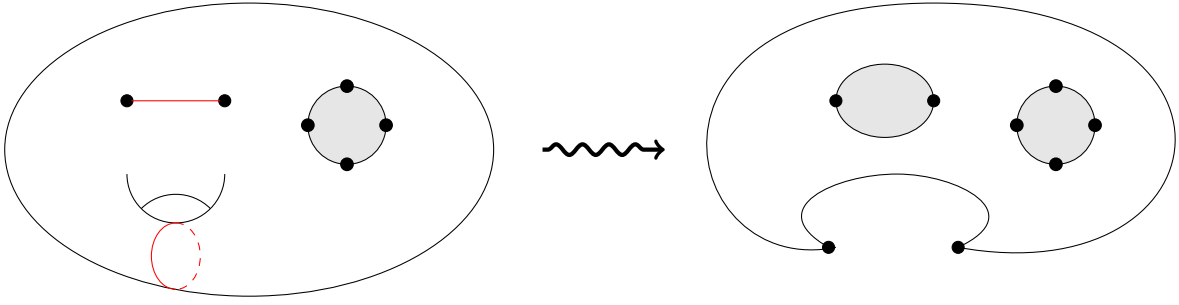

\begin{prop}\label{prop:mixed reduction}
Let $\cR$ be a collection of curves on a marked surface $\Sigma$ as above. Take the cone $\cC=\cC(\cR)$ consisting of the real $\X$-laminations $\hL=(L,\sigma_L=(\sigma_p)_{p\in P})$ of the form 
\begin{align*}
    L=\bigsqcup_{i=1}^{K_1}w_i l_i^\tri \sqcup \bigsqcup_{j=1}^{K_2}w_j C_j
\end{align*}
with $w_i,w_j \geq 0$ and $\sigma_p=+$ for all $p \in P$. Then $\Gamma_\Sigma^\cC$ consists of mutation loops that preserve $\cR$ setwisely, and $\cC$ is a reduction cone such that $\bs_\cC=\bs_{\Sigma_\cR}$.
\end{prop}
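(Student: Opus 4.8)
The plan is to realize the $\cC(\cR)$-reduction as the composite of two reductions that are already under control: the cluster reduction (\cref{lem:C-red_arcs}) along the ideal arcs, followed by the reduction along a multicurve (\cref{lem:C-red_curves}) along the simple closed curves. Write $\cR_1:=\{\alpha_1,\dots,\alpha_{K_1}\}$ and $\cR_2:=\{C_1,\dots,C_{K_2}\}$, so $\cC(\cR)$ is exactly the cone generated by the basic laminations $l_1^\tri,\dots,l_{K_1}^\tri$ together with $C_1,\dots,C_{K_2}$. First I would identify $\Gamma_\Sigma^\cC$ as in the proofs of \cref{lem:C-red_arcs,lem:C-red_curves}: a mutation loop preserving $\cC(\cR)$ setwise must permute its extremal rays, and it cannot interchange the rays coming from ideal arcs with those coming from simple closed curves, so it preserves $\cR_1$ and $\cR_2$, hence $\cR$; the converse is clear since $\phi(l_i^\tri)$ is determined by $\phi(\alpha_i)$ independently of the auxiliary triangulation. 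In particular $\Gamma_\Sigma^\cC\subset\Gamma_\Sigma^{\cC(\cR_1)}=\Gamma_\Sigma^\cTr$, where $\cTr\subset\Exch_\Sigma$ is the dual graph of the simplex $S:=\{\alpha_1,\dots,\alpha_{K_1}\}$ of the tagged arc complex $\Delta_{\bs_\Sigma}^{\mathrm{FZ}}$.

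The key structural point I would establish is a factorization of $\cC(\cR)$-compatibility: an edge $v\overbar{k}v'$ of $\bExch_\Sigma$ is $\cC(\cR)$-compatible precisely when $x_k^{(v)}$ vanishes on every generator of $\cC(\cR)$, i.e. when it is simultaneously $\cC(\cR_1)$-compatible and $\cC(\cR_2)$-compatible. Because the cluster reduction $\pi_{\cR_1}=\pi_\cTr$ retains the unfrozen $\X$-coordinates verbatim, the curves $\cR_2$ descend to a multicurve $\cR_2'$ on the cut surface $\Sigma(\cR_1)$ with $\cC(\cR_2')=\pi_{\cR_1}(\cC(\cR_2))$, and for an edge of a path $\gamma$ contained in $\cTr$ the conditions "$\cC(\cR_2)$-compatible" and "image under $\pi_{\cR_1}$ is $\cC(\cR_2')$-compatible" coincide. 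Hence the horizontal edges surviving the two steps — all horizontal edges of $\gamma$ are kept by $\pi_{\cR_1}$ since $\gamma\subset\cTr$, and the $\cC(\cR_2')$-compatible survivors are kept by $\pi_{\cR_2'}$ — are exactly the $\cC(\cR)$-compatible ones, and the composite lands in the seed pattern of the surface obtained by cutting $\Sigma(\cR_1)$ along $\cR_2'$, namely $\Sigma_\cR$. I would then set $\pi_\cC^{\mathrm{grp}}:=\pi_{\cR_2'}^{\mathrm{grp}}\circ\pi_{\cR_1}^{\mathrm{grp}}$, $\pi_\cC^\trop:=\pi_{\cR_2'}^\trop\circ\pi_{\cR_1}^\trop$, and $\cTr^\bot:=\Exch_\Sigma\setminus E(\cC)$.

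With this in hand, the five axioms of \cref{d:C-reduction} would follow by chaining the corresponding assertions of \cref{lem:C-red_arcs,lem:C-red_curves}. For (i), $c_{\cTr^\bot}(E(\cC))$ is connected by \cref{lem:reduction_connectivity} and is identified with a subgraph of $\Exch_{\Sigma_\cR}$ by composing the identifications of the two lemmas; for (ii), I would start with a representation path $\gamma$ contained in $\cTr$ and apply \cref{lem:C-red_curves}(ii) to $\pi_{\cR_1}(\gamma)$ after a deformation as in \cite[Lemma 5.8]{IK20}; (iii) is immediate since a composite of continuous, generically PL maps annihilating the respective cones has the same properties and $\pi_\cC(\cC(\cR))=\{0\}$. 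For (iv) and (v), note that for a path inside $\cTr$ the first reduction $\pi_{\cR_1}$ leaves the path combinatorially unchanged and modifies the presentation matrices only by a block that does not change the spectral radius (\cref{prop:SS_cluster_reduction}), so the $\cC(\cR_1)$-step of (iv),(v) is essentially tautological; one then applies \cref{lem:C-red_curves}(iv),(v), and combines with \cref{cor:reduction_spec} to get the equality of cluster stretch factors.

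The delicate point, which I expect to be the main obstacle, is the matching of the hereditariness conditions across the composition: a priori $\cC(\cR)$-hereditariness of $\gamma$ is \emph{weaker} than $\cC(\cR_1)$-hereditariness, because $\cC(\cR_1)\subset\cC(\cR)$, so one cannot invoke \cref{lem:C-red_arcs}(iv) naively. The resolution is to choose $\gamma$ inside $\cTr$ from the outset: then $\pi_{\cR_1}$ discards no horizontal edge, so the first step consumes no hereditariness, and all of it is spent on the multicurve step, where by the factorization above the condition "$x_{\oline k}^{(\oline v)}$ vanishes on $\cC(\cR_2')$" is equivalent to "$x_k^{(v)}$ vanishes on $\cC(\cR)$" for edges of $\gamma$ lying in $\cTr$. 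Thus $\cC(\cR)$-hereditariness of $\gamma$ delivers exactly the $\cC(\cR_2')$-hereditariness of $\pi_{\cR_1}(\gamma)$ required by \cref{lem:C-red_curves}(iv),(v). Checking carefully that the representation path in (ii) can be chosen to lie in $\cTr$ while remaining compatible with the multicurve reduction, and tracking the cones $\cC(\cR_1),\cC(\cR_2),\cC(\cR_2')$ through the deformations, is the bookkeeping that will require the most care.
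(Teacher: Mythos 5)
Your high-level strategy — compose the two known reductions — is the same as the paper's, but the paper factors in the \emph{opposite} order (it cuts along the simple closed curves $\cR_2$ first and only then performs the arc/cluster reduction along $\oline{\cR}_1\subset\Sigma_{\cR_2}$), and this order is not merely cosmetic: it determines the correct choice of $\cTr^\bot$ in axiom (i), which is where your write-up goes wrong.

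You set $\cTr^\bot:=\Exch_\Sigma\setminus E(\cC)$, whereas the paper takes $\cTr^\bot:=\Exch_\Sigma\setminus E(\cC_2)$, contracting \emph{only} the non-$\cR_2$-compatible edges. Your larger $\cTr^\bot$ additionally contracts the edges that are $\cC_2$-compatible but not $\cC_1$-compatible, i.e.\ the flips along the $\alpha_i$ and their images. Those edges are precisely the ones that should be \emph{discarded} (they leave the subgraph $\cTr$), not \emph{contracted}: under $\pi_{\cR_2}$ they project to genuine flips along $\oline{\alpha}_i$ in $\Exch_{\Sigma_{\cR_2}}$, so contracting them glues together vertices of $\Exch_\Sigma$ whose images in $\Exch_{\Sigma_\cR}$ are distinct, and $c_{\cTr^\bot}(E(\cC))$ fails to be a subgraph of $\Exch_{\Sigma_\cR}$. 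A minimal toy example already shows this: take $\bs$ of type $A_2$ with $\Exch_\bs$ a pentagon, $S=\{A_1\}$ and $\cR_2=\emptyset$. Then $E(\cC)$ is the single edge of $\cTr$ joining the two triangulations containing $A_1$, while $\Exch_\bs\setminus E(\cC)$ is the complementary path of length $4$, which is connected and carries all five vertices; contracting it collapses the pentagon to one vertex and sends $E(\cC)$ to a loop — not a subgraph of $\Exch_{A_1}$. This is precisely why \cref{lem:C-red_arcs} insists on $\cTr^\bot=\emptyset$ for the pure arc reduction. Since $\cC\supset\cC_1$, your $\cTr^\bot$ contracts even more than this failed example.

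The fix is to adopt the paper's order: take $\cC_2\subset\cC$ to be the subcone generated by the simple closed curves, set $\cTr^\bot:=\Exch_\Sigma\setminus E(\cC_2)$, note via \cref{lem:collapsed_tri} and the proof of \cref{lem:C-red_curves} that the resulting quotient is a connected subgraph of $\Exch_{\Sigma_{\cR_2}}$, and observe that the surviving image of $E(\cC)\subset E(\cC_2)$ consists exactly of the $\oline{\cR}_1$-compatible edges, hence is identified with $\Exch_{\Sigma_\cR}$ via \cref{lem:C-red_arcs}. The composite $\pi_\cC=\pi_{\cC(\oline{\cR}_1)}\circ\pi_{\cC(\cR_2)}$ then realizes (ii)--(v). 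Your analysis of the hereditariness bookkeeping — in particular that a representation path should be chosen so that the arc step is a pure restriction and the $\cC(\cR)$-hereditariness is entirely spent on the curve step — is a sound observation, and does not depend on the order of factorization; but with the curves-first order it is even less delicate, since the arcs step is the \emph{second} one and, by $\cC\supset\cC(\cR_2)$, the subsequence of edges that must carry strict stable signs is already guaranteed before one reaches the arc reduction.
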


\begin{proof}
The first assertion is clear from the definitions.  Let $\cC_2 \subset \cC$ be the subcone spanned by the simple closed curves in $\cR$, and take $\cTr^\bot:=\mathrm{Exch}_\Sigma \setminus E(\cC_2)$. The edges in $E(\cC_2)$ are $\cR_2$-compatible edges, where $\cR_2:=\{C_1,\dots,C_{K_2}\}$. Then the contracted graph $c_{\cTr^\bot}(\mathrm{Exch}_\Sigma)$ is identified with a connected subgraph of $\mathrm{Exch}_{\Sigma_{\cR_2}}$ by the proof of \cref{lem:C-red_curves}. Then the image $c_{\cTr^\bot}(E(\cC))$ of the subgraph $E(\cC) \subset E(\cC_2)$ consists of $\oline{\cR}_1$-compatible edges, where $\oline{\cR}_1:=\{\oline{\alpha}_1,\dots,\oline{\alpha}_{K_1}\}$ and $\oline{\alpha}_i$ is the image of $\alpha_i$ in $\Sigma_{\cR_2}$. Hence from the proof of \cref{lem:C-red_arcs}, we see that $c_{\cTr^\bot}(E(\cC))$ is isomorphic to the exchange graph $\mathrm{Exch}_{\Sigma_\cR}$ of the marked surface $\Sigma_\cR$ obtained by cutting $\Sigma_{\cR_2}$ along the curves in $\oline{\cR}_1$. We define the other data by compositions
\begin{align*}
    \pi_\cC^{\mathrm{grp}}: \Gamma_\Sigma^\cC \subset \Gamma_\Sigma^{\cC(\cR_2)} \xrightarrow{\pi_{\cC(\cR_2)}^{\mathrm{grp}}} \Gamma_{\Sigma_{\cR_2}} \xrightarrow{\pi_{\cC(\oline{\cR}_1)}^{\mathrm{grp}}} \Gamma_{\Sigma_\cR}
\end{align*}
and
\begin{align*}
    \pi_\cC^\trop: \X_\Sigma^\uf(\bR^\trop) \xrightarrow{\pi_{\cC(\cR_2)}^\trop} \X_{\Sigma_{\cR_2}}^\uf(\bR^\trop) \xrightarrow{\pi_{\cC(\oline{\cR}_1)}^\trop} \X_{\Sigma_\cR}^\uf(\bR^\trop).
\end{align*}
Then the conditions (ii)--(v) can be checked by a concatenation of arguments in \cref{lem:C-red_arcs,lem:C-red_curves}.
\end{proof}
\appendix

\section{Enhanced train tracks}\label{sec:train track}

A train track is a powerful tool to study laminations and foliations and their behavior.
Here we give a crush course in train tracks and generalize it for $\cX$-laminations.
This section is mainly written with reference to \cite{Hat}.
We refer the reader to \cite{PH,Mos} for a more detailed presentation.

\subsection{Definition}\label{subsec:train track_lamination}
Let $\Sigma$ be a marked surface.
A finite trivalent graph $\tau \subset \Sigma^\circ$ embedded in $\Sigma^\circ := \Sigma \setminus \bigsqcup_{p \in P} D_p \sqcup \bigsqcup_{m \in M_\partial} D_m$, where $D_m$ denotes a small half disk around $m \in M_\partial$, is a \emph{train track} if 
\begin{itemize}
    \item $\tau$ is transverse to $\partial \Sigma^\circ$,
    \item each vertex $v$ looks like the configuration in \cref{fig:switch_cond}, and
    \item each complementary region $C$ satisfies 
    \[ \chi(C) - \frac{1}{2} \#(\mbox{cusps of $C$}) - \frac{1}{4} \#(\mbox{corners of $C$}) < 0, \]
    where $\chi(C)$ denotes the Euler characteristic of $C$.
\end{itemize}

A \emph{measure} on a train track $\tau$ is a function 
\[ \nu: \{ \mbox{edges of $\tau$} \} \too \bR_{\geq 0} \]
satisfying the \emph{switch condition} $\nu(e_0) = \nu(e_1) + \nu(e_2)$ for each vertex $v$ (also called a \lq switch\rq) as shown in \cref{fig:switch_cond}.
We denote by $V(\tau)$ the set of measures on a train track $\tau$. 
A train track is said to be \emph{recurrent} if it admits a measure $\nu$ such that $\nu(e) >0$ for any edge $e \subset \tau$.

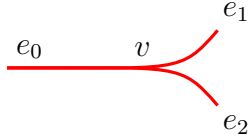
\begin{figure}[h]
    \centering
    \begin{tikzpicture}[auto]
    \draw[very thick, red] (-1.3,0) .. controls (1,0) and (1,-0.08) .. (1.5,0.5);
    \draw[very thick, red] (-1.3,0) .. controls (1,0) and (1,0.08) .. (1.5,-0.5);
    \node at (0.5,0.25) {$v$};
    \node at (1.75,0.75) {$e_1$};
    \node at (1.75,-0.75) {$e_2$};
    \node at (-1,0.25) {$e_0$};
    \end{tikzpicture}
    \caption{Switch condition}
    \label{fig:switch_cond}
\end{figure}

An enhanced measure $\widehat{\nu} = (\nu, \sigma)$ on a train track $\tau$ is a measure $\nu\in V(\tau)$ equipped with a signature $\sigma_p \in \{+, -, 0\}$ for each puncture $p$ such that
\[\begin{cases}
    \sigma_p = 0 & \!\begin{array}{l}
        \mbox{if $\tau$ has no edges transverse to $D_p$ or}\\
        \mbox{the measure of an edge transverse to $D_p$ is zero,}
    \end{array}\\
    \sigma_p \in \{+, -\} & \mbox{ otherwise}.
\end{cases}\] 
We denote by $\widehat{V}(\tau)$ the set of enhanced measures on a train track $\tau$.
We can regard it as a cone in the vector space $\bR^{\{\mbox{\scriptsize edges of $\tau$}\}}$ by the injective map
\begin{align*}
    w_\tau: \widehat{V}(\tau) \to \bR^{\{\mbox{\scriptsize edges of $\tau$}\}};\ \widehat{\nu} \mapsto w(\widehat{\nu}),
\end{align*}
where
\[w_\tau(\widehat{\nu})(e) :=
\begin{cases}
    \sigma_p \cdot \nu(e) & \mbox{if $e$ is transverse to $D_p$ for some $p \in P$},\\
    \nu(e) & \mbox{otherwise}
\end{cases}\]
for $\widehat{\nu} = (\nu, \sigma) \in \widehat{V}(\tau)$.
We say that a recurrent train track $\tau$ is \emph{complete} if the dimension of $\mathrm{Im}(w_\tau)$ is $6g-6+3h$,
\footnote{The definition of completeness in \cite{PH} is different from our definition but they are equivalent \cite[Proof of Lemma 3.1.2]{PH}.
} which is the maximal value of the dimensions among the train tracks on $\Sigma$.

An enhanced measure $\widehat{\nu}=(\nu, \sigma_\nu) \in \widehat{
V}(\tau)$ on $\tau$ is said to be rational if $\nu(e) \in \bQ$ for any edge $e$ of $\tau$. Given a rational enhanced measure, we construct a rational $\cX$-lamination $\hL_\nu$ as follows:
\begin{enumerate}
    \item Take an integer $c$ such that $c \nu(e) \in \bZ$ for any $e$.
    \item Take $c\nu(e)$ arcs parallel to $e$ and concatenate them around each vertex of $\tau$. Such an operation is well-defined, thanks to the switch conditions.
    \item Let $L'_\nu$ denote the resulting collection of curves on $\Sigma$. Then we set
    \[ \hL_\nu := (c^{-1} L'_\nu, \sigma_\nu) \in \cX^\uf_\Sigma(\bQ^\trop). \]
\end{enumerate}

\begin{lem}
The map $\widehat{V}(\tau)_\bQ \to \cX^\uf_\Sigma(\bQ^\trop);\ \widehat{\nu} \mapsto \hL_\nu$ is injective.
Here $\widehat{V}(\tau)_\bQ \subset \widehat{V}(\tau)$ denotes the set of rational enhanced measures on $\tau$.
\end{lem}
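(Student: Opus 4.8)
The plan is to show injectivity by reconstructing the enhanced measure $\widehat\nu = (\nu,\sigma_\nu)$ from the $\cX$-lamination $\hL_\nu$. The key point is that the construction of $\hL_\nu$ from a rational enhanced measure has a natural inverse obtained by intersecting the lamination with a regular neighborhood of the train track $\tau$. First I would fix a fibered regular neighborhood $N(\tau)$ of $\tau$ in $\Sigma^\circ$, with its decomposition into rectangles (one per edge of $\tau$) and the cusped junction regions at the switches; this is the standard structure recalled in \cite{PH,Hat}. By the construction of $\hL_\nu$ (concatenating $c\,\nu(e)$ arcs parallel to each edge $e$ and rescaling by $c^{-1}$), the support $c^{-1}L'_\nu$ is, up to isotopy, carried by $\tau$, meaning it lies in $N(\tau)$ transverse to the ties of the fibration.

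The main step is then: from a curve system carried by $\tau$ one recovers the weight on each edge $e$ as the total transverse measure (sum of the $c^{-1}$-weights of the strands) of the intersection with a tie crossing the rectangle of $e$. This is well-defined because the switch conditions guarantee that the count is independent of the chosen tie within a rectangle, and it is precisely the inverse of step (2) in the construction. Hence $\nu$ is determined by $\hL_\nu$. For the signature part, I would observe that if $\tau$ has an edge $e$ transverse to the half-disk $D_p$ with $\nu(e)>0$, then the strands of $L'_\nu$ running alongside $e$ spiral around the puncture $p$, and the direction of spiralling (equivalently, the sign of the corresponding shear coordinate, or the asymptotic behavior recorded in the $\cX$-lamination data) recovers $\sigma_p \in \{+,-\}$; while if there is no such edge or $\nu(e)=0$ then by definition $\sigma_p = 0$, which is also visible in $\hL_\nu$. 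Thus both $\nu$ and $\sigma_\nu$ are recovered, giving injectivity.

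I expect the main obstacle to be the bookkeeping around punctures: one must argue carefully that the map $\widehat\nu \mapsto \hL_\nu$ genuinely lands in $\cX^\uf_\Sigma(\bQ^\trop)$ with the signature data intact, and that the spiralling direction in the geometric/combinatorial model of $\cX$-laminations used in the paper (enhanced curve systems, as in \cite[Appendix A]{IK20}) faithfully encodes $\sigma_p$ even in degenerate configurations (e.g.\ univalent punctures, or edges of $\tau$ that are transverse to several half-disks). A secondary subtlety is independence of the auxiliary integer $c$: since $\hL_\nu = (c^{-1}L'_\nu,\sigma_\nu)$ and a different choice $c'$ yields $(c'^{-1}L'_{\nu,c'},\sigma_\nu)$ with $c'^{-1}L'_{\nu,c'}$ isotopic to $c^{-1}L'_\nu$ as a weighted curve system, the recovered $\nu$ is unchanged — but I would state this explicitly. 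Once these points are dispatched, injectivity follows formally: if $\hL_\nu = \hL_{\nu'}$ then applying the recovery procedure gives $\nu = \nu'$ and $\sigma_\nu = \sigma_{\nu'}$, hence $\widehat\nu = \widehat{\nu}'$.
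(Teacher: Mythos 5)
Your argument is correct in essence, but it takes a genuinely different route from the one the paper supplies. The paper in fact states this lemma without an explicit proof, absorbing it into the more general real-measure statement \cref{lem:measures_subset_L^x}, which is proved ``globally'': the enhanced train track measure spectrum $I^\tau_*$ (recording minimal intersection numbers with all simple closed curves together with the signed weights at punctures) is a PL injection by \cref{lem:I^tau_inj} (citing \cite{Hat}), and the commuting triangle $I^\tau_* = I^{\mathrm{ML}}_*\circ\psi_\tau$ then forces $\psi_\tau$ to be injective. Your approach is instead ``local'': you build an explicit section of the carrying map, recovering each $\nu(e)$ as $c^{-1}$ times the number of strands of the carried representative crossing a tie in the rectangle over $e$, and recovering the signature from the data already present in $\hL_\nu$. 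Note that since $\hL_\nu$ is \emph{defined} as the pair $(c^{-1}L'_\nu,\sigma_\nu)$, the signature is carried verbatim, so the spiralling discussion --- while geometrically accurate --- is not actually needed; equality of the pairs forces equality of the signature components directly. Your argument is more elementary, and for the rational/combinatorial case it avoids any appeal to intersection spectra; the paper's argument pays for the heavier input by simultaneously handling the real PL version.

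One step deserves to be made explicit. Your well-definedness claim only addresses independence of the tie chosen \emph{within a fixed rectangle} (a switch-condition matter). The step that does the real work is that the tie count is an invariant of the \emph{isotopy class} of the carried multicurve: if $c^{-1}L'_\nu$ and $c'^{-1}L'_{\nu'}$ represent the same point of $\cX^\uf_\Sigma(\bQ^\trop)$, both carried by $\tau$, the counts on a given tie agree. This does hold --- a carried multicurve meets each tie in a taut (bigon-free) configuration, so the count equals the relative geometric intersection number with that tie arc, which is isotopy-invariant; alternatively one may cite the standard uniqueness of edge weights for carried laminations in \cite{PH}. As phrased, however, this isotopy invariance is left implicit, and it is precisely the fact that makes the ``recovery'' map well-defined on the image in $\cX^\uf_\Sigma(\bQ^\trop)$ rather than merely on particular representatives. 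Once that is pinned down, your argument closes cleanly.
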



Extending this procedure, we can construct an (enhanced) measured geodesic lamination $(G_{\widehat{\nu}}, \mu_\nu)$ from an arbitrary enhanced measure $\widehat{\nu} = (\nu, \sigma_\nu) \in \widehat{V}(\tau)$.
From now on, we assume that $\Sigma$ is a punctured surface and fix a complete hyperbolic structure $F$ on $\Sigma^\circ$ with geodesic boundary. 
\begin{enumerate}
    \item Take a \emph{fibered neighborhood} $N_\tau$ of $\tau$ equipped with a retraction $r: N_\tau \searrow \tau$, as shown in \cref{fig:fibered nbd}.
    \item The fibers of $N_\tau$ gives rise to a measured foliation $(\cF_\nu, \mu_\nu)$ on $N_\tau$ 
    by assigning the transverse measure $\mu_\nu$ so that $\mu_\nu(r^{-1}(z)) = \nu(e)$ for each edge $e \subset \tau$ and $z \in \interior e$.
    \item \lq\lq Slit" $N_\tau$ along its singular leaves (that is, the leaves starting at the cusps of $N_\tau$). 
    Let $L_\nu$ be the collection of remaining non-singular curves.  
    \item 
    Straighten each compact leaf of $L_\tau$ into a complete geodesic, and each ideal leaf into a bi-infinite geodesic which spirals in the direction determined by $\sigma_\nu$.
    Let $G_{\widehat{\nu}}$ be the resulting geodesic lamination, which is equipped with the transverse measure $\mu_\nu$ naturally induced by that on $\cF_\nu$. 
\end{enumerate}
We refer the reader to \cite[Construction 1.7.7]{PH} for a detail on each step of the above construction. 

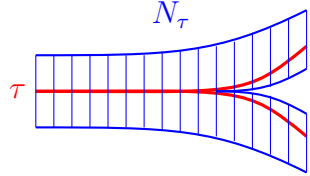
\begin{figure}[h]
    \centering
    \begin{tikzpicture}[scale=1.2]
    \draw[very thick, red] (-1.5,0) .. controls (0.8,0) and (0.8,-0.08) .. (1.5,0.5) node (v1) {};
    \draw[very thick, red] (-1.5,0) .. controls (0.8,0) and (0.8,0.08) .. (1.5,-0.5);
    \draw [blue, thick](-1.5,0.4) .. controls (0,0.4) and (0.5,0.4) .. (1.5,0.9);
    \draw [blue, thick](-1.5,-0.4) .. controls (0,-0.4) and (0.5,-0.4) .. (1.5,-0.9);
    \draw [blue, thick](0.5,0) .. controls (0.95,0) and (1.15,0.05) .. (1.5,0.25);
    \draw [blue, thick](0.5,0) .. controls (0.95,0) and (1.15,-0.05) .. (1.5,-0.25);
    \draw [blue](-1.5,0.4) -- (-1.5,-0.4);
    \draw [blue](-1.3,0.4) -- (-1.3,-0.4);
    \draw [blue](-1.1,0.4) -- (-1.1,-0.4);
    \draw [blue](-0.9,0.4) -- (-0.9,-0.4);
    \draw [blue](0.5,0.5) -- (0.5,-0.5);
    \draw [blue] (0.7,0.55) -- (0.7,0);
    \draw [blue] (0.7,0) -- (0.7,-0.55);
    \draw [blue](1.5,0.25) -- (1.5,0.9);
    \draw [blue](1.5,-0.25) -- (1.5,-0.9);
    \draw [blue](-0.7,0.4) -- (-0.7,-0.4);
    \draw [blue](-0.5,-0.4) -- (-0.5,0.4);
    \draw [blue](-0.3,0.4) -- (-0.3,-0.4);
    \draw [blue](-0.1,-0.44) -- (-0.1,0.44);
    \draw [blue](0.1,0.45) -- (0.1,-0.45);
    \draw [blue](0.3,0.48) -- (0.3,-0.48);
    \draw [blue](0.9,0.65) -- (0.9,0.01);
    \draw [blue](0.9,-0.03) -- (0.9,-0.65);
    \draw [blue](1.1,0.7) -- (1.1,0.05);
    \draw [blue](1.1,-0.05) -- (1.1,-0.7) -- cycle;
    \draw [blue](1.3,0.8) -- (1.3,0.15);
    \draw [blue](1.3,-0.15) -- (1.3,-0.8);
    \node [blue] at (0,0.85) {$N_\tau$};
    \node [red] at (-1.7,0) {$\tau$};
    \end{tikzpicture}
    \caption{A fibered neighborhood of a train track around a vertex}
    \label{fig:fibered nbd}
\end{figure}

\begin{prop}\label{lem:measures_subset_L^x}
The map $\psi_\tau: \widehat{V}(\tau) \to \eML(F)$; $\widehat{\nu} \mapsto (G_{\widehat{\nu}}, \mu_\nu)$ is injective.
\end{prop}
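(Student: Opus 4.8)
The plan is to exhibit a left inverse of $\psi_\tau$ on its image: given the measured geodesic lamination $(G_{\widehat\nu},\mu_\nu)$, I want to read off the enhanced measure $\widehat\nu=(\nu,\sigma_\nu)$ it came from. First I would recall, from Steps (1)--(4) of the construction above and \cite[Construction 1.7.7]{PH}, that $(G_{\widehat\nu},\mu_\nu)$ is isotopic (by an isotopy supported away from the cusps of $N_\tau$) to the restriction of the foliated neighborhood $(\cF_\nu,\mu_\nu)$ to $N_\tau$ with its singular leaves slit out. In particular its support is carried by $\tau$ and is transverse to every fiber (tie) of $N_\tau$. For an edge $e$ of $\tau$ and an interior point $z\in\interior e$, the fiber $r^{-1}(z)$ is a tie, and by the definition of $\mu_\nu$ in Step (2) one has $\mu_\nu(r^{-1}(z))=\nu(e)$.

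Next I would argue that this number is intrinsic to $(G_{\widehat\nu},\mu_\nu)\in\eML(F)$ and the fixed data $(\tau,N_\tau)$: since the ties of a fibered neighborhood are taut with respect to any carried lamination, $\mu_\nu(r^{-1}(z))$ equals the geometric intersection pairing of $(G_{\widehat\nu},\mu_\nu)$ with the isotopy class (rel endpoints) of the arc $r^{-1}(z)$, which makes no reference to $\widehat\nu$. Hence if $\psi_\tau(\widehat\nu)=\psi_\tau(\widehat\nu')$ then $\nu(e)=\nu'(e)$ for every edge $e$, i.e.\ $\nu=\nu'$. To recover the signature, note that since $\nu=\nu'$ the set of punctures $p$ at which the enhanced‑measure axiom forces $\sigma_p=0$ (no edge of $\tau$ transverse to $D_p$, or the transverse edge carrying zero $\nu$‑measure) is the same for $\widehat\nu$ and $\widehat\nu'$, and there $\sigma_p=0=\sigma'_p$. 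For any remaining puncture $p$ there are ideal leaves of $G_{\widehat\nu}$ spiralling around $\partial D_p$, and the direction of spiralling is exactly $\sigma_p$ by Step (4); this direction is visible in $(G_{\widehat\nu},\mu_\nu)$ as an element of $\eML(F)$, so $\sigma_p=\sigma'_p$. Thus $\widehat\nu=\widehat\nu'$, which gives injectivity.

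As an alternative for rational enhanced measures, one could instead combine the preceding lemma (injectivity of $\widehat{V}(\tau)_\bQ\to\cX^\uf_\Sigma(\bQ^\trop)$, $\widehat\nu\mapsto\hL_\nu$) with the identification of $\hL_\nu$ with $(G_{\widehat\nu},\mu_\nu)$ under the PL isomorphism of \cref{thm:geom_models}; but since injectivity on a dense subset of a continuous map does not automatically propagate, the direct left‑inverse argument above seems the cleanest route to the full statement.

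The main obstacle is the second paragraph: one must verify carefully that passing from the foliated neighborhood to the geodesic lamination — slitting along the finitely many singular leaves (which carry $\mu_\nu$‑measure zero) and straightening the non‑singular leaves to geodesics by an isotopy away from the cusps — neither alters the $\mu_\nu$‑mass over a tie nor allows a positive‑measure leaf to escape into a puncture‑disk or to be absorbed into another leaf, so that the identity $\mu_\nu(r^{-1}(z))=\nu(e)$ and its intrinsic reinterpretation genuinely survive the construction. This is exactly where the detailed properties of fibered neighborhoods from \cite[Construction 1.7.7]{PH} are needed.
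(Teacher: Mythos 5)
Your proof takes a genuinely different route from the paper's, and the route is sound in principle, though it relies on a point that the paper deliberately circumvents. The paper does not construct a left inverse edge by edge; instead it introduces the enhanced train track measure spectrum
\[
I^\tau_* : \widehat{V}(\tau) \to \bR^P \times \bR_{\geq 0}^{\cS(\Sigma)}, \qquad
I^\tau_{\widehat\nu}(\alpha) = \min_{c\in\alpha,\,c\pitchfork\tau}\sum_{z\in c\cap\tau}\nu(e_z), \quad
I^\tau_{\widehat\nu}(p)=\sigma_\nu(p)\,\nu(e_p),
\]
proves separately (in \cref{lem:I^tau_inj}, quoting Hatcher) that $I^\tau_*$ is a PL injection, and then concludes by observing that $I^\tau_* = I^{\mathrm{ML}}_*\circ\psi_\tau$. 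In other words, the paper factors the identification through the classical intersection spectrum with \emph{simple closed curves} (plus the signed transverse-edge data at punctures), for which injectivity is a standing result in the train-track literature. Your approach instead tries to read $\nu(e)$ off from the transverse measure of a \emph{tie} $r^{-1}(z)$, which is an arc rather than a closed curve. That does work, but precisely because ties of a fibered neighborhood are in efficient (taut) position relative to any carried lamination, and one has to know this to assert that the tie measure is intrinsic to the geodesic lamination as an element of $\eML(F)$. You flag this as "the main obstacle," and correctly so: making "intersection pairing with an isotopy class of arc rel endpoints" a well-defined functional on $\eML(F)$ requires exactly the kind of care that the paper sidesteps by restricting attention to closed curves, where the geometric intersection function is standard and continuous. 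Note also that your recovery of the signature from the spiralling direction is mirrored in the paper's $I^\tau_{\widehat\nu}(p)=\sigma_\nu(p)\,\nu(e_p)$ term — both arguments need the same observation there, namely that when $\nu(e_p)>0$ the spiralling direction of the ideal leaves is visible in the geodesic lamination, and when $\nu(e_p)=0$ the enhanced-measure axiom forces $\sigma_p=0$. Your alternative remark about going through $\widehat V(\tau)_\bQ$ and continuity is correctly dismissed: injectivity on a dense set does not propagate. So: different decomposition, same underlying facts; the paper's version is cleaner because it leans only on closed-curve intersection numbers and packages injectivity into the separate \cref{lem:I^tau_inj}, whereas yours is more direct but has to shoulder the taut-tie argument explicitly.
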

We are going to proof this proposition. We use \emph{enhanced train track measure spectrum}
\begin{align*}
I^\tau_* : \widehat{V}(\tau) \to \bR^P \times \bR_{\geq 0}^{\cS(\Sigma)},\ \widehat{\nu} \mapsto I^\tau_{\widehat{\nu}}
\end{align*}
defined by 
\begin{align*}
I^\tau_{\widehat{\nu}}(\alpha) := \min_{c \in \gamma,\, c \pitchfork \tau} \sum_{z \in c \cap \tau} \nu(e_z)
\end{align*}
for $\alpha \subset \cS(\Sigma)$, where $e_z$ denote an edge of $\tau$ containing $z$, and
\begin{align*}
    I^\tau_{\widehat{\nu}}(p) := \sigma_\nu(p) \cdot \nu(e_p),
\end{align*}
where $e_p$ is an edge of $\tau$ transverse to $\partial_p$.

\begin{lem}\label{lem:I^tau_inj}
The map $I^\tau_*$ is a PL injection.
\end{lem}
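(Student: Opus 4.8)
The plan is to prove that $I^\tau_*$ is PL by exhibiting, on each closed subcone of $\widehat{V}(\tau)$ determined by a fixed signature $\sigma \in \{+,-,0\}^P$, explicit piecewise-linear formulas for the functions $\widehat{\nu} \mapsto I^\tau_{\widehat{\nu}}(\alpha)$ and $\widehat{\nu} \mapsto I^\tau_{\widehat{\nu}}(p)$, and then to prove injectivity by recovering the measure $\nu$ and the signature $\sigma_\nu$ from the spectrum. For the PL property: the puncture coordinates $I^\tau_{\widehat{\nu}}(p) = \sigma_\nu(p)\cdot\nu(e_p)$ are manifestly linear in $\nu$ once $\sigma$ is fixed (and continuous across the walls $\nu(e_p)=0$ where the signature is forced to be $0$). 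For a simple closed curve or arc $\alpha$, the quantity $I^\tau_{\widehat{\nu}}(\alpha)=\min_{c}\sum_{z\in c\cap\tau}\nu(e_z)$ is a minimum, over the finitely many homotopy classes of transverse representatives $c$ that are ``taut'' (carried efficiently by the fibered neighborhood $N_\tau$, i.e. not backtracking and hitting each edge at most the combinatorially forced number of times), of a linear functional in $\nu$; a minimum of finitely many linear functionals is PL. So I would first record the standard fact (see \cite[Section 1.7]{PH} or \cite{Hat}) that this minimum is attained by a bounded family of combinatorial types, reducing $I^\tau_{\widehat{\nu}}(\alpha)$ to $\min$ of finitely many linear functions of $\nu$, hence PL on the subcone; taking the common refinement over $\alpha$ and over the finitely many signature-sectors gives that $I^\tau_*$ is PL globally.

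For injectivity, the key observation is that the whole enhanced measure is determined by finitely many spectrum values. First, $\nu$ itself is recovered: for each edge $e$ of $\tau$ one can find a transverse arc or curve $\alpha_e$ that crosses $e$ exactly once and crosses $\tau$ minimally elsewhere (using that $\tau$ is trivalent and its complementary regions are nondegenerate), so that $I^\tau_{\widehat{\nu}}(\alpha_e)$ is a known linear expression from which $\nu(e)$ can be solved; more conceptually, the map $\widehat{\nu}\mapsto \nu$ factors through the classical train-track measure spectrum, which is known to be injective on $V(\tau)$ (this is the content of the classical statement that a measured train track is determined by its intersection numbers with simple closed curves, cf. \cite[Section 1.7]{PH}). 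Second, once $\nu$ is known, the signature $\sigma_\nu(p)$ at each puncture $p$ with $\nu(e_p)>0$ is read off from $\sgn\big(I^\tau_{\widehat{\nu}}(p)\big)$, and where $\nu(e_p)=0$ the signature is $0$ by the definition of an enhanced measure in \cref{subsec:train track_lamination}. Hence $I^\tau_*$ is injective.

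I would organize the write-up as: (1) fix a signature sector, write the puncture coordinates linearly; (2) invoke the finiteness of taut transversals to write each curve/arc coordinate as a finite minimum of linear functionals, concluding PL; (3) recover $\nu$ from the classical injectivity of the train-track measure spectrum applied to the curves $\alpha_e$; (4) recover the signature from the sign of the puncture coordinates, handling the $\nu(e_p)=0$ boundary case. Steps (3)–(4) together give injectivity.

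The main obstacle is Step (2): pinning down precisely which collection of transversal arcs achieves the minimum and checking that this collection is finite and independent of $\nu$ within a given sector — in other words, that the ``efficient position'' of a curve with respect to $\tau$ depends only on combinatorial data and not on the particular weights. This is standard in the literature on train tracks but needs care here because we allow arcs (not just closed curves) and a surface with boundary/punctures, so the tautness argument must be adapted to transversals ending on $\partial\Sigma^\circ$ or spiralling into punctures; I expect to handle it by passing to the fibered neighborhood $N_\tau$ and its induced foliation, where ``minimal intersection with $\tau$'' becomes ``no backtracking across fibers,'' a condition cut out by finitely many combinatorial constraints. The remaining steps are routine given the setup already in place in \cref{sec:train track}.
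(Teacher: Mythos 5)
Your plan for the PL property is sound and is essentially the same content as the paper's citation of \cite[Proposition 2.1]{Hat}: the puncture coordinates are linear on each signature-sector, the curve coordinates are finite minima of linear functionals over taut transversal types, and a common refinement over sectors and curves gives global piecewise-linearity. The care you flag about tautness for arcs/punctured surfaces is exactly what Hatcher handles, and working this out from scratch, while doable, would just be re-proving his Proposition 2.1.

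The injectivity half is where you diverge from the paper, and where I see a real gap. Your step (3) rests on the claim that $\nu$ is recovered from the curve part of the spectrum alone, invoking ``the classical statement that a measured train track is determined by its intersection numbers with simple closed curves.'' But that classical statement (in \cite{PH}) is for recurrent train tracks carrying \emph{compactly supported} measured laminations; the present train tracks have branches running into punctures, and the corresponding measured laminations are non-compact with spiralling leaves. The measure $\nu(e_p)$ on such a branch is detected by the spectrum only through the puncture coordinate $I^\tau_{\widehat\nu}(p)=\sigma_\nu(p)\,\nu(e_p)$ (or, equivalently, a peripheral curve, if $\cS(\Sigma)$ even contains peripheral curves -- the paper does not say it does). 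So it is not true that the map $\widehat\nu\mapsto\nu$ factors through the curve spectrum, and your ``conceptual'' justification does not apply. The explicit alternative you sketch -- choosing for each edge $e$ a transversal $\alpha_e$ crossing $e$ once and ``solving'' -- is also problematic: the minimizing representative of $\alpha_e$ depends on $\nu$, which is precisely the unknown, so what you would really get is a system of piecewise-linear (not linear) conditions whose invertibility still needs an argument. In short, your step (3) secretly assumes the hard part of the lemma.

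The paper's route avoids this: it restricts $I^\tau_*$ to each signature-sector $\widehat V^{(\sigma)}(\tau)$ and invokes \cite[Lemma 2.2]{Hat}, which is formulated exactly in the enhanced/punctured setting and gives injectivity sector-by-sector; it then checks that the images of distinct open sectors are disjoint (different signs on $\bR^P$) and that overlapping sector boundaries are identified via subtracks. This sector-gluing is doing real work, and it is not something your ``recover $\nu$ first, then $\sigma_\nu$'' factorisation lets you skip, because the first step of that factorisation is not available. If you want to keep your two-step structure, you would need to (i) enlarge the recovery of $\nu$ to use the puncture coordinates $|I^\tau_{\widehat\nu}(p)|$ as well as the curve coordinates, and then (ii) actually \emph{prove} that curve intersection numbers together with $(\nu(e_p))_p$ determine $\nu$ on a train track with spiralling branches -- which is again essentially Hatcher's Lemma 2.2. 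So I would recommend just citing Hatcher as the paper does.
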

\begin{proof}
The piecewise linearity follows from \cite[Proposition 2.1]{Hat}.
For a sign $\sigma=(\sigma_p)_{p \in P} \in \{+,-\}^P$, let us consider the subset
\begin{align*}
    \widehat{V}^{(\sigma)}(\tau):= \{(\nu,\sigma_\nu) \in \widehat{V}(\tau) \mid \sigma_p \sigma_\nu(p) \geq 0 \mbox{ for $p \in P$}\}.
\end{align*}
Then the injectivity of the restriction of the map $I^\tau_*$ to each $\widehat{V}^{(\sigma)}(\tau)$ follows from \cite[Lemma 2.2]{Hat}. 
For two signs $\sigma_1, \sigma_2 \in \{+,-\}^P$, observe the followings:
\begin{itemize}
    \item $I^\tau_*(\mathring{V}^{(\sigma_1)}(\tau)) \cap I^\tau_*(\mathring{V}^{(\sigma_2)}(\tau)) = \emptyset$ if $\sigma_1 \neq \sigma_2$. Here for $i=1,2$, $\mathring{V}^{(\sigma_i)}(\tau)$ denote the set of enhanced measures $(\nu,\sigma_\nu) \in \widehat{V}^{(\sigma_i)}(\tau)$ satisfying $(\sigma_\nu)_p \neq 0$ for all $p \in P$ such that some of edges of $\tau$ incident to $\partial_p$.
    \item For the subtrack $\tau'$ of $\tau$ obtained by eliminating the edges of $\tau$ transverse to $\partial_p$ for each puncture $p$ such that $(\sigma_1)_p \neq (\sigma_2)_p$, we have
    \[\widehat{V}^{(\sigma_1)}(\tau) \cap \widehat{V}^{(\sigma_2)}(\tau) = \{ \widehat{\nu} \mid (\sigma_\nu)_p = 0 \mbox{ if } (\sigma_1)_p \neq (\sigma_2)_p\} = \widehat{V}^{(\sigma_1)}(\tau').\]
\end{itemize}
Then we get the desired assertion.
\end{proof}

\begin{proof}[Proof of \cref{lem:measures_subset_L^x}]
Comparing the definitions, one can verify that the following diagram commutes:
\[
\begin{tikzcd}
\widehat{V}(\tau) \ar[r, "\psi_\tau"] \ar[rd, "I^\tau_*"'] & \eML(F) \ar[d, "I^{\mathrm{ML}}_*"]\\
& \bR^P \times \bR_{\geq 0}^{\cS(\Sigma)}
\end{tikzcd}
\]
Then
\cref{lem:I^tau_inj} implies that the horizontal map is injective.
\end{proof}


We say that $(G, \mu) \in \eML(F)$ is \emph{carried by} a train track $\tau \subset \Sigma$ (or $\tau$ \emph{carries} $(G, \mu)$) if $(G, \mu) \in \psi_\tau(\widehat{V}(\tau))$.
Using the identification $\eML(F) \cong \cX_\Sigma(\bR^\trop)$, we also say that the point $\hL \in \cX_\Sigma(\bR^\trop)$ corresponding to such $(G,\mu)$ is carried by $\tau$.

Let $\widehat{V}^\circ(\tau)$ denotes the subset of $\widehat{V}(\tau)$ consists of strictly positive (enhanced) measures on $\tau$.

\begin{thm}\label{thm:tt_atlas}
The set $\{ (\widehat{V}^\circ(\tau),\ \psi_\tau) \mid \mbox{$\tau \subset \Sigma$ is a complete train track} \}$ gives a PL atlas of $\eML(F)$.
\end{thm}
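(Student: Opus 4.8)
The statement to prove is \cref{thm:tt_atlas}: the collection $\{(\widehat V^\circ(\tau),\psi_\tau)\}$, with $\tau$ ranging over complete train tracks on $\Sigma$, is a PL atlas of $\eML(F)$. I would organize the argument around four claims: (a) each $\psi_\tau$ is a PL homeomorphism onto an open subset of $\eML(F)$; (b) the domains $\psi_\tau(\widehat V^\circ(\tau))$ cover $\eML(F)$; (c) the transition maps $\psi_{\tau'}^{-1}\circ\psi_\tau$ are PL where defined; (d) $\widehat V^\circ(\tau)$ is (open in) a PL chart domain, i.e. an open polyhedral cone's interior intersected with the hyperplane arrangement coming from the signatures. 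Claims (a) and (c) are what makes it an \emph{atlas}; (b) makes it cover; (d) pins down the local model.

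\textbf{Step 1 (injectivity and the PL structure on the source).} \cref{lem:measures_subset_L^x} already gives injectivity of $\psi_\tau:\widehat V(\tau)\to\eML(F)$, via the commuting triangle with the measure-spectrum map $I^\tau_*$ and \cref{lem:I^tau_inj}. I would first record that $\widehat V^\circ(\tau)$ is naturally an open subset of the finite-dimensional real vector space $W(\tau)$ of weight functions satisfying the switch conditions, together with the sign data: concretely $w_\tau$ embeds $\widehat V(\tau)$ as a union of (closed, top-dimensional) cones in $\bR^{\{\text{edges}\}}$ indexed by signatures, glued along faces where edges transverse to punctures carry zero measure, and $\widehat V^\circ(\tau)$ is the interior. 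Since $\tau$ is \emph{complete}, $\dim w_\tau(\widehat V(\tau))=6g-6+3h=\dim\eML(F)$, so source and target have equal dimension. This is where I would invoke \cref{thm:geom_models} to identify $\dim\eML(F)$ with the dimension of $\cX^\uf_\Sigma(\bR^\trop)$.

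\textbf{Step 2 (openness of the image and the homeomorphism property).} The main content. I would argue that $\psi_\tau$ is an \emph{open} map, so that its image is open in $\eML(F)$ and $\psi_\tau$ is a homeomorphism onto it. One clean route: $I^\tau_*$ is a PL injection by \cref{lem:I^tau_inj}, $I^{\mathrm{ML}}_*:\eML(F)\to\bR^P\times\bR_{\geq0}^{\cS(\Sigma)}$ is a PL embedding (standard: a measured lamination is determined by, and varies PL-ly with, its intersection numbers with simple closed curves, plus the spiralling signs at punctures — this is the content referenced after \cref{thm:geom_models} and in \cite[Section 2]{IK20}), and the triangle $I^{\mathrm{ML}}_*\circ\psi_\tau=I^\tau_*$ commutes. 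Since both $I^\tau_*$ and $I^{\mathrm{ML}}_*$ are PL embeddings of equal-dimensional PL manifolds (with boundary/corners), invariance of domain for PL maps gives that $I^\tau_*$ has open image in the image of $I^{\mathrm{ML}}_*$, hence $\psi_\tau=(I^{\mathrm{ML}}_*)^{-1}\circ I^\tau_*$ is an open PL embedding on the interior $\widehat V^\circ(\tau)$. I expect this invariance-of-domain bookkeeping, together with correctly matching up the polyhedral (corner) structures on both sides, to be \emph{the main obstacle}: one must be careful that $\widehat V^\circ(\tau)$ lands in the manifold interior of $\eML(F)$ (no boundary strata), which is precisely why one restricts to strictly positive measures and to \emph{complete} train tracks.

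\textbf{Step 3 (covering) and Step 4 (transition maps).} For covering: given $(G,\mu)\in\eML(F)$, I would cite the standard fact (Thurston; see \cite[Construction 1.7.7, Theorem 1.7.x]{PH}, \cite{Hat}) that every measured geodesic lamination is carried by some complete train track, and moreover can be arranged to be carried \emph{fully} (all weights strictly positive) by a suitable such $\tau$ — splitting/shifting a carrying train track produces one with $\widehat\nu\in\widehat V^\circ(\tau)$ and $\psi_\tau(\widehat\nu)=(G,\mu)$. The signature bookkeeping at punctures is handled by the enhancement: the spiralling direction of $(G,\mu)$ determines $\sigma_\nu$. This gives $\eML(F)=\bigcup_\tau\psi_\tau(\widehat V^\circ(\tau))$. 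For transition maps: on an overlap $\psi_\tau(\widehat V^\circ(\tau))\cap\psi_{\tau'}(\widehat V^\circ(\tau'))$, the map $\psi_{\tau'}^{-1}\circ\psi_\tau$ equals $(I^{\tau'}_*)^{-1}\circ I^\tau_*$ on the relevant open set; both factors are PL by \cref{lem:I^tau_inj} and \cref{thm:geom_models}'s PL structure, so the composite is PL, and its inverse is PL by the symmetric argument. Hence the charts are PL-compatible, and the collection is a PL atlas. I would close by noting that the PL structure it defines agrees with the one on $\cX^\uf_\Sigma(\bR^\trop)$ under the identification of \cref{thm:geom_models}, since the comparison maps $I^\tau_*$ are PL with respect to that structure.
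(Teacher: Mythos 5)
Your proposal is correct and takes essentially the same route as the paper's proof: both topologize $\widehat{V}(\tau)$ via the embedding $w_\tau$ into $\bR^{\{\text{edges of }\tau\}}$, identify $\widehat{V}^\circ(\tau)$ with the interior, and obtain openness of the image by invariance of domain following \cite[Lemma 3.1.2]{PH}. The paper's proof is far more terse (it addresses only the openness claim and defers everything else to the cited reference), whereas you additionally spell out the covering and transition-map compatibility, which the paper leaves implicit; this is a welcome expansion rather than a divergence.
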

\begin{proof}
We give a topology $\widehat{V}(\tau)$ by $w: \widehat{V}(\tau) \to \bR^{\scriptsize \{\mbox{edges of $\tau$}\}}$, then $\interior \widehat{V}(\tau) = \widehat{V}^\circ(\tau)$.
Thus by the invariance of domain, we obtain the openness of $\psi_\tau(\widehat{V}^\circ(\tau))$ in the same manner as the proof of \cite[Lemma 3.1.2]{PH}.
\end{proof}

\subsection{Dehn--Thurston coordinates}
We recall here the PL coordinate system on the space of measured laminations/foliations associated with a pants decomposition, called the \emph{Dehn--Thurston coordinates}. Roughly speaking, these coordinates are the tropical analogues of the classically well-known Fenchel--Nielsen coordinates on the \Teich\ space.

\paragraph{\textbf{The case where $\Sigma$ is a punctured surface}}
Let $\Sigma$ be a punctured surface, and fix a complete hyperbolic structure $F$. 
Take a pants decomposition $\cR = \{ C_i \}_{i=1}^{3g-3+h}$ of the compact surface $\Sigma^\circ$. We first recall the \emph{standard train tracks}, following \cite{PH}. 

Fix a tubular neighborhood $A_i$ of each curve $C_i \in \cR$. 
Then each component $P_j$ of $\Sigma^\circ \setminus \bigcup_i \interior A_i$ is a pair of pants embedded in $\Sigma$:
\begin{align*}
\Sigma^\circ \setminus \bigcup_{i=1}^{3g-3+h} \interior A_i = \bigsqcup_{j=1}^{2g-2+h} P_j.
\end{align*}
As a model surfaces, fix an annulus $A$ with a distinguished point on each boundary component and a pair of pants $P$ with a distinguished point on each boundary component. Fix orientation-preserving homeomorphisms
\begin{align*}
    \xi_i &: A \to A_i,\\
    \eta_j &: P \to P_j
\end{align*}
so that $\xi_i(p) = \eta_j(q)$ for each distinguished point $p \in \partial A$ and $q \in \partial P$ whenever $\xi_i(p), \eta_j(q) \in A_i \cap P_j \neq \emptyset$.
We refer to the distinguished points on $A$ and $P$, or their images under the maps $\xi_i$ and $\eta_j$ as the \emph{stops}.
A train track $\tau \subset \Sigma^\circ$ is \emph{standard} with respect to $\cR$ if
\begin{enumerate}
    \item $\xi_i^{-1}(\tau \cap A_i)$ (resp. $\eta^{-1}_j(\tau \cap P_j)$) is either empty or the subtrack of one of the train tracks in \cref{fig:connector_std} (resp. \cref{fig:pants_std}) for each $i = 1, \dots, 3g-3+h$ (resp. $j=1, \dots, 2g-2+h$), and
    \item it is recurrent
    \footnote{The train tracks satisfying (1) but are not recurrent are shown in \cite[Figure 2.6.3]{PH}.}.
\end{enumerate}

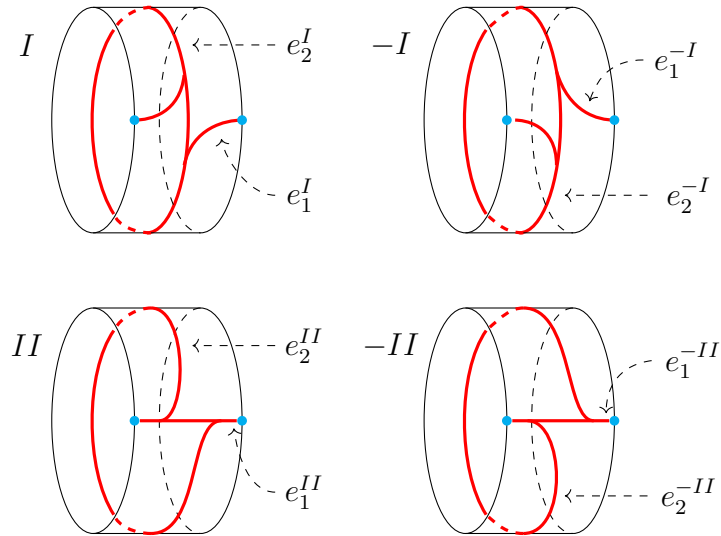
\begin{figure}[h]
    \centering
    \begin{tikzpicture}[xscale=1.1]
    \draw (3.6,-1) coordinate (v2) arc [start angle=-90, end angle=90, x radius=.5cm, y radius=1.5cm];
    \draw [red, very thick] (2.95,-1) arc [start angle=-90, end angle=90, x radius=.5cm, y radius=1.5cm];
    \draw [dashed] (3.6,2) coordinate (v1) arc [start angle=90, end angle=270, x radius=.5cm, y radius=1.5cm];
    \draw  (2.3,0.5) ellipse (0.5 and 1.5);
    \draw (v1) -- (2.3,2);
    \draw (v2) -- (2.3,-1);
    \draw [red, very thick] (2.55,1.7) .. controls (2.2,1.15) and (2.2,-0.15) .. (2.55,-0.75);
    \draw [red, very thick, dashed] (3,2) .. controls (2.85,2) and (2.65,1.9) .. (2.55,1.7);
    \draw [red, very thick, dashed] (3,-1) .. controls (2.8,-1) and (2.6,-0.9) .. (2.55,-0.75);
    \draw [red, very thick](3.4,1.1) .. controls (3.35,0.6) and (3,0.5) .. (2.8,0.5);
    \draw [red, very thick](3.4,-0.1) .. controls (3.5,0.4) and (3.9,0.5) .. (4.1,0.5);
    \node [fill, circle, inner sep = 1.3, cyan] at (4.1,0.5) {};
    \node [fill, circle, inner sep = 1.3, cyan] at (2.8,0.5) {};
    
    \draw (8.1,-1) coordinate (v2) arc [start angle=-90, end angle=90, x radius=.5cm, y radius=1.5cm];
    \draw [red, very thick] (7.45,-1) arc [start angle=-90, end angle=90, x radius=.5cm, y radius=1.5cm];
    \draw [dashed] (8.1,2) coordinate (v1) arc [start angle=90, end angle=270, x radius=.5cm, y radius=1.5cm];
    \draw  (6.8,0.5) ellipse (0.5 and 1.5);
    \draw (v1) -- (6.8,2);
    \draw (v2) -- (6.8,-1);
    \draw [red, very thick] (7.05,1.7) .. controls (6.7,1.15) and (6.7,-0.15) .. (7.05,-0.75);
    \draw [red, very thick, dashed] (7.5,2) .. controls (7.35,2) and (7.25,1.9) .. (7.05,1.7);
    \draw [red, very thick, dashed] (7.5,-1) .. controls (7.3,-1) and (7.2,-0.9) .. (7.05,-0.75);
    \draw [red, very thick](7.9,-0.1) .. controls (7.9,0.45) and (7.5,0.5) .. (7.4,0.5);
    \draw [red, very thick](7.9,1.15) .. controls (8.05,0.6) and (8.4,0.5) .. (8.6,0.5);
    \node [fill, circle, inner sep = 1.3, cyan] at (8.6,0.5) {};
    \node [fill, circle, inner sep = 1.3, cyan] at (7.3,0.5) {};

    \draw (3.6,-5) coordinate (v2) arc [start angle=-90, end angle=90, x radius=.5cm, y radius=1.5cm];
    \draw [dashed] (3.6,-2) coordinate (v1) arc [start angle=90, end angle=270, x radius=.5cm, y radius=1.5cm];
    \draw  (2.3,-3.5) ellipse (0.5 and 1.5);
    \draw (v1) -- (2.3,-2);
    \draw (v2) -- (2.3,-5);
    \draw [red, very thick] (2.55,-2.3) .. controls (2.2,-2.85) and (2.2,-4.15) .. (2.55,-4.75);
    \draw [red, very thick, dashed] (3,-2) .. controls (2.85,-2) and (2.65,-2.1) .. (2.55,-2.3);
    \draw [red, very thick, dashed] (3,-5) .. controls (2.8,-5) and (2.6,-4.9) .. (2.55,-4.75);
    \node [fill, circle, inner sep = 1.3, cyan] (v6) at (4.1,-3.5) {};
    \node [fill, circle, inner sep = 1.3, cyan] (v5) at (2.8,-3.5) {};
    
    \draw (8.1,-5) coordinate (v2) arc [start angle=-90, end angle=90, x radius=.5cm, y radius=1.5cm];
    \draw [dashed] (8.1,-2) coordinate (v1) arc [start angle=90, end angle=270, x radius=.5cm, y radius=1.5cm];
    \draw  (6.8,-3.5) ellipse (0.5 and 1.5);
    \draw (v1) -- (6.8,-2);
    \draw (v2) -- (6.8,-5);
    \draw [red, very thick] (7.05,-2.3) .. controls (6.7,-2.85) and (6.7,-4.15) .. (7.05,-4.75);
    \draw [red, very thick, dashed] (7.5,-2) .. controls (7.35,-2) and (7.25,-2.1) .. (7.05,-2.3);
    \draw [red, very thick, dashed] (7.5,-5) .. controls (7.3,-5) and (7.2,-4.9) .. (7.05,-4.75);
    \node [fill, circle, inner sep = 1.3, cyan] (v4) at (8.6,-3.5) {};
    \node [fill, circle, inner sep = 1.3, cyan] (v3) at (7.3,-3.5) {};
    
    \draw [red, very thick](v3) -- (v4);
    \draw [red, very thick](v5) -- (v6);
    \draw [red, very thick](3,-2) .. controls (3.45,-2) and (3.45,-3.5) .. (3.1,-3.5);
    \draw [red, very thick](3,-5) .. controls (3.6,-5) and (3.45,-3.5) .. (3.85,-3.5);
    \draw [red, very thick](7.5,-2) .. controls (8.1,-2) and (8,-3.5) .. (8.35,-3.5);
    \draw [red, very thick](7.55,-3.5) .. controls (8,-3.5) and (8.05,-5) .. (7.5,-5);
    
    \draw [->, dashed] (4.5,-0.5) .. controls (4.2,-0.5) and (3.9,-0.05) .. (3.85,0.25);
    \node at (4.8,-0.5) {$e^I_1$};
    \draw [->, dashed] (4.5,-4.45) .. controls (4.2,-4.45) and (4.05,-4) .. (4,-3.6);
    \node at (4.85,-4.45) {$e^{II}_1$};
    \draw [->, dashed](8.95,1.3) .. controls (8.6,1.3) and (8.4,1.1) .. (8.3,0.75);
    \draw [->, dashed](9.05,-2.7) .. controls (8.7,-2.75) and (8.5,-3) .. (8.5,-3.4);
    \node at (9.35,1.3) {$e_1^{-I}$};
    \node at (9.55,-2.7) {$e_1^{-II}$};
    \draw [->, dashed](4.5,1.5) -- (3.5,1.5);
    \draw [->, dashed](4.5,-2.5) -- (3.5,-2.5);
    \draw [->, dashed](9,-0.5) -- (8,-0.5);
    \draw [->, dashed](9,-4.5) -- (8,-4.5);
    \node at (4.8,1.5) {$e_2^{I}$};
    \node at (4.85,-2.5) {$e_2^{II}$};
    \node at (9.5,-0.5) {$e_2^{-I}$};
    \node at (9.5,-4.5) {$e_2^{-II}$};
    \node at (1.5,1.5) {$I$};
    \node at (1.5,-2.5) {$II$};
    \node at (5.9,1.5) {$-I$};
    \node at (5.9,-2.5) {$-II$};
    \end{tikzpicture}
    \caption{Standard tracks in an annulus}
    \label{fig:connector_std}
\end{figure}

We denote by $ST_\cR$ the set of standard train tracks with respect to a pants decomposition $\cR$.

We call the train tracks shown in \cref{fig:connector_std} the standard track of type $I$, $-I$, $II$ and $-II$, respectively.

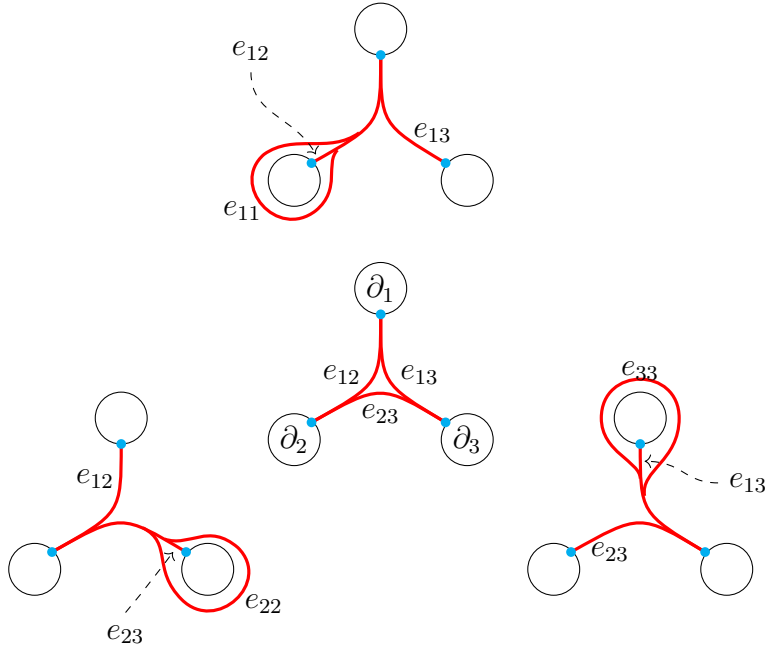
\begin{figure}[h]
    \centering
    \begin{tikzpicture}[scale=1.15]
    \draw  (0,2.5) ellipse (0.3 and 0.3);
    \draw  (-1,0.75) ellipse (0.3 and 0.3);
    \draw  (1,0.75) ellipse (0.3 and 0.3);
    \draw [red, very thick] (0,2.2) .. controls (0,1.4) and (0,1.4) .. (-0.8,0.95);
    \draw [red, very thick](-0.8,0.95) .. controls (0,1.4) and (0,1.4) .. (0.75,0.95);
    \draw [red, very thick](0.75,0.95) .. controls (0,1.4) and (0,1.4) .. (0,2.2);
    \node [fill, circle, inner sep=1.3, cyan] at (0,2.2) {};
    \node [fill, circle, inner sep=1.3, cyan] at (-0.8,0.95) {};
    \node [fill, circle, inner sep=1.3, cyan] at (0.75,0.95) {};
    
    \draw  (3,1) ellipse (0.3 and 0.3);
    \draw  (2,-0.75) ellipse (0.3 and 0.3);
    \draw  (4,-0.75) ellipse (0.3 and 0.3);
    \draw [red, very thick](2.2,-0.55) .. controls (3,-0.1) and (3,-0.1) .. (3.75,-0.55);
    \draw [red, very thick](3.75,-0.55) .. controls (3,-0.1) and (3,-0.1) .. (3,0.7);
    \node [fill, circle, inner sep=1.3, cyan] at (3,0.7) {};
    \node [fill, circle, inner sep=1.3, cyan] at (2.2,-0.55) {};
    \node [fill, circle, inner sep=1.3, cyan] at (3.75,-0.55) {};
    
    \draw  (-3,1) ellipse (0.3 and 0.3);
    \draw  (-4,-0.75) ellipse (0.3 and 0.3);
    \draw  (-2,-0.75) ellipse (0.3 and 0.3);
    \draw [red, very thick] (-3,0.7) .. controls (-3,-0.1) and (-3,-0.1) .. (-3.8,-0.55);
    \draw [red, very thick](-3.8,-0.55) .. controls (-3,-0.1) and (-3,-0.1) .. (-2.25,-0.55);
    \draw [red, very thick](-2.75,-0.27) .. controls (-2.4,-0.45) and (-2.65,-0.75) .. (-2.3,-1.1) .. controls (-1.9,-1.5) and (-1.25,-0.9) .. (-1.65,-0.5) .. controls (-2,-0.2) and (-2.2,-0.5) .. (-2.5,-0.4);
    \node [fill, circle, inner sep=1.3, cyan] at (-3,0.7) {};
    \node [fill, circle, inner sep=1.3, cyan] at (-3.8,-0.55) {};
    \node [fill, circle, inner sep=1.3, cyan] at (-2.25,-0.55) {};
    
    \draw  (0,5.5) ellipse (0.3 and 0.3);
    \draw  (-1,3.75) ellipse (0.3 and 0.3);
    \draw  (1,3.75) ellipse (0.3 and 0.3);
    \draw [red, very thick] (0,5.2) .. controls (0,4.4) and (0,4.4) .. (-0.8,3.95);
    \draw [red, very thick](0.75,3.95) .. controls (0,4.4) and (0,4.4) .. (0,5.2);
    \draw [red, very thick](-0.25,4.3) .. controls (-0.6,4.05) and (-1,4.3) .. (-1.35,4.05) .. controls (-1.75,3.7) and (-1.2,3.05) .. (-0.75,3.4) .. controls (-0.45,3.65) and (-0.65,3.95) .. (-0.5,4.1);
    \node [fill, circle, inner sep=1.3, cyan] at (0,5.2) {};
    \node [fill, circle, inner sep=1.3, cyan] at (-0.8,3.95) {};
    \node [fill, circle, inner sep=1.3, cyan] at (0.75,3.95) {};
    \draw [red, very thick](3.05,0.1) .. controls (3,0.4) and (3.45,0.5) .. (3.45,1) .. controls (3.45,1.6) and (2.55,1.6) .. (2.55,1) .. controls (2.55,0.6) and (3,0.5) .. (3,0.3);
    \draw [->, dashed](-1.5,5) .. controls (-1.5,4.5) and (-1,4.5) .. (-0.75,4.05);
    \node at (0,2.5) {$\partial_1$};
    \node at (-1,0.75) {$\partial_2$};
    \node at (1,0.75) {$\partial_3$};
    \node at (-1.5,5.25) {$e_{12}$};
    \node at (-1.6,3.4) {$e_{11}$};
    \node at (0.6,4.3) {$e_{13}$};
    \node at (-0.45,1.5) {$e_{12}$};
    \node at (0,1.05) {$e_{23}$};
    \node at (0.45,1.5) {$e_{13}$};
    \node at (-1.35,-1.15) {$e_{22}$};
    \node at (-2.95,-1.5) {$e_{23}$};
    \node at (-3.3,0.3) {$e_{12}$};
    \draw [->, dashed](-2.95,-1.25) .. controls (-2.7,-0.95) and (-2.55,-0.75) .. (-2.4,-0.55);
    \node at (3,1.55) {$e_{33}$};
    \node at (2.65,-0.55) {$e_{23}$};
    \node at (4.25,0.25) {$e_{13}$};
    \draw [->, dashed](3.9,0.25) .. controls (3.5,0.25) and (3.5,0.5) .. (3.05,0.5);
    \end{tikzpicture}
    \caption{Standard tracks in a pair of pants}
    \label{fig:pants_std}
\end{figure}

\begin{thm}\label{thm:tt_cone_decomp}
Fix a pants deomposition $\cR$ of $\Sigma^\circ$.
Then the collection
\begin{align*}
\fF_\cR = \{ \psi_\tau(\widehat{V}(\tau)) \mid \tau \in ST_\cR \}
\end{align*} 
of cones gives a complete fan in $\eML(F)$.
That is, 
\begin{itemize}
    \item $\widehat{V}(\tau)$ is a polyhedral cone,
    \item $\bigcup_{\tau \in ST_\cR} \psi_\tau(\widehat{V}(\tau)) = \eML(F)$,
    \item for each face $F$ of $\psi_\tau(\widehat{V}(\tau)) \in \fF_\cR$, there is a standard track $\tau_F \subset \tau$ such that $\psi_{\tau_F}(\widehat{V}(\tau_F)) = F$,
    \item if $\psi_{\tau_1}(\widehat{V}(\tau_1)) \cap \psi_{\tau_2}(\widehat{V}(\tau_2)) \neq 0$ for $\tau_1 \neq \tau_2 \in ST_\cR$, then it is a common face of the cones $\psi_{\tau_1}(\widehat{V}(\tau_1))$ and $\psi_{\tau_2}(\widehat{V}(\tau_2))$.
\end{itemize}
\end{thm}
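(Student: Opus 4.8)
The plan is to prove \cref{thm:tt_cone_decomp} as the \emph{enhanced} (that is, signed) refinement of the classical fact, due to Penner--Harer \cite{PH} and Hatcher \cite{Hat}, that the standard train tracks subordinate to a pants decomposition organize the space of measured laminations into a fan; I would verify the four bullets in order, the genuinely new input being the bookkeeping of the puncture signatures carried by an enhancement $\widehat\nu=(\nu,\sigma_\nu)$. For polyhedrality, I would use the embedding $w_\tau\colon\widehat V(\tau)\hookrightarrow\bR^{\{\text{edges of }\tau\}}$ of \cref{subsec:train track_lamination}: for each admissible signature $\sigma$ the piece $\widehat V^{(\sigma)}(\tau)$ occurring in the proof of \cref{lem:I^tau_inj} is the rational polyhedral cone cut out of the linear subspace defined by the switch conditions by the non-negativity constraints on non-puncture edges together with the sign inequalities $\sigma_p\cdot w_\tau(\widehat\nu)(e_p)\ge 0$ at the puncture edges, and $\widehat V(\tau)$ is the union of the finitely many $\widehat V^{(\sigma)}(\tau)$, glued along the faces where the relevant puncture-edge weights vanish --- exactly the loci where the enhancement rule forces $\sigma_p=0$. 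Injectivity of $\psi_\tau$ is \cref{lem:measures_subset_L^x}, so $\psi_\tau(\widehat V(\tau))$ is a (rational) polyhedral cone, giving the first bullet.

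For the covering statement, given $\hL=(G,\mu)\in\eML(F)$ I would put $G$ into standard position with respect to $\cR$: isotope it to meet each pants curve $C_i$ transversely and efficiently, and then, inside each annular neighborhood $A_i$ and each pair of pants $P_j$, straighten the leaves into one of the finitely many standard models of \cref{fig:connector_std,fig:pants_std} --- the isotopy class of the trace of a lamination in a pair of pants being determined by its intersection numbers with the three boundary curves and a bounded amount of combinatorial data, and in an annulus by the intersection number and the twisting parameter (whose sign is what distinguishes the two annular model types). The spiralling direction of the leaves around a puncture $p$ with $\mu(\partial_p)>0$ is precisely what is recorded by $\sigma_p$ and what selects the corresponding pants model near that boundary. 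Each resulting combinatorial type of standard position --- intersection numbers with the $C_i$, twisting signs, puncture signatures --- then corresponds to exactly one standard track $\tau\in ST_{\cR}$ carrying $\hL$, so $\hL\in\psi_\tau(\widehat V(\tau))$; the degenerate cases (vanishing intersection with some $C_i$, closed leaves equal to a pants curve, leaves spiralling into a puncture) are absorbed by passing to the appropriate subtrack.

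For the face and intersection bullets, I would first check directly from \cref{fig:connector_std,fig:pants_std} that every subtrack of a standard track is again standard: deleting edges from a standard model yields either another model in the figures or the empty track in that annulus or pair of pants. Then $\psi_{\tau'}(\widehat V(\tau'))$ is precisely the subset of $\psi_\tau(\widehat V(\tau))$ on which the weights of the edges of $\tau\setminus\tau'$ vanish (and the signatures at the newly isolated punctures drop to $0$, consistently with the enhancement rule), hence a face of the polyhedral cone $\psi_\tau(\widehat V(\tau))$; conversely every face arises this way since a face of the switch-condition cone is obtained by setting a subset of weights to zero. For the last bullet, if $\hL\in\psi_{\tau_1}(\widehat V(\tau_1))\cap\psi_{\tau_2}(\widehat V(\tau_2))\setminus\{0\}$, then $\hL$ is carried by both $\tau_1$ and $\tau_2$; uniqueness of standard position forces $\hL$ to be carried by a common subtrack, and the interiors $\psi_\tau(\widehat V^\circ(\tau))$ are pairwise disjoint (distinct combinatorial types of standard position are non-isotopic). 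A local analysis at a relative-interior point of the intersection, using the within-cone face structure just established together with the injectivity of $I^\tau_*$ from \cref{lem:I^tau_inj}, then pins the intersection to a single common face $\psi_{\tau_0}(\widehat V(\tau_0))$.

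The hard part will be the covering statement together with this last bullet, and more precisely the matching of the puncture-signature data with the standard-track types: in the classical unsigned setting this is Penner--Harer's theorem, but here one must check that $\sigma_p$ behaves correctly under degeneration --- forced to lie in $\{+,-\}$ exactly when the transverse measure at $p$ is positive and dropping to $0$ on the corresponding face --- so that the signed cones glue along these faces without gaps or overlaps. Handling the zero-intersection, closed-leaf, and spiralling-into-punctures degeneracies uniformly, so that the stated fan axioms hold on the nose rather than merely on an open dense set, is where I would concentrate the effort.
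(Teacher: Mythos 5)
Your proposal is correct in substance and follows the same broad skeleton as the paper's argument — decompose by puncture signature, verify the fan axioms, and glue the signed sectors along the faces where puncture-edge weights vanish — but it differs markedly in what it treats as a black box. The paper's actual proof is two sentences: for each fixed $\sigma\in\{+,-\}^P$ it simply cites Hatcher's result (\cite[Proposition 2.3]{Hat}) to get a complete fan on $\eML^{(\sigma)}(F)$, and then appeals to the bookkeeping already done in the proof of \cref{lem:I^tau_inj} to glue the $2^{|P|}$ sectors. You instead reconstruct the classical Penner--Harer content from scratch (standard position, the intersection/twist parameters in annuli and pants, subtracks-are-standard, uniqueness of standard position), which is where the real work lives but is precisely what the paper outsources. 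Both routes are valid; the paper's is shorter and cleaner provided one accepts the citation, while yours is self-contained and makes explicit the combinatorics that the citation hides. You also correctly identify the genuinely new wrinkle — that $\sigma_p$ drops to $0$ exactly on the face where the puncture-edge weight vanishes, so the signed cones glue without gaps or overlaps — which is the only point the paper itself has to argue beyond citation, and it handles it via \cref{lem:I^tau_inj}. The one thing worth tightening if you carried this out in full: in your intersection bullet you rely on "uniqueness of standard position," but in the enhanced setting you should explicitly reduce to the injectivity of the spectrum map $I^\tau_*$ (\cref{lem:I^tau_inj}) rather than to an isotopy argument, since a single geodesic lamination can sit in the closure of two signed sectors and injectivity of $I^\tau_*$ is what rules out overlap of interiors across signs.
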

\begin{proof}
For a fixed sign $\sigma \in \{+,-\}^P$, $\{\psi_\tau(\widehat{V}^{(\sigma)}(\tau))\}$ gives a complete fan in $\eML^{(\sigma)}(F)$ by \cite[Proposition 2.3]{Hat}. 
Then they glue together as \cref{lem:I^tau_inj}, and we get the desired fan in the entire space.
\end{proof}

Now we come to the definition of the Dehn--Thurston coordinates. 
Let $\tau$ be a standard train track with respect to some pants decomposition $\cR$, and let $\widehat{\nu} = (\nu, \sigma) \in \widehat{V}(\tau)$. Recall the branches $e_k^X$ for $k=1,2$ and $X=I,II,-I,-II$ from \cref{fig:pants_std}. 
For each $i \in \{1, \dots, 3g-3+h\}$, define
\begin{align*}
    m_i(\widehat{\nu}) &:=
    \begin{cases}
        \nu(\xi_i(e_1^{X})) & \mbox{if $\xi^{-1}_i(A_i \cap \tau)$ is of type $X = I$ or $II$,}\\
        -\nu(\xi_i(e_1^{X})) & \mbox{if $\xi^{-1}_i(A_i \cap \tau)$ is of type $X = -I$ or $-II$,}\\
    \end{cases}\\
    t_i(\widehat{\nu}) &:= \nu(\xi^{-1}_i(e_2^{X})) \quad \mbox{for each type $X = I, -I, II$ and $-II$}.
\end{align*}
Also, for each $p \in P$, define
\begin{align*}
    m_p(\widehat{\nu}) := \sigma \cdot \nu(e_p).
\end{align*}
Here $e_p \subset \tau$ is an edge transverse to the boundary component $\partial_p$.

\begin{thm}[Dehn--Thurston coordinates]\label{thm:DT_coord}
Let $\Sigma$ be a punctured surface, and $\cR$ a pants decomposition of $\Sigma^\circ$.
Then the map
\begin{align*}
    \Pi_\cR: \eML(F) \to (\bR^2/ (\bZ/2))^{3g-3+h} \times \bR^P \cong \bR^{6g-6+3h}
\end{align*}
given by
\begin{align*}
    \Pi_\cR(G, \mu) :=  \big( (m_i(\widehat{\nu}), t_i(\widehat{\nu}))_{i=1}^{3g-3+h}, (m_p(\widehat{\nu}))_{p \in P} \big)
\end{align*}
is well-defined and it gives a global coordinate system on $\eML(F)$.
Here the action of $\bZ/2$ on $\bR^2$ is the antipodal map, and $\widehat{\nu} = \psi_\tau^{-1}(G, \mu)$ if $(G, \mu)$ is carried by a standard train track $\tau \in \fC_\cR$.
\end{thm}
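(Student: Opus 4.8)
\textbf{Proof plan for \cref{thm:DT_coord}.}
The plan is to deduce the theorem from the fan structure already established in \cref{thm:tt_cone_decomp} together with the injectivity statement in \cref{lem:measures_subset_L^x} (\emph{i.e.} \cref{lem:measures_subset_L^x}, the injectivity of $\psi_\tau$) and the PL atlas \cref{thm:tt_atlas}. The key point is that the coordinate functions $m_i, t_i, m_p$ are, on each standard chart $\psi_\tau(\widehat{V}(\tau))$, linear functions of the edge-weights $\nu(e)$, so that $\Pi_\cR$ restricted to any single cone in $\fF_\cR$ is linear; one then checks that these linear pieces are mutually compatible on overlaps and that the global map is a PL bijection onto $(\bR^2/(\bZ/2))^{3g-3+h}\times\bR^P$.

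First I would verify well-definedness. By \cref{thm:tt_cone_decomp} every $(G,\mu)\in\eML(F)$ lies in $\psi_\tau(\widehat{V}(\tau))$ for some standard $\tau\in ST_\cR$, and by \cref{lem:measures_subset_L^x} the preimage $\widehat{\nu}=\psi_\tau^{-1}(G,\mu)$ is unique; so $m_i(\widehat\nu), t_i(\widehat\nu), m_p(\widehat\nu)$ are defined. The only ambiguity is the choice of $\tau$: if $(G,\mu)$ lies in two cones $\psi_{\tau_1}(\widehat{V}(\tau_1))$ and $\psi_{\tau_2}(\widehat{V}(\tau_2))$, then by the last bullet of \cref{thm:tt_cone_decomp} this intersection is a common face, corresponding to a common subtrack $\tau_0\subset\tau_1,\tau_2$, and one must check that the formulas for $m_i,t_i$ computed in $\tau_1$ and in $\tau_2$ agree. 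This is a finite case-check over the types $I,-I,II,-II$ in the annulus $A_i$: on a boundary face some edge-weight vanishes, and in that degenerate situation the two conventions (and the antipodal identification on $\bR^2$, which precisely absorbs the sign flip between type $X$ and type $-X$) give the same value. I would also note that $t_i\ge 0$ always, while $m_i$ can take either sign, and when $m_i=0$ the intensity $t_i$ is unconstrained — this is exactly why the target of the $i$-th factor is $\bR^2/(\bZ/2)$ rather than, say, $\bR_{\ge0}\times\bR$.

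Next, injectivity: if $\Pi_\cR(G,\mu)=\Pi_\cR(G',\mu')$, one recovers, from the signed intersection numbers $m_i$ and the signature data $m_p$, the combinatorial type of a standard train track carrying both laminations (the sign of $m_i$ dictates type $I/{-I}$ versus $II/{-II}$ up to the twisting $t_i$, and $m_i=0$ forces the connector to be absent), and then the remaining coordinates $t_i$ and $|m_i|$ determine the edge-weights $\nu$ uniquely inside that cone; by \cref{lem:measures_subset_L^x} this forces $(G,\mu)=(G',\mu')$. For surjectivity, given an arbitrary target point one reads off which cone it should land in from the signs of the $m_i$ and the support of the $m_p$, builds the corresponding standard train track, solves the (linear, invertible) system for the $\nu(e)$ on its edges subject to the switch conditions, checks non-negativity (guaranteed since $t_i\ge0$ and one uses $|m_i|$), and applies $\psi_\tau$. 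That $\Pi_\cR$ is a global PL coordinate system then follows because it is a bijection which is linear on each cone of the fan $\fF_\cR$ and whose inverse is likewise piecewise-linear, and \cref{thm:tt_atlas} guarantees these cones form a genuine PL atlas so the pieces patch to a PL homeomorphism. I expect the main obstacle to be the overlap-compatibility bookkeeping in the well-definedness step: making sure that the sign conventions for $m_i$ across the four connector types, the antipodal quotient on each $\bR^2$-factor, and the behaviour on the lower-dimensional faces of the fan all line up consistently is the delicate part, whereas linearity on each chart and the bijection count are routine once the conventions are fixed.
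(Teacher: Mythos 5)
Your plan is correct and rests on the same underlying pillars as the paper's proof: the fan decomposition of $\eML(F)$ into cones $\psi_\tau(\widehat{V}(\tau))$ for $\tau \in ST_\cR$ (\cref{thm:tt_cone_decomp}), the injectivity of each $\psi_\tau$ (\cref{lem:measures_subset_L^x}), and the observation that the Dehn--Thurston coordinates are linear in the edge-weights on each cone. Where you differ from the paper is in how the bijectivity is dispatched: the paper's proof simply cites Penner--Harer \cite[Theorem 3.1.1]{PH} for the construction of the continuous inverse $\nu_\cR^u$ on the compact-lamination subspace $\ML_0(F)$, reproduces their explicit linear formulas for $\nu$ on the pants edges, and then extends the inverse to $\nu_\cR:\bR^{6g-6+3h}\to \eML(F)$ by the one-line rule $m_a := m_p$ whenever a pants boundary corresponds to a puncture $p$; no separate arguments for well-definedness, injectivity, or surjectivity are spelled out, since they are absorbed into the construction of the explicit two-sided inverse. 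Your plan re-derives essentially the same facts from scratch (overlap compatibility, cone-by-cone reconstruction of $\nu$ from the target data), which is sound but considerably longer. One small inaccuracy worth correcting: with the paper's conventions it is $t_i$, not $m_i$, that is forced to be non-negative (since $t_i = \nu(\xi_i^{-1}(e_2^X))$ is a measure), while $m_i$ carries a sign determined by the connector type; correspondingly, the boundary case where the $\bZ/2$-antipodal quotient earns its keep is $t_i = 0$, where the sign of $m_i$ is ambiguous (the degenerate connector can be read as a limit from either a $+$-type or a $-$-type track), rather than $m_i = 0$ as you wrote. This does not affect the structure of your argument, only the bookkeeping in the overlap check.
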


\begin{proof}

For the case of compact laminations $\ML_0(F)$, the continuous inverse map $\nu_\cR^u$ of the restriction
\begin{align*}
    \Pi_\cR^u: \ML_0(F) \to (\bR^2/ (\bZ/2))^{3g-3+h}\ ,\quad 
    (G,\mu) \mapsto (m_i(\widehat{\nu}), t_i(\widehat{\nu}))_{i=1}^{3g-3+h}
\end{align*}
of $\Pi_\cR$ 
is constructed in the proof of \cite[Theorem 3.1.1]{PH} based on a fan structure of $\ML_0(F)$, which is the restriction of the fan $\fF_\cR$ given in \cref{thm:tt_cone_decomp}.
For instance, the measure $\nu= \nu^u_\cR((m_i, t_i)_i)$ on the edges which lies in a pair of pants $P_j$ is given by
\begin{align*}
    2\nu(e^j_{11}) &= [m_1 - m_2 - m_3]_+,\\
    2\nu(e^j_{12}) &= [m_1 + m_2 - m_3]_+,\\
    2\nu(e^j_{13}) &= [m_1 - m_2 + m_3]_+,\\
    2\nu(e^j_{22}) &= [-m_1 + m_2 - m_3]_+,\\
    2\nu(e^j_{23}) &= [-m_1 + m_2 + m_3]_+,\\
    2\nu(e^j_{33}) &= [-m_1 - m_2 + m_3]_+,
\end{align*}
where $e_{ab}^j := \eta_j(e_{ab})$ and
$m_a := m_i$ if $\eta_j(\partial_a) = C_i$ for $a,b \in {1,2,3}$.
(See \cref{fig:pants_std}.)

In our case, the only difference is the existence of edges which are transverse to the boundary components.
So one can naturally extend $\nu_\cR^u$ to $\nu_\cR: \bR^{6g-6+3h} \to \eML(F)$ by setting $m_a = m_p$ if $\eta_j(\partial_a) = \partial_p$.
The measures on the edges which lies in the annuli $A_i$ are given in the same manner as in the case of $\ML_0(\Sigma)$.
It gives the continuous inverse map of $\Pi_\cR$.
\end{proof}

Instead of an explanation of a general method to obtain the standard train track which carries a given measured lamination, we give an example which 
looks non-trivial so that one can guess the general construction.
\begin{ex}
We choose the surface $\Sigma$ to be a sphere with 4 punctures, and consider the lamination $l^-_\tri \in \cX_\Sigma(\bR^\trop)$ shown in the left of \cref{fig:tt_ideal_tri}, which is defined in \cref{ex:3bdries+1pct_sph}.
(See \cite[Lemma 4.4]{IK20}.)
We are going to find the standard train track $\tau$ with respect to a pants decomposition $\cR = \{C\}$ drawn in the left of \cref{fig:tt_ideal_tri} (right blue circle) which carries $l^-_\tri$, where the existence is guaranteed by \cref{thm:tt_cone_decomp}.
For this, first take a tubular neighborhood $A_C$ of $C$ and fix a point (that is, a stop) on each of its boundary components.
Deform the lamination $l^-_\tri$ so that 
\begin{itemize}
    \item transverse to $\partial A_C$,
    \item intersects to $A_C$ only \lq\lq around" the stops, and
    \item the arcs in the complement $\Sigma\setminus A_C$ winds around $C$ minimally.
\end{itemize}
The deformed lamination is shown in the middle of \cref{fig:tt_ideal_tri}.
Finally, having one of the standard tracks in mind, we appropriately pinch the lamination and obtain the desired standard train track $\tau$.
The corresponding measure of $\tau$ is given by counting the segments of $l^-_\tri$ parallel to an edge of $\tau$.
Further giving the same signs at punctures as those of the lamination $l^-_\tri$, we obtain an enhanced measure $\widehat{\nu}$ on $\tau$ such that $l^-_\tri= \psi_\tau(\widehat{\nu})$.

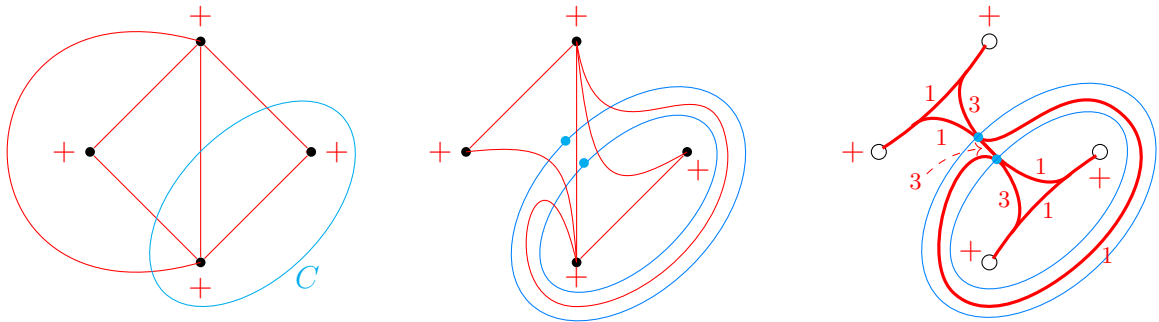
\begin{figure}[h]
    \centering
    \hspace{-1cm}
    \begin{tikzpicture}[scale=.98]
    \node [fill, circle, inner sep=1.3] (v1) at (-0.1,1.5) {};
    \node [fill, circle, inner sep=1.3] (v2) at (-1.6,0) {};
    \node [fill, circle, inner sep=1.3] (v3) at (-0.1,-1.5) {};
    \node [fill, circle, inner sep=1.3] (v4) at (1.4,0) {};
    \draw [red](v1) -- (v2) -- (v3) -- (v4) -- (v4) -- (v1) -- (v3);
    \draw [red](-0.1,1.5) .. controls (-3.6,2.5) and (-3.6,-2.5) .. (-0.1,-1.5);
    \draw [cyan, rotate=45] (-0.0712,-0.9244) ellipse (1.7 and 1);
    \node [cyan] at (1.35,-1.7) {$C$};
    
    \node [fill, circle, inner sep=1.3] (v1) at (5,1.5) {};
    \node [fill, circle, inner sep=1.3] (v2) at (3.5,0) {};
    \node [fill, circle, inner sep=1.3] (v3) at (5,-1.5) {};
    \node [fill, circle, inner sep=1.3] (v4) at (6.5,0) {};
    \draw [myblue, rotate=45] (3.5351,-4.5307) ellipse (1.5 and .8);
    \draw [myblue, rotate=45] (3.5351,-4.5307) ellipse (1.9 and 1.2);
    \draw [red](v1) -- (v3);
    \draw [red](v1) -- (v2);
    \draw [red](v3) -- (v4);
    \draw [red](6.5,0) .. controls (5.45,-0.6) and (5.2,-0.5) .. (5,1.5);
    \draw [red](5,-1.5) .. controls (4.9,-0.15) and (4.8,0.15) .. (3.5,0);
    \draw [red](5,-1.5);
    \draw [red](5,-1.5) .. controls (4.5,0.7) and (3.75,-2.1) .. (5,-2.1) .. controls (5.95,-2.1) and (7.1,-1) .. (7.05,0) .. controls (6.9,1.6) and (5.3,-0.55) .. (5,1.5);
    \node [fill, circle, inner sep=1.3, cyan] at (4.85,0.15) {};
    \node [fill, circle, inner sep=1.3, cyan] at (5.1,-0.15) {};
    
    \node [draw, circle, inner sep=2] at (10.6,1.5) {};
    \node [draw, circle, inner sep=2] at (9.1,0) {};
    \node [draw, circle, inner sep=2] at (10.6,-1.5) {};
    \node [draw, circle, inner sep=2] at (12.1,0) {};
    \draw [red, very thick](10.65,-1.45) .. controls (11,-0.9) and (11.5,-0.4) .. (12.05,-0.05);
    \draw [red, very thick](9.15,0.05) .. controls (9.7,0.4) and (10.2,0.9) .. (10.55,1.45);
    \draw [red, very thick](10.3,1.1) .. controls (9.8,0.55) and (11.2,-0.8) .. (11.7,-0.3);
    \draw [red, very thick](9.55,0.35) .. controls (10.1,0.85) and (11.4,-0.55) .. (10.9,-1.1);
    \draw [red, very thick](10.4,0.25) .. controls (10.75,-0.2) and (11.6,1.15) .. (12.35,0.55) .. controls (13,0.05) and (12.5,-0.85) .. (12,-1.35) .. controls (11.5,-1.85) and (10.6,-2.4) .. (10.1,-1.9) .. controls (9.6,-1.4) and (10.2,0.45) .. (10.8,-0.2);
    \node [fill, circle, inner sep=1.3, cyan] at (10.45,0.2) {};
    \node [fill, circle, inner sep=1.3, cyan] at (10.7,-0.1) {};
    \draw [myblue, rotate=45] (7.5052,-8.4344) ellipse (1.5 and .8);
    \draw [myblue, rotate=45] (7.5052,-8.4344) ellipse (1.9 and 1.2);
    \node [red] at (-0.1,1.85) {$+$};
    \node [red] at (-1.95,0) {$+$};
    \node [red] at (-0.1,-1.85) {$+$};
    \node [red] at (1.75,0) {$+$};
    \node [red] at (5,1.85) {$+$};
    \node [red] at (3.15,0) {$+$};
    \node [red] at (5,-1.7) {$+$};
    \node [red] at (6.65,-0.25) {$+$};
    \node [red] at (10.6,1.85) {$+$};
    \node [red] at (8.75,0) {$+$};
    \node [red] at (10.35,-1.35) {$+$};
    \node [red] at (12.1,-0.35) {$+$};
    \node [red] at (9.8,0.8) {\scriptsize 1};
    \node [red] at (9.95,0.2) {\scriptsize 1};
    \node [red] at (10.4,0.65) {\scriptsize 3};
    \node [red] at (10.8,-0.65) {\scriptsize 3};
    \node [red] at (11.4,-0.8) {\scriptsize 1};
    \node [red] at (11.3,-0.2) {\scriptsize 1};
    \node [red] at (12.2,-1.4) {\scriptsize 1};
    \draw [->, red, dashed](9.75,-0.3) .. controls (9.95,-0.1) and (10.15,0) .. (10.5,0.05);
    \node [red] at (9.6,-0.4) {\scriptsize 3};
    \end{tikzpicture}
    \caption{An example of a train track carrying both a pants decomposition and a collection of ideal arcs. 
    Left and Center: The given lamination $l^-_\tri$ and its deformation. Right: The train track which carries $l^-_\tri$ and the enhanced measure associated to $l^-_\tri$.}
    \label{fig:tt_ideal_tri}
\end{figure}

\end{ex}

\begin{rmk}\label{rem:connector_folded}
Our choice of the four standard tracks in an annulus  \cref{fig:connector_std} is according to \cite[Figure 2.6.1]{PH}. In fact, the train tracks of types $I$ and $II$ (resp. $-I$ and $-II$) can be obtained by the \emph{splittings} in two directions from a common train track $\tau^+$ (resp. $\tau^-$) shown in the left (resp. right) of \cref{fig:connector_ver_Hat}. For the combinatorial operations on train tracks including the splittings, see \cite[Section 2.1]{PH}. 
It follows that
\begin{align*}
    \widehat{V}(\tau^\epsilon) = \widehat{V}(\tau^{\epsilon I}) \cup \widehat{V}(\tau^{\epsilon II})
\end{align*}
for $\epsilon = +, -$, where $\tau^{\epsilon X}$ is the standard track of type $\epsilon X$ in an annulus for $\epsilon = +, -$, $X=I, II$. 
Note that if we use a train track $\tau \subset \Sigma^\circ$ such that $\xi_i^{-1}(\tau \cap A_i)$ is either $\tau^+$ or $\tau^-$ instead of those in \cref{fig:connector_std}, then it carries the curve $C_i$, that is, $C_i \in \widehat{V}(\tau)$ for $i = 1, \dots, 3g-3+h$. 
Sometimes, a train track such that $\xi_i^{-1}(\tau \cap A_i)$ is one of the train tracks either in \cref{fig:connector_std} or \cref{fig:connector_ver_Hat} for each $i=1,\dots,3g-3+h$ is also called a standard track.

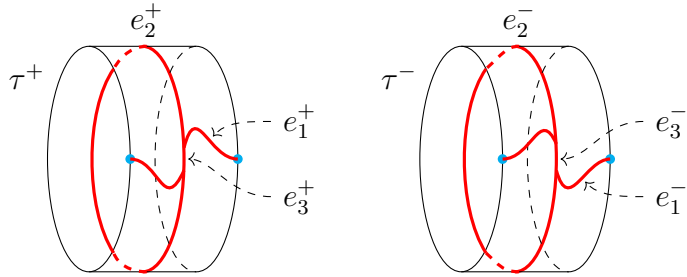
\begin{figure}[h]
    \centering
    \begin{tikzpicture}[xscale=1.1]
    \draw (3.6,-1) coordinate (v2) arc [start angle=-90, end angle=90, x radius=.5cm, y radius=1.5cm];
    \draw [red, very thick] (2.95,-1) arc [start angle=-90, end angle=90, x radius=.5cm, y radius=1.5cm];
    \draw [dashed] (3.6,2) coordinate (v1) arc [start angle=90, end angle=270, x radius=.5cm, y radius=1.5cm];
    \draw  (2.3,0.5) ellipse (0.5 and 1.5);
    \draw (v1) -- (2.3,2);
    \draw (v2) -- (2.3,-1);
    \draw [red, very thick] (2.6,1.7) .. controls (2.25,1.15) and (2.25,-0.15) .. (2.6,-0.75);
    \draw [red, very thick, dashed] (3,2) .. controls (2.85,2) and (2.7,1.9) .. (2.6,1.7);
    \draw [red, very thick, dashed] (3,-1) .. controls (2.8,-1) and (2.65,-0.9) .. (2.6,-0.75);
    
    \node [fill, circle, inner sep = 1.3, cyan] at (4.1,0.5) {};
    \node [fill, circle, inner sep = 1.3, cyan] at (2.8,0.5) {};
    
    \draw (8.1,-1) coordinate (v2) arc [start angle=-90, end angle=90, x radius=.5cm, y radius=1.5cm];
    \draw [red, very thick] (7.45,-1) arc [start angle=-90, end angle=90, x radius=.5cm, y radius=1.5cm];
    \draw [dashed] (8.1,2) coordinate (v1) arc [start angle=90, end angle=270, x radius=.5cm, y radius=1.5cm];
    \draw  (6.8,0.5) ellipse (0.5 and 1.5);
    \draw (v1) -- (6.8,2);
    \draw (v2) -- (6.8,-1);
    \draw [red, very thick] (7.1,1.7) .. controls (6.75,1.15) and (6.75,-0.15) .. (7.1,-0.75);
    \draw [red, very thick, dashed] (7.5,2) .. controls (7.35,2) and (7.3,1.9) .. (7.1,1.7);
    \draw [red, very thick, dashed] (7.5,-1) .. controls (7.3,-1) and (7.25,-0.9) .. (7.1,-0.75);
    
    \node [fill, circle, inner sep = 1.3, cyan] at (8.6,0.5) {};
    \node [fill, circle, inner sep = 1.3, cyan] at (7.3,0.5) {};
    
    \draw [very thick, red](3.45,0.35) .. controls (3.2,-0.25) and (3.15,0.5) .. (2.8,0.5);
    \draw [very thick, red](3.45,0.65) .. controls (3.6,1.3) and (3.75,0.5) .. (4.1,0.5);
    \draw [very thick, red](7.95,0.35) .. controls (8.1,-0.25) and (8.3,0.5) .. (8.6,0.5);
    \draw [very thick, red](7.95,0.7) .. controls (7.75,1.2) and (7.65,0.5) .. (7.3,0.5);
    \node at (1.55,1.6) {$\tau^+$};
    \node at (6.05,1.6) {$\tau^-$};
    \draw [dashed, ->](4.5,1) .. controls (4.2,1) and (3.95,1) .. (3.8,0.8);
    \node at (4.85,1) {$e^+_1$};
    \draw [dashed, ->](9,0) .. controls (8.75,0) and (8.45,0) .. (8.3,0.2);
    \node at (9.35,0) {$e^-_1$};
    \node at (3,2.35) {$e_2^+$};
    \node at (7.5,2.35) {$e_2^-$};
    \draw [dashed, ->](9,1) .. controls (8.5,1) and (8.35,0.5) .. (8,0.5);
    \draw [dashed, ->](4.5,0) .. controls (4,0) and (3.8,0.5) .. (3.5,0.5);
    \node at (4.85,0) {$e_3^+$};
    \node at (9.35,1) {$e_3^-$};
    \end{tikzpicture}
    \caption{The folded standard tracks $\tau^+$ and $\tau^-$ on an annulus.}
    \label{fig:connector_ver_Hat}
\end{figure}

\end{rmk}

\paragraph{\textbf{The general case}}

Now we allow the surface $\Sigma$ to have boundary components. 
We will mean by a pants decomposition of $\Sigma$ the union of a pants decomposition $\cR$ of $\bar{\Sigma}$ (being naturally regarded as a collection of curves in $\Sigma$) and the multicurve $\cR_\partial$ (see \cref{subsec:dyn_gen_X}).
Take an $\iota$-invariant pants decomposition $\widetilde{D\cR}$ of the punctured double $D^\times \Sigma$ such that $\cR \subset \widetilde{D\cR} \cap \Sigma$.
Then we have
\begin{align*}
    \widetilde{D\cR} = D\cR \sqcup \cR(\partial_1) \sqcup \cdots \sqcup \cR(\partial_b),
\end{align*}
where $D\cR$ is the double of $\cR$, and $\cR(\partial_k)$ is an $\iota$-invariant pants decomposition of the punctured sphere $D^\times \bD(\partial_k)$ for $k = 1, \dots, b$.
In order to extend \cref{thm:tt_cone_decomp} to the case of a marked surface, we need to consider $\iota$-invariant \lq\lq standard" train tracks in the punctured spheres $D^\times \bD(\partial_k)$.

Let $\bD$ be a once-punctured disk, and $D^\times \bD$ its double. Consider an $\iota$-invariant pants decomposition $\cR$ of $D^\times \bD$. 
We say that a curve $C_o \in \cR$ is \emph{outermost} if $C_o$ is one of the boundary components of a pair of pants whose other boundary components are given by the two copies of the puncture of $\bD$.
Such an outermost curve uniquely exists for any $\iota$-invariant pants decomposition of the double of a punctured disk. For example, see \cref{fig:std_5_sp_pts}. 
Take tubular neighborhoods $A_i$ of $C_i \in \cR$ and denote by $P_j$ the connected components of $D^\times \bD \setminus \bigcup_i A_i$. 
Fix orientation-preserving homeomorphisms $\xi_i: A \to A_i$ and $\eta_j: P \to P_j$ so that their stops are $\iota$-invariant.
The \emph{$\iota$-invariant standard tracks} in an annulus, a pair of pants without outermost curve, and a pair of pants with an outermost curve are shown in \cref{fig:std_ann_iota_inv}, \cref{fig:std_pants_iota_inv} and \cref{fig:std_pants_iota_inv_out}, respectively.

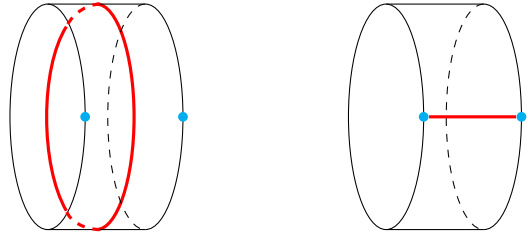
\begin{figure}[h]
    \centering
    \begin{tikzpicture}
    \draw (3.6,-1) coordinate (v2) arc [start angle=-90, end angle=90, x radius=.5cm, y radius=1.5cm];
    \draw [red, very thick] (2.95,-1) arc [start angle=-90, end angle=90, x radius=.5cm, y radius=1.5cm];
    \draw [dashed] (3.6,2) coordinate (v1) arc [start angle=90, end angle=270, x radius=.5cm, y radius=1.5cm];
    \draw  (2.3,0.5) ellipse (0.5 and 1.5);
    \draw (v1) -- (2.3,2);
    \draw (v2) -- (2.3,-1);
    \draw [red, very thick] (2.55,1.7) .. controls (2.2,1.15) and (2.2,-0.15) .. (2.55,-0.75);
    \draw [red, very thick, dashed] (3,2) .. controls (2.85,2) and (2.65,1.9) .. (2.55,1.7);
    \draw [red, very thick, dashed] (3,-1) .. controls (2.8,-1) and (2.6,-0.9) .. (2.55,-0.75);
    
    \node [fill, circle, inner sep = 1.3, cyan] at (4.1,0.5) {};
    \node [fill, circle, inner sep = 1.3, cyan] at (2.8,0.5) {};
    
    \draw (8.1,-1) coordinate (v2) arc [start angle=-90, end angle=90, x radius=.5cm, y radius=1.5cm];
    \draw [dashed] (8.1,2) coordinate (v1) arc [start angle=90, end angle=270, x radius=.5cm, y radius=1.5cm];
    \draw  (6.8,0.5) ellipse (0.5 and 1.5);
    \draw (v1) -- (6.8,2);
    \draw (v2) -- (6.8,-1);
    \node [fill, circle, inner sep = 1.3, cyan] (v4) at (8.6,0.5) {};
    \node [fill, circle, inner sep = 1.3, cyan] (v3) at (7.3,0.5) {};
\draw [very thick, red](v3) -- (v4);	
\end{tikzpicture}
    \caption{$\iota$-invariant standard tracks in an annulus}
    \label{fig:std_ann_iota_inv}
\end{figure}

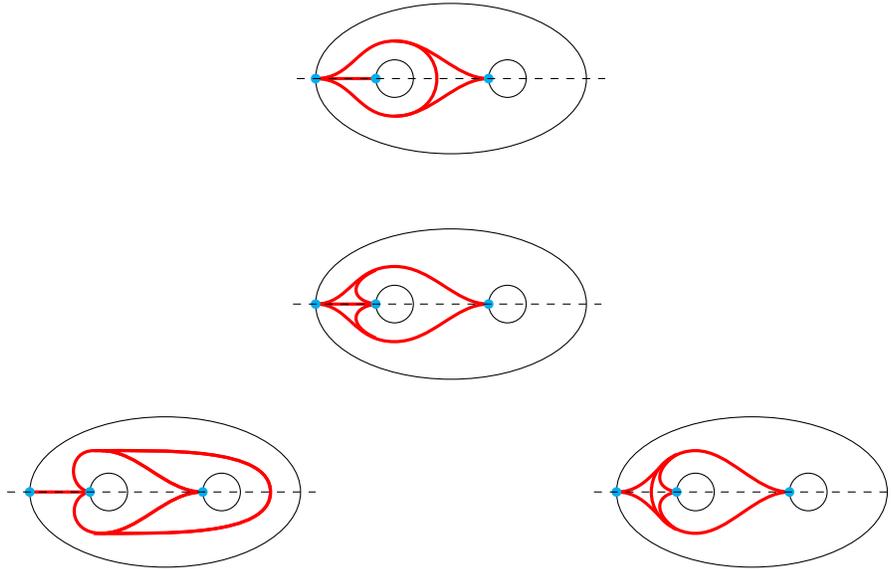
\begin{figure}[h]
    \centering\begin{tikzpicture}[rotate=90]
    \draw  (0,-1.05) ellipse (1 and 1.8);
    \draw  (0,-0.3) ellipse (0.25 and 0.25);
    \draw  (0,-1.8) ellipse (0.25 and 0.25);
    \draw [red, very thick](0,0.75) -- (0,-0.05);
    \draw [red, very thick](0,0.75) .. controls (0,0.25) and (-0.5,0.2) .. (-0.5,-0.3) .. controls (-0.5,-0.8) and (0,-1.1) .. (0,-1.55);
    \draw [red, very thick](0,0.75) .. controls (0,0.25) and (0.5,0.2) .. (0.5,-0.3) .. controls (0.5,-0.8) and (0,-1.1) .. (0,-1.55);
    \draw [red, very thick](-0.5,-0.3) .. controls (-0.5,-1.05) and (0.5,-1.05) .. (0.5,-0.3);
    \node [fill, circle, inner sep=1.3pt, cyan] at (0,-1.55) {};
    \node [fill, circle, inner sep=1.3pt, cyan] at (0,0.75) {};
    \node [fill, circle, inner sep=1.3pt, cyan] at (0,-0.05) {};
    \draw [dashed](0,1) -- (0,-3.1);

    \draw  (-5.5,-5.05) ellipse (1 and 1.8);
    \draw  (-5.5,-4.3) ellipse (0.25 and 0.25);
    \draw  (-5.5,-5.8) ellipse (0.25 and 0.25);
    \draw [red, very thick](-5.5,-3.25) .. controls (-5.5,-3.75) and (-6.05,-3.8) .. (-6.05,-4.3) .. controls (-6.05,-4.75) and (-5.5,-5.1) .. (-5.5,-5.55);
    \draw [red, very thick](-5.5,-3.25) .. controls (-5.5,-3.75) and (-4.95,-3.8) .. (-4.95,-4.3) .. controls (-4.95,-4.75) and (-5.5,-5.1) .. (-5.5,-5.55);
    \draw [red, very thick](-5.5,-4.05) .. controls (-5.55,-3.75) and (-5.8,-3.75) .. (-6,-4.05);
    \draw [red, very thick](-5.5,-4.05) .. controls (-5.45,-3.75) and (-5.2,-3.75) .. (-5,-4.05);
    \draw [red, very thick](-6.05,3.6) .. controls (-6.05,3.1) and (-5.5,2.75) .. (-5.5,2.25);
    \draw [red, very thick](-4.95,3.6) .. controls (-4.95,3.1) and (-5.5,2.75) .. (-5.5,2.25);
    \draw [red, very thick](-6.05,3.7) .. controls (-6.05,2.75) and (-6.1,1.35) .. (-5.5,1.35) .. controls (-4.9,1.35) and (-4.95,2.75) .. (-4.95,3.7);
    
    \draw  (-5.5,2.75) ellipse (1 and 1.8);
    \draw  (-5.5,3.5) ellipse (0.25 and 0.25);
    \draw  (-5.5,2) ellipse (0.25 and 0.25);
    \draw [red, very thick](-5.5,4.55) -- (-5.5,3.75);
    \draw [very thick, red](-5.5,3.75) .. controls (-5.45,4.05) and (-4.95,4.05) .. (-4.95,3.7);
    \draw [red, very thick](-5.5,3.75) .. controls (-5.55,4.05) and (-6.05,4.05) .. (-6.05,3.6);

    \draw  (-3,-1.05) ellipse (1 and 1.8);
    \draw  (-3,-0.3) ellipse (0.25 and 0.25);
    \draw  (-3,-1.8) ellipse (0.25 and 0.25);
    \draw [red, very thick](-3,0.75) -- (-3,-0.05);
    \draw [red, very thick](-3,0.75) .. controls (-3,0.3) and (-3.5,0.25) .. (-3.5,-0.3) .. controls (-3.5,-0.8) and (-3,-1.1) .. (-3,-1.55);
    \draw [red, very thick](-3,0.75) .. controls (-3,0.3) and (-2.5,0.25) .. (-2.5,-0.3) .. controls (-2.5,-0.8) and (-3,-1.1) .. (-3,-1.55);
    \draw [red, very thick](-3,-0.05) .. controls (-3.05,0.3) and (-3.3,0.3) .. (-3.45,-0.05);
    \draw [red, very thick](-3,-0.05) .. controls (-2.95,0.3) and (-2.7,0.3) .. (-2.55,-0.05);
    \node [fill, circle, inner sep=1.3pt, cyan] at (-3,0.75) {};
    \node [fill, circle, inner sep=1.3pt, cyan] at (-3,-0.05) {};
    \node [fill, circle, inner sep=1.3pt, cyan] at (-3,-1.55) {};
    \draw [dashed](-3,1.05) -- (-3,-3.05);
    
    \draw [red, very thick](-6.05,3.6) .. controls (-6.05,3.1) and (-5.5,2.75) .. (-5.5,2.25);
    \draw [red, very thick](-4.95,3.6) .. controls (-4.95,3.1) and (-5.5,2.75) .. (-5.5,2.25);
    \draw [red, very thick](-6.05,3.7) .. controls (-6.05,2.75) and (-6.1,1.35) .. (-5.5,1.35) .. controls (-4.9,1.35) and (-4.95,2.75) .. (-4.95,3.7);
    \draw [red, very thick](-6,-4.05) .. controls (-5.8,-3.6) and (-5.2,-3.6) .. (-5,-4.05);

    \node [fill, circle, inner sep=1.3pt, cyan] at (-5.5,-3.25) {};
    \node [fill, circle, inner sep=1.3pt, cyan] at (-5.5,-4.05) {};
    \node [fill, circle, inner sep=1.3pt, cyan] at (-5.5,-5.55) {};
    \node [fill, circle, inner sep=1.3pt, cyan] at (-5.5,4.55) {};
    \node [fill, circle, inner sep=1.3pt, cyan] at (-5.5,3.75) {};
    \node [fill, circle, inner sep=1.3pt, cyan] at (-5.5,2.25) {};

    \draw [dashed](-5.5,4.85) -- (-5.5,0.75);
    \draw [dashed](-5.5,-2.95) -- (-5.5,-7.05);
\end{tikzpicture}
    \caption{$\iota$-invariant standard tracks in a pair of pants without outermost curve.}
    \label{fig:std_pants_iota_inv}
\end{figure}

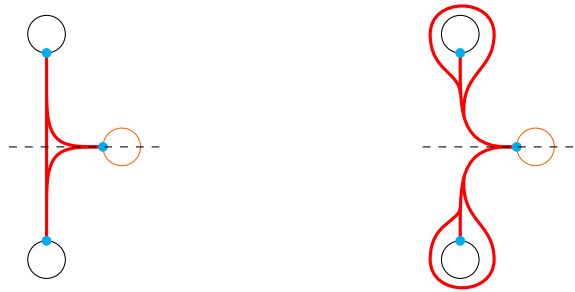
\begin{figure}[h]
    \centering
    \begin{tikzpicture}[rotate=90]
    \draw  (0,0.5) ellipse (0.25 and 0.25);
    \draw [myorange] (1.5,-0.5) ellipse (0.25 and 0.25);
    \draw  (3,0.5) ellipse (0.25 and 0.25);
    \draw  (0,-5) ellipse (0.25 and 0.25);
    \draw [myorange] (1.5,-6) ellipse (0.25 and 0.25);
    \draw  (3,-5) ellipse (0.25 and 0.25);
    \draw [very thick, red](0.8,0.5) .. controls (1.5,0.5) and (1.5,0.3) .. (1.5,-0.25);
    \draw [very thick, red](2.2,0.5) .. controls (1.5,0.5) and (1.5,0.3) .. (1.5,-0.25);
    \draw [very thick, red](0.25,-5) .. controls (1,-5) and (1.55,-4.95) .. (1.5,-5.75);
    \draw [very thick, red](1.5,-5.75) .. controls (1.45,-4.95) and (2,-5) .. (2.75,-5);
    \draw [very thick, red](2.75,0.5) -- (0.25,0.5);
    \node [fill, circle, cyan, inner sep=1.3] at (0.25,0.5) {};
    \node [fill, circle, cyan, inner sep=1.3] at (1.5,-0.25) {};
    \node [fill, circle, cyan, inner sep=1.3] at (2.75,0.5) {};
    \node [fill, circle, cyan, inner sep=1.3] at (0.25,-5) {};
    \node [fill, circle, cyan, inner sep=1.3] at (1.5,-5.75) {};
    \node [fill, circle, cyan, inner sep=1.3] at (2.75,-5) {};
    \draw [very thick, red](1.1,-5.05) .. controls (0.5,-5) and (0.5,-5.45) .. (0,-5.45) .. controls (-0.5,-5.45) and (-0.5,-4.6) .. (0,-4.6) .. controls (0.5,-4.6) and (0.5,-5) .. (0.7,-5);
    \draw [very thick, red](1.9,-5.05) .. controls (2.5,-5) and (2.5,-5.45) .. (3,-5.45) .. controls (3.5,-5.45) and (3.5,-4.6) .. (3,-4.6) .. controls (2.5,-4.6) and (2.5,-5) .. (2.2,-5);
    \draw [dashed](1.5,1) -- (1.5,-1);
    \draw [dashed](1.5,-4.5) -- (1.5,-6.5);
    \end{tikzpicture}
    \caption{$\iota$-invariant standard tracks in a pair of pants with a outermost curve (shown in orange).}
    \label{fig:std_pants_iota_inv_out}
\end{figure}

For an $\iota$-invariant train track $D\tau$ in $(D^\times \bD)^\circ$, the restriction $\tau := \bD^\circ \cap D\tau$ is a train track on $\bD^\circ$ if $D\tau$ has no edges transverse to the boundary components arising from the special points.
We call an $\iota$-invariant train track of this type a \emph{doubled} train track.
A doubled train track $D\tau$ in $(D^\times \bD)^\circ$ is \emph{standard} if it satisfies (the $\iota$-invariant version of) the same conditions as for the usual standard train tracks.





\begin{ex}
Let $\bD$ be the once-punctured disk with five special points on its boundary.
An $\iota$-invariant pants decomposition of $D^\times \bD$ (blue and orange) and an $\iota$-invariant standard train track $D\tau$ (red) are shown in the left of \cref{fig:std_5_sp_pts}. 
The restriction $\tau := D\tau \cap \bD(\partial_k)^\circ$ is shown in the right.
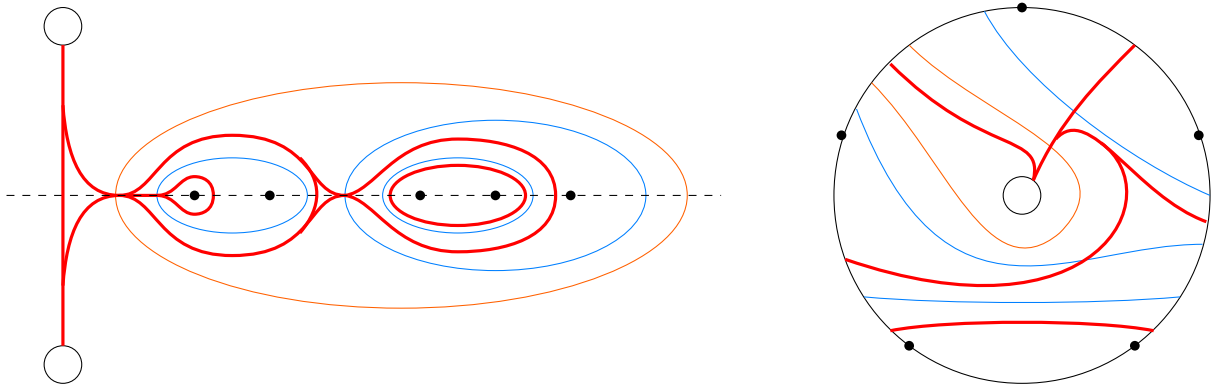
\begin{figure}[h]
    \centering
    \begin{tikzpicture}
    \begin{scope}[rotate=90, xshift=1cm]
    \draw  (-2.25,5.25) ellipse (0.25 and 0.25);
    \draw  (2.25,5.25) ellipse (0.25 and 0.25);
    \draw [myblue] (0,3) ellipse (0.5 and 1);
    \draw [myblue] (0,-0.5) ellipse (1 and 2);
    \draw [myblue] (0,0) node (v2) {} ellipse (0.5 and 1);
    \node [fill, circle, inner sep=1.3] at (0,3.5) {};
    \node [fill, circle, inner sep=1.3] at (0,2.5) {};
    \node [fill, circle, inner sep=1.3] at (0,0.5) {};
    \node [fill, circle, inner sep=1.3] at (0,-0.5) {};
    \node [fill, circle, inner sep=1.3] at (0,-1.5) {};
    \draw [myorange] (0,0.75) ellipse (1.5 and 3.8);
    \draw [very thick, red](-1.2,5.25) .. controls (-0.5,5.2) and (0,5) .. (0,4.5);
    \draw [very thick, red](1.2,5.25) .. controls (0.5,5.2) and (0,5) .. (0,4.5);
    \draw [very thick, red](0,4.5) .. controls (0,4) and (-0.8,4) .. (-0.8,3) .. controls (-0.8,1.9) and (0,2) .. (0,1.5);
    \draw [very thick, red](0,4.5) .. controls (0,4) and (0.8,4) .. (0.8,3) .. controls (0.8,1.9) and (0,2) .. (0,1.5);
    \draw [very thick, red](0,4.5) -- (0,4);
    \draw [very thick, red](0,4) .. controls (0,3.75) and (-0.25,3.7) .. (-0.25,3.5) .. controls (-0.25,3.4) and (-0.2,3.25) .. (0,3.25) .. controls (0.2,3.25) and (0.25,3.4) .. (0.25,3.5) .. controls (0.25,3.7) and (0,3.75) .. (0,4);
    \draw [very thick, red](-0.5,2.1) .. controls (-0.15,1.8) and (0.2,1.8) .. (0.5,2.1);
    \draw [very thick, red](0,1.5) .. controls (-0.05,1.1) and (-0.75,0.85) .. (-0.75,0) .. controls (-0.75,-0.65) and (-0.55,-1.3) .. (0,-1.3) .. controls (0.55,-1.3) and (0.75,-0.65) .. (0.75,0) .. controls (0.75,0.85) and (0.05,1.1) .. (0,1.5);
    \draw [very thick, red] (v2) ellipse (0.4 and 0.9);
    \draw [dashed](0,6) -- (0,-3.5);
    \draw [very thick, red](-2,5.25) -- (2,5.25);
    \end{scope}
    
    \draw (7.5,1) node (v1) {} ellipse (2.5 and 2.5);
    \node [fill, circle, inner sep=1.3] at (7.5,3.5) {};
    \node [fill, circle, inner sep=1.3] at (5.1,1.8) {};
    \node [fill, circle, inner sep=1.3] at (6,-1) {};
    \node [fill, circle, inner sep=1.3] at (9,-1) {};
    \node [fill, circle, inner sep=1.3] at (9.85,1.8) {};
    \draw (v1) ellipse (0.25 and 0.25);
    \draw [myorange](6,3) .. controls (7,2) and (9,1.5) .. (8,0.5) .. controls (7.05,-0.3) and (6.5,1.5) .. (5.5,2.5);
    \draw [myblue](7,3.45) .. controls (7.5,2.5) and (8.95,1.45) .. (10,1);
    \draw [myblue](5.3,2.15) .. controls (6.5,-1) and (8.15,0.35) .. (9.9,0.35);
    \draw [myblue](5.4,-0.35) .. controls (6.5,-0.45) and (8.5,-0.45) .. (9.6,-0.35);
    \draw [very thick, red](5.75,2.75) .. controls (7,1.5) and (7.8,1.85) .. (7.65,1.2);
    \draw [very thick, red](7.65,1.2) .. controls (8,2) and (8.5,2.5) .. (9,3);
    \draw [very thick, red](7.95,1.75) .. controls (8.4,2.25) and (9,1) .. (9.95,0.65);
    \draw [very thick, red](8.55,1.7) .. controls (9.5,1) and (8.5,-1) .. (5.15,0.15);
    \draw [very thick, red](5.75,-0.8) .. controls (6.5,-0.65) and (8.5,-0.65) .. (9.25,-0.8);
    \end{tikzpicture}
    \caption{Left: an $\iota$-invariant pants decomposition and a standard train track in the double $D^\times \bD$ of the once-punctured disk $\bD$ with five special points. Right: their restrictions to the disk $\bD$.}
    \label{fig:std_5_sp_pts}
\end{figure}
\end{ex}

Now let us return to the marked surface $\Sigma$.
For $k=1, \dots, b$, let $A_{\partial_k}$ be a tubular neighborhood of a curve in $\cR_\partial$ which is parallel to the $k$-th boundary component of $\Sigma$, and fix an orientation-preserving homeomorphism $\xi_{\partial_k}: A \to A_{\partial_k}$.
An $\iota$-invariant train track $D\tau$ in $(D^\times \Sigma)^\circ$ is \emph{standard} if the restrictions $D\tau \cap (\bar{\Sigma})^\circ$, $\xi^{-1}_{\partial_k} (D \tau \cap A_{\partial_k})$ and $D\tau \cap (D^\times \bD(\partial_k))^\circ$ are standard for all $k=1, \dots, k$.
Let $\widehat{V}(\tau)$ denote the set of $\iota$-invariant enhanced measures on an $\iota$-invariant standard train track $D\tau$ on $(D^\times \Sigma)^\circ$, and set
\begin{align*}
ST_{\widetilde{D\cR}} := \{ \tau \mid \tau = D\tau \cap \Sigma^\circ,\ \mbox{$D\tau$ is an $\iota$-invariant standard train track in $(D^\times \Sigma)^\circ$} \}.    
\end{align*}
Let $\psi_\tau$ denote the restriction of the map $\psi_{D\tau}$ to $\widehat{V}(\tau)$.
The following lemma can be shown in the same manner as \cref{thm:tt_cone_decomp}:

\begin{thm}\label{thm:tt_cone_decomp_gen}
Fix a complete hyperbolic structure $D^\times F$ of $D^\times \Sigma$.
Then the collection $\fF_{\widetilde{D\cR}} = \{\psi_\tau(\widehat{V}(\tau)) \mid \tau \in ST_{\widetilde{D\cR}}\}$ of cones gives a complete fan in $\eML(F)$.
\end{thm}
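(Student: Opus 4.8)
The plan is to deduce the statement from \cref{thm:tt_cone_decomp} applied to the punctured double $D^\times\Sigma$ by passing to $\iota$-invariants. Recall that for a marked surface with boundary $\eML(F)$ denotes the space $\eML^\iota(D^\times F)$ of $\iota$-invariant measured geodesic laminations on the double (\cref{thm:geom_models}), that $D\tau\subset(D^\times\Sigma)^\circ$ is a genuine train track on the punctured surface $D^\times\Sigma$, and that the set $\widehat{V}(\tau)$ of $\iota$-invariant enhanced measures is precisely the fixed locus $\widehat{V}(D\tau)^\iota$ of the linear involution induced by the $\iota$-permutation of the edges of $D\tau$, with $\psi_\tau=\psi_{D\tau}|_{\widehat{V}(\tau)}$. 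Since $\psi_{D\tau}$ is an injective PL map (\cref{lem:measures_subset_L^x}) and is $\iota$-equivariant by naturality of the construction, one has $\psi_{D\tau}(\widehat{V}(D\tau))\cap\eML^\iota(D^\times F)=\psi_{D\tau}(\widehat{V}(D\tau)^\iota)=\psi_\tau(\widehat{V}(\tau))$ whenever $D\tau$ is $\iota$-invariant.

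First I would apply \cref{thm:tt_cone_decomp} to the punctured surface $D^\times\Sigma$ with the $\iota$-invariant pants decomposition $\widetilde{D\cR}$, obtaining a complete fan $\fF^{\mathrm{dbl}}=\{\psi_{D\tau}(\widehat{V}(D\tau))\}$ in $\eML(D^\times F)$ indexed, together with its faces, by the standard train tracks of $D^\times\Sigma$ with respect to $\widetilde{D\cR}$. Because $\widetilde{D\cR}$ is $\iota$-invariant, $\iota$ acts on this set of train tracks and, compatibly, on $\eML(D^\times F)$ and on $\fF^{\mathrm{dbl}}$, and the indexing $D\tau\mapsto\psi_{D\tau}(\widehat{V}(D\tau))$ is $\iota$-equivariant. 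Hence every $\iota$-fixed cone of $\fF^{\mathrm{dbl}}$ is of the form $\psi_{D\tau}(\widehat{V}(D\tau))$ for an $\iota$-invariant standard train track $D\tau$; unwinding the definition of $ST_{\widetilde{D\cR}}$ through the local models (the restrictions to $\bar\Sigma$, to the annuli $A_{\partial_k}$, and to the $D^\times\bD(\partial_k)$ being standard) shows conversely that these are exactly the $D\tau$ with $\tau=D\tau\cap\Sigma^\circ\in ST_{\widetilde{D\cR}}$. Now for any $\hL\in\eML^\iota(D^\times F)$ it lies in the relative interior of a unique cone $C_0\in\fF^{\mathrm{dbl}}$; since $\iota(\hL)=\hL$, uniqueness forces $\iota(C_0)=C_0$, so $\hL\in C_0\cap\eML^\iota(D^\times F)=\psi_\tau(\widehat{V}(\tau))$ for the corresponding $\tau\in ST_{\widetilde{D\cR}}$. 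This simultaneously shows that $\fF_{\widetilde{D\cR}}$ covers $\eML(F)$ and that the relative interiors of distinct members of $\fF_{\widetilde{D\cR}}$ are disjoint.

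The remaining fan axioms are inherited from $\fF^{\mathrm{dbl}}$. Each $\widehat{V}(\tau)=\widehat{V}(D\tau)^\iota$ is the fixed locus of a linear involution of a polyhedral cone, hence polyhedral, so each $\psi_\tau(\widehat{V}(\tau))$ is a polyhedral cone. For the face relations I would invoke the elementary fact that for a linear involution $\iota$ of a polyhedral cone $P$, every face of $P^\iota$ has the form $Q\cap P^\iota$ for an $\iota$-invariant face $Q$ of $P$ — take $Q$ to be the smallest face of $P$ containing a chosen relative-interior point of the given face of $P^\iota$, so that $\iota$-invariance is automatic by minimality. Applying this to $C_0=\psi_{D\tau}(\widehat{V}(D\tau))$, and using that each $\iota$-invariant face of $C_0$ is $\psi_{D\tau_F}(\widehat{V}(D\tau_F))$ for an $\iota$-invariant standard subtrack $D\tau_F\subset D\tau$ by \cref{thm:tt_cone_decomp}, we get that every face of $\psi_\tau(\widehat{V}(\tau))$ equals $\psi_{\tau_F}(\widehat{V}(\tau_F))$ with $\tau_F\in ST_{\widetilde{D\cR}}$. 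Similarly, if $\psi_{\tau_1}(\widehat{V}(\tau_1))\cap\psi_{\tau_2}(\widehat{V}(\tau_2))\neq 0$, this intersection is $(\psi_{D\tau_1}(\widehat{V}(D\tau_1))\cap\psi_{D\tau_2}(\widehat{V}(D\tau_2)))\cap\eML^\iota(D^\times F)$; by \cref{thm:tt_cone_decomp} the first factor is a common $\iota$-invariant face of the two cones of $\fF^{\mathrm{dbl}}$, so intersecting with the fixed locus yields a common face in $\fF_{\widetilde{D\cR}}$.

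\textbf{Main obstacle.} The conceptual content lies entirely in \cref{thm:tt_cone_decomp} together with the $\iota$-equivariance of the train-track/lamination correspondence; what needs care — and is the reason the result holds "in the same manner" rather than verbatim — is the combinatorial bookkeeping for the $\iota$-invariant standard train tracks in the once-punctured-disk components $D^\times\bD(\partial_k)$, in particular the outermost-curve configurations of \cref{fig:std_pants_iota_inv_out}, so that these local models together with the annulus and $\bar\Sigma$ pictures reproduce exactly the $\iota$-invariant standard train tracks of $D^\times\Sigma$ with respect to $\widetilde{D\cR}$ (neither missing any nor including spurious ones), together with the verification that the recurrence condition and the face/intersection structure of $\fF^{\mathrm{dbl}}$ descend correctly to the $\iota$-fixed locus. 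I expect these to be routine but somewhat tedious.
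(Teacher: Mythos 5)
Your approach — applying \cref{thm:tt_cone_decomp} to the punctured double $D^\times\Sigma$ with pants decomposition $\widetilde{D\cR}$ and then intersecting the resulting fan $\fF^{\mathrm{dbl}}$ with the $\iota$-fixed locus — is a legitimate and arguably cleaner route than literally re-running Hatcher's argument in the $\iota$-invariant setting (which is what the paper's "in the same manner" most likely intends); both ultimately rest on the same input. The polyhedral bookkeeping (that $P^\iota$ is a polyhedral cone whose faces are $Q\cap P^\iota$ for $\iota$-invariant faces $Q$ of $P$, and the disjoint-relative-interiors argument via minimal cones) is correct.

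There is, however, a genuine gap in the identification of $\iota$-fixed cones with $ST_{\widetilde{D\cR}}$, and it is not the kind of combinatorial bookkeeping you describe as "routine but tedious." Your "conversely" clause asserts that the $\iota$-invariant standard train tracks of $D^\times\Sigma$ with respect to $\widetilde{D\cR}$ are \emph{exactly} the $D\tau$ with $D\tau\cap\Sigma^\circ\in ST_{\widetilde{D\cR}}$. This is false as stated: an $\iota$-invariant standard train track on $D^\times\Sigma$ may carry an ($\iota$-invariant) edge transverse to the boundary circle $\partial_p$ of a puncture $p$ arising from a special point (an $\iota$-fixed cusp), but the "doubled" condition built into $ST_{\widetilde{D\cR}}$ excludes precisely these edges, since otherwise $D\tau\cap\Sigma^\circ$ has a dangling half-edge and is not a train track. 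So there are $\iota$-fixed cones $\psi_{D\tau}(\widehat V(D\tau))\in\fF^{\mathrm{dbl}}$ that do \emph{not} come from $ST_{\widetilde{D\cR}}$, and as written your covering argument could land in one of them.

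What rescues the argument is a substantive observation you do not make: since $\iota$ is orientation-reversing, it reverses the spiralling direction of leaves into cusps. Hence an $\iota$-invariant enhanced measure must have signature $\sigma_p=0$ at every $\iota$-fixed cusp $p$, and therefore vanishing measure on any edge transverse to $\partial_p$. Consequently no $\iota$-invariant $\hL$ lies in the \emph{relative interior} of a cone $\psi_{D\tau}(\widehat V(D\tau))$ whose $D\tau$ has such an edge; the minimal cone $C_0$ you select for $\hL$ automatically has a doubled $D\tau$, and since subtracks of a doubled track are doubled, the face bookkeeping goes through as well. Without this orientation-reversal/signature argument the proposed identification of cones is simply incorrect, so you should state it explicitly rather than defer it.
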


\section{The action of pseudo-Anosov mapping classes on the \texorpdfstring{$\cA$}{A}-laminations}\label{sec:A-dynamics}

A sign stability is the statement for a dynamics of an action of a mutation loop on a tropical cluster $\cX$-variety.
Although, apart from sign stability, we can consider an action on a tropical cluster $\cA$-variety.

Let $\Sigma$ be a marked surface.
Even if $\Sigma$ has not boundary components, the action of a pA mapping class on $\bS \cA_\Sigma(\bR^\trop)$ has not a NS dynamics since the action on the $\cA$-laminations which consists of only peripheral curves is finite order.
Although, the dynamics like as \cref{thm:pA_NS_general} holds for $\cA$-laminations:

\begin{prop}\label{prop:NS_A}
Let $\phi \in \Gamma_\Sigma$ be a pA mutation loop.
Then we have
\[
\lim_{n \to \infty} \phi^{\pm n}([L]) = [L_\phi^\pm] \in \bS \cU^\uf_\Sigma(\bR^\trop)
\]
for any $[L] \in \bS\cA_\Sigma(\bR^\trop) \setminus (p \circ \pi)^{-1}([L_{\pi(\phi)}^\mp])$.
Here, we consider $\cU^\uf_\Sigma(\bR^\trop)$ as the subspace of $\cA_\Sigma(\bR^\trop)$ without peripheral curves, $L^\pm_{\pi(\phi)}$ are the attracting/repelling points of $\pi(\phi)$ and $L_\phi^\pm \in \cU_\Sigma(\bR^\trop)$ are defined in \cref{thm:pA_NS_general}.
\end{prop}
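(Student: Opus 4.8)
The plan is to follow the proof of \cref{thm:pA_NS_general} almost verbatim, replacing the space $\cX^\uf_\Sigma(\bR^\trop)$ of $\cX$-laminations by the space $\cA_\Sigma(\bR^\trop)$ of $\cA$-laminations, and the projection $\pi=\pi^\trop_{\cR_\partial}$ by the composite map $q:=p\circ\pi$ of the statement, which is a continuous $\Gamma_\Sigma$-equivariant map $\cA_\Sigma(\bR^\trop)\to\cX_{\bar{\Sigma}}(\bR^\trop)$; continuity and equivariance are inherited from those of the tropicalized ensemble map and of $\pi^\trop_{\cR_\partial}$ (\cref{lem:pi_S_equivariance}). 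Since $\bS\cA_\Sigma(\bR^\trop)$ is compact Hausdorff and $\phi$ acts as a homeomorphism, it suffices, exactly as in that proof, to show that the $\omega$-limit set satisfies $\Lambda^+_\phi([L])=\{[L^+_\phi]\}$ for every $[L]$ with $q(L)\notin\bR_{\geq 0}\cdot L^-_{\pi(\phi)}$ (which is how the exclusion $[L]\notin(p\circ\pi)^{-1}([L^-_{\pi(\phi)}])$ should be read, in parallel with the exclusion in \cref{thm:pA_NS_general}; in particular $q(L)\neq 0$ and $[q(L)]\neq[L^-_{\pi(\phi)}]$), and then to apply the same statement to the pA mutation loop $\phi^{-1}$.

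First I would record the $\cA$-analogue of \cref{lem:barU_in_X}: the same section $\iota:\cU_{\bar{\Sigma}}(\bR^\trop)\hookrightarrow\cU^\uf_\Sigma(\bR^\trop)\subset\cA_\Sigma(\bR^\trop)$ of $q$ (it involves only compact laminations, which behave identically on the $\cA$- and $\cX$-sides) is $\bR_{>0}\times\Gamma_\Sigma$-equivariant; the fiber $q^{-1}(\bR_{\geq 0}\cdot L)$ for $L\in\cU_{\bar{\Sigma}}(\bR^\trop)$ equals $\{\iota(\lambda L)\sqcup d\mid \lambda\geq 0,\ d\in D_{\cA,\Sigma}\}$, where $D_{\cA,\Sigma}$ is the set of $\cA$-laminations supported on curves that are peripheral around a puncture of $\Sigma$, boundary-parallel, or contained in one of the disk components $\bD(\partial_i)$; and $D_{\cA,\Sigma}$ is $\Gamma_\Sigma$-invariant with all orbits finite, since there are only finitely many homotopy classes of such curves. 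Granting this, the computation from \cref{thm:pA_NS_general} transcribes directly: continuity and equivariance of $q$ give $q(\Lambda^+_\phi([L]))\subset\Lambda^+_{\pi(\phi)}([q(L)])$, which equals $\{[L^+_{\pi(\phi)}]\}$ by \cref{prop:pA_NS_SS} applied to the pA mapping class $\pi(\phi)$ on the punctured surface $\bar{\Sigma}$ (here we use $[q(L)]\neq[L^-_{\pi(\phi)}]$); hence $\Lambda^+_\phi([L])\subset q^{-1}([L^+_{\pi(\phi)}])=\bS\big(\{L^+_\phi\sqcup d\mid d\in D_{\cA,\Sigma}\}\big)$ with $L^+_\phi=\iota(L^+_{\pi(\phi)})$.

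Finally, using that $\Lambda^+_\phi([L])$ is $\phi$-invariant, that $\phi^n(L^+_\phi)=\iota(\pi(\phi)^n L^+_{\pi(\phi)})=\lambda_{\pi(\phi)}^n L^+_\phi$, and the $\Gamma_\Sigma$-equivariance of $\iota$ and of $\sqcup$, one obtains for each $d\in D_{\cA,\Sigma}$
\[
\phi^n\big([L^+_\phi\sqcup d]\big)=\big[\,L^+_\phi\sqcup\lambda_{\pi(\phi)}^{-n}\phi^n(d)\,\big]\xrightarrow{n\to\infty}[L^+_\phi],
\]
because $\lambda_{\pi(\phi)}>1$ and $\{\phi^n(d)\}_{n\geq 0}$ is a finite, hence bounded, set. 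Thus $\Lambda^+_\phi([L])$ is a non-empty $\phi$-invariant subset of $\bS\big(\{L^+_\phi\sqcup d\}\big)$ on which the forward orbit of every point converges to $[L^+_\phi]$, so it must be exactly $\{[L^+_\phi]\}$; compactness of $\bS\cA_\Sigma(\bR^\trop)$ then forces $\phi^n([L])\to[L^+_\phi]$. Applying this to $\phi^{-1}$ yields the claim for $\phi^{-n}$ as well.

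I expect the one genuinely new point to be the second step: the precise description of the fibers of $q$ on the $\cA$-side and the verification that $D_{\cA,\Sigma}$ is $\Gamma_\Sigma$-invariant with finite orbits. Compared with $D_{\cX,\Sigma}$ one must now also keep track of the peripheral curves around the punctures of $\Sigma$ (these lie in the kernel of the tropicalized ensemble map) and of the way the factor $(\bZ/2)^P$ of $\Gamma_\Sigma$ acts on them, and confirm, as in \cref{lem:barU_in_X}\,(2) via finite type $D_N$, that a lamination in $\cA_{\bD(\partial_i)}$ is always a real weighted collection of arcs. Once these structural facts are established, the dynamical argument is a line-by-line copy of the proof of \cref{thm:pA_NS_general}.
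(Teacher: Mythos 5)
Your proposal follows the exact route the paper indicates (which says only that one can prove this in the same manner as \cref{thm:pA_NS_general}), transcribing that proof step by step with $q = p\circ\pi$ in place of $\pi$, the $\cA$-analogue $D_{\cA,\Sigma}$ of $D_{\cX,\Sigma}$ (correctly augmented by peripheral curves around punctures, which account for the kernel of the tropicalized ensemble map), and the same $\omega$-limit-set argument. This is the intended proof; the structural remarks you isolate at the end (fiber description of $q$, $\Gamma_\Sigma$-invariance and finiteness of orbits in $D_{\cA,\Sigma}$) are precisely the points one would need to check, and they hold for the reasons you give.
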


One can prove it in the same manner as \cref{thm:pA_NS_general}.

This proposition is one of our reasons why to use $\cX$-variety for definition of sign stability.

\end{document}